\documentclass[a4 paper, 11pt,reqno]{amsart}

\usepackage{amssymb}
\usepackage{amscd}
\usepackage{amsfonts}
\usepackage{mathrsfs}
\usepackage{setspace}
\usepackage{version}
\usepackage{mathtools}
\usepackage{dsfont}

\usepackage{amsmath,amssymb,amscd,verbatim,bbm,graphicx,enumerate,tikz}

\definecolor{crimson}{rgb}{0.86, 0.08, 0.24}
\definecolor{bleudefrance}{rgb}{0.19, 0.5, 0.91}
\usepackage[pdftex,colorlinks,linkcolor=crimson,citecolor=bleudefrance,filecolor=black]{hyperref}

\usepackage{float}
\usepackage{tikz-cd}

\usepackage{tikz}
\usetikzlibrary{calc}
\usetikzlibrary{shadings,intersections}
\usetikzlibrary{decorations.text}
\usepackage{pgfplots,wrapfig}
\usepackage{lipsum}

\usepackage{xcolor}

\pgfplotsset{compat=1.17}

\theoremstyle{plain}
\numberwithin{equation}{section} 
\newtheorem{theorem}{Theorem}[section]
\newtheorem{proposition}[theorem]{Proposition}
\newtheorem{lemma}[theorem]{Lemma}

\newtheorem{corollary}[theorem]{Corollary}
\newtheorem{definition}[theorem]{Definition}

\theoremstyle{remark}
\newtheorem{remark}[theorem]{Remark}

\usepackage{soul}

\renewcommand{\leq}{\leqslant}
\renewcommand{\geq}{\geqslant}
\newsavebox{\proofbox}
\savebox{\proofbox}{\begin{picture}(7,7)  \put(0,0){\framebox(7,7){}}\end{picture}}

\newcommand\E{\mathbb{E}}
\newcommand\Z{\mathbb{Z}}
\newcommand\R{\mathbb{R}}

\newcommand\N{\mathbb{N}}

\newcommand\Gr{\operatorname{Gr}}
\newcommand\SL{\operatorname{SL}}

\newcommand\GL{\operatorname{GL}}

\newcommand\PGL{\operatorname{PGL}}

\newcommand\Mat{\operatorname{Mat}}

\newcommand\Isom{\operatorname{Isom}}
\newcommand\Lip{\operatorname{Lip}}

\renewcommand\P{\mathbb{P}}

\newcommand\id{\operatorname{id}}

\newcommand\acts{\operatorname{\curvearrowright}}

\usepackage{accents}
\newlength{\dhatheight}

\addtolength{\voffset}{-1cm}
\addtolength{\textheight}{2cm}

\addtolength{\hoffset}{-1cm}
\addtolength{\textwidth}{2cm}

\setcounter{tocdepth}{1}
\title[Counting and Boundary Limit theorems for  Gromov-hyperbolic groups]{Counting and Boundary Limit theorems for representations of Gromov-hyperbolic groups}

\author{Stephen Cantrell}
\address{Department of mathematics, University of Chicago, Chicago, Illinois 60637, USA}
\email{scantrell@uchicago.edu}
\thanks{}

\author{Cagri Sert} 
\address{Institut f\"{u}r Mathematik, Universit\"{a}t Z\"{u}rich, Winterthurerstrasse 190, 8057 Z\"{u}rich, Switzerland}
\email{cagri.sert@math.uzh.ch}
\thanks{C.S. is supported by the grant SNF Ambizione 193481.}

\begin{document}

\subjclass[2020]{Primary 20F67; Secondary 37D40, 60F05, 60F10}

\keywords{Gromov-hyperbolic groups, counting, limit theorems, Patterson--Sullivan measures}

\maketitle

\begin{center}
\textit{Dedicated to the memory of \'Emile Le Page}
\end{center}

\begin{abstract}
Given a Gromov-hyperbolic group $G$ endowed with a finite symmetric generating set, we study the statistics of counting measures on the spheres of the associated Cayley graph under linear representations of $G$. More generally, we obtain a weak law of large numbers for subadditive functions, echoing the classical Fekete lemma. For strongly irreducible and proximal representations, we prove a counting central limit theorem with a Berry--Esseen type error rate and exponential large deviation estimates. Moreover, in the same setting, we show convergence of interpolated normalized matrix norms along geodesic rays to Brownian motion and a functional law of iterated logarithm, paralleling the analogous results in the theory of random matrix products. Our counting large deviation estimates provide a positive answer to a question of Kaimanovich--Kapovich--Schupp.  In most cases, our counting limit theorems will be obtained from stronger almost sure limit laws for Patterson--Sullivan measures on the boundary of the group.
\end{abstract}

{
\hypersetup{linkcolor=black}
\tableofcontents
}

\section{Introduction}

Let $\mu$ be a probability measure on $G=\GL_d(\R)$ and $(X_i)_{i \in \N}$ be a sequence of independent $G$-valued random variables with distribution $\mu$. Let $Y_n$ denote the $n^{th}$-step of the random product $X_n \ldots X_1$. 
The theory of random matrix products is concerned with studying the asymptotic behaviour of  $Y_n$, for example, by investigating limit theorems (law of large numbers, central limit theorem, large deviations, etc.) for numerical quantities associated to matrices such as the operator norm $\|Y_n\|$ or spectral radius. The most intricate part of the theory is when the probability measure $\mu$ is finitely or countably supported say inside a countable group $\Gamma<G$. In that case, one has to deal with the possible singular behaviour of the countable subgroup $\Gamma$ inside the ambient group $\GL_d(\R)$. After pioneering works of Furstenberg, Kesten  \cite{furstenberg.non.commuting,furstenberg-kesten} and several others, significant progress was made by Le Page \cite{lepage} in early '80s however many open questions still persist.

The theory of random matrix products provides a way to express asymptotic behaviour of large elements of $\Gamma$ in $\GL_d(\R)$. Indeed, for a finitely supported probability measure $\mu$ as above, the probabilistic description of the asymptotic behaviour of $Y_n$ is a problem of \textit{symbolic counting}, i.e.~ counting with certain multiplicities. A related but different way to study the asymptotic behaviour of elements of $\Gamma$, perhaps more directly related to group $\Gamma$ itself rather than its symbolic representation, would be to study statistics of asymptotics of \textit{actual elements} of $\Gamma$. However, due to disparate algebraico-combinatorial structure of different countable groups $\Gamma<\GL_d(\R)$ such a general description is notoriously harder to obtain. Accordingly, such counting asymptotics results are much less developed compared to the theory of random matrix products.

In this article, we will be interested in describing \textit{counting asymptotics} and \textit{boundary limit laws} for representations of Gromov-hyperbolic groups. These include virtually free groups, cocompact isometry groups of negatively curved geodesic spaces, groups with small cancellation property etc. From another perspective, in some probabilistic models (e.g.~ random groups), most finitely presented groups are Gromov-hyperbolic.
We shall prove four main limit theorems:\\
\indent \textbullet ${}$ \textit{Law of large numbers for subadditive functions:} this is of more general nature compared to the following results, it holds for any real-valued subadditive function on $\Gamma$.\\
\indent The next results concern matrix representations of Gromov-hyperbolic groups, they hold under the standard (strong) irreducibility and proximality assumptions of random matrix products theory:\\
\indent \textbullet ${}$ \textit{Exponential large deviation estimates for counting:} this one refines the aforementioned law of large numbers in the setting of matrix representations and provides a positive answer to a question raised by Kaimanovich--Kapovich--Schupp \cite[Problem 9.3]{kaimanovich-kapovich-schupp}. Apart from representations, we also prove counting large deviation estimates for isometric actions on Gromov-hyperbolic spaces.\\[1pt]
\indent \textbullet ${}$ \textit{Counting central limit theorem with Berry--Esseen type error term}.\\[1pt]
\indent \textbullet ${}$ \textit{Convergence of normalized interpolations along geodesic rays under a Patterson--Sullivan measure to the standard Brownian motion}: this one is of a different nature, it pertains to a measure on the boundary rather than counting. In fact, the first three points above also have corresponding boundary analogues which, beyond interest in themselves, serve as a tools to prove them.

Somewhat ironically, the key mechanism that will allow us to obtain these deterministic counting asymptotics is the inherent dynamical or probabilistic structure of the Gromov-hyperbolic groups. Indeed, as realised by Cannon \cite{cannon} and Gromov \cite{gromov}, the geodesics on such a group can be coded by a finite state automaton. This makes it possible to approach the deterministic data of these groups by (a collection of) well-behaved stochastic processes, namely Markov chains. For example, for the last three results mentioned above, it enables us to employ probabilistic results of \textit{Markovian random matrix products} (mainly due to Bougerol \cite{bougerol.survey,bougerol.thm.limite, bougerol.comparaison} and Guivarc'h \cite{guivarch.markov}; we also develop some of them further) to the deterministic counting results. This transfer, however, requires handling some difficulties which we manage to do by, among others, elaborating on techniques developed by Calegari--Fujiwara \cite{calegari-fujiwara} (generally) and Gekhtman--Taylor--Tiozzo \cite{GTTCLT} (for the central limit theorem). The deterministic nature of our results, in particular the fact that we do not induce randomness using an external source (like a subshift of finite type \cite{calegari-fujiwara, pollicott.sharp.2011}
) is of particular interest.
We shall comment more on each of our results and on the past works below, let us now continue by stating our theorems and remarks more precisely.

\bigskip

Let $\Gamma$ be a finitely generated group and $S$ a  generating set for $\Gamma$ -- all considered generating sets will be assumed to be finite and symmetric. The choice of $S$ makes $\Gamma$ into a metric space by considering the associated length function on $\Gamma$, namely $|g|_S=\min \{n \in \N \, | \, s_1\ldots s_n=g, \, s_i \in S\}$ and for $g,h \in \Gamma$ setting the (left) metric to be $d_S(g,h)=|g^{-1}h|_S$. Recall that for $\Delta \geq 0$, by a $\Delta$-hyperbolic metric space $(M,d)$, we understand a metric space such that for every $x,y,z,o \in M$, we have $(x,y)_o \geq (x,z)_o \wedge (z,y)_o -\Delta$, where $(\cdot,\cdot)_{\cdot}$ is the Gromov product given by $(x,y)_o=\frac{1}{2}(d(x,o)+d(y,o)-d(x,y))$.
The group $\Gamma$ is said to be \textit{Gromov-hyperbolic} if there exists a real constant $\Delta \geq 0$ and a generating set $S$ such that the associated metric space is $\Delta$-hyperbolic. Given a generating set $S \subseteq \Gamma$, we write $S_n$ for the sphere of radius $n$ for the associated metric, namely $S_n:=\{g \in \Gamma: |g|_S = n\}$. Finally a Gromov-hyperbolic group $\Gamma$ is said to be \textit{non-elementary} if it is not virtually cyclic, i.e.~ does not contain a cyclic subgroup of finite index.

\subsection{Convergence of subadditive spherical averages}
A real-valued function $\varphi$ on a group $\Gamma$ is called \textit{subadditive}, if for every $g,h \in \Gamma$, we have $\varphi(gh) \leq \varphi(g) + \varphi(h)$. The following is our first result.  

\begin{theorem}[Weak law of large numbers for subadditive spherical averages]\label{thm:lln}
Let $\Gamma$ be a non-elementary Gromov-hyperbolic group endowed with a generating set $S$ and $\varphi : \Gamma \to \mathbb{R}$ is subadditive function on $\Gamma$. Then, there exists $\Lambda \geq 0$ such that
for any $\epsilon >0$,
\[
\lim_{n\to\infty} \frac{1}{\#S_n} \#\left\{ g \in S_n : \left| \frac{\varphi(g)}{n} - \Lambda \right|> \epsilon\right\} =0.
\]
In particular,
\[
\lim_{n\to\infty} \frac{1}{n}
\sum_{|g|_S = n} \frac{1}{\#S_n}\varphi(g) = \Lambda.
\]
\end{theorem}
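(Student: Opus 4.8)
The plan is to code the geodesics of $\Gamma$ by a finite automaton, push the uniform measures on the spheres onto path space, reduce to the Parry measures of the maximal components, and apply Kingman's subadditive ergodic theorem; the constant $\Lambda$ is then identified intrinsically on the Gromov boundary. First I record some elementary consequences of subadditivity: writing $C=\max_{s\in S}|\varphi(s)|$, one has $\varphi(e)\ge 0$, $|\varphi(g)|\le C|g|_{S}$, and for all $g,h\in\Gamma$ the two-sided bound $|\varphi(hg)-\varphi(g)|\le C|h|_{S}$ together with its dual on the right, so that $\varphi$ is $C$-Lipschitz for $d_{S}$. Next I recall the Cannon--Gromov automatic structure for the fixed generating set $S$: there is a finite directed graph $\mathcal{G}$ with edges labelled by $S$ and a base vertex $\star$ recognising a prefix-closed regular language $L\subseteq S^{*}$ of geodesic normal forms (e.g.\ the ShortLex normal forms), such that evaluation $w\mapsto\overline w$ induces a bijection between length-$n$ paths issued from $\star$ and the sphere $S_{n}$. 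Let $\lambda=\lim_{n}(\#S_{n})^{1/n}$; since $\Gamma$ is non-elementary, $\lambda>1$, and by Coornaert $\#S_{n}\asymp\lambda^{n}$, with $\lambda$ the common Perron--Frobenius eigenvalue of the adjacency matrices of the \emph{maximal} strongly connected components $\mathcal{C}_{1},\dots,\mathcal{C}_{k}$ of $\mathcal{G}$, while the number of length-$n$ paths from $\star$ spending more than $\delta n$ steps outside $\bigcup_{i}\mathcal{C}_{i}$ is $O(\rho^{n})$ for some $\rho=\rho(\delta)<\lambda$.

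On each maximal component $\mathcal{C}_{i}$ I would work with the one-sided subshift of finite type $(\Sigma_{i},\sigma)$ equipped with its Parry measure $\nu_{i}$ (the measure of maximal entropy), which is $\sigma$-invariant and ergodic. Writing $g_{n}(\omega)\in\Gamma$ for the product of the first $n$ edge-labels of a path $\omega$, one has $g_{m+n}(\omega)=g_{m}(\omega)\,g_{n}(\sigma^{m}\omega)$, so $c_{n}:=\varphi\circ g_{n}$ satisfies the subadditive cocycle inequality $c_{m+n}(\omega)\le c_{m}(\omega)+c_{n}(\sigma^{m}\omega)$, with $|c_{n}|\le Cn$. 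Kingman's subadditive ergodic theorem then gives, for $\nu_{i}$-almost every $\omega$, that $\tfrac1n c_{n}(\omega)\to\Lambda_{i}:=\inf_{n}\tfrac1n\int c_{n}\,d\nu_{i}=\lim_{n}\tfrac1n\int c_{n}\,d\nu_{i}$.

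The crux is to show $\Lambda_{1}=\dots=\Lambda_{k}=:\Lambda$ and that this limit governs the sphere counts. For the first point I would use that two geodesic rays with a common endpoint in $\partial\Gamma$ eventually $2\Delta$-fellow-travel; combined with the Lipschitz bound this gives $\tfrac1n\varphi(g_{n})-\tfrac1n\varphi(g_{n}')\to0$ whenever $g_{n}\to\xi$ and $g_{n}'\to\xi$, so the boundary function $\beta(\xi):=\lim_{n}\tfrac1n\varphi(g_{n})$, where defined, depends only on $\xi$; then $|\varphi(hg)-\varphi(g)|\le C|h|_{S}$ forces $\beta(h\xi)=\beta(\xi)$, i.e.\ $\beta$ is $\Gamma$-invariant on its domain. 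Now the Patterson--Sullivan measure $\nu_{PS}$ on $\partial\Gamma$ lies in the measure class of the limiting spherical counting measures, which in turn is absolutely continuous with respect to the sum of the boundary pushforwards of the $\nu_{i}$ (a structural property of the geodesic automaton, essentially due to Coornaert); by Kingman $\beta$ is defined $\nu_{i}$-a.e.\ on each component, hence $\nu_{PS}$-a.e., and since the $\Gamma$-action on $(\partial\Gamma,\nu_{PS})$ is ergodic, $\beta$ is $\nu_{PS}$-a.e.\ constant, whence $\Lambda_{i}=\Lambda$ for all $i$. \emph{This step -- relating the Patterson--Sullivan measure to the Parry measures of the maximal components, and handling the possible presence of several maximal components -- is where the hyperbolic geometry must be married to the automaton combinatorics, and I expect it to be the main obstacle}; it is here that ideas in the spirit of Calegari--Fujiwara and Coornaert enter.

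Finally, for the transfer to counting, decompose the uniform measure on $\{w\in L:|w|=n\}$ (which, under $w\mapsto\overline w$, is the uniform measure on $S_{n}$) according to which maximal component the path occupies for all but $\delta n$ of its steps; the exceptional event has probability $O((\rho/\lambda)^{n})\to0$. Conditioned on the bulk lying in $\mathcal{C}_{i}$ and viewed through a central window $[\varepsilon n,(1-\varepsilon)n]$, the conditioned uniform measure equidistributes towards the Parry measure $\nu_{i}$ (classical equidistribution of path counts of an irreducible subshift of finite type towards its measure of maximal entropy), so the $\nu_{i}$-a.e.\ convergence from Kingman passes to convergence in probability for the window element, while the Lipschitz bound $|\varphi(g_{n})-\varphi(h)|\le 2C\varepsilon n$, with $h$ the window element, absorbs the two end segments once $\varepsilon$ is small enough (depending on the target accuracy). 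Summing over $i$ and using $\#S_{n}\asymp\lambda^{n}$ yields $\#S_{n}^{-1}\#\{g\in S_{n}:|\varphi(g)/n-\Lambda|>\epsilon\}\to0$, the first assertion; since $|\varphi(g)/n|\le C$ on $S_{n}$, bounded convergence gives $\tfrac1n\#S_{n}^{-1}\sum_{|g|_{S}=n}\varphi(g)\to\Lambda$. Lastly, as $S=S^{-1}$ the inversion $g\mapsto g^{-1}$ is a measure-preserving bijection of the uniform measure on $S_{n}$, so $\tfrac1n\#S_{n}^{-1}\sum_{|g|_{S}=n}\big(\varphi(g)+\varphi(g^{-1})\big)\ge\tfrac1n\varphi(e)\to0$ while the left-hand side tends to $2\Lambda$; hence $\Lambda\ge0$.
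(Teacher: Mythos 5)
Your proposal is correct and, for the key step of producing $\Lambda$ --- Kingman's subadditive ergodic theorem applied to the Parry measures of the maximal components, the $\Gamma$-invariant boundary function $\beta$, and ergodicity of the Patterson--Sullivan measure class --- it runs along the same lines as the paper's proof of the boundary strong law (Theorem~\ref{thm:rayconvergence}), including the crux of comparing the Parry measures to the Patterson--Sullivan measure through the automaton. Where you genuinely diverge is the transfer from the symbolic/boundary picture to the spherical counting statement. You decompose the uniform measure on $S_n$ according to which maximal component the coding path occupies, bound the contribution of paths spending $\geq \delta n$ steps outside maximal components by $O((\rho/\lambda)^n)$, and compare the conditioned uniform measure of a central window of length $(1-2\varepsilon)n$ to the Parry measure, absorbing the two end segments with the Lipschitz estimate. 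The paper instead takes a geometric short-cut: having proved the boundary strong law, it controls $\#(S_n\cap A_\epsilon)/\#S_n$ by the $\nu$-mass of $\bigcup_{x\in S_n\cap A_\epsilon}O(x,R)$, using the quasiconformal shadow estimate \eqref{eq:shadows} and the bounded multiplicity of shadows. Your combinatorial transfer is, in effect, the alternative ``approximation argument'' the paper mentions after Theorem~\ref{thm:ldtboundary} and carries out quantitatively in Lemmas~\ref{lem:TV} and~\ref{lem:approx2} of the CLT section; it avoids shadows entirely, at the price of needing a total-variation (not merely weak) comparison between the window distribution and the Parry measure --- which does hold with exponentially small error at the $\varepsilon n$-cutoff scale by Perron--Frobenius theory, but is a stronger statement than the ``classical equidistribution of path counts'' you invoke.

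Two small imprecisions worth tightening. First, the claim that $\nu_{PS}$ is absolutely continuous with respect to $\sum_i\Psi_\ast\nu_i$ is not literally true: $\widehat\nu$-typical paths start at $\ast$, outside every maximal component, so the measures are mutually singular before shifting. What your argument actually needs --- and what the paper records as Lemma~\ref{lem:alpha} --- is the shift relation $\sigma_\ast^k\widehat\nu|_{[v]}=\alpha_v^k\,\mu|_{[v]}$ on cylinders $[v]$ inside a maximal component; combined with $\Gamma$-invariance of $\beta$ and ergodicity, this gives $\nu(\{\beta=\Lambda_i\})>0$, hence $=1$, for each $i$, which is what forces $\Lambda_1=\dots=\Lambda_k$. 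You correctly identify this as the delicate step, and the fix is the shift relation rather than absolute continuity. Second, the paper's definition of $\Lambda$ in Theorem~\ref{thm:rayconvergence} (and hence the identification $\Lambda_i=\Lambda$) must be checked to be independent of the choice of $B$ before one is entitled to a single constant; your ergodicity argument supplies this, but it is worth saying explicitly that the paper does the same thing by proving $\nu(E)=1$ rather than merely $\nu(E)>0$.
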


Unlike our other results below where we will specialize to linear representations, the generality of the previous result goes far beyond; we comment on subadditive functions of different nature in Remark \ref{rk.subad.functions.intro} below. We note that the first statement above is precisely a weak law of large numbers whereas the second one corresponds to convergence in expectation (for a strong law, see Theorem \ref{thm:rayconvergence}). Finally, notice the curious analogy with the classical Fekete lemma which matches this convergence in expectation when $\Gamma=\N$ and $S=\{1\}$.

\begin{remark}[Examples of subadditive functions]\label{rk.subad.functions.intro}
Two large classes of subadditive functions contain the following.\\[2pt]
\indent 1. (Semi-norms on groups) Let $H$ be any group endowed with a semi-norm $|\cdot|$ and $\rho:\Gamma \to H$ a homomorphism (cf.~ \cite{kaimanovich-kapovich-schupp}.). The function $\varphi(\gamma):= |\rho(\gamma)|$ is clearly a subadditive function on $\Gamma$ and this construction encompasses many examples:\\[1pt]
\indent 1.a. Already in the case $H=\Gamma$, $\rho=\id$ and $|\cdot|$ any length function on $\Gamma$, the previous theorem applied to $\varphi(\cdot)=|\cdot|$ yields an asymptotic ratio $\Lambda$ between $|\cdot|_S$ and $|\cdot|$. Note that $\Lambda>0$ if, for example, $|\cdot|$ comes from a finite generating set. More generally, let $(X,d)$ be a metric space, $o \in X$ and $\Gamma \acts X$ by isometries. Then $\varphi(\gamma)=d(\gamma \cdot o, o)$ is a subadditive function.\\[1pt]
\indent 1.b. Let $H=\GL_d(\R)$, $\|\cdot\|$ an operator norm on the algebra $\Mat_d(\R)$ of matrices and $\rho:\Gamma \to H$ a representation. Then, $\varphi(\gamma):=\log \|\rho(\gamma)\|$ is an example of a subadditive function.\\[2pt]
\indent 2. (Quasi-morphisms) Another class of examples comes from the observation that Theorem \ref{thm:lln} remains valid for any function $\varphi'$ on $\Gamma$ such that $|\varphi-\varphi'|$ is bounded. In view of this, the previous result applies to any quasi-morphism (see Remark \S \ref{sec.lln}). For those, it is not hard to see that $\Lambda=0$.
\end{remark}

\begin{remark}[Possible extensions]
Using different methods that rely on the topological flow introduced by Mineyev \cite{mineyev} and studied by Tanaka in \cite{tanaka.topflow}, it might be possible to prove that Theorem \ref{thm:lln} holds when we count with respect to some other hyperbolic metrics that are not necessarily word metrics (see  \cite[Theorem 3.12]{cantrelltanaka}). We have decided not to present the proof of this result as it is not clear how to obtain more refined counting limit laws below in this more general setting. 
\end{remark}

We note that Theorem \ref{thm:lln} and its almost sure version (Theorem \ref{thm:rayconvergence} below) generalise several previous works. For example, \cite[Theorem 7.3]{countinglox} and \cite[Theorem 7.4]{tanaka.topflow} follow from the particular case where $\varphi$ is a displacement function associated to an isometric group action with additional requirements. It also generalises (without error term) \cite[Theorem 1.1]{cantrell.TAMS}. See also \S \ref{subsub.intro.as.lln}.



It would be interesting to characterise when the constant $\Lambda$ appearing in Theorem \ref{thm:lln} is strictly positive. For a subadditive function $\varphi$ coming from a semi-norm (1.~ of Remark \ref{rk.subad.functions.intro}) one can typically say more, see Proposition \ref{prop.positivity.in.section}. We will also see a characterization below in the case of strongly irreducible representations.

The rest of our counting results (except Theorem \ref{thm.gromov.counting.ld}) concern finite dimensional representations $\Gamma \to \GL_d(\R)$ of Gromov-hyperbolic groups and we now specialize to this setting.

\subsection{Counting limit theorems for representations}

Recall that a representation  $\rho: \Gamma \to \GL_d(\mathbb{R})$ is said to be \textit{strongly irreducible} if there does not exist a finite collection of proper non-trivial subspaces of $\mathbb{R}^d$ whose union is invariant under the action of $\rho(\Gamma)$. It is said to be \textit{proximal} if there exists a sequence of elements $g_n \in \rho(\Gamma)$ such that $\frac{g_n}{\|g_n\|}$ converges to a rank-one linear transformation.

\subsubsection{Positivity of average growth rate} In what follows, whenever a representation  $\rho:\Gamma \to \GL_d(\R)$ of a Gromov-hyperbolic group $\Gamma$ (equipped with a generating set) is understood, $\Lambda$ denotes the average growth rate given by applying Theorem \ref{thm:lln} to $\varphi(g) = \log\|\rho(g)\|$. Clearly, $\Lambda$ does not depend on the choice of the operator norm. The following result gives a characterization of when $\Lambda$ is positive.

\begin{proposition}\label{prop.positivity}
Let $\Gamma$ be a non-elementary Gromov-hyperbolic group, $S$ a generating set for $\Gamma$ and $\rho:\Gamma \to \GL_d(\R)$ a strongly irreducible representation. Then the constant $\Lambda \ge 0$ is strictly positive if and only if $\rho(\Gamma)$ is not relatively compact in $\PGL_d(\R)$.
\end{proposition}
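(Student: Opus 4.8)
The plan is to reduce everything to a single symmetric subadditive function on $\Gamma$, dispose of one implication directly, and derive the other from the automatic structure of $\Gamma$ together with the theory of Markovian random matrix products.

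\emph{Reduction.} I would work with
\[
\psi(g)\;:=\;\log\bigl(\|\rho(g)\|\,\|\rho(g)^{-1}\|\bigr)\;=\;\log\sigma_1(\rho(g))-\log\sigma_d(\rho(g))\;\geq\;0 ,
\]
where $\sigma_1\geq\dots\geq\sigma_d$ are the singular values. Submultiplicativity of the operator norm makes $\psi$ subadditive; one has $\psi(g)=\psi(g^{-1})$; $\psi$ depends only on the image of $\rho(g)$ in $\PGL_d(\R)$; and, as a function on $\PGL_d(\R)$, $\psi$ is continuous and proper, so a subset of $\PGL_d(\R)$ is relatively compact if and only if $\psi$ is bounded on it. Applying Theorem~\ref{thm:lln} to $\psi$ produces a constant $\Lambda'\geq 0$, and since each sphere $S_n$ is symmetric while $\psi(g)=\log\|\rho(g)\|+\log\|\rho(g^{-1})\|$, comparing spherical averages gives $\Lambda'=2\Lambda$. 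Thus Proposition~\ref{prop.positivity} is equivalent to: $\Lambda'>0$ precisely when $\psi$ is unbounded on $\Gamma$. One direction is then immediate: if $\rho(\Gamma)$ is relatively compact in $\PGL_d(\R)$, then $\psi$ is bounded on $\Gamma$, so $\tfrac1n$ times its spherical averages tends to $0$, i.e.\ $\Lambda'=0$ and hence $\Lambda=0$.

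\emph{The main implication.} It remains to show that if $\psi$ is unbounded on $\Gamma$ then $\Lambda'>0$. Here I would feed the automatic structure of $\Gamma$ (Cannon~\cite{cannon}, Gromov~\cite{gromov}) into the theory of Markovian random matrix products: geodesics in the Cayley graph are coded by directed paths in a finite graph, the counting measures on the spheres $S_n$ are uniformly comparable to the push-forwards of the Parry (maximal-entropy) Markov measure on length-$n$ paths, and reading off generators along a path realizes $M_n:=\rho(s_1)\cdots\rho(s_n)$ as a Markovian random matrix product. Because $\rho(\Gamma)$ is strongly irreducible, the law of large numbers for such products (Guivarc'h~\cite{guivarch.markov}, Bougerol~\cite{bougerol.thm.limite}, suitably extended) identifies $\Lambda$ with the top Markovian Lyapunov exponent $\beta_1$, the almost sure limit of $\tfrac1n\log\|M_n\|$; the passage from the Markov measure to the counting measure is legitimate because $0\leq\log\|\rho(g)\|\leq |g|_S\max_{s\in S}\log\|\rho(s)\|$ gives uniform integrability of $\tfrac1n\log\|\rho(\cdot)\|$ on $S_n$. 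On the other hand $\delta(g):=\tfrac1d\log|\det\rho(g)|$ is a homomorphism $\Gamma\to(\R,+)$, so $\delta(g^{-1})=-\delta(g)$ and its spherical averages vanish identically; the same transfer then shows $\tfrac1n\log|\det M_n|$ has growth rate $0$. The Markovian incarnation of Furstenberg's positivity theorem --- under strong irreducibility, if $\rho(\Gamma)$ is not relatively compact in $\PGL_d(\R)$ then $\beta_1$ strictly exceeds $\tfrac1d$ times the exponential growth rate of $|\det M_n|$ --- now gives $\Lambda=\beta_1>0$, as wanted.

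\emph{Main obstacle.} The crux is this last step: proving the Markovian version of Furstenberg's positivity theorem, together with the rigidity that underlies it --- equality of the extreme Markovian Lyapunov exponents forces an invariant conformal structure, i.e.\ relative compactness in $\PGL_d(\R)$ --- which is precisely why the dividing line is relative compactness in $\PGL_d(\R)$ and not mere unboundedness in $\GL_d(\R)$. Carrying this out requires constructing stationary measures on $\P(\R^d)$ --- and, for the rigid implication, on the flag varieties --- adapted to the coding Markov chain; checking that strong irreducibility of $\rho(\Gamma)$ genuinely propagates along the chain, which uses connectivity of the automaton so that the semigroup generated along sufficiently long paths is still $\rho(\Gamma)$; and invoking Furstenberg's lemma that a strongly irreducible subgroup of $\PGL_d(\R)$ preserving a probability measure on $\P(\R^d)$ is relatively compact. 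A secondary, more bookkeeping-type difficulty is the Markov-to-counting dictionary itself: a group element may be represented by several geodesic paths and the push-forward of the Parry measure is not literally the uniform measure on $S_n$, so one needs multiplicity estimates in the spirit of Calegari--Fujiwara~\cite{calegari-fujiwara} to compare the two families of averages rigorously.
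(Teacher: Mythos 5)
Your proposal is correct and follows essentially the same strategy as the paper's: a symmetry-of-$S$ argument to dispatch the ``compact $\Rightarrow\Lambda=0$'' direction, and for the converse the Markovian coding of geodesics combined with the Guivarc'h--Bougerol simplicity/positivity theorem for strongly irreducible Markovian products and the vanishing (or non-negativity) of the determinant average to force $\Lambda_1>0$. The packaging differs cosmetically: the paper derives from Bougerol's $r$-contracting criterion (Theorem~\ref{thm.bougerol.simplicity}) a gap $\Lambda_r>\Lambda_{r+1}$ and then pairs $\Lambda_1\ge 0$ with $\sum_i\Lambda_i\ge 0$, rather than quoting a Furstenberg-type inequality $\beta_1>\tfrac1d\sum_i\beta_i$ outright; and for the easy direction the paper renormalizes the operator norm to make $\log\|\rho(\cdot)\|$ additive instead of passing through your function $\psi$. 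One caution: the remark that uniform integrability of $\tfrac1n\log\|\rho(\cdot)\|$ on $S_n$ ``legitimates'' the Markov-to-counting passage understates what is actually required; the identification of $\Lambda$ with the Markovian Lyapunov exponent rests on the Calegari--Fujiwara comparison between the Parry and Patterson--Sullivan measures (Lemma~\ref{lem:alpha}, Theorem~\ref{thm:rayconvergence}, Lemma~\ref{lemma.ps.lyapunovs}) and on the equality of exponents across maximal components (Proposition~\ref{prop:mandv}), not on an integrability estimate, though you do flag this correctly among your obstacles.
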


This result is ultimately a consequence of Furstenberg's result \cite{furstenberg.non.commuting} on positivity of the top Lyapunov exponent for independent and identically distributed (iid) random products. However, for this statement, we additionally (need to) exploit the symmetry of the generating set since positivity of top Lyapunov exponent may fail for random products in $\GL_d(\R)$. 

Combined with Theorem \ref{thm:lln}, this result already implies that if such a $\Gamma$ is Zariski-dense in a real semisimple linear Lie group $G$, the word metric $d_S$ and any left-$G$-invariant Riemannian metric $d_G$ are Lipschitz equivalent when restricted to a large (i.e.~ full asymptotic density in the spheres $S_n$) subset of $\Gamma$. We discuss this more in the large deviation part \S \ref{subsub.large.dev.intro} below and in detail in \S \ref{subsec.LMR}.

\subsubsection{Exponential large deviation estimates}\label{subsub.large.dev.intro} Establishing the next result was one of the earlier motivations of our work. In \cite{kaimanovich-kapovich-schupp} Kaimanovich--Kapovich--Schupp  asked whether exponential large deviation estimates can be obtained for free groups equipped with certain generating sets. The following therefore provides a class of such examples with considerably less restrictions on the underlying group and generating set (see \S \ref{sec.last}).

\begin{theorem} \label{thm:ldt}
Let $\Gamma$ be a non-elementary Gromov-hyperbolic group, $S$ a generating set  and $\rho:\Gamma \to \GL_d(\R)$ a strongly irreducible and proximal representation. Then for every $\epsilon > 0$,
\[
\limsup_{n\to\infty} \frac{1}{n} \log \left( \frac{1}{\#S_n} \#\left\{ g \in S_n : \left| \frac{\log\|\rho(g)\|}{n} - \Lambda \right| > \epsilon \right\}\right) < 0.
\]
Here $\Lambda >0$ is the constant obtained from applying Theorem \ref{thm:lln}.
\end{theorem}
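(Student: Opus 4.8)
The plan is to transfer the i.i.d. large deviation estimates of Le Page for random matrix products to the deterministic counting setting via the automatic structure of $\Gamma$. More precisely, one first invokes the result of Cannon and Gromov that the language of geodesics (or a suitable subset giving a ``combing'') for $(\Gamma, S)$ is regular, so that the spheres $S_n$ are, up to a controlled multiplicative error, parametrized by length-$n$ paths in a finite directed graph. Equipping the strongly connected components of this automaton with the Parry (maximal entropy) measure turns the counting measure $\frac{1}{\#S_n}\sum_{|g|_S=n}\delta_g$ into (a finite combination of) the distributions of the first $n$ steps of a stationary Markov chain on $\Gamma$, and the group element $g$ associated to a path becomes a Markovian product $X_n\cdots X_1$ of generators. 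This reduces the statement to an exponential large deviation estimate of the form $\limsup_n \frac{1}{n}\log \mathbb{P}\big(|\tfrac{1}{n}\log\|\rho(X_n\cdots X_1)\| - \Lambda| > \epsilon\big) < 0$ for each component.

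The second step is the probabilistic input: for such Markovian matrix products, provided the driving measure (here supported on $\rho(S)$, which generates $\rho(\Gamma)$) yields a strongly irreducible and proximal semigroup, the work of Bougerol and Guivarc'h gives a simple dominant Lyapunov exponent together with exponential concentration of $\frac{1}{n}\log\|\rho(X_n\cdots X_1)\|$ around it; this is the Markovian analogue of Le Page's theorem. One then must check two compatibility facts: (i) that the Lyapunov exponent produced by the Parry-measure Markov chain on a given component coincides with the deterministic constant $\Lambda$ from Theorem~\ref{thm:lln} — this follows because Theorem~\ref{thm:lln} already identifies the spherical average limit, and the Markov chain reproduces the spherical averages up to negligible error, so the two limits must agree (and equality across components is forced likewise); and (ii) that positivity $\Lambda>0$ holds, which is exactly Proposition~\ref{prop.positivity} applied in the present strongly irreducible case (proximality is not even needed there). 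Summing the finitely many exponentially small contributions of the components, and absorbing the multiplicative discrepancy between $\#S_n$ and the number of automaton paths (which is only polynomial, hence harmless on the exponential scale), yields the claimed bound.

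The main obstacle I expect is \emph{not} the pure probability but the geometric transfer: the automaton coding geodesics in a hyperbolic group is not a single mixing subshift but a union of strongly connected components with possibly different topological entropies, and a path of length $n$ may traverse several components before settling into the dominant one. One must therefore argue that the mass of $S_n$ is, up to exponentially small error, carried by paths that spend all but $o(n)$ (indeed $O(1)$ on average, but $o(n)$ suffices) of their time inside a single component of maximal entropy, and that on each such component the chain is mixing enough for the Bougerol–Guivarc'h machinery to apply with \emph{uniform} exponential rates. Handling the transient/boundary behaviour between components — and ensuring the finitely many ``defect'' contributions from lower-entropy components are themselves exponentially negligible relative to $\#S_n$ — is the delicate part; this is precisely where one leans on the techniques of Calegari–Fujiwara for controlling the combinatorics of the automaton and its components. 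A secondary technical point is that $\rho(S)$ need not itself be proximal/strongly irreducible even when $\rho(\Gamma)$ is, so one may need to pass to a power of the generating step (bounded blocks of generators) before applying the random-product input, then translate the estimate back; this bounded regrouping again only costs constants and does not affect the exponential rate.
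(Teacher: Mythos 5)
Your overall plan is sound, and in fact the paper explicitly acknowledges (in a remark following the proof of Theorem \ref{thm:ldtboundary}) that a direct comparison between Markov measures on $\mathcal{G}$ and the counting measures on $S_n$ would give Theorem \ref{thm:ldt} without passing through the boundary; that is essentially your route, and it is the one the paper actually carries out in Section \ref{section:clt} for the Berry--Esseen CLT. However, the paper proves Theorem \ref{thm:ldt} by a \emph{different} route: it first establishes the boundary large deviation estimate Theorem \ref{thm:ldtboundary} for the Patterson--Sullivan measure, and then deduces Theorem \ref{thm:ldt} from it via the shadow lemma \eqref{eq:shadows} together with the purely exponential growth estimate \eqref{eq.pure.growth}, exactly as in the passage from Theorem \ref{thm:rayconvergence} to Theorem \ref{thm:lln}. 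The boundary route is chosen because Theorem \ref{thm:ldtboundary} is of independent interest, and because working with $\widehat{\nu}$ on the shift space $\Sigma_A$ permits a clean application of the Calegari--Fujiwara/Cantrell comparison (Lemmas \ref{lem:alpha} and \ref{lem:TV2}) to push the estimate through the non-maximal part of the automaton with an exponentially small error. Your approach instead controls the transient part on the counting side directly; this requires the quantitative comparison between $\tau_{np}$, $\mu_{np+r}$, $\widetilde{\tau}^c_{np}$ and $\pi_{kp}$ that the paper develops in Lemmas \ref{lem:TV} and \ref{lem:approx2}, which is more work than you need for a purely exponential statement but is necessary for the Berry--Esseen bound later.

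One substantive point you gloss over: your suggestion that one might handle the failure of proximality/strong irreducibility of $\rho(S)$ by ``passing to a power of the generating step'' is not quite the right mechanism. The relevant object is the semigroup $T_x(x)$ of elements labelling loops around a vertex $x$ in a maximal component, and what needs proving is that this \emph{semigroup} (not some power of $S$) is strongly irreducible and proximal. The paper's Lemma \ref{lem:(star)} establishes this via a genuinely nontrivial input: Gou\"ezel--Math\'eus--Maucourant's theorem \cite[Theorem 4.3]{gmm} shows that $T_x(x)$ has positive upper density in the spheres, hence the subgroup it generates has finite index in $\Gamma$, and strong irreducibility is inherited by finite-index subgroups; proximality then follows from Goldsheid--Margulis via the equality of the identity components of the Zariski closures. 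Without this, you do not know that the Markovian matrix product on a maximal component satisfies the hypotheses of the Bougerol--Guivarc'h machinery, so the large deviation input is not yet available. A smaller point: in asserting positivity of $\Lambda$ you say proximality ``is not even needed''; in fact Proposition \ref{prop.positivity} requires $\rho(\Gamma)$ not relatively compact in $\PGL_d(\R)$, and it is proximality that guarantees this hypothesis here.
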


This result is analogous to a result of Le Page \cite{lepage} (see \cite[Theorem 6.2]{bougerol.book}) for iid random matrix products. We note that a multi-dimensional version, a consequence which pertains to the exponential concentration of the multi-dimensional Cartan projection around a Lyapunov vector (in the spirit of Benoist--Quint \cite[Theorem 13.17.(iii)]{bq.book}), follows immediately from this result. As we discuss further in \S \ref{subsec.unique.max}, this establishes the uniqueness of maximum of the growth indicator function considered in \cite{sert.thesis}. 

Furthermore, as discussed in \S \ref{subsec.LMR}, it follows from this result and positivity of $\Lambda$ that, when $\Gamma$ is a Zariski-dense subgroup of a real linear semisimple Lie group $G$, the word-metric $d_S$ on $\Gamma$ and and left-$G$-invariant Riemannian metric $d_G$ on $G$ coming from a Killing form are Lipschitz equivalent when restricted to an $S$-exponentially generic subset of $\Gamma$ (see Corollary \ref{corol.LMR})\footnote{Notice that in general even if $\Gamma$ is a (non-uniform) lattice in $G$, one cannot hope to have this Lipschitz equivalence on whole of $\Gamma$. Such a global Lipschitz equivalence holds for higher-rank irreducible lattices \cite{LMR1,LMR2} which are of course not Gromov-hyperbolic.}.
It may be tempting to try to prove this result using a random walk approach. 
However, to do this, one would need to construct a probability measure $\mu$ on $S$ for which we have the equality $h_\mu =\ell_\mu \log \lambda$ in the fundamental inequality $h_\mu \leq \ell_\mu \log \lambda$ of Guivarc'h (here, $h_\mu$ is the asymptotic (Avez) entropy of $\mu$, $\ell_\mu$ is its drift and $\lambda$ exponential growth rate of $S$-spheres in $\Gamma$, see e.g.~\cite{gmm}). The reason for this is that a probability measure $\mu$ with $h_\mu<\ell_\mu \log \lambda$ will only see an exponentially small part of the spheres of $S$. On the other hand, as shown in \cite[Theorem 1.3]{gmm}, the equality case $h_\mu=\ell_\mu \log\lambda$ is very rigid and forces the ambient group to be virtually free.

Regarding its proof, Theorem \ref{thm:ldt} will be deduced from an almost-sure version of it (with respect to geodesic rays following Patterson--Sullivan measure class on boundary) which we will discuss below (Theorem \ref{thm:ldtboundary}). 

\bigskip

The following result establishes exponential counting large deviation estimates in another setting, that of isometric actions on Gromov-hyperbolic spaces. This setting has recently attracted much attention both from probabilistic \cite{aoun-sert.gromov, baik-choi-kim, bq.clt.hyperbolic, BMSS, gouezel.21, maher-tiozzo} and counting \cite{calegari-fujiwara, cantrell.TAMS, cantrell.georays, cantrelltanaka, GTTCLT,countinglox,yang} perspectives. To state our result, recall that the action of a group $\Gamma$ on a Gromov-hyperbolic space $H$ by isometries is said to be \textit{non-elementary} if it there exists $\gamma_1,\gamma_2 \in \Gamma$ acting as loxodromic elements (see \S \ref{subsec.ld.isometry}) with disjoint pairs of fixed points on the Gromov boundary of $H$. 

\begin{theorem}\label{thm.gromov.counting.ld}
Let $\Gamma$ be a Gromov-hyperbolic group, $S$ a generating set of $\Gamma$ and $(H,d)$ a geodesic Gromov-hyperbolic space and $o \in H$ a basepoint. Suppose that $\Gamma$ acts on $H$ by isometries and that the action is non-elementary. Then, there exists a constant $\Lambda>0$ such that for every $\epsilon>0$, we have
\[
\limsup_{n\to\infty} \frac{1}{n} \log \left( \frac{1}{\#S_n} \#\left\{ g \in S_n : \left| \frac{d(g \cdot o,o)}{n} - \Lambda \right| > \epsilon \right\}\right) < 0.
\]
\end{theorem}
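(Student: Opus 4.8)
The plan is to reduce this statement about isometric actions on a general Gromov-hyperbolic space $H$ to the already-established linear case (Theorem \ref{thm:ldt}), or failing a direct reduction, to mirror its proof via the automatic structure of $\Gamma$. Concretely, I would first invoke the Cannon--Gromov coding: fix the generating set $S$ and pass to a geodesic combing of $\Gamma$ recognized by a finite-state automaton, so that elements of $S_n$ are in bounded-to-one correspondence with length-$n$ paths in a finite directed graph $\mathcal{G}$. Following Calegari--Fujiwara and Gekhtman--Taylor--Tiozzo, decompose $\mathcal{G}$ into its maximal strongly connected components and observe that the counting measure on $S_n$ concentrates (up to exponentially small error) on paths staying in a single dominant component of maximal growth. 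On such a component, the uniform measure on length-$n$ paths is governed by a Markov chain, and the quantity $d(g\cdot o, o)$ along a path becomes a Birkhoff-type sum perturbed by a boundedly-additive cocycle error coming from the difference between $d(g\cdot o,o)$ and an additive functional of the coding. The key point is that, by the non-elementarity hypothesis, the displacement cocycle is not a coboundary, so the associated Markovian additive functional has a well-defined drift $\Lambda>0$ (this is where $\Lambda$ comes from, and positivity follows from the existence of two independent loxodromics $\gamma_1,\gamma_2$, which produces enough hyperbolicity in the dynamics).

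The heart of the argument is then a large-deviation estimate for this Markovian additive functional. I would establish exponential concentration of $\tfrac{1}{n} d(g\cdot o,o)$ around $\Lambda$ by a Markovian analogue of the classical Cram\'er/G\"artner--Ellis machinery: show that the twisted transfer operator $L_z$ (weighting length-$n$ paths by $e^{z\, d(g\cdot o, o)}$) has a leading eigenvalue $\beta(z)$ that is analytic and strictly convex near $z=0$, with $\beta'(0)=\Lambda$. Subadditivity/Fekete together with the spectral gap for the unweighted transfer operator (Perron--Frobenius on the dominant component) give existence of $\beta(z)$; strict convexity — equivalently, a non-degenerate variance — is precisely where the non-elementary action is used, via a Livsic-type argument showing the cocycle $d(g\cdot o,o)$ is not cohomologous to a constant (two loxodromics with distinct fixed-point pairs have incommensurable translation lengths along suitable periodic orbits). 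Given $\beta(z)$, a Chebyshev/exponential-moment bound yields
\[
\frac{1}{\#S_n}\#\left\{g\in S_n: \left|\frac{d(g\cdot o,o)}{n}-\Lambda\right|>\epsilon\right\} \leq C e^{-c(\epsilon) n}
\]
for some $c(\epsilon)>0$, after summing over the finitely many automaton components and absorbing the bounded comparison errors. One technical nuisance is that $d(g\cdot o,o)$ is only \emph{quasi}-additive along geodesics rather than exactly a cocycle; this is handled by the standard hyperbolicity estimate that for a geodesic word $w = s_1\cdots s_n$ the displacement $d(w\cdot o, o)$ differs from the sum of ``local'' displacements along a quasi-geodesic in $H$ by an error that is uniformly bounded (depending only on the hyperbolicity constant of $H$ and the action), which does not affect the exponential rate.

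I expect the main obstacle to be proving the strict convexity of $\beta(z)$ at $0$ — that is, positivity of the limiting variance of the displacement functional — uniformly across the dominant components of the automaton. In the linear case one leans on Furstenberg-type positivity and Le Page's spectral theory for i.i.d. products; here one must instead argue intrinsically that non-elementarity of the $\Gamma$-action on $H$ forbids $d(g\cdot o,o)$ from being a ``virtual coboundary'' on the Markov coding. The cleanest route is a Livsic/periodic-orbit obstruction: if the variance vanished, then the displacement cocycle restricted to each dominant component would be cohomologous to a constant times the word-length cocycle, forcing the two loxodromic elements $\gamma_1,\gamma_2$ (realized as periodic orbits, after passing to powers) to have translation lengths on $H$ proportional to their word-lengths with the \emph{same} constant — but one can perturb using $\gamma_1^k\gamma_2^\ell$ to violate this, exploiting that disjoint fixed-point pairs give, by the ping-pong lemma, translation length $\approx k\ell_H(\gamma_1)+\ell\,\ell_H(\gamma_2)$ on $H$ while word-length grows differently. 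Making this rigorous requires some care in matching periodic automaton orbits with conjugacy classes in $\Gamma$ (as in Calegari--Fujiwara), but it is a known mechanism. The remaining steps — automaton decomposition, Perron--Frobenius spectral gap, quasi-additivity cleanup, and the Chebyshev estimate — are, while laborious, essentially routine adaptations of the techniques the paper already invokes.
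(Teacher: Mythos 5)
Your proposal has two genuine gaps that would break the argument in the generality of the theorem.

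First, the claim that for a geodesic word $w=s_1\cdots s_n$ the displacement $d(w\cdot o, o)$ differs from the sum of local displacements along the orbit by a uniformly bounded error is \emph{false} without further hypotheses. That quasi-additivity is equivalent to the orbit map $\Gamma\to H$, $g\mapsto g\cdot o$, being a quasi-isometric embedding, which is not part of the hypotheses: the theorem only requires the action to be non-elementary (existence of two independent loxodromics). For a once-punctured torus group $F_2=\langle a,b\rangle<\PSL_2(\R)$ acting on $\HH^2$ with $[a,b]$ parabolic, the words $[a,b]^n$ are geodesic of length $4n$ in $F_2$ but have displacement $\sim 2\log n$ in $\HH^2$, so the error is unbounded. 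The displacement functional $g\mapsto d(g\cdot o,o)$ is merely subadditive, and this is precisely the structure the paper exploits (via Kingman's subadditive ergodic theorem and Chacon--Ornstein) rather than treating it as a Birkhoff sum plus bounded error.

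Second, the transfer-operator/G\"artner--Ellis route you propose needs the twisted operator $L_z$ to have a spectral gap with an analytic leading eigenvalue $\beta(z)$. Establishing this requires some H\"older-type regularity (summable variations) of the potential induced on the shift space by $g\mapsto d(g\cdot o,o)$, and no such regularity is available here; the paper states explicitly, in its discussion of prior work, that the assumptions in this setting ``are too weak to allow us to apply techniques from thermodynamic formalism,'' which is why thermodynamic formalism is avoided throughout. (Relatedly, your Livsic-style variance argument is not needed for the large deviation estimate at all: exponential decay follows from differentiability of $\beta$ at $0$, not strict convexity, and the theorem's conclusion only involves positivity of the drift $\Lambda$, not of the variance.)

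The paper's actual route is organized quite differently: it first proves a boundary large deviation theorem (Theorem~\ref{thm.gromov.ray.ld}) for the Patterson--Sullivan measure class, deduces the counting statement from it via the shadow estimates~\eqref{eq:shadows} and pure exponential growth~\eqref{eq.pure.growth}, and proves the boundary theorem by running Markovian random walks on $\Isom(H)$ valued in the horofunction compactification $\overline{H}^h$ with the Busemann cocycle. The large deviation estimate for these Markovian products (Theorem~\ref{thm.ld.gromov}) is obtained by the Benoist--Quint martingale decomposition (Proposition~\ref{prop.bq.3.2}) together with Azuma-type concentration for martingale differences (Liu--Watbled); the drift $\ell_\Lambda$ is identified via Chacon--Ornstein plus a renewal-measure argument (reduction to an iid walk whose non-elementarity follows from \cite[Theorem~4.3]{gmm} and \cite{CCMT,das-simmons-urbanski}), and its positivity comes from Maher--Tiozzo's linear escape, not from a ping-pong/Livsic argument. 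The comparison of drifts across distinct maximal Cannon components uses the Calegari--Fujiwara ergodicity trick (as in Proposition~\ref{prop:mandv}), not a spectral argument. So to salvage your plan you would need to replace the transfer-operator and quasi-additivity steps with a subadditive/martingale-based large deviation mechanism on the Markov coding, which is essentially what the paper does.
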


This result is analogous to the main result of the recent work \cite{BMSS} (see also \cite{gouezel.21}) in the setting of iid random walks on Gromov-hyperbolic spaces. As for Theorem \ref{thm:ldt}, we will deduce Theorem \ref{thm.gromov.counting.ld} from a corresponding boundary limit theorem (Theorem \ref{thm.gromov.ray.ld}) for Patterson--Sullivan measures. To prove the latter, we crucially make use of the large deviation estimates that we develop from the work of Benoist--Quint \cite{bq.clt}, for cocycles over random products of group elements in Markovian dependence (these tools also serve us in the Berry--Essen estimate as explained above). We defer the statement of Theorem \ref{thm.gromov.ray.ld} to Section \ref{section:ldt}.

\begin{remark}
Recently Cantrell and Tanaka \cite[Theorem 4.23]{cantrelltanaka} proved a global large deviation principle that implies Theorem \ref{thm.gromov.counting.ld} when $\Gamma$ acts on $H=\Gamma$ by multiplication and $d$ is a left-invariant hyperbolic metric that is quasi-isometric to a word metric. 
\end{remark}

\subsubsection{Central limit theorem with Berry--Esseen type error term} Equipped with a law of large numbers, we now state the first refined limit theorem for counting statistics in representations:

\begin{theorem}\label{thm:clt}
Let $\Gamma$ be a non-elementary Gromov-hyperbolic group, $S$ a generating set  and $\rho:\Gamma \to \GL_d(\R)$ a strongly irreducible and proximal representation. Fix an operator norm $\|\cdot\|$ on $\Mat_d(\R)$. Then, there exists a constant $C>0$  such that for every $t \in \R$ 
\[
\left| \frac{1}{\#S_n} \#\left\{ g \in S_n : \frac{\log\|\rho(g)\| - n\Lambda}{\sqrt{n}} \le t \right\}- \frac{1}{\sqrt{2\pi} \sigma} \int_{-\infty}^t e^{-s^2/2\sigma^2} \ ds\right| \leq  \frac{C \log n}{\sqrt{n}}
\]
where $\Lambda >0$ is as in Theorem \ref{thm:lln} and $\sigma^2 > 0$ are strictly positive constants.
\end{theorem}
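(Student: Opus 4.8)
The plan is to transfer a Berry–Esseen estimate for Markovian random matrix products to the deterministic counting problem using the automatic structure of $\Gamma$. First I would fix a geodesic combing of $\Gamma$: by Cannon's theorem there is a finite directed graph (automaton) $\mathcal{G}$ whose admissible paths of length $n$ starting from the initial vertex are in bijection with a set of geodesic representatives that exhausts $S_n$ up to a multiplicative constant, and in fact with careful bookkeeping (following \cite{calegari-fujiwara}) one can arrange that a \emph{uniformly large} proportion of $S_n$ is captured by paths lying in a single recurrent component. On each recurrent component one puts the Parry measure, turning the reading of geodesics into a stationary, exponentially mixing Markov chain $(w_i)_{i\ge 1}$ on the edge set; the cocycle of interest is $\sigma(g) = \log\|\rho(g)\|$, which along a geodesic word $g = s_1\cdots s_n$ is, up to an additive $O(1)$ error (this is the key geometric input: $\log\|\rho(\cdot)\|$ is a \emph{quasimorphism-like} additive cocycle along geodesics because of the Morse lemma and thinness of triangles), a Birkhoff-type sum of a Hölder function over the Markov chain driven by the matrices $\rho(s_i)$. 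Because $\rho$ is strongly irreducible and proximal, the associated Markovian random matrix product has a unique stationary measure on $\P(\R^d)$, a spectral gap for the Markov operator, and — crucially — a \emph{non-degenerate} variance $\sigma^2>0$ (this is where proximality and strong irreducibility are used, via Guivarc'h–Bougerol theory and the fact, already established via Proposition \ref{prop.positivity}, that the representation is unbounded in $\PGL_d$).

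Next I would invoke the Berry–Esseen theorem for such Markov-modulated matrix cocycles. The relevant statement — a Markovian analogue of Le Page's CLT with $n^{-1/2}$ error — is available through the spectral method (perturbation of the transfer operator / Nagaev–Guivarc'h method) for the mixing Markov chains produced by the automaton; the paper says it develops the needed Markovian results building on Bougerol \cite{bougerol.thm.limite, bougerol.comparaison} and Guivarc'h \cite{guivarch.markov}, so I would cite that machinery. This gives, for the \emph{counting} measure $\nu_n$ restricted to the good recurrent component, that $\nu_n\big(\{\,(\log\|\rho(g)\|-n\Lambda)/\sqrt n \le t\,\}\big)$ is within $O(1/\sqrt n)$ of the Gaussian $\Phi_{\sigma}(t)$, uniformly in $t$. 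The drift of the cocycle under the Parry measure must be identified with $\Lambda$; this follows from Theorem \ref{thm:lln} applied to $\varphi = \log\|\rho(\cdot)\|$ together with the fact that the Parry-weighted average along geodesics agrees asymptotically with the spherical average — itself a consequence of the counting estimates of Calegari–Fujiwara / the fact that $\#S_n$ and the Parry partition function have comparable exponential growth.

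The last step, and the one producing the extra $\log n$ factor, is to reassemble $S_n$ from the automaton. One has to account for: (i) geodesics not captured by the chosen recurrent component — these form a set of \emph{exponentially} small relative density (a large-deviation input, cf.\ Theorem \ref{thm:ldt}), so they contribute an $O(e^{-cn})$ error to the CDF, negligible against $\log n/\sqrt n$; (ii) the discrepancy between counting geodesic \emph{representatives} (symbolic count) and counting group \emph{elements} (one element may have several geodesic spellings) — this multiplicity is bounded and equidistributed enough, again by a Calegari–Fujiwara type argument, that it only perturbs normalizing constants; and (iii) summation over the finitely many residue classes $S_{n},S_{n-1},\dots$ corresponding to different path lengths hitting the component and over the finitely many components, each of which may have its own drift/variance — here one must use that the dominant component dictates $\Lambda$ and $\sigma^2$ while subdominant ones are exponentially suppressed. \textbf{The main obstacle} I anticipate is precisely point (iii) combined with the $O(1)$ defect in additivity of $\log\|\rho(\cdot)\|$ along geodesics: controlling how the bounded cocycle errors and the several automaton components aggregate without degrading the Gaussian approximation below order $\log n/\sqrt n$. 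Concretely, summing $O(1/\sqrt{n_j})$ Berry–Esseen errors over $O(\log n)$ or $O(n)$ nearby radii/components, while simultaneously showing the non-dominant contributions telescope or cancel, is the delicate part; the $\log n$ in the final bound is the price paid for this reassembly, exactly as in \cite{GTTCLT}, whose techniques I would follow for the central limit statement.
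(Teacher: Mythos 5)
Your proposal gets the architecture right — Cannon coding, Markov chains on the automaton, Markovian Berry--Esseen, identification of drift and variance across components via Calegari--Fujiwara, and the general GTT-style transfer — but there are two concrete points where your account diverges from what actually makes the argument close, and one anticipated ``obstacle'' that isn't really the obstacle.

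First, your concern (ii) about multiplicity between geodesic representatives and group elements is moot: in the strongly Markov structure of Definition \ref{def.strong.markov}, admissible paths of length $n$ starting at $\ast$ are in \emph{bijection} with elements of $S_n$, not merely surjection-with-bounded-multiplicity. There is nothing to control on that front.

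Second, and more substantively, you locate the $\log n$ factor in a summation over components and nearby radii. That is not where it comes from. The number of residue classes $r\in\{0,\dots,p-1\}$ and the number of maximal components are bounded constants, and the contribution of paths that fail to enter a maximal component early is $O(\theta^n)$, exponentially negligible (Lemma \ref{lem:TV}, Lemma \ref{lem:approx2}). The $\log n$ arises from two genuinely different sources. (a) The Markovian Berry--Esseen bound (Theorem \ref{thm.bougerol.berryess}) itself has error $O(\log n/\sqrt n)$, not $O(1/\sqrt n)$: Bougerol's $O(1/\sqrt n)$ bound is only for $\log\|M_n v\|$, and passing from the vector cocycle to the matrix norm requires the Xiao--Grama--Liu-style argument combined with new Markovian large deviation inputs (Lemmas \ref{lemma.lambda.gap.ld}, \ref{lemma.cartan.iwasawa}), which degrades the rate to $\log n/\sqrt n$. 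Your proposal treats the Markovian Berry--Esseen as a black box with rate $n^{-1/2}$; it isn't. (b) The transfer to counting peels a logarithmic-length prefix $g_0$ and suffix $g_2$ off each $g\in S_{np}$, factoring $g=g_0g_1g_2$ with $|g_0|=|g_2|=p\lfloor c\log n\rfloor$, and compares the induced distribution of the middle factor $g_1$ with the stationary Markov law $\pi_{pn-2p\lfloor c\log n\rfloor}$ in total variation, with $O(n^{-1/2})$ error (Lemma \ref{lem:approx2}). Submultiplicativity of the operator norm then shifts the threshold $t$ by $\pm Cn^{-1/2}\log n$ — the defect-in-additivity you correctly flag — and because the Gaussian CDF has bounded derivative, this shift costs $O(\log n/\sqrt n)$. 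So the $\log n$ is not the price of reassembly over components; it is the price of (a) the vector-norm-to-matrix-norm passage inside the probabilistic theorem and (b) the logarithmic prefix/suffix in the combinatorial factorization. Identifying the factorization $g_0g_1g_2$ with logarithmic outer pieces, and why the middle factor sees a well-approximated stationary chain, is the real engine of the transfer, and your proposal does not surface it.
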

In the sequel, whenever a strongly irreducible and proximal representation is fixed, $\sigma^2$ will denote the variance in the above CLT.

The proof of this result requires several ingredients. We first prove a Berry--Esseen central limit theorem for the norm of Markovian random matrix products (Theorem \ref{thm.bougerol.berryess}) based on the analogous result of Bougerol \cite{bougerol.thm.limite} for the norm cocycle. To do this, we use an idea due to Xiao--Grama--Liu from their recent work \cite{XGL}.  The core of the argument is based on large deviation estimates from Benoist--Quint \cite{bq.book} that we develop (Theorem \ref{thm.ld.matrix}) for the Markovian setting by elaborating on other work due to Benoist--Quint \cite{bq.clt} which concerns large deviation estimates for cocycles. Equipped with these results as well as techniques developed by Calegari--Fujiwara \cite{calegari-fujiwara}, we employ a quantitative version of an argument from recent work of Gekhtman--Taylor--Tiozzo \cite{GTTCLT} to carry out our proof.

\subsection{Boundary limit theorems for representations}
As previously mentioned, limit theorems with respect to Patterson--Sullivan measures on the boundary will play a key role in our work: on the one hand, we will prove new results for them (such as Theorem \ref{thm:boundarywiener} on convergence to the Brownian motion), on the other hand, they will be used to prove the counting law of large numbers (Theorem \ref{thm:lln}) and large deviation theorems (Theorems \ref{thm:ldt} and \ref{thm.gromov.counting.ld}). More specifically, we will describe the growth rate of subadditive functions (and the log-norm function for linear representations) along Patterson--Sullivan typical geodesic rays. We achieve this by comparing Markov measures on a Cannon coding with Patterson--Sullivan measures on the boundary of our considered group. Along with techniques from ergodic theory and geometric group theory, this will allow us to translate results concerning Markovian random products to 
asymptotic behaviour along Patterson--Sullivan typical geodesic rays.

In the statements of our boundary limit theorems (and throughout this work), we will consider the boundary $\partial \Gamma$ of $\Gamma$ equipped with generating set $S$ to be the collection of $|\cdot|_S$ geodesic rays up to the usual bounded distance equivalence. For $\xi \in \partial \Gamma$, we use the notation $\xi_n \to \xi$ to indicate that $(\xi_n)_{n \in \N}$ is a geodesic ray in the class of $\xi$ (see \S \ref{sec.hyp}).

\subsubsection{Law of large numbers for the Patterson--Sullivan measure class}\label{subsub.intro.as.lln}

Here is the strong law underlying Theorem \ref{thm:lln}:

\begin{theorem}[Strong law of large numbers] \label{thm:rayconvergence}
Let $\Gamma$ be a non-elementary Gromov-hyperbolic group endowed with a generating set $S$ and $\varphi : \Gamma \to \mathbb{R}$ is subadditive function on $\Gamma$. Let $\nu$ be a probability measure on $\partial \Gamma$ in the Patterson--Sullivan measure class. Then, there exists a constant $\Lambda \geq 0$ such that 
\[
\lim_{n\to\infty} \frac{\varphi(\xi_n)}{n} = \Lambda
\]
for $\nu$-almost every $\xi \in \partial \Gamma$ and every representative $\xi_n \to \xi$.
\end{theorem}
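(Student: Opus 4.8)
\emph{Strategy.} I would prove the convergence on the Cannon coding of $(\Gamma,S)$ and then transfer it to the boundary. As set up in the coding section, the geodesic combing of $\Gamma$ is encoded by a finite $S$-labelled automaton; passing to its maximal recurrent component $\mathcal G$, the space $\Sigma$ of one-sided infinite paths in $\mathcal G$ carries a shift-invariant ergodic Markov measure $\mathbb P$, and reading off the consecutive edge-labels of $x\in\Sigma$ produces a geodesic ray $w_\bullet(x)=(e=w_0(x),w_1(x),w_2(x),\dots)$ with $w_n(x)\in S_n$; the induced coding map $\pi\colon\Sigma\to\partial\Gamma$, $\pi(x)=[w_\bullet(x)]$, pushes $\mathbb P$ forward to a measure in the Patterson--Sullivan class. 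Since any $\nu$ in that class is mutually absolutely continuous with $\pi_*\mathbb P$, it suffices to treat $\nu=\pi_*\mathbb P$, i.e.\ to show that $\varphi(w_n(x))/n\to\Lambda$ for $\mathbb P$-almost every $x$.

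\emph{Kingman on the coding.} Put $F_n(x):=\varphi(w_n(x))$. Reading labels from time $m$ onward gives $w_n(\sigma^m x)=w_m(x)^{-1}w_{m+n}(x)$, so subadditivity of $\varphi$ yields the cocycle inequality
\[
F_{m+n}(x)\;\leq\;F_m(x)+F_n(\sigma^m x)\qquad(m,n\geq 0).
\]
Since $w_n(x)$ is a product of $n$ generators and $S=S^{-1}$, we have $|F_n(x)|\leq n\max_{s\in S}|\varphi(s)|$, so $F_1$ is bounded and in particular lies in $L^1(\mathbb P)$. Applying Kingman's subadditive ergodic theorem to $(\Sigma,\sigma,\mathbb P)$ and $(F_n)_n$, and using ergodicity of $\mathbb P$ to see that the limit is almost everywhere constant, we obtain
\[
\Lambda\;=\;\lim_{n\to\infty}\frac1n\int_\Sigma F_n\,d\mathbb P\;=\;\inf_{n\geq 1}\frac1n\int_\Sigma F_n\,d\mathbb P
\]
(the second equality because $n\mapsto\int_\Sigma F_n\,d\mathbb P$ is subadditive by $\sigma$-invariance of $\mathbb P$) with $F_n(x)/n\to\Lambda$ for $\mathbb P$-a.e.\ $x$. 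In particular, for $\nu$-a.e.\ $\xi$ there is at least one geodesic ray representing $\xi$, namely $w_\bullet(x)$ with $\pi(x)=\xi$, along which $\varphi(w_n)/n\to\Lambda$.

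\emph{Passing to every representative.} Let $\xi$ be such a point with witnessing ray $w_\bullet$, and let $(\xi_n)_n\to\xi$ be an arbitrary geodesic ray in the class of $\xi$. By $\Delta$-hyperbolicity, two geodesic rays asymptotic to the same boundary point stay within a Hausdorff distance $D=D(\Delta)$ of one another, so for each $n$ there is $k=k(n)$ with $|k-n|\leq D$ and $d_S(\xi_n,w_k)\leq D$; then $w_k^{-1}\xi_n$ and $w_k^{-1}w_n$ both have $S$-length at most $D$, and applying subadditivity of $\varphi$ in both directions gives $|\varphi(\xi_n)-\varphi(w_n)|\leq 2D\max_{s\in S}|\varphi(s)|$. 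Dividing by $n$ and letting $n\to\infty$ yields $\varphi(\xi_n)/n\to\Lambda$ for this representative too; hence the convergence holds for every representative on a set of full $\nu$-measure. The constant $\Lambda$ does not depend on the choice of $\nu$ in the Patterson--Sullivan class (two mutually absolutely continuous measures have intersecting full-measure sets) nor on the chosen Markov measure, and its nonnegativity follows a posteriori from the counting statement of Theorem \ref{thm:lln} (deduced from this result) together with $S=S^{-1}$ and $\varphi(g)+\varphi(g^{-1})\geq\varphi(e)\geq 0$.

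\emph{The main obstacle.} The substantive step is the first one: producing a \emph{single} shift-invariant ergodic Markov measure on the Cannon automaton whose image under the coding map is the Patterson--Sullivan class. The automaton generally has several recurrent components, and one must know that the Patterson--Sullivan class is carried by one maximal-growth component --- otherwise the Kingman limit could differ from component to component and would not define a constant --- and that every suffix of a combing geodesic is again a geodesic, so that $w_\bullet(x)$ is genuinely a geodesic ray for $\mathbb P$-almost every $x$ (this could be relaxed to quasigeodesics at the cost of invoking the Morse lemma in the transfer step). Granting this comparison between Markov and Patterson--Sullivan measures, the remainder is a routine combination of Kingman's theorem with the thin-triangles estimate.
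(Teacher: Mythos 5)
Your overall architecture --- Cannon coding, Kingman's subadditive ergodic theorem on the associated subshift, transfer to arbitrary geodesic representatives via thin triangles and Lipschitz continuity of $\varphi$ --- matches the paper's. The transfer step and the nonnegativity of $\Lambda$ are handled essentially as you describe (the Lipschitz estimate is the paper's Lemma \ref{lem:Lipschitz}). But the step you flag as ``the main obstacle'' is the actual content of the proof, and your proposal leaves a genuine gap there rather than a technicality.

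Concretely, you assume you can pass to ``the'' maximal recurrent component of the automaton, take its Parry measure $\mathbb P$, and that the coding map pushes $\mathbb P$ forward into the Patterson--Sullivan class. Neither piece is available. As the paper recalls, for a general hyperbolic group and generating set the component structure of the Cannon automaton is not known to be controllable: there may be several maximal-growth components, and each contributes. Moreover the Parry measure does not live on the space of infinite paths based at the start state $\ast$; it lives on sequences supported inside a maximal component, so its ``pushforward to $\partial\Gamma$'' requires a different map than the one coding boundary points, and establishing that this pushforward is mutually absolutely continuous with $\nu$ is not something the paper proves or needs. If one tried to run your argument literally, Kingman would produce one constant $\Lambda_j$ per maximal component, and you would still have to show these agree and that the resulting full-measure set for some Markov measure has full $\nu$-measure.

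The paper circumvents both problems with a softer mechanism. It fixes one maximal component $B$, runs Kingman there to get a constant $\Lambda$, and then defines the set
\[
E=\Bigl\{\xi\in\partial\Gamma:\ \lim_{n\to\infty}\tfrac{\varphi(\xi_n)}{n}=\Lambda\ \text{for every (equivalently, some) representative}\Bigr\},
\]
which is $\Gamma$-invariant by the Lipschitz estimate. Since the Patterson--Sullivan class is $\Gamma$-ergodic, it suffices to prove $\nu(E)>0$, not $\nu(E)=1$. Positivity is obtained without comparing global measure classes: by Lemma \ref{lem:alpha}, some iterate $\sigma^k_\ast\widehat\nu$ restricted to a cylinder $[v]$ inside $B$ is a positive multiple of the Parry measure restricted to $[v]$, so the full-$\mu_B$-measure Kingman set has positive $\sigma^k_\ast\widehat\nu$-measure on $[v]$; the crucial observation that the convergence $\varphi(w_n(x))/n\to\Lambda$ depends only on the tail of $x$ (because $|\varphi(gh)-\varphi(h)|\le|\varphi(g)|+|\varphi(g^{-1})|$) then transports this positivity back to $\widehat\nu$ itself and hence to $\nu$. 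This ergodicity argument also automatically forces the Kingman constants from different maximal components to coincide (their convergence sets would otherwise be disjoint, $\Gamma$-invariant, and both of positive $\nu$-measure). In short: the paper never needs the Parry pushforward to be in the Patterson--Sullivan class, only a local proportionality plus tail-invariance plus $\Gamma$-ergodicity. You should replace your ``Granting this comparison\dots'' sentence with this mechanism; without it the proposal does not close.
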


This result generalizes \cite[Theorem A.1]{kaimanovich-kapovich-schupp}. The reason we call it a strong law is that, roughly speaking, the Patterson--Sullivan measures can be viewed as the law of a process for which the uniform counting measures correspond to finite time distributions. This is also the spirit of the deduction of Theorem \ref{thm:lln} from the previous result.

\subsubsection{Large deviations for Patterson--Sullivan measures}
The quantitative analogue of Theorem \ref{thm:rayconvergence} for linear representations is the following result.

\begin{theorem}\label{thm:ldtboundary}
Let $\Gamma$ be a non-elementary Gromov-hyperbolic group, $S$ a generating set and $\rho:\Gamma \to \GL_d(\R)$ a strongly irreducible and proximal representation. Let $\nu$ be a Patterson--Sullivan measure  on  $\partial \Gamma$ for the $S$ word metric and $\Lambda \geq 0$ be the constant from Theorem \ref{thm:rayconvergence}. Then, for any $\epsilon >0$,
\[
\limsup_{n\to\infty} \frac{1}{n} \log \nu\left( \xi \in \partial \Gamma: \text{for all $\xi_m \to \xi$ with $\xi_0=\id$, }\left| \frac{\log\|\rho(\xi_n)\|}{n} - \Lambda \right| > \epsilon \right) <0.
\]
\end{theorem}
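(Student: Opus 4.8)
\medskip
\noindent\textit{Sketch of proof.} The plan is to transfer the statement from the boundary $\partial\Gamma$ to the space of infinite paths of a Cannon automaton and then invoke the large deviation estimates for Markovian random matrix products (Theorem~\ref{thm.ld.matrix}). Let $\mathcal{G}$ be a Cannon automaton for $(\Gamma,S)$: a finite directed graph with a distinguished initial vertex and edges labelled by $S$, whose directed paths from the initial vertex are exactly the $S$-geodesics issuing from $\id$, infinite paths corresponding to geodesic rays. For an infinite admissible path $\omega$ with label sequence $s_1(\omega),s_2(\omega),\dots$, the sequence $\xi(\omega)_n:=s_1(\omega)\cdots s_n(\omega)$ is a geodesic ray from $\id$; write $\xi(\omega)$ for its endpoint. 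We use the comparison between Patterson--Sullivan measures on $\partial\Gamma$ and Markov measures on the path space $\Sigma$ of $\mathcal{G}$ established earlier in the paper (building on Coornaert and Calegari--Fujiwara~\cite{calegari-fujiwara}): there is a shift-invariant Markov probability measure $\mathbb{P}$ on $\Sigma$, carried by paths that eventually enter a maximal component of $\mathcal{G}$, whose push-forward $\xi_*\mathbb{P}$ under the coding map is equivalent to $\nu$ with Radon--Nikodym derivative bounded above and below. Since $\xi(\omega)$ is itself a geodesic ray from $\id$ representing its own endpoint, the $\xi(\cdot)$-preimage of the event in the statement is contained in $\bigl\{\omega\in\Sigma:\ \bigl|\tfrac1n\log\|\rho(s_1(\omega))\cdots\rho(s_n(\omega))\|-\Lambda\bigr|>\epsilon\bigr\}$, and by the bounded equivalence of $\nu$ and $\xi_*\mathbb{P}$ it suffices to show that the $\mathbb{P}$-measure of this last event decays exponentially in $n$.

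Now $(s_i(\omega))_{i\ge1}$ is a function of the sequence of vertices visited by $\omega$, which under $\mathbb{P}$ is a finite-state Markov chain; hence $Y_n(\omega):=\rho(s_1(\omega))\cdots\rho(s_n(\omega))$ is a Markovian random matrix product (the left-to-right order and the choice of operator norm on $\Mat_d(\R)$ affect $\log\|Y_n\|$ only by $O(1)$). Theorem~\ref{thm.ld.matrix} then gives
\[
\limsup_{n\to\infty}\frac1n\log\,\mathbb{P}\Bigl(\bigl|\tfrac1n\log\|Y_n\|-\lambda_1\bigr|>\epsilon\Bigr)<0,
\]
where $\lambda_1$ is the top Lyapunov exponent of the product, \emph{provided} the matrix cocycle over this Markov chain satisfies the strong irreducibility and proximality (contraction) hypotheses required by that theorem. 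The verification of these hypotheses is the crux of the argument: one must show that for every vertex $v$ in a maximal component of $\mathcal{G}$ the closed subsemigroup of $\GL_d(\R)$ generated by the matrices $\rho(s_1)\cdots\rho(s_k)$ over admissible loops based at $v$ is strongly irreducible and proximal. This is deduced from strong irreducibility and proximality of $\rho$ together with the combinatorial richness of the Cannon automaton of a non-elementary hyperbolic group --- loop words at $v$ can be inserted into any geodesic through $v$ and contain powers of loxodromic elements whose axes pass through $v$, so the subsemigroup they generate is large enough (Zariski-dense in the Zariski closure of $\rho(\Gamma)$, or at least not fixing a finite union of proper subspaces and containing a proximal element) to inherit strong irreducibility and proximality from $\rho(\Gamma)$ --- and here one elaborates on the techniques of Calegari--Fujiwara~\cite{calegari-fujiwara}.

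Finally one identifies $\lambda_1$ with the constant $\Lambda$ of Theorem~\ref{thm:rayconvergence}: applying that theorem to $\varphi(\cdot)=\log\|\rho(\cdot)\|$ gives $\tfrac1n\log\|\rho(\xi_n)\|\to\Lambda$ for $\nu$-a.e.\ $\xi$, hence, by the bounded equivalence of measures, $\tfrac1n\log\|Y_n(\omega)\|\to\Lambda$ for $\mathbb{P}$-a.e.\ $\omega$; on the other hand the Furstenberg--Kesten law for Markovian products gives $\tfrac1n\log\|Y_n\|\to\lambda_1$ almost surely, so $\lambda_1=\Lambda$. Combining the displayed estimate with the bounded comparison between $\nu$ and $\xi_*\mathbb{P}$ yields the asserted bound on the $\nu$-measure of the event in the statement.

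The step I expect to be the main obstacle is the verification, described above, that the matrix cocycle over the Cannon automaton satisfies the strong irreducibility and proximality hypotheses of Theorem~\ref{thm.ld.matrix}: the automaton carries no a priori compatibility with the linear structure of $\rho$, so genuine work is needed to extract the necessary non-degeneracy of the loop-semigroups at vertices of the maximal components. A secondary technical point is handling the possible multiplicity of maximal components and the fact that the coding map is only finite-to-one; this is dealt with by working one component at a time and using that $\nu$ is, up to bounded densities, a convex combination of the corresponding Markov measures.
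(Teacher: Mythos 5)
Your overall architecture is the same as the paper's: pass to the Cannon path space, verify that the loop semigroups at vertices of maximal components inherit strong irreducibility and proximality from $\rho(\Gamma)$ (this is exactly the paper's Lemma~\ref{lem:(star)}, proved via Gou\"ezel--Math\'eus--Maucourant and Goldsheid--Margulis), apply the Markovian large deviation estimate of Theorem~\ref{thm.ld.matrix}, and identify $\lambda_1$ with $\Lambda$. Your discussion of those steps is essentially right.

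The gap is in the comparison between $\nu$ and a Markov measure on the path space. You assert that there is a \emph{shift-invariant} Markov probability $\mathbb P$ on $\Sigma$, carried by paths that eventually enter a maximal component, whose pushforward under the coding map is equivalent to $\nu$ with bounded densities. This is false. A shift-invariant measure giving full mass to paths that eventually enter a maximal component must, by shift-invariance, be supported on paths that lie entirely in a maximal component (the entry-time events are disjoint under the shift, so only the time-$0$ one can carry mass). So such a $\mathbb P$ can only be the Parry-like measure $\mu$ on $\bigcup_j\Sigma_{B_j}$. But the coding map sends a path lying entirely in a maximal component to a boundary point whose $\ast$-coded geodesic enters a maximal component at step~$1$; if the automaton has any transient vertices between $\ast$ and the maximal components (which it does in general), this is a proper subset of $\partial\Gamma$, and $\Psi_\ast\mu$ is not equivalent to $\nu$. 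The measure that pushes forward to $\nu$ exactly, $\widehat\nu$, is not shift-invariant, and the underlying chain on the full automaton is not irreducible, so Theorem~\ref{thm.ld.matrix} does not apply to it directly.

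What is actually needed, and what the paper does, is a stratification over the entry time $k$ into a maximal component: the sets $A_k$ with $k>\epsilon'n$ have exponentially small $\widehat\nu$-mass (Lemma~\ref{lem:TV2}), while for $k\le\epsilon'n$ the first $k$ steps change $\log\|\rho(\xi_n)\|$ by at most $O(\epsilon'n)$, which is absorbed into $\epsilon/2$, and then Lemma~\ref{lem:alpha} ($\sigma^k_\ast\widehat\nu|_{[v]}=\alpha_v^k\,\mu|_{[v]}$ with uniformly bounded $\alpha_v^k$) lets one pass from $\widehat\nu(E_n(\epsilon)\cap A_k)$ to a $\mu$-probability of the Markovian event, to which Theorem~\ref{thm.ld.matrix} applies. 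Your secondary concern about finite-to-one-ness of the coding is not the real issue (one only needs the one-way inclusion $\Psi^{-1}(U_n)\subseteq E_n$); the genuine issue is the transient part of the automaton and the entry-time stratification, which your sketch omits.
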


Here, when we say the $\nu$ is a Patterson--Sullivan measure, we mean that it is constructed as a weak limit as in \eqref{PSdef} or \eqref{eq.usual.ps} below. We note that any two measures obtained in this way are mutually absolutely continuous and their densities are bounded away from 0 (and infinity).

The proof makes use of Bougerol's results \cite{bougerol.thm.limite} which are translated to the group theoretic setting using techniques due to Calegari--Fujiwara \cite{calegari-fujiwara} and extensions of these techniques due to Cantrell \cite{cantrell.georays}. The scheme of proof, somewhat common to the next Theorem \ref{thm:boundarywiener}, is expounded in \S \ref{subsec.outline} below.

\subsubsection{Convergence to the Wiener measure and law of iterated logarithm}
We now turn to our last result which is an invariance principle and functional law of iterated logarithm with respect to Patterson--Sullivan measures. We first need some notation. Suppose $\Gamma$ is a Gromov-hyperbolic group endowed with a generating set $S$ and that $\rho: \Gamma \to \GL_d(\R)$ is a strongly irreducible, proximal representation.  Let $C([0,1])$ denote the continuous real valued functions on $[0,1]$ equipped  with the Borel $\sigma$-algebra for the topology of uniform convergence. We define a sequence of random variables $(S_n)_{n \in \N}$ on $\partial \Gamma$ taking values in $C([0,1])$ as follows. For each $\xi \in \partial \Gamma$, integer $n \geq 1$ and $t \in [0,1]$, we define $S_n\xi(t)$ to be
\begin{equation}\label{eq.defn.snxi}
\min_{\xi_m \to \xi} \frac{1}{(n\sigma^2)^{1/2}} \left( \log \|\rho(\xi_{\lfloor tn \rfloor})\|-nt\Lambda + (nt-\lfloor nt \rfloor) (\log \|\rho(\xi_{\lfloor tn \rfloor +1})\| - \log\|\rho(\xi_{\lfloor tn \rfloor})\|) \right)
\end{equation}
where $\Lambda$ and $\sigma^2 >0$ are the mean and variance from Theorem \ref{thm:clt}. The reason we consider the minimum over the set of representatives is only practical, it allows to define the random variables $S_n$ on $\partial \Gamma$; replacing $\min$ with $\max$ will not alter the asymptotic behaviour (and hence the results to follow) since any two representatives of a boundary point $\xi \in \partial \Gamma$ stay at bounded distance. 
We denote by $\mathcal{W}$ the Wiener measure on $C([0,1])$. Recall that this is the distribution of the standard Brownian motion $B(\cdot) \in C([0,1])$ which is characterized \cite{karatzas-shreve} by  $B(0) \overset{a.s.}{=}0$, and for every $p \in \N$ and reals $0=t_0<t_1<\ldots<t_p$, the real-valued random variables $B(t_1),B(t_2)-B(t_1),\ldots,B(t_p)-B(t_{p-1})$ are independent and distributed with the Gaussian distribution, respectively, $\mathcal{N}(0,t_{i}-t_{i-1})$.

We will prove the convergence to Wiener measure with respect to the Patterson--Sullivan measure obtained as the weak limit
\begin{equation}\label{PSdef}
\nu = \lim_{n\to\infty} \frac{ \sum_{|g|_S \leq n} \lambda^{-|g|_S} \delta_g}{\sum_{|g|_S \leq n} \lambda^{-|g|_S}}.  
\end{equation}
Here $\lambda \in (1,\infty)$ denotes the exponential growth rate of the cardinality of $S_n = \{g \in \Gamma: |g|_S = n\}$. The fact that the limit \eqref{PSdef} exists will be explained in \S \ref{subsec.hyp.gp}. We prove the following result.

\begin{theorem} \label{thm:boundarywiener}
Let $\Gamma$ be a non-elementary Gromov-hyperbolic group, $S$ a generating set  and $\rho:\Gamma \to \GL_d(\R)$ a strongly irreducible and proximal representation. Let $\nu$ be the Patterson--Sullivan measure defined in \eqref{PSdef}. Then,
\begin{enumerate}[\hspace{0.78cm}1.]
    \item  under $\nu$, the sequence $(S_{n})_{n \in \N}$ of $C([0,1])$-valued random variables converges in distribution to $\mathcal{W}$; and,
\item for $\nu$-almost every $\xi \in \partial \Gamma$, the set of limit points of the sequence $\left(\frac{S_{n}\xi}{2\log \log n}\right)_{n \in \N}$ of elements of $C([0,1])$ is equal to the following compact subset of $C([0,1])$:
$$
\left\{f \in C([0,1]): f \; \text{is absolutely continuous}, f(0)=0, \int_{0}^1f'(t)^2dt \leq 1\right\}.
$$
\end{enumerate}
\end{theorem}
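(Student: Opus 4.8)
The strategy is to transfer the invariance principle and functional law of iterated logarithm from the setting of Markovian random matrix products to the boundary, using the Cannon coding and the comparison between Markov measures and the Patterson--Sullivan measure $\nu$. Concretely, first I would recall the Cannon automaton coding geodesics in $\Gamma$ and, as in the earlier theorems, equip the space of infinite paths with a suitable Markov measure $\mathbb{P}$ whose pushforward to $\partial \Gamma$ is (boundedly) comparable to $\nu$ defined in \eqref{PSdef}; the exponential growth rate $\lambda$ enters through the Parry-type measure of maximal entropy on the subshift. Under $\mathbb{P}$, the sequence $\bigl(\log\|\rho(\xi_n)\|\bigr)_{n}$ along a path becomes (up to bounded error and up to handling the several connected components of the automaton, as in Calegari--Fujiwara) an additive cocycle over a Markov chain driven by the matrices $\rho(s)$, $s\in S$. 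I would then invoke Bougerol's results \cite{bougerol.thm.limite} (in the Markovian matrix-product setting, together with the extensions we develop) to get, for this cocycle, the Donsker invariance principle — convergence of the interpolated, centered, normalized partial sums to $\mathcal{W}$ — and the Strassen functional law of iterated logarithm; here $\Lambda$ is the mean drift and $\sigma^2>0$ the (nondegenerate, by proximality and strong irreducibility) asymptotic variance, consistent with Theorem \ref{thm:clt}.

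Next I would push these statements from the path space to $\partial \Gamma$. For part 1 (convergence in distribution to $\mathcal{W}$): the map $\xi \mapsto S_n\xi$ differs from the corresponding path-space functional only by (i) the choice of geodesic representative, which costs an $O(1/\sqrt n)$ uniform error since any two representatives of $\xi$ stay within bounded distance and $\rho(\cdot)$ distorts this by at most a multiplicative constant on $\log\|\cdot\|$, and (ii) the discrepancy between the Markov measure $\mathbb{P}$ and $\nu$. For (ii), since the density $d\nu/d(\text{pushforward of }\mathbb{P})$ is bounded above and below, weak convergence under one measure does not automatically transfer; instead I would argue directly: $\nu$ itself can be realized as (a bounded perturbation of) a Markov measure on the Cannon coding — this is exactly the comparison underlying Theorems \ref{thm:rayconvergence}, \ref{thm:ldtboundary} — so Bougerol's invariance principle applies to $\nu$ once we know the relevant cocycle structure is preserved, the large deviation estimates of Theorem \ref{thm:ldtboundary} controlling the contribution of the non-generic part. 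For part 2 (functional LIL): this is an almost-sure statement, so it is insensitive to replacing $\mathbb{P}$ by the mutually absolutely continuous $\nu$ (their Radon--Nikodym derivative being bounded away from $0$ and $\infty$); hence the Strassen-type law transfers verbatim, again modulo the $O(1/\sqrt n)$ — in fact $o(1)$ at the LIL scaling — error from the choice of representative, which does not affect the set of limit points of $S_n\xi/(2\log\log n)$. The description of the limit set as Strassen's ball $\{f \in C([0,1]): f \text{ absolutely continuous}, f(0)=0, \int_0^1 f'(t)^2\,dt \leq 1\}$ is then the classical output of the functional LIL for the underlying cocycle.

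The main obstacle, and the place where the most care is needed, is the passage between the combinatorial/deterministic counting world and the Markovian probabilistic world — specifically, handling the fact that the Cannon automaton is generally \emph{not} irreducible: it decomposes into several recurrent components (and transient parts), so the Markov chain of matrix products is not a single well-mixing chain but a finite union, and the Patterson--Sullivan measure distributes mass across these components according to their topological entropies, only the maximal-entropy components contributing in the limit. This is precisely the difficulty that Calegari--Fujiwara \cite{calegari-fujiwara} and, for the CLT, Gekhtman--Taylor--Tiozzo \cite{GTTCLT} developed techniques to overcome, and I would follow that line: restrict to maximal components, show the non-maximal part is $\nu$-negligible (indeed exponentially so, by the large deviation input), verify that the asymptotic variance is the same $\sigma^2$ across all maximal components (using that $\rho$ is strongly irreducible and proximal, so the variance is determined intrinsically and does not depend on the component), and finally stitch the components together — the uniformity of the variance being what makes the limiting Gaussian/Strassen description global rather than component-dependent. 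A secondary technical point is ensuring the interpolation in \eqref{eq.defn.snxi} (piecewise-linear in $t$, with the $\min$ over representatives) genuinely matches the polygonal interpolation to which Bougerol's Donsker-type theorem applies; this is routine once the $O(1)$ representative-dependence is absorbed, but should be stated carefully so that tightness in $C([0,1])$ is not lost.
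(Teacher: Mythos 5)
Your high-level architecture is correct — Cannon coding, Markov measures, Bougerol's limit theorems, Calegari--Fujiwara's technique for handling the reducible automaton, and equality of means and variances across maximal components — but the specific transfer mechanisms you propose differ from the paper's and, in one place, contain a genuine gap.

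For part 2 (the functional LIL), you assert that the statement transfers ``verbatim'' because the Markov measure $\mathbb{P}$ and $\nu$ (or their lifts to path space) are mutually absolutely continuous with bounded Radon--Nikodym derivative. This is not what happens, and the assertion as stated is false: the measure $\widehat{\nu}$ lifting $\nu$ is supported on the cylinder $[\ast]$ of paths starting at the initial vertex, while the Parry-like shift-invariant measure $\mu$ is supported on $\bigcup_j \Sigma_{B_j}$; these sets are disjoint, so there is no mutual absolute continuity between $\widehat{\nu}$ and $\mu$. What is true is the much more delicate comparison of Lemma \ref{lem:alpha}: $\sigma_\ast^k\widehat{\nu}|_{[v]} = \alpha_v^k\,\mu|_{[v]}$ for each vertex $v$ in a maximal component and each $k$ admitting a length-$k$ path $\ast \to v$. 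Even if you instead mean a Markov measure on paths issuing from $\ast$, that chain is not ergodic when there are several maximal components, so Bougerol's LIL does not apply to it directly; one must apply it component by component (Proposition \ref{prop.lil.maximal}) and use the coincidence of means and variances (Proposition \ref{prop:mandv}) to merge them — which you do flag, but then short-circuit with the absolute-continuity claim. The paper's actual mechanism is different and cleaner: the set $U \subseteq \partial\Gamma$ on which the LIL conclusion holds is shown to be $\Gamma$-invariant (using that the conclusion is insensitive to changing finitely many initial steps), so by $\Gamma$-ergodicity of $\nu$ it suffices to prove $\nu(U) > 0$; this positive-measure statement is extracted from Lemma \ref{lem:alpha} together with the full $\mu$-measure of the LIL set on the maximal components. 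Your route replaces a soft ergodicity argument with an absolute-continuity claim that would itself need a nontrivial proof and, as stated, is wrong.

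For part 1, you correctly note that bounded density does not automatically transfer weak convergence and then gesture at large deviations ``controlling the non-generic part,'' but you do not actually establish tightness of the laws $(S_n)_\ast\nu$ in $C([0,1])$, which is a substantial step. The paper proves it via the total-variation comparison $\|\widetilde{\nu}_m - \widehat{\nu}\|_{TV} = O(\theta^m)$ (Lemma \ref{lem:TV2}), a shift-and-restrict estimate of the form of Lemma \ref{lem:inequ3}/\ref{lemma.inequ4} reducing to tightness of the laws under the Parry measure $\mu$ (which holds since those converge to $\mathcal{W}$ by Corollary \ref{corol.limthms.fullautomat}), and a separate ``small $n$'' argument using that the polygonal increments are uniformly bounded by $n^{-1/2}$ times $\max_{s\in S}\log\|\rho(s)\|$. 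Likewise, the convergence of finite-dimensional distributions in the paper goes through the Ces\`{a}ro relation \eqref{eq.parry.to.ps} and a sandwich between shifted events $C^{\pm}_{n,t_1,\dots,t_d}$, not through an absolute-continuity comparison. Your $O(1/\sqrt{n})$ control on the choice of geodesic representative is fine and is indeed used in the paper (to pass from $S_n\xi$ to $\widehat{S}_n y$), but it addresses only the easy part of the transfer; the essential technical content is the treatment of tightness and the finite-dimensional laws via Lemmas \ref{lem:TV2}, \ref{lem:alpha} and the Ces\`{a}ro identity, which your proposal does not supply.
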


Two immediate corollaries of this result are the classical central limit theorem and law of iterated logarithm with respect to the Patterson--Sullivan measure $\nu$ (Corollary \ref{corol.boundary.CLT.LIL}).

\subsection{Outline of the arguments}\label{subsec.outline}
We briefly outline the over arching argument used to prove Theorem \ref{thm:boundarywiener} which is also valid to some extent for the proof of Theorem \ref{thm:ldtboundary} (see below for other limit theorems).
\begin{enumerate}
    \item We begin by introducing multiple Markov chains based on the Cannon coding for our group $\Gamma$ and generating set $S$.
    \item We formulate and, in some cases, further develop Bougerol's results  \cite{bougerol.survey,bougerol.thm.limite} for random matrix products in Markovian dependence.
    \item Using work of Goldsheid--Margulis \cite{goldsheid-margulis} and Gou\"{e}zel--Math\'{e}us--Maucourant \cite{gmm}, we show that our assumptions on the representations (strongly irreducible and proximal) allow us to apply the results of (ii) to the Markovian products introduced in (i).
    \item We then use an argument of Calegari--Fujiwara \cite{calegari-fujiwara} to show that the means $\Lambda$ and variances $\sigma^2$ coming from the limit theorems obtained in (iii) of different Markovian products introduced in (i) coincide (and equal the limiting average $\Lambda$ obtained from Theorem \ref{thm:lln}).
    \item We compare the stationary distributions on the Markov chains to a Patterson--Sullivan measures on the boundary of the group. With some additional work, this allows us to prove that along geodesic rays in the Cayley graph of $(\Gamma, S)$, the log-norm function satisfies the corresponding limit theorem with respect to Patterson--Sullivan measure on the Gromov boundary $\partial \Gamma$.
\end{enumerate}

For counting limit theorems (Theorems \ref{thm:lln}, \ref{thm:ldt}, \ref{thm.gromov.counting.ld}), we ultimately use the boundary results obtained in (v) above together with regularity estimates on Patterson--Sullivan measures to get counting limit theorems.

As discussed above, our tactic for proving counting CLT with error term (Theorem \ref{thm:clt}) is a little bit different  (without passing by a boundary limit theorem to optimize the Berry--Esseen error term), see a more detailed description in \S \ref{section:clt}.


\subsection{Previous works}
Here we briefly comment on some previous related works in the literature.

In \cite{kaimanovich-kapovich-schupp} (see also \cite{kapovich-schupp-shpilrain}) Kaimanovich--Kapovich--Schupp study generic asymptotic behaviour (in the sense of Theorems \ref{thm:lln} and \ref{thm:rayconvergence}) of elements in countable groups $\Gamma$ where genericity is understood with respect to the uniform counting measure in a free group $\mathrm{F}$ (with a free generating set) when $\Gamma$ is seen as a quotient of $\mathrm{F}$. This point of view lies in between the two extremes, namely the symbolic counting point of view of probability theory (i.e.~ group invariant random walks on groups) and our deterministic counting viewpoint. In vague terms, our approach agree with that of Kapovich--Kaimanovich--Schupp when the underlying group is a free group and generating set is free\footnote{accordingly, our Theorem \ref{thm:rayconvergence} generalizes \cite[Theorem A.1]{kaimanovich-kapovich-schupp}
.} and these two together agree with the probabilistic (iid random walks) approach when the underlying algebraic object is a free semigroup.


Coming back to counting asymptotics, there has recently been significant interest in counting limit theorems on hyperbolic groups, see for example \cite{calegari-fujiwara,cantrell.TAMS,cantrelltanaka,choi, GTTCLT,countinglox,horsham.thesis, horshamsharp,pollicott.sharp.TAMS, yang}. In some of these works, techniques from thermodynamic formalism (tracing back to \cite{bowen,sinai}, see also \cite{lalley}) are used. Although these techniques are powerful and allow for stronger results to be obtained, they usually require strong assumptions on the studied potentials. In this work, our assumptions are too weak to allow us to apply techniques from thermodynamic formalism. In others, including ours, ideas from Markov chain or random walk theory (in a sense initiated in this context by \cite{calegari-fujiwara}) are used instead of thermodynamic techniques. For example, in \cite{GTTCLT} the authors deduce a (qualitative) counting CLT for displacement functions on Gromov-hyperbolic spaces\footnote{See \S \ref{sec.last} for a consequence of our large deviation results for the displacement function on symmetric spaces of non-compact type.} from a CLT for centerable cocycles. Using ideas of Benoist--Quint \cite{bq.clt} relying on solving a cohomological equation, it might be possible to do so in our setting as well. However, our approach relying instead on results of Bougerol coming from analytic perturbation theory, yields more quantitative results (such as the Berry--Esseen bounds). It was indeed one of our goals to get quantitative results as it seems particularly in line with the spirit of counting problems. 


Finally, we also mention that in the upcoming work \cite{CCDS} with I. Cipriano and R. Dougall, we obtain more precise limit laws for the both the spectral radius and norm potentials under the assumption that our representation is Anosov (or dominated). In this setting we will be able to exploit ideas from thermodynamic formalism.


\subsubsection{Further directions}
In this work, we restricted our attention to Gromov-hyperbolic groups. Our approach relies on the existence of a nice combinatorial structure (Cannon coding) and stochastic results on this structure. Various generalisations of the notion of the Canon coding have been studied, both from a combinatorial perspective (\cite{cannon.etal}\footnote{See already in \cite{kaimanovich-kapovich-schupp} some considerations towards this direction, but the results therein does not readily yield counting estimates.}) and geometric perspective (\cite{GTTCLT}). It seems possible to find examples of non-hyperbolic groups equipped with certain specific (in some cases abstract) generating sets (see \cite{GTTCLT}) for which our counting results will hold. In some settings (e.g.~ relatively hyperbolic groups) it may also be possible to find analogues of our boundary limit theorems. It would be interesting to characterize the widest class of groups (equipped with any generating set) for which our results hold. 

\subsection*{Acknowledgements}

The authors thank Emmanuel Breuillard and Ryokichi Tanaka for useful discussions and suggestions.

\section{Hyperbolic groups and automatic structures}\label{sec.hyp}

\subsection{Gromov-hyperbolic groups}\label{subsec.hyp.gp}
Let $\Gamma$ be a finitely generated group and $S$ a (finite, symmetric) generating set. As in the introduction, for $g,h \in \Gamma$, $|g|_S$ denotes the word length of $g$ with respect to $S$ and $d_S(g,h)=|g^{-1}h|_S$ defines a left-invariant metric on $\Gamma$. The Gromov product of $g,h \in \Gamma$ is defined as $\langle g, h \rangle = \frac{1}{2} (|g|_S + |h|_S - |g^{-1}h|_S)$. The group $\Gamma$ is said to be Gromov-hyperbolic if $(\Gamma,d_S)$ is a Gromov-hyperbolic metric space. We recall that a metric space $(H,d)$ is said to be Gromov-hyperbolic if there exists $\Delta>0$ such that for every $o,x,y,z \in H$, $$(x,z)_o \geq \min\{(x,y)_o,(y,z)_o\}-\Delta,$$ where $(x,y)_o:=\frac{1}{2}(d(x,o)+d(y,o)-d(x,y))$
denotes the Gromov-product.
Although the constant $\Delta>0$ may depend on the generating set $S$, Gromov-hyperbolicity of $\Gamma$ does not depend on $S$.

Fix a Gromov-hyperbolic group $\Gamma$ and a generating set $S$. A geodesic ray is a sequence of elements $\xi_n \in \Gamma$ such that $|\xi_n^{-1} \xi_m|_S = m-n$ for each $m,n \in \N$ with $m \ge n$. The Gromov boundary $\partial \Gamma$ is the set of equivalence classes of geodesic rays where two rays $\xi$ and $\xi'$ are equivalent if $\sup_{n\ge 1} |\xi_n^{-1} \xi_n'|_S$ is finite. Since the action of $\Gamma$ on itself by left-multiplication is by isometries (with respect to $d_S$), the natural action on the set of geodesic rays factors through this equivalence relation and defines an action of $\Gamma$ on its Gromov boundary $\partial \Gamma$. It is well-known that the set $\Gamma \cup \partial \Gamma$ carries a compact metrizable topology extending the (discrete) topology of $\Gamma$ such that $\Gamma$ is open and dense in $\Gamma \cup \partial \Gamma$ and the $\Gamma$-action is by homeomorphisms.

We can extend the Gromov product to $\partial \Gamma \times \Gamma$ by setting
\[
\langle \xi, g \rangle = \sup\{ \liminf_{n\to\infty} \langle \xi'_n, g \rangle : \xi'_n \to\xi \}
\]
where the supremum is taken over geodesic rays $\xi'_n$ with $\xi'_n \to \xi$; the latter notation denotes the fact that $\xi'_n$ is a geodesic ray in the equivalence class corresponding to $\xi$. Using this extended Gromov product, for $R>0$ and $g \in \Gamma$, we define the $R$-shadow based at $g$ to be the following subset of $\partial \Gamma$:
\[
O(g,R) = \{ \xi \in \partial \Gamma: \langle \xi, g \rangle > |g|_S - R\} 
\]

To prove our law of large numbers (Theorem \ref{thm:lln}), a key ingredient will be the study of the growth rate of subadditive functions along typical geodesic rays in $\Gamma$. In particular, we will be interested in the behaviour of our functions along Patterson--Sullivan typical geodesic rays. Recall that a Patterson--Sullivan measure for the length function $|\cdot|_S$ on $\Gamma$ is obtained as a weak limit of the following sequence of measures on the compact $\Gamma \cup \partial \Gamma$
\begin{equation}\label{eq.usual.ps}
    \frac{\sum_{g\in \Gamma} \lambda^{-s |g|_S} \delta_{g}}{\sum_{g\in \Gamma} \lambda^{-s |g|_S}}
\end{equation}
as $s \searrow 1$ where $\lambda>1$ is the exponential growth rate of the cardinality of $S_n = \{g \in \Gamma: |g|_S = n\}.$ Alternatively, we can obtain a Patterson--Sullivan measure as the weak limit of the sequence
\begin{equation}\label{eq.def.ps}
 \frac{\sum_{|g|_S \le n} \lambda^{-|g|_S
}\delta_{g}}{\sum_{|g|_S \le n} \lambda^{-|g|_S
}}
\end{equation}
as $n\to\infty$ (See Section 4 of \cite{calegari-fujiwara}). Any measure $\nu$ constructed using either of the above two methods yields a Radon measure supported on $\partial \Gamma$ such that the $\Gamma$ action preserves its measure class and is ergodic. In this setting ergodic means that $\Gamma$-invariant subsets of $\partial \Gamma$ have either full or null $\nu$-measure. An important property exhibited by Patterson--Sullivan measures is the so-called quasiconformal property: for each $R > 0$ sufficiently large, there exists a constant $C>1$ depending only on $R$ and the hyperbolicity constant of $\Gamma$ such that
\begin{equation} \label{eq:shadows}
C^{-1} \lambda^{-|g|_S} \le \nu(O(g,R)) \le C \lambda^{-|g|_S}
\end{equation}
for all $g \in \Gamma$.

Before we move on to discuss the strongly Markov structure of hyperbolic groups, we record a basic property of subadditive functions which we will use implicitly throughout our work. Recall that the left and right word metrics associated to a generating set $S$ on $\Gamma$ are
\[
d_S(g,h) = |g^{-1}h|_S \ \ \text{ and } \ \ d_R(g,h) := |gh^{-1}|_S.
\]
We will repeatedly (and sometimes implicitly) use the fact that subadditive functions $\varphi: \Gamma \to \mathbb{R}$ are Lipschitz in these metrics metrics as noted in the next result.
\begin{lemma}\label{lem:Lipschitz}
Fix a finite symmetric generating set $S$ for $\Gamma$ and let $\varphi: \Gamma \to \mathbb{R}$ be subadditive. Then, $\varphi$ is Lipschitz in the left and right word metrics.
\end{lemma}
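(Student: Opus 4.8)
The plan is to show directly that $\varphi$ changes by a bounded amount when we multiply by a generator, on either side, and then iterate. Set $M := \max_{s \in S} \max\{\varphi(s), \varphi(s^{-1})\}$; this is finite since $S$ is finite. The key point is that subadditivity gives a two-sided bound on the increment. First I would observe that for any $g \in \Gamma$ and $s \in S$, subadditivity yields $\varphi(gs) \leq \varphi(g) + \varphi(s) \leq \varphi(g) + M$, and symmetrically $\varphi(g) = \varphi((gs)s^{-1}) \leq \varphi(gs) + \varphi(s^{-1}) \leq \varphi(gs) + M$, so that $|\varphi(gs) - \varphi(g)| \leq M$. The same computation with multiplication on the left gives $|\varphi(sg) - \varphi(g)| \leq M$.

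Next I would iterate along a geodesic word. Given $g, h \in \Gamma$, write $g^{-1}h = s_1 \cdots s_n$ with $s_i \in S$ and $n = |g^{-1}h|_S = d_S(g,h)$. Then $h = g s_1 \cdots s_n$, and applying the one-step bound $n$ times along the partial products $g, gs_1, gs_1s_2, \ldots, gs_1\cdots s_n = h$ gives, by the triangle inequality,
\[
|\varphi(h) - \varphi(g)| \leq \sum_{i=1}^{n} |\varphi(g s_1 \cdots s_i) - \varphi(g s_1 \cdots s_{i-1})| \leq nM = M \, d_S(g,h).
\]
This is exactly the Lipschitz bound for the left word metric. For the right word metric $d_R(g,h) = |gh^{-1}|_S$, I would write $gh^{-1} = s_1 \cdots s_m$ with $m = d_R(g,h)$, so $g = s_1 \cdots s_m h$, and iterate the left-multiplication one-step bound along the partial products $h, s_m h, s_{m-1}s_m h, \ldots, s_1 \cdots s_m h = g$ to obtain $|\varphi(g) - \varphi(h)| \leq m M = M \, d_R(g,h)$.

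There is no real obstacle here — the argument is essentially the observation that subadditivity plus symmetry of $S$ upgrades the trivial one-sided bound $\varphi(gh) \leq \varphi(g) + \varphi(h)$ into a genuine two-sided increment estimate, after which Lipschitzness is immediate by telescoping along a geodesic word. The only thing to be slightly careful about is bookkeeping: using a geodesic representative of $g^{-1}h$ (resp.\ $gh^{-1}$) so that the number of factors is exactly the relevant word distance, and choosing the Lipschitz constant $M$ uniformly over the (finite, symmetric) set $S$.
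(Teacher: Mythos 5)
Your proof is correct and follows essentially the same approach as the paper's: establish the one-step bound $|\varphi(gs) - \varphi(g)| \leq M$ (and its left analogue $|\varphi(sg)-\varphi(g)|\leq M$) using subadditivity together with symmetry of $S$, then telescope along a geodesic word. The paper states the same reduction to a one-step increment bound and leaves the telescoping implicit; you have simply spelled it out, with a marginally sharper constant $\max_{s\in S}\varphi(s)$ in place of $\max_{s\in S}|\varphi(s)|$.
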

\begin{proof}
It suffices to show that there exist a constant $C>0$ such that
\[
|\varphi(g) - \varphi(sg)| \le C \ \ \text{ and } \ \  |\varphi(g) - \varphi(gs) | \le C
\]
for all $g \in \Gamma$ and $s \in S$. It follows easily from the definition of subadditivity that $C = \max_{s\in S} |\varphi(s)|$ works.
\end{proof}

\subsection{Markov structure of Gromov-hyperbolic groups}
It was realized by Cannon \cite{cannon} that certain Kleinian groups enjoy a strong coding property: the elements of metric spheres in the Cayley graph can be bijectively represented by admissible words of corresponding length in a finite automaton (which we will refer to as strongly Markov property, see Definition \ref{def.strong.markov}). It was indicated by Gromov \cite{gromov} and proved by Coornaert--Delzant--Papadopoulos \cite{cdp} and Ghys--de la Harpe \cite{gh} that general Gromov-hyperbolic groups with arbitrary finite generating sets enjoy the strongly Markov property. We now discuss this crucial property which will allow us to associate a subshift of finite type to a Gromov-hyperbolic group equipped with a generating set.
\begin{definition}\label{def.strong.markov}
A group $\Gamma$ is strongly Markov if given any generating set $S$ for $\Gamma$, there exists a finite directed graph $\mathcal{G}$ with vertex set $V$ and directed edge set $E \subset V \times V$ that exhibit the following properties:
\begin{enumerate}
\item $V$ contains a vertex $\ast$ such that $(x,\ast)$ does not belong to $E$ for any $x \in V$,
\item there exists a labelling $\lambda :E \to S$ such that the map sending a path (starting at $\ast$) with concurrent edges $(\ast,x_1),(x_1,x_2),\ldots , (x_{n-1},x_n)$ to the group element $\lambda(\ast,x_1) \lambda(x_1,x_2) \ldots \lambda(x_{n-1},x_n),$ is a bijection,
\item the above bijection preserves word length; if $|g|=n$, then the finite path corresponding to $g$ has length $n$.
\end{enumerate}
\end{definition}
\noindent To simplify notation later on, we augment the above strongly Markov structure by introducing an additional vertex labelled $0$ to $V$. We also add directed edges from every vertex $x \in V $ to $0$ and define $\lambda(x,0) = \id$ (the identity in $\Gamma$) for every $x \in V$. We will assume that every strongly Markov structure has been augmented in this way and will abuse notation by labelling the augmented structure, its edge and vertex set by $\mathcal{G}$, $V$ and $E$ respectively. This directed graph $\mathcal{G}$ allows us to introduce a subshift of finite type as we now explain.

\subsubsection{Shift spaces}\label{subsub.shifts}
Let $A$ be a $k \times k$ matrix consisting of zeros and ones. We use the notation $A_{i,j}$ to denote the $(i,j)$th entry of $A$. The subshift of finite type associated to $A$ is the space
$$\Sigma_A = \{(x_n)_{n=0}^{\infty} : x_n \in \{1,2,...,k\}, A_{x_n,x_{n+1}}=1, n \in \mathbb{Z}_{\ge 0}\}.$$
Given $x$ in $\Sigma_A$ we write $x_n$ for the $n$th coordinate of $x$. The shift map $\sigma: \Sigma_A \rightarrow \Sigma_A$ sends $x$ to $y= \sigma(x)$ where $y_n=x_{n+1}$ for all $n \in \mathbb{Z}_{\ge 0}$. 

The mixing properties of $(\Sigma_A, \sigma)$ are determined by the structure of the matrix $A$.
\begin{definition}
We say that a $k \times k$ zero-one matrix $A$ is irreducible if for every $(i,j)$ ($i,j \in \{1,2,...,k\}$), there exists $n\in \mathbb{N}$ such that $(A^n)_{i,j} >0$. We say that $A$ is aperiodic if there exists $n \in \mathbb{N}$ such that $(A^n)_{i,j} > 0$ for all $i,j$. 
\end{definition}

It is a standard fact that if $A$ is irreducible then $(\Sigma_A, \sigma)$ is (topologically) transitive, and if $A$ is aperiodic then $(\Sigma_A, \sigma)$ is mixing. Further, if $A$ is irreducible then there exists a natural number $p\ge1$ known as the period of $A$ such that the alphabet $\{1,\ldots,k\}$ of $A$ is partitioned into $p$ disjoint subsets $A_i$ and $\Sigma_A$ has a cyclic decomposition
\[
\Sigma_A = \bigsqcup_{k=0}^{i-1} \Sigma_{A}(i),
\]
where $\Sigma_A(i)$ is the subset of $\Sigma_A$ starting with elements from $A_i$. The shift map $\sigma:\Sigma_A \to \Sigma_A$ sends $\Sigma_{A}(i)$ to $\Sigma_{A}(i+1)$ where $i, i+1$ are taken modulo $p$ and for each $i=0, \ldots, p-1$ the subshifts $(\Sigma_{A}(i), \sigma^p)$ are mixing.

\subsubsection{Shift space associated to a Markov structure}\label{subsub.shift.space.ofmarkov}
Suppose now that $\mathcal{G}$ is a strongly Markov structure associated to a Gromov-hyperbolic group $\Gamma$ and generating set $S$. We can describe $\mathcal{G}$ using a zero-one matrix $A$: we label the vertices of $\mathcal{G}$ by $0,\ast,1,2,\ldots, k \in \N$ (where $0$ and $\ast$ are distinguished vertices described above) and set $A_{i,j} = 1$ if and only if there is a directed edge from vertex $i$ to vertex $j$ and otherwise we set $A_{i,j}=0$. We can then construct a subshift of finite type $\Sigma_A$ as described in the previous paragraph. We will write $A'$ for the matrix obtained from $A$ by discarding the row and column corresponding to the vertex $0$ and $A''$ for the one where we also discard the vertex $\ast$. 
We will write $\Sigma_A^0$ for the collection of sequences in $\Sigma_A$ that contains an occurrence of $0$. Note that, by construction, if a sequence $(x_n)_{n=0}^\infty$ has $x_k = 0$ for some $k$ then $x_l = 0$ for all $l \ge k$. We will use the notation $(x_0, \ldots, x_{n-1}, \dot{0})$ to express sequences that start with the vertices $x_0, x_1, \ldots, x_{n-1}$ and then end with infinitely many zeros. Note that $\Sigma_A^0$ is dense in $\Sigma_A$ when $\Sigma_A$ is endowed with the restriction of the product topology on $V^\N$. We define a map $i : \Gamma \to \Sigma_A^0$ by $
i(g) = (\ast, x_1, \ldots, x_{n}, \dot{0})
$
where $(\ast, x_1, \ldots, x_{n}, \dot{0})$ is the unique sequence belonging to $\Sigma_A$ such that $g= \lambda(\ast,x_1) \lambda(x_1,x_2) \cdots \lambda(x_{n-1},x_{n})$ (and $|g|_S = n$).

For certain hyperbolic groups and generating sets (i.e.~ for a free group equipped with a free generating set) one can find a strongly Markov structure $\mathcal{G}$ such that the corresponding matrix $A''$ is aperiodic. However, for general hyperbolic groups and generating sets it is not known whether it is always possible to find a Markov structure such that the matrix $A''$ is aperiodic or even irreducible.
After relabelling (i.e.~ permuting) the columns and rows of $A''$, we may assume that $A''$ has the form 
$$A'' = \begin{pmatrix} 
B_{1,1} & 0 & \dots & 0  \\
B_{2,1} & B_{2,2} & \dots & 0\\
\vdots & \vdots & \ddots & \vdots\\
B_{m',1} & B_{m',2} & \dots & B_{m',m'}
\end{pmatrix} ,$$

\noindent where the matrices $B_{i,i}$ are irreducible. The matrices $B_{i,i}$ are known as the irreducible components of $A''$ and the corresponding vertex sets in $\mathcal{G}$ are the irreducible components of $\mathcal{G}$.
By property $(3)$ in Definition \ref{def.strong.markov}, it is easy to see that the spectral radius of each $B_{i}$ is bounded above by the growth rate $\lambda$ of the group $\Gamma$. Moreover, by the same token, there must be at least one component that has $\lambda$ as an eigenvalue. We call an irreducible component \textit{maximal} if the corresponding matrix $B_i$ has spectral radius $\lambda$. We relabel the irreducible components so that maximal components correspond to $B_{i,i}$ for $i=1,\ldots,m$, which we will denote as $B_i$. An important property of $\mathcal{G}$ is that the maximal components of $\mathcal{G}$ are disjoint. That is, there does not exist a path in $\mathcal{G}$ from one maximal component to another.
This is a consequence of a result of Coornaert \cite{coor} which asserts that for a non-elementary hyperbolic group (and any generating set $S$) the growth of $\#S_n$ is purely exponential, i.e.~ for $\Gamma$, $S$ as above there exist $C>1$ and $\lambda >1$ such that for all $n \in \mathbb{Z}_{\ge 0}$.
\begin{equation}\label{eq.pure.growth}
C^{-1} \lambda^n \le \#S_n \le C \lambda^n.
\end{equation}


\section{Markovian random matrix products}\label{sec.markov}

This section is mostly independent of the rest of the paper and it is devoted to limit theorems for norms of Markovian random matrix products (which will be important ingredients of our counting results): simplicity of Lyapunov exponents, invariance principle, functional law of iterated logarithm (LIL), large deviation estimates and Berry--Esseen bounds.\\[3pt]
\textbullet ${}$ \textit{Simplicity of Lyapunov exponents} (\S \ref{subsec.simplicity}): We will briefly recall the work of Bougerol \cite{bougerol.comparaison} (see also Virtser \cite{virtser} and Royer \cite{royer}) generalizing previous work of Guivarc'h \cite{guivarch.markov} and ultimately the key result of Furstenberg \cite{furstenberg.non.commuting} on positivity of the top Lyapunov exponent.\\[3pt]
\textbullet ${}$ \textit{Invariance principle and LIL} (\S \ref{subsec.Donsker.LIL.markov}): we will recall the work of Bougerol \cite{bougerol.survey,bougerol.thm.limite}  generalizing corresponding results in the iid setting due to Le Page \cite{lepage}.\\[3pt]
\textbullet ${}$ \textit{Large deviation estimates} (\S \ref{subsec.ld.markov.matrix} and \S \ref{subsec.ld.isometry}): We will prove large deviation estimates both for Markovian matrix products and Markovian random walks on Gromov-hyperbolic spaces. The former result is contained in Bougerol's work \cite{bougerol.thm.limite}, however we will give a different proof using an approach of Benoist--Quint \cite{bq.clt}. We have two reasons for giving a different proof: the first one is that the tools developed for this proof will be used in the large deviation ingredient of the Berry--Esseen estimate (which we could not directly obtain from Bougerol's work), the second one is that this approach is more general and gives also the corresponding results for Markovian random walks on Gromov-hyperbolic spaces. The latter will be used later to give another setting providing a positive answer to a question of Kaimanovich--Kapovich--Schupp \cite{kaimanovich-kapovich-schupp} (see \S \ref{sec.last}).\\[3pt]
\textbullet ${}$ \textit{Berry--Esseen estimates} (\S \ref{subsec.berry.esseen.markov}):
We will prove Berry--Esseen estimates for matrix norms $\log \|M_n\|$ using the corresponding estimates of Bougerol \cite{bougerol.thm.limite} for $\log \|M_nv\|$ by adapting the approach of Xiao--Grama--Liu \cite{XGL} and using our large deviation estimates.\\ 

Before proceeding, we mention that we will restrict ourselves to Markovian random matrix products over countable state Markov chains. The general state space cases are typically treated using similar ideas but with heavier machinery (see Guivarc'h \cite{guivarch.markov} and Bougerol \cite{bougerol.comparaison,bougerol.thm.limite} for nice expositions). Although we will only need to apply these results to the finite state space case, we include the countable setting since it does not introduce any serious additional difficulties and since we believe that this generality could be useful for works in contexts close to ours (e.g.~ for extensions of our counting results).

\subsection{Basic definitions} We start by setting our notation, brief recalls and defining Markovian random walks on groups and Markovian random matrix products.

\subsubsection{Reminders on the theory Markov chains on countable state spaces}\label{subsub.markov.chains}

Let $E$ be a countable set and $P$ a probability transition kernel on $E$. 
By (standard) abuse of notation, let $P$ also denote the associated Markov operator and its dual: given a real-valued function $f$ on $E$, $Pf(x)=\int f(y)P(x,dy)$ whenever the integral makes sense.
We shall write $\mu P$ for the action of $P$ on probability measures on $E$. 
Given a probability measure $\nu$ on $E$, the distribution of the associated Markov chain on $E^\N$ is denoted by $\mathbb{P}_\nu$. We will usually denote the sequence of coordinate functions by $z_n$ for $n=0,1,\ldots$. We say that the probability kernel $P$ is irreducible if for every $x,y \in E$, there exists $n \in \N$ such that $P^n(x,y)>0$. For an irreducible kernel $P$, its period is defined to be $\gcd \{n \in \N  :  P^n(x,x)>0\}$ for some (equivalently all) $x \in E$. An irreducible kernel is said to be aperiodic if its period is one. In general, if the period is $p \in \N$, there exists a partition $E_1,\ldots, E_p$ of the the state space $E$ such that for every $x \in E_i$, $P(x,E_{i+1})=1$ $(i \mod p)$. If $P$ is irreducible and has period $p$, the $P^p$ defines an irreducible aperiodic kernel on $E_i$ for every $i=1,\ldots,p$. 

A probability measure $\pi$ on $E$ is called $P$-stationary if it satisfies $\pi P=\pi$. 
An irreducible transition kernel $P$ is said to be \textit{positively recurrent} if it admits a stationary probability measure $\pi$, in which case this probability measure is unique. If $P$ has period $p \in \N$, we have $\pi=\frac{1}{p}\sum_{i=1}^p \pi_{|E_i}$ and $\pi_{|E_i} P=\pi_{|E_{i+1}}$ $(i \mod p)$, where $p \pi_{|E_i}$ is the unique stationary probability measure of the irreducible aperiodic kernel $P^p$ on $E_i$. 
The Markov chain $(z_n)$ is said to be \textit{uniformly geometrically ergodic} if there exist a $P$-stationary probability measure $\pi$ on $E$ and constants $C>0$ and $\rho \in (0,1)$ such that for every $z \in E$ and $n \in \N$, we have $\|P^n(z,\cdot)-\pi(\cdot)\|_{TV} \leq C \rho^n$, where $\|\cdot\|_{TV}$ denotes the total variation or equivalently the $\ell^1$-norm (note that this condition forces $P$ to be irreducible and aperiodic). This is automatically satisfied if $P$ satisfies the Doeblin condition (i.e.~ there exist $n \in \N$, $z \in E$ and $\delta>0$ such that for every $y \in E$, $P^n(y,z)\geq \delta$) and in particular if $E$ is finite and $P$ is aperiodic.

\subsubsection{Markovian matrix products associated to a Markov chain}

Let $E$ be a countable state space, $P$ a transition kernel on $E$ and $\Gamma$ a group. Given a map $X:E \to \Gamma$, the associated \textit{Markovian random walk} on $\Gamma$ is defined as the process $M_n=X(z_{n})\cdots X(z_1)$. We will often write $X_n=X(z_n)$ for the $n^{th}$-step of the associated Markovian random walk. When $\Gamma \leq \GL_d(\R)$, we will mostly refer to it as a Markovian random matrix product. In this section, we will always require that the transition kernel $P$ be irreducible and positive recurrent. For Markovian random matrix products, we will always ask that the map $X$ has the following integrability condition with respect to the stationary probability measure $\pi$ of $P$: $\E_{\pi}[\log N(X_1)]<\infty$, where for a matrix $g \in \GL_d(\R)$ and a choice of norm $\|\cdot\|$ on $\R^d$, we write $N(g)=\max \{\log \|g\|, \log \|g^{-1}\|\}$. Note that the integrability condition does not depend on the choice of norm.

We will briefly refer to all this data as a Markovian random walk or Markovian (random matrix) product and denote it by $(M_n)$.

\subsection{Simplicity of Lyapunov exponents}\label{subsec.simplicity}

Given a Markovian product $(M_n)$, it follows from the Furstenberg--Kesten theorem (or subadditive ergodic theorem) that for every $k=1,\ldots,d$ there exist constants $\lambda_1 \geq \ldots \geq \lambda_d$ such that  $\P_\pi$-a.s.~ we have
\begin{equation}\label{eq.kingman.matrix}
    \frac{1}{n} \log \|\wedge^k M_n\| \underset{n \to \infty}{\longrightarrow} \sum_{i=1}^k \lambda_i.
\end{equation}
These numbers are called the \textit{Lyapunov exponents} of the Markovian product $(M_n)$. Clearly, they do not depend on the choice of the norm on $\Mat(\wedge^k \R^d)$.

We will now see a result characterizing certain situations where these exponents are distinct from each other. We first need some definitions.

We say that a subset $T$ of $\GL_d(\R)$ is \textit{$r$-proximal} with $r \in \{1,\ldots,d-1\}$ if there exists a sequence $(g_n)$ of elements in $T$ such that $\frac{g_n}{\|g_n\|}$ converges in $\Mat_d(\R)$ to a linear transformation of rank at most $r$. Sometimes, we shall simply write proximal instead of 1-proximal.

Given a Markovian product $(M_n)$, for $x_0 \in E$, let $T_{x_0}:= \{M \in \GL_d(\R) : \exists n \in \N, x_1,\ldots,x_n \in E \; \text{such that} \; P(x_i,x_{i+1})>0 \; \text{and} \; M=X(x_n)\ldots X(x_1) \},$
where the indices $i$ in the condition $P(x_i,x_{i+1})>0$ ranges from $0$ to $n-1$. Moreover, for $x \in E$, let
$T_{x_0}(x):=\{M \in \GL_d(\R) : \exists n \in \N, x_1,\ldots,x_n=x \in E \; \text{such that} \; P(x_i,x_{i+1})>0 \; \text{and} \; M=X(x_n)\ldots X(x_1) \}$. Note that for every $x \in E$, $T_x(x)$ is a semigroup in $\GL_d(\R)$ contained in the set $T_x$.

The Markovian product $(M_n)$ is said to be \textit{$r$-contracting} if there exists $x \in E$ such that $T_x$ is $r$-proximal.
We say that a Markovian product $(M_n)$ is \textit{irreducible} if for any $r \in \{1,\ldots,d-1\}$ there does not exist a map $V:E \to \Gr_r(\R^d)$ (where $\Gr_r$ denotes the Grassmanian of $r$-dimensional subspaces) such that for every $x_0 \in E$ and $n \in \N$, $\P_{x_0}$-a.s. $M_n V(x_0)=V(x_n)$.
Finally, we say that a Markovian product $(M_n)$ is \textit{strongly irreducible} if for any $r \in \{1,\ldots,d-1\}$ there does not exist a finite number of maps $V_i:E \to \Gr_r(\R^d)$ (say, $i=1,\ldots,t$) such that for $x \in E$, denoting $W(x)=\cup_{i=1}^t V_i(x)$, we have, for every $x_0 \in E$ and $n \in \N$, $P_{x_0}$-a.s. $M_n W(x_0)=W(x_n)$.



The following particular case of a result of Bougerol \cite[Th\'{e}or\`{e}me 1.6]{bougerol.comparaison} gives a characterization of the so-called simplicity of Lyapunov spectrum in our setting.  
\begin{theorem}[Simplicity of Lyapunov spectrum, Guivarc'h \cite{guivarch.markov} and Bougerol \cite{bougerol.comparaison}]\label{thm.bougerol.simplicity}
Let $(M_n)$ be a strongly irreducible Markovian product. Then, for $r=1,\ldots,d-1$, we have
$\lambda_1>\lambda_{r+1}$ if and only if $M_n$ is $r$-contracting. 
\end{theorem}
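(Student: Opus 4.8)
The plan is to treat the two implications separately, the forward one directly and the converse by reduction to Bougerol's theorem. \emph{Forward direction: $\lambda_1>\lambda_{r+1}\Rightarrow(M_n)$ is $r$-contracting.} I expect this to be soft, using neither strong irreducibility nor serious machinery. Let $m$ be the multiplicity of $\lambda_1$, so $m\le r$, and fix any $x\in E$; since $P$ is irreducible and positive recurrent we have $\pi(x)>0$, so \eqref{eq.kingman.matrix} holds $\P_x$-almost surely. Working with an equivalent Euclidean norm on $\R^d$ (this changes neither the Lyapunov exponents nor the notion of $r$-proximality) and letting $\sigma_1(M_n)\ge\cdots\ge\sigma_d(M_n)$ be the singular values, \eqref{eq.kingman.matrix} for consecutive exterior powers gives $\tfrac{1}{n}\log\sigma_k(M_n)\to\lambda_k$ for each $k$, hence $\sigma_{m+1}(M_n)/\|M_n\|\to 0$ $\P_x$-a.s. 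The matrices $M_n/\|M_n\|$ lie on the compact unit sphere of $\Mat_d(\R)$, so a subsequence converges to some $\pi\ne 0$ with $\sigma_{m+1}(\pi)=0$ (by continuity of singular values), i.e.\ $\pi$ has rank $\le m\le r$. As $M_n\in T_x$ by definition of $T_x$ (a product $X(z_n)\cdots X(z_1)$ along an admissible path from $z_0=x$), this exhibits $T_x$ as $r$-proximal.

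\emph{Converse: $(M_n)$ is $r$-contracting $\Rightarrow\lambda_1>\lambda_{r+1}$.} Here the plan is to invoke Bougerol's \cite[Th\'{e}or\`{e}me~1.6]{bougerol.comparaison} (built on Furstenberg \cite{furstenberg.non.commuting} and Guivarc'h \cite{guivarch.markov}), which characterizes, for positively recurrent Markovian matrix products, the gap $\lambda_1>\lambda_{r+1}$ in terms of strong irreducibility together with $r$-proximality of the transfer semigroup $T_x(x)$ of products over cycles based at a fixed state $x$. The remaining work — which I expect to be the main, albeit routine, obstacle — is to reconcile our hypotheses with his via two dictionary facts. (i) Our $r$-contraction ($r$-proximality of $T_x$ for some $x$) is equivalent to $r$-proximality of $T_y(y)$ for every $y\in E$: I would pass from $T_x$ to $T_x(x)$, and between different base states, by pre- or post-composing a fixed admissible return word (it exists since $P$ is irreducible and recurrent) and using invertibility of the generating matrices, so that $g_n\mapsto hg_n$ and $g_n\mapsto g_nh$ alter $g_n/\|g_n\|$ only by a bounded, bounded-away-from-$0$ scalar and a fixed invertible matrix, hence preserve having a subsequential limit of rank at most $r$. (ii) Our strong irreducibility of $(M_n)$ is equivalent to strong irreducibility of $T_x(x)$: from a finite $T_x(x)$-invariant union $W_0$ of proper $r$-dimensional subspaces one builds the section $y\mapsto h\,W_0$, with $h$ the matrix of any admissible path $x\to y$ (well defined, since closing such a path into a cycle at $x$ removes the dependence on $h$), which is surely equivariant along the chain and of the form $\bigcup_iV_i(\cdot)$ with $V_i:E\to\Gr_r(\R^d)$ — contradicting strong irreducibility of $(M_n)$; conversely, an equivariant family $\bigcup_iV_i(\cdot)$ restricts, over the positive-probability cylinders cut out by cycles at $x$, to a finite $T_x(x)$-invariant union of proper subspaces. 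Granting (i) and (ii), Bougerol's theorem gives the converse implication, and the theorem follows.

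Finally, should Bougerol's statement be phrased for aperiodic chains, I would remark that when $P$ has period $p$ every cycle based at a state already has length divisible by $p$, so one may instead run the argument for $P^p$ restricted to a single cyclic class — after the routine recasting of the $p$-step increments of $(M_{np})$ as a Markovian product — with the Lyapunov exponents merely scaling by $p$.
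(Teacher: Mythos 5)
The paper does not prove Theorem \ref{thm.bougerol.simplicity} at all; it is stated as ``the following particular case of a result of Bougerol \cite[Th\'eor\`eme 1.6]{bougerol.comparaison}'' and used as a black box thereafter. So there is no in-paper argument to compare against, and what you have actually done is supply the proof the paper chose to omit: a short self-contained argument for the implication ``gap $\Rightarrow$ $r$-contracting'' (which, as you correctly observe, needs neither strong irreducibility nor any of Bougerol's machinery, just Kingman's theorem applied to $\wedge^k M_n$, the identity $\P_x\ll\P_\pi$, and compactness of the sphere in $\Mat_d(\R)$), together with a reduction of the converse to Bougerol's statement via the two dictionary facts (i)--(ii). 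Those dictionary facts are exactly the kind of translations the paper itself uses later without proof --- see the remark preceding Lemma \ref{lem:(star)}, where the authors say that $1$-contracting/strong irreducibility of $(M^j_n)$ is equivalent to proximality/strong irreducibility of some $T_x(x)$ ``(see the proof of \cite[Th\'eor\`eme 5.3 (ii)]{bougerol.comparaison})'' --- so your approach is entirely consistent with the paper's.

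Two small points worth tightening. In dictionary fact (i), when you pass from a sequence $g_n\in T_x$ to one in $T_x(x)$, the $g_n$ end at states $y_n$ that need not be constant; with a countable state space one must first pass to a subsequence along which $y_n$ is constant before a single fixed return word closes all the loops (harmless when $E$ is finite, which is the paper's case of application). In dictionary fact (ii), when you define $W(y)=hW_0$ you should note that $T_x(x)W_0=W_0$ (not merely $\subseteq$) because elements of $T_x(x)$ are invertible and permute the finitely many subspaces of $W_0$, which is what makes the well-definedness argument ($khW_0=W_0=kh'W_0\Rightarrow hW_0=h'W_0$) go through cleanly. Neither point affects the correctness of the argument.
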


This result will be a crucial ingredient for the upcoming limit theorems (Theorems \ref{thm.bougerol.wiener.lil} and \ref{thm.bougerol.berryess}). It will also be used in the proof of positivity of $\Lambda$ in Proposition \ref{prop.positivity} (however, this can alternatively be deduced from Proposition \ref{prop.positivity.in.section} relying directly on the earlier positivity result of Furstenberg).

\subsection{Invariance principle and functional law of iterated logarithm}\label{subsec.Donsker.LIL.markov}
Here we briefly discuss two limit theorems due to Bougerol \cite{bougerol.thm.limite}: the first one is an analogue of the classical invariance principle due to Donsker which is a generalization of the central limit theorem. The second one is the analogue of Strassen's functional law of iterated logarithm (LIL) generalizing the Hartman--Wintner LIL. For these results (and others to follow), we will need further assumptions on the Markovian product $(M_n)$ that we now discuss.

Following Bougerol \cite[\S 3]{bougerol.thm.limite} (see also \cite{bougerol.survey}), we shall say that a Markovian random matrix product $(M_n)$ satisfies\\[5pt]
\textbf{Condition $(A_1)$:} If the Markov chain $(z_n)$ is uniformly geometrically ergodic; and,\\[4pt]
\textbf{Condition $(A_2)$:} If there exist positive constants $a,B$ such that $\mathbb{E}_x[e^{aN(M_1)}] \leq B$ for every $x \in E$.

Notice that both are automatically satisfied if $E$ is finite and $(z_n)$ is aperiodic.

\begin{theorem}[Convergence to the Wiener process and LIL, Bougerol \cite{bougerol.thm.limite}]\label{thm.bougerol.wiener.lil}
Let $(M_n)$ be a $1$-contracting irreducible Markovian random matrix product satisfying condition $(A_1)$ and $(A_2)$. For $\sigma_0>0$ and $t \in [0,1]$ and $n \in \N$, let $S_{n}(t)$ denote $C([0,1])$-valued random variable defined by 
\begin{equation}\label{eq.defn.sn}
S_{n}(t)=\frac{1}{(n\sigma_0^2)^{1/2}} \left( \log \|M_{\lfloor tn \rfloor}\|-nt\lambda_1 + (nt-\lfloor nt \rfloor) (\log \|M_{\lfloor tn \rfloor +1}\| - \log\|M_{\lfloor tn \rfloor}\|) \right)    
\end{equation}
Then, there exists a constant $\sigma_0=\sigma>0$ such that for every $x \in E$ 
\begin{enumerate}[\hspace{0.78cm}1.]
\item under $\P_x$, the sequence $(S_{n})_{n \in \N}$ of $C([0,1])$-valued random variables converges in distribution to $\mathcal{W}$; and,
\item for $\P_x$-a.e.~ $\omega$, the set of limit points of the sequence $\left(\frac{(S_{n}(t))(\omega)}{2\log \log n}\right)_{n \in \N}$ of elements of $C([0,1])$ is equal to the following compact subset of $C([0,1])$:
$$
\left\{f \in C([0,1]): f \; \text{is absolutely continuous}, f(0)=0, \int_{0}^1f'(t)^2dt \leq 1\right\}.
$$
\end{enumerate}
\end{theorem}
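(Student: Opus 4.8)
The plan is to derive Theorem~\ref{thm.bougerol.wiener.lil} as essentially a direct transcription of Bougerol's invariance principle and functional law of iterated logarithm for the norm cocycle \cite{bougerol.survey,bougerol.thm.limite}, whose hypotheses are exactly the ones we have imposed: $1$-contracting, irreducible, plus $(A_1)$ and $(A_2)$. Bougerol proves precisely statements of this shape for $\log\|M_n v\|$ (for a fixed unit vector $v$, or more precisely for the cocycle $\sigma(M_n,x)=\log\|M_n v\|-\log\|v\|$ over the skew-product with the projective action), producing a variance $\sigma^2>0$ and a Donsker-type convergence $S_n\xrightarrow{d}\mathcal W$ together with the Strassen limit-point description. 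Under irreducibility and $1$-contraction, $\sigma^2>0$ is part of Bougerol's conclusion (nondegeneracy of the variance), so the positivity claim requires no separate argument. Thus the first and main step is simply to quote Bougerol's theorem for the vector cocycle.

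The only genuine work is to pass from $\log\|M_n v\|$ to $\log\|M_n\|$, i.e.\ to control the discrepancy
\[
D_n := \log\|M_n\| - \log\|M_n v\| = -\log\left(\frac{\|M_n v\|}{\|M_n\|}\right) \geq 0.
\]
First I would observe that it suffices to show $D_n$ grows sublinearly in a suitably strong sense: for the invariance-principle part (item~1), uniform convergence of the interpolated, $\sqrt n$-normalized paths will follow once we know $\frac{1}{\sqrt n}\max_{k\le n} D_k \to 0$ in probability under each $\P_x$; for the LIL part (item~2), we need $D_n = o(\sqrt{n\log\log n})$ almost surely, again with control of the maximum over $k\le n$. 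The key estimate is that for a $1$-contracting, irreducible Markovian product, the "angle" between the random direction $M_n v/\|M_n v\|$ and the dominant direction does not degenerate too fast; equivalently, for a generic unit vector $v$ the quantity $\log\|M_n\| - \log\|M_n v\|$ has exponential tails that are uniform in $n$. This is the standard regularity input in random matrix products (the Le Page / Guivarc'h type estimate $\P(\|M_n v\|/\|M_n\| < e^{-t}) \le C e^{-\alpha t}$), and in the Markovian setting it follows from Bougerol's large deviation results \cite{bougerol.thm.limite} together with $(A_1)$--$(A_2)$ (strong irreducibility and contraction give that the second Lyapunov exponent is strictly smaller, by Theorem~\ref{thm.bougerol.simplicity}, so the gap can be exploited). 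From such a uniform tail bound, a union bound over $k \le n$ plus Borel--Cantelli yields both $\frac{1}{\sqrt n}\max_{k\le n}D_k \to 0$ in $\P_x$-probability and $\max_{k\le n} D_k = O(\log n) = o(\sqrt{n\log\log n})$ almost surely.

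With this discrepancy control in hand, the conclusion is routine: write $S_n^v$ for the interpolated path built from $\log\|M_k v\|$ (for which Bougerol gives convergence to $\mathcal W$ and the Strassen set) and note $\sup_{t\in[0,1]}|S_n(t) - S_n^v(t)| \le \frac{C}{\sqrt{n\sigma^2}}\max_{k\le n} D_k$, which tends to $0$ (in probability for item~1, a.s.\ for item~2). Since the limiting objects are the same for $S_n^v$ and $S_n$, and since adding a uniformly-small perturbation changes neither the distributional limit (by Slutsky / the mapping theorem in $C([0,1])$) nor the set of limit points of $S_n\xi/(2\log\log n)$, both assertions transfer verbatim. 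One should also check that the variance $\sigma$ one obtains is independent of the auxiliary vector $v$ and of the starting state $x$ — this is automatic because $\sigma^2$ is intrinsically the asymptotic variance of $\frac{1}{\sqrt n}(\log\|M_n\|-n\lambda_1)$, and $D_n/\sqrt n \to 0$ shows this coincides with Bougerol's $\sigma^2$ for the vector cocycle. I expect the main obstacle to be stating and citing (or briefly re-deriving in the Markovian setting) the uniform-in-$n$ exponential tail bound for $\log\|M_n\| - \log\|M_n v\|$; everything downstream is soft.
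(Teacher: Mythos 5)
Your overall strategy --- apply Bougerol's invariance principle and functional LIL to the vector cocycle $\log\|M_n v\|$, then transfer to $\log\|M_n\|$ by controlling the discrepancy $D_n=\log\|M_n\|-\log\|M_n v\|$ --- is also the paper's strategy, and your observations that the positivity of $\sigma^2$ is part of Bougerol's conclusion and that the variance is intrinsic (independent of $v$) are both correct. The difference is in how the discrepancy is controlled. The paper simply cites \cite[Proposition~2.8]{bougerol.thm.limite}, which directly supplies the comparison between $\log\|M_n\|$ and $\log\|M_n v\|$; the LIL transfer then follows at once, and the Donsker transfer follows with one more standard fact about convergence in $C([0,1])$ (\cite[Problem~4.16]{karatzas-shreve}). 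You instead propose a uniform-in-$n$ exponential tail bound on $D_n$, followed by a union bound and Borel--Cantelli. This is heavier machinery than necessary: under proximality and irreducibility, $D_n$ converges almost surely to a finite random variable (this is, in effect, what the cited Proposition~2.8 encodes --- a Furstenberg-type convergence statement for the top singular direction), so $\max_{k\le n}D_k = O(1)$ almost surely and the transfer is soft. It is also not quite correct that the exponential tail "follows from Bougerol's large deviation results" together with the Lyapunov gap: what you would actually need is a Frostman/H\"older-type regularity estimate for the limiting law of the top singular direction of $M_n$, which is a qualitatively different input from a large deviation bound on $\log\|M_nv\|$, and which in the Markovian setting involves the \emph{time-reversed} chain (since that singular direction is governed by the transposed product $X_1^{*}\cdots X_n^{*}$, not $X_n\cdots X_1$). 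You rightly flag this as the main obstacle; the paper's route sidesteps it by citing Bougerol's comparison result rather than re-deriving a quantitative version of it.
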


We indicate how to deduce this version from Bougerol's original statement which concerns $\log\|M_n v\|$ for a non-zero vector $v \in \R^d$.

\begin{proof}
In view of Theorem \ref{thm.bougerol.simplicity}, our assumptions on the Markovian product $(M_n)$, namely, $1$-contracting and irreducible, imply that the condition \cite[(A3)]{bougerol.thm.limite} is satisfied (see \cite[Definition 2.7]{bougerol.thm.limite} and thereafter). Therefore \cite[Th\'{e}or\`{e}me 4.5]{bougerol.thm.limite} implies both statements when $\log \|M_n\|$ is replaced by $\log \|M_n v\|$ for some non-zero $v \in R^d$. Note that positivity of the variance follows from \cite[Proposition 4.9]{bougerol.thm.limite}. The statements for  $\log \|M_n\|$ then follow from \cite[Proposition 2.8]{bougerol.thm.limite}: the second conclusion directly follows and the first one follows by appealing to a standard fact, see e.g.~ \cite[Problem 4.16]{karatzas-shreve}.
\end{proof}


\subsection{Large deviation estimates for Markovian random matrix products}\label{subsec.ld.markov.matrix}
In this part, we prove the following theorem by using some ideas that we adapt from the work of Benoist--Quint \cite{bq.clt}. The developed tools will also serve as an ingredient in the the proof of Berry--Esseen estimates.

\begin{theorem}[Markovian random matrix products]\label{thm.ld.matrix}
Let $(M_n)$ be a strongly irreducible and $1$-contracting random matrix product satisfying $(A_1)$ and $(A_2)$. Let $\|\cdot\|$ be a fixed norm on $\R^d$. Then, for every $\epsilon>0$, there exist $\alpha>0$ and $C>0$ such that for every $x \in E$ and $n \in \N$, and non-zero $v \in V$, we have
$$\mathbb{P}_x( |\log \|M_n v\|-n\lambda_1| \geq n\epsilon) \leq C e^{-\alpha n} \qquad \text{and} \qquad \mathbb{P}_x( |\log \|M_n\|-n\lambda_1| \geq n\epsilon) \leq C e^{-\alpha n}.$$
\end{theorem}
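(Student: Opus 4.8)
The plan is to adapt the cohomological-equation method of Benoist--Quint \cite{bq.clt} to the skew product driven by the Markov chain, replacing the i.i.d.\ increment structure by a martingale coming from the Markov property. Throughout I normalise $v$ to lie on the unit sphere of $\R^d$; since the norm cocycle is scale-invariant this loses nothing. Write $\beta(g,\eta):=\log\|gw\|$ for $\eta\in X:=\Gr_1(\R^d)$ the line spanned by a unit vector $w$, and set $\eta_0=\overline w$, $\eta_k=\overline{M_kw}=X(z_k)\cdot\eta_{k-1}$. Then $(\omega_k)_{k\ge0}:=(z_k,\eta_k)_{k\ge0}$ is a Markov chain on $\Omega:=E\times X$ (the $E$-component is the original chain), and $\log\|M_nw\|=\sum_{k=1}^n\beta(X_k,\eta_{k-1})$. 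It suffices to prove, uniformly over $x\in E$ and $\eta\in X$, that $\P_{(x,\eta)}\big(|\log\|M_nw\|-n\lambda_1|\ge n\epsilon\big)\le Ce^{-\alpha n}$; the matrix-norm estimates will be deduced from this at the end.

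The heart of the proof is to solve the cohomological equation $(\mathrm{Id}-\mathcal P)\varphi=h-\ell$ with $\varphi$ bounded, where $\mathcal P$ is the transition operator of $(\omega_k)$, $h(\omega):=\E[\beta(X_1,\eta)\mid\omega_0=\omega]$, and $\ell$ is the (a priori unknown) constant $\lim_k\mathcal P^kh$. One sets $\varphi:=\sum_{k\ge0}\mathcal P^k(h-\ell)$ and shows the series converges in $\|\cdot\|_\infty$. This needs: (i) $h$ is uniformly $\theta$-Hölder in the $X$-variable for $\theta$ small — obtained from $(A_2)$ by interpolating the trivial bound $|\beta(g,\eta)-\beta(g,\eta')|\le 2N(g)$ against the Lipschitz bound $\lesssim e^{2N(g)}d_X(\eta,\eta')$, after which $\E_z[e^{2\theta N(X_1)}]$ is bounded; and (ii) the skew product mixes exponentially fast in the dual-Hölder sense, i.e.\ $\big|\mathcal P^kh(\omega)-\mathcal P^kh(\omega')\big|\le C'\rho^k$ for all $\omega,\omega'$, hence $\mathcal P^kh\to\ell$ uniformly and geometrically. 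Point (ii) is obtained by combining the uniform geometric ergodicity $(A_1)$ of the $E$-chain with exponential contraction of the projective coordinate when the two copies share their $E$-driving sequence; the latter is precisely where strong irreducibility and $1$-contracting are used, through the simplicity $\lambda_1>\lambda_2$ of Theorem~\ref{thm.bougerol.simplicity} and a Furstenberg-type contraction argument (available from the work of Bougerol--Guivarc'h). Finally one identifies $\ell=\lambda_1$: from the martingale decomposition below, $\tfrac1n\log\|M_nw\|\to\ell$ a.s.\ for every starting line, while by Furstenberg--Kesten \eqref{eq.kingman.matrix} together with $1$-contracting and strong irreducibility the same limit equals $\lambda_1$ for almost every line.

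With $\varphi$ in hand, put $\bar\beta(\omega_{k-1},\omega_k):=\beta(X_k,\eta_{k-1})-\varphi(\omega_{k-1})+\varphi(\omega_k)$. The cohomological equation gives $\E[\bar\beta(\omega_{k-1},\omega_k)\mid\mathcal F_{k-1}]=\lambda_1$, so $\mathcal M_n:=\sum_{k=1}^n(\bar\beta(\omega_{k-1},\omega_k)-\lambda_1)$ is a martingale and $\log\|M_nw\|-n\lambda_1=\mathcal M_n+\varphi(\omega_0)-\varphi(\omega_n)$, whence $\big|\log\|M_nw\|-n\lambda_1-\mathcal M_n\big|\le 2\|\varphi\|_\infty$. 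Its increments obey $|d_k|\le N(X_k)+C$ and $\E[e^{t|d_k|}\mid\mathcal F_{k-1}]\le e^{tC}B$ for $0\le t\le a$ by $(A_2)$; being conditionally centred they satisfy $\E[e^{td_k}\mid\mathcal F_{k-1}]\le e^{\gamma t^2}$ for $|t|$ small, so iterating yields $\E_{\omega_0}[e^{\pm t\mathcal M_n}]\le e^{\gamma nt^2}$ and, optimising $t$ in Chernoff's bound, $\P_{\omega_0}(|\mathcal M_n|\ge n\delta)\le 2e^{-n\delta^2/4\gamma}$ uniformly in $\omega_0$. Absorbing the bounded term $2\|\varphi\|_\infty$ (and enlarging $C$ for the finitely many small $n$) gives the claimed estimate for $\log\|M_nw\|$, uniformly in $x$ and $\eta$.

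For the matrix norm, the lower deviation $\P_x(\log\|M_n\|\le n(\lambda_1-\epsilon))$ is immediate since $\log\|M_n\|\ge\log\|M_nw\|$ for any fixed unit $w$. For the upper deviation, fix $\delta$ with $\log\frac{1}{1-\delta}<\epsilon/2$, let $\{w_1,\dots,w_N\}$ be a $\delta$-net of the unit sphere, and use $\|M_nw\|\le\|M_nw_i\|+\delta\|M_n\|$ for $w_i$ nearest $w$, which gives $\|M_n\|\le(1-\delta)^{-1}\max_i\|M_nw_i\|$; a union bound over the fixed number $N=N(\delta,d)$ of vectors, applying the vector estimate with $\epsilon/2$, then yields $\P_x(\log\|M_n\|\ge n(\lambda_1+\epsilon))\le NCe^{-\alpha n}$. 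Combining the two bounds completes the proof. The main obstacle is step (ii): assembling the exponential mixing of the skew product on $E\times X$ in the functional-analytic form matching the Hölder regularity of $h$ (rather than the classical spectral-gap form), since this is where $1$-contracting, strong irreducibility, the exponential moment $(A_2)$ and the simplicity theorem all have to be brought together; the remaining steps are routine bookkeeping or a standard martingale deviation estimate.
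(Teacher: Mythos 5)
Your proposal is correct in outline, but it takes a genuinely different route from the paper. You solve the cohomological equation $(\mathrm{Id}-\mathcal P)\varphi=h-\lambda_1$ on $E\times\P(\R^d)$, which produces a bounded corrector and a martingale centred at $n\lambda_1$, and then you apply a Chernoff/Azuma bound. The cost of this route is step (ii), which you yourself flag as the obstacle: you need exponential mixing of the projective skew product in the Hölder sense (effectively a Le-Page-type spectral gap), which forces you to prove a quantitative Furstenberg contraction estimate $\E_x[d(M_n\eta,M_n\eta')^\theta]\le C\rho^n$ uniform in the starting lines and in $x$. You invoke Bougerol--Guivarc'h for this, which is legitimate but is precisely the heavy technical input that Bougerol's original argument rests on. The paper deliberately avoids this: it decomposes $\sum_{k}\sigma(X_k,\eta_{k-1})$ not via a corrector but as (martingale differences of $\sigma$ around its one-step conditional expectation) $+$ (an additive functional $\sum_k \xi(y_k)$ with $\xi=Q\sigma$ bounded continuous). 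The martingale part is handled by Azuma exactly as you do, but the additive part is handled by Proposition~\ref{prop.bq.3.1}, a uniform large-deviation estimate for empirical averages over a covering Markov--Feller chain. That proposition needs only compactness of the fibre, Feller continuity and uniform positive recurrence — no Hölder regularity and no projective contraction. The paper then pins down the limiting constant by showing the $Q$-stationary measure on $E\times\P(\R^d)$ is unique, via the renewal-measure semigroups $T_x(x)$ and the iid uniqueness theorem, instead of your solution of the cohomological equation plus an Oseledets argument. The payoff of the paper's choice is explicit in the text: the same decomposition transfers verbatim to isometric actions on Gromov-hyperbolic spaces (Theorem~\ref{thm.ld.gromov}), where no analogue of projective spectral gap is available, so your corrector method would not extend there.

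Two smaller points. Your identification $\ell=\lambda_1$ via ``a.s.\ convergence for every starting line, plus Furstenberg--Kesten for a generic line'' implicitly uses Oseledets and irreducibility; the paper sidesteps Oseledets by instead observing that the vector-norm large deviation estimate, applied simultaneously to a unit basis, pins down $\ell$ directly through the comparability of $\max_i\|M_nv_i\|$ with an operator norm. Your $\delta$-net argument for the matrix-norm upper tail is correct and slightly different from (but equivalent in effect to) the paper's basis argument; both are fine.
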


This result is not new; it follows from Bougerol's \cite[Th\'{e}or\`{e}me 4.3]{bougerol.thm.limite}. However, we give a different proof. The tools developed for this proof, beyond their aforementioned utility in the Berry--Esseen estimate, will also allow us to prove Theorem \ref{thm.ld.gromov} in the next part. The rest of \S \ref{subsec.ld.markov.matrix} is devoted to its proof.

\subsubsection{Large deviations in Breiman's LLN}
Following Benoist--Quint \cite{bq.clt}, we adopt a slighly more general setting. Let $C$ be a compact metrizable space and $E$ a Polish space. We say that a Markov-Feller transition kernel $Q$ on $Y=E \times C$ covers a Markovian transition kernel $P$ on $E$, if the following diagram commutes

\begin{equation}\label{eq.covering.diag}
 \begin{tikzcd}
E \times C \arrow{r}{Q} \arrow[swap]{d}{\pi_1} & \mathcal{P}(E\times C) \arrow{d}{\pi_1{}_{\ast}} \\%
E \arrow{r}{P}& \mathcal{P}(E)
\end{tikzcd}
\end{equation}
Here, $\mathcal{P}(E)$ (resp.~ $\mathcal{P}(E \times C)$) denotes the set of probability measures on $E$ (resp.~ on $E \times C$), $\pi_1:E \times C \to E$ is the projection map and $\pi_1{}_{\ast}$ is the induced push-forward map. 

Given a bounded continuous function $\varphi:Y \to \R$, we set 
\[
\ell_\varphi^+=\sup_{\eta} \int \varphi \ d\eta \qquad \text{and} \qquad \ell_\varphi^-=\inf_{\eta} \int \varphi \ d\eta
\]
where the supremum and infimum are taken over $Q$-invariant probability measures on $Y$. 

Let us say that a Markov--Feller kernel $P$ on $E$ is uniformly positive recurrent if for every $\epsilon>0$, there exists a compact set $K \subseteq E$ and $N \in \N$ such that for every $x \in E$ and $n \geq N$, we have $\frac{1}{n}\sum_{j=1}^n (\delta_x P^j) (K)>1-\epsilon$. 
The following result is a more general version of \cite[Proposition 3.1]{bq.clt} that one can derive from its proof with a small variation explained below.

\begin{proposition}[Benoist-Quint]\label{prop.bq.3.1}
Let $Q$ be a Markov--Feller transition kernel on $Y=E\times C$ covering a transition kernel $P$ on $E$. Suppose that $P$ is uniformly positive recurrent. Then, for every bounded continuous function $\varphi:Y \to \R$ and $\epsilon>0$, there exists $C_0>0$ and $\alpha_0>0$ such that for every $y \in Y$ and $n \in \N$, we have
$$
\mathbb{Q}_y \left\{(y_0,\ldots) \in Y^\N : \frac{1}{n}\sum_{k=1}^n \varphi(y_k) \in [\ell_\varphi^- - \epsilon,\ell_\varphi^+ + \epsilon]\right\} >1-C_0 e^{-\alpha_0 n}.
$$
\end{proposition}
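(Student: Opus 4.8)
The plan is to follow Benoist--Quint's argument for \cite[Proposition 3.1]{bq.clt}, the only modification being in the first step, where uniform positive recurrence allows us to treat an arbitrary bounded continuous $\varphi$ and an arbitrary base point $y\in Y$. First, since $\ell_{-\varphi}^{+}=-\ell_{\varphi}^{-}$ and $\ell_{-\varphi}^{-}=-\ell_{\varphi}^{+}$, a union bound reduces the statement to the following one-sided estimate: for every bounded continuous $\varphi$ and every $\epsilon>0$ there are $C_0,\alpha_0>0$ with
\[
\mathbb{Q}_y\Big\{\tfrac1n\sum_{k=1}^n\varphi(y_k)\geq\ell_\varphi^{+}+\epsilon\Big\}\leq C_0 e^{-\alpha_0 n}\qquad\text{for all }y\in Y,\ n\in\N,
\]
the matching lower tail being the same bound applied to $-\varphi$.

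The crux of the argument --- and what I expect to be the main obstacle --- is a \emph{uniform Ces\`aro estimate}: for every $\epsilon>0$ there is $m\in\N$ such that $\psi_m(y):=\tfrac1m\sum_{k=1}^m Q^k\varphi(y)\leq\ell_\varphi^{+}+\epsilon/2$ for all $y\in Y$. I would prove this by contradiction. If it fails there are $\epsilon_0>0$, $m_j\to\infty$ and points $y_j=(x_j,c_j)\in Y$ with $\int\varphi\,d\eta_j>\ell_\varphi^{+}+\epsilon_0$, where $\eta_j:=\tfrac1{m_j}\sum_{k=1}^{m_j}\delta_{y_j}Q^k$. The covering property \eqref{eq.covering.diag} gives $(\pi_1)_{\ast}\eta_j=\tfrac1{m_j}\sum_{k=1}^{m_j}\delta_{x_j}P^k$, and uniform positive recurrence of $P$ makes the family $\{(\pi_1)_{\ast}\eta_j\}$ tight; since $C$ is compact, $\{\eta_j\}$ is then tight on $Y=E\times C$, so along a subsequence $\eta_j\to\eta$ weakly. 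The identity $\eta_j Q-\eta_j=\tfrac1{m_j}\big(\delta_{y_j}Q^{m_j+1}-\delta_{y_j}Q\big)\to 0$ together with the Feller property of $Q$ shows that $\eta$ is $Q$-invariant, while $\int\varphi\,d\eta=\lim_j\int\varphi\,d\eta_j\geq\ell_\varphi^{+}+\epsilon_0$, contradicting the definition of $\ell_\varphi^{+}$. This is exactly where \emph{uniform} positive recurrence (rather than mere positive recurrence) enters: the empirical measures $\eta_j$ are anchored at varying base points, so their tightness has to be uniform in the base point.

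With $m$ fixed as above, the rest is a routine block-martingale computation. Writing $n=\ell m+r$ with $0\leq r<m$, letting $(\mathcal F_k)$ be the natural filtration of $(y_k)$, and setting $W_j=\sum_{k=(j-1)m+1}^{jm}\varphi(y_k)$ for $j=1,\dots,\ell$, the Markov property and the Ces\`aro estimate give $\mathbb{E}_y[W_j\mid\mathcal F_{(j-1)m}]=m\,\psi_m(y_{(j-1)m})\leq m(\ell_\varphi^{+}+\epsilon/2)$, so the increments $D_j:=W_j-\mathbb{E}_y[W_j\mid\mathcal F_{(j-1)m}]$ form a martingale difference sequence with $|D_j|\leq 2m\|\varphi\|_\infty$. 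On the event $\{\tfrac1n\sum_{k=1}^n\varphi(y_k)\geq\ell_\varphi^{+}+\epsilon\}$ one obtains $\sum_{j=1}^\ell D_j\geq\epsilon n/4$ once $n$ is large (the $r$ leftover terms contribute $O(m\|\varphi\|_\infty)$ and $\ell m/n\to1$), and the Azuma--Hoeffding inequality bounds the probability of this by $\exp(-\alpha_0 n)$ for an $\alpha_0>0$ depending only on $\epsilon$, $m$ and $\|\varphi\|_\infty$. Enlarging $C_0$ to absorb the finitely many remaining small values of $n$ (for which the claimed bound is trivial) completes the proof of the upper tail, and hence of the proposition.
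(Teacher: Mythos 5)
Your proof is correct and follows essentially the same route as the paper's. The paper simply cites the proof of \cite[Proposition 3.1]{bq.clt} and isolates the one step that needs adaptation to the non-compact setting — the uniform Ces\`aro estimate $\frac{1}{m}\sum_{k=1}^{m}Q^k\varphi \leq \ell_\varphi^{+}+\epsilon/2$ for $m$ large, uniformly in the base point — which it establishes by exactly your contradiction/tightness argument: uniform positive recurrence of $P$ keeps the empirical Ces\`aro averages tight on $E$, compactness of $C$ then keeps them tight on $Y$, the Feller property of $Q$ makes any limit point $Q$-invariant, and such a limit point would violate the definition of $\ell_\varphi^{+}$. Your write-up of the block-martingale/Azuma step is a detail the paper leaves entirely to the cited reference, so it adds transparency but not a different method.
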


\begin{proof}
The proof of \cite[Proposition 3.1]{bq.clt} goes through: the uniform convergence \cite[(3.2)]{bq.clt} is the only point that needs care in our non-compact case and it follows from the uniform positive recurrence assumption we imposed on the transition kernel $P$ on $E$. Indeed, suppose that the convergence $\max(\ell_\varphi^+, \frac{1}{n}\sum_{k=1}^nQ^k\varphi) \to \ell_\varphi^+$ is not uniform. Then, one finds a sequence $y_n$ of points and $\epsilon_0>0$ such that for every $n \in \N$, $\frac{1}{n}\sum_{k=1}^nQ^k\varphi(y_n) \geq \ell_\varphi^+ +\epsilon_0$. By the uniform positive recurrence property of $P$, any limit point $\eta$ of $\frac{1}{n}\sum_{k=1}^n \delta_{y_n}Q^k$ projects to $\pi$ (the unique stationary measure for $P$ on $E$) and since $C$ is compact, $\eta$ gives full mass to $Y$. Hence it is a $Q$-invariant probability measure on $Y$ satisfying $\int \varphi \ d\eta \geq \ell_\varphi^+ + \epsilon_0$, a contradiction. 
\end{proof}


We now prove a large deviation result for cocycles associated to group actions (cf.~ \cite[Proposition 3.2]{bq.clt}). Let $\Gamma$ be a locally compact second countable group acting continuously on $C$. Let $X:E \to \Gamma$ be a continuous map and $P$ be a Markov-Feller transition kernel on $E$. We consider the Markov--Feller transition kernel on $Y$ defined as follows: for Borel subsets $A \subset E$ and $B \subset C$ and $y=(x,c)$, we set
\begin{equation}\label{eq.covering.op}
Q(y,A \times B):= P(x,A)  \ 1_{B}(X(x) \cdot c).
\end{equation}
By construction the kernel $Q$ covers the transition kernel $P$ in the sense of \eqref{eq.covering.diag}. For $y_0=(x_0,c) \in Y$, we will denote by $\mathbb{Q}_{y_0}$ the probability measure on $Y^\N$ determined by the kernel $Q$ and the initial distribution $\delta_{y_0}$. Note that $\mathbb{Q}_{y_0}$ is the push-forward of $\mathbb{P}_{x_0}$ by 
\begin{equation}\label{eq.Q.by.P}
\begin{aligned}
 E^\N & \to Y^\N\\
 (x_i) & \mapsto ((x_0,c),(x_1,X(x_0)c),(x_2,X(x_1)X(x_0) c),\ldots,y_n,\ldots)
 \end{aligned}
\end{equation}
where $y_n=(x_n,X(x_{n-1})\ldots X(x_0) c)$.

A continuous cocycle $\sigma:\Gamma \times C \to \R$ is said to have uniform exponential moment if there exists $\alpha_2>0$ and $C_2>0$ such that for every $x \in E$, $\mathbb{E}_x[\sup_{c \in C}e^{\alpha_2\sigma(X(z_1),c)}] \leq C_2$.

\begin{proposition}\label{prop.bq.3.2}
Under the assumptions of Proposition \ref{prop.bq.3.1}, given a continuous cocycle $\sigma: \Gamma \times C \to \R$ with uniform exponential moment for every $\epsilon>0$, there exist $C>0$ and $\alpha>0$ such that for every $x \in E$, $c \in C$, and $n \in \N$, we have 
$$
\P_x\left\{(x_0,\ldots) : \frac{1}{n} \sum_{k=1}^n \sigma(X(x_k),X(x_{k-1})\ldots X(x_0) c) \in [\ell^- -\epsilon,\ell^+ +\epsilon] \right\} \geq 1-Ce^{-\alpha n},
$$
where
$\ell^+=\sup_{\eta} \int \sigma(X(x),c) \ d\eta(x,c),$
with the supremum taken over Borel probability measures on $Y$ that are $Q$-invariant (and the lower-average $\ell^-$ is defined similarly with $\inf$ instead of $\sup$).
\end{proposition}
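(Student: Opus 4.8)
The plan is to realise the cocycle sum in the statement as an additive functional of the $Q$-chain and then invoke Proposition~\ref{prop.bq.3.1}, the one extra ingredient being a truncation to circumvent the unboundedness of $\sigma$. Set $\varphi\colon Y\to\R$, $\varphi(x,c)=\sigma(X(x),c)$, which is continuous since $X$ and $\sigma$ are. By the description of $\mathbb{Q}_{(x_0,c)}$ as the push-forward of $\mathbb{P}_{x_0}$ under $(x_i)_i\mapsto(y_i)_i$ with $y_k=(x_k,\,X(x_{k-1})\cdots X(x_0)c)$ (see \eqref{eq.Q.by.P}), one has, by the very definition of $\varphi$,
\[
\sum_{k=1}^n\sigma\big(X(x_k),\,X(x_{k-1})\cdots X(x_0)c\big)=\sum_{k=1}^n\varphi(y_k).
\]
So it suffices to bound $\mathbb{Q}_y\big\{\tfrac1n\sum_{k=1}^n\varphi(y_k)\notin[\ell_\varphi^--\epsilon,\ell_\varphi^++\epsilon]\big\}$ exponentially and uniformly in $y=(x,c)\in Y$, where $\ell_\varphi^{\pm}$ are as in Proposition~\ref{prop.bq.3.1}; for bounded $\varphi$ this \emph{is} Proposition~\ref{prop.bq.3.1}.

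To remove boundedness I would truncate: fix $L>0$, put $\varphi_L=(\varphi\wedge L)\vee(-L)$ (bounded, continuous) and $g_L=\varphi-\varphi_L$, so $|g_L|\le|\varphi|\,\mathbf{1}_{\{|\varphi|>L\}}$. Proposition~\ref{prop.bq.3.1} applied to $\varphi_L$ already yields the required exponential bound with window $[\ell_{\varphi_L}^--\tfrac{\epsilon}{3},\,\ell_{\varphi_L}^++\tfrac{\epsilon}{3}]$. It then remains to check that $\ell_{\varphi_L}^{\pm}\to\ell_\varphi^{\pm}$ as $L\to\infty$ uniformly (so that for $L$ large this window lies inside $[\ell_\varphi^--\tfrac{2\epsilon}{3},\,\ell_\varphi^++\tfrac{2\epsilon}{3}]$), and to obtain an exponential tail bound for $\tfrac1n\sum_{k=1}^n g_L(y_k)$.

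For the first point, every $Q$-invariant probability $\eta$ on $Y$ projects under $\pi_1$ to a $P$-invariant probability on $E$, hence to the unique stationary $\pi$ (this uses uniform positive recurrence of $P$, exactly as in the proof of Proposition~\ref{prop.bq.3.1}); integrating the exponential-moment hypothesis against $\pi=\pi P$ gives $\int_E\sup_c e^{\alpha_2|\sigma(X(x),c)|}\,d\pi(x)\le C_2$ (using the moment bound for $\sigma$, and for $-\sigma$ where needed — in our applications $|\sigma|\le N(\cdot)$ and this is condition $(A_2)$), whence $\int_Y e^{\alpha_2|\varphi|}\,d\eta\le C_2$ for every such $\eta$; uniform integrability then forces $\sup_\eta\int|\varphi|\,\mathbf{1}_{\{|\varphi|>L\}}\,d\eta\to0$, giving the claimed convergence. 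For the second point, conditionally on $\mathcal{F}_{k-1}$ the base point $c_k:=X(x_{k-1})\cdots X(x_0)c$ is determined and $x_k\sim P(x_{k-1},\cdot)$, so from the elementary inequality $e^{tg_L(y_k)}\le 1+e^{-sL}e^{(t+s)|\varphi(y_k)|}$ (valid for $t,s>0$) together with the uniform exponential moment one gets, for $t,s>0$ with $t+s\le\alpha_2$,
\[
\mathbb{E}\big[e^{tg_L(y_k)}\mid\mathcal{F}_{k-1}\big]\le 1+C_2\,e^{-sL}.
\]
Taking $t=s=\alpha_2/2$, iterating and applying Markov's inequality bounds $\mathbb{Q}_y\big\{\tfrac1n\sum_{k=1}^n g_L(y_k)>\tfrac{\epsilon}{3}\big\}$ by $\exp\!\big(n\big[\log(1+C_2 e^{-\alpha_2 L/2})-\tfrac{\alpha_2\epsilon}{6}\big]\big)$, which is $\le e^{-\alpha'n}$ for some $\alpha'>0$ once $L$ is large; the same holds with $g_L$ replaced by $-g_L$. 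Intersecting these exponentially likely events with the one from Proposition~\ref{prop.bq.3.1} and pushing back along \eqref{eq.Q.by.P} to $\mathbb{P}_x$ yields the statement.

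The genuinely delicate point is the tail control of $\tfrac1n\sum_k g_L(y_k)$: the estimate has to be uniform in the initial point $y$ (so that the final bound is uniform in $x$ and $c$) and, through the free parameter $L$, small enough to fit inside the target window without spoiling the exponential rate. This is precisely where the uniform exponential moment of $\sigma$ is essential, and it is the step that upgrades the bounded-function Proposition~\ref{prop.bq.3.1} to the cocycle statement above.
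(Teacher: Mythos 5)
Your proof is correct, but it follows a genuinely different route from the paper's. The paper performs the classical martingale/drift decomposition: writing $\xi = Q\tilde\sigma$ with $\tilde\sigma(x,c) = \sigma(X(x),c)$, one has
\[
\sum_{k=1}^n \sigma(X(x_k),X(x_{k-1})\cdots X(x_0)c) = \sum_{k=1}^n \phi_k + \sum_{k=0}^{n-1}\xi(y_k),
\]
where $\phi_k$ is a martingale difference sequence; the second sum falls directly under Proposition~\ref{prop.bq.3.1} because $\xi$ is automatically bounded (Jensen plus the exponential moment) and $\int\xi\,d\eta = \int\tilde\sigma\,d\eta$ for any $Q$-invariant $\eta$, so $\ell_\xi^\pm = \ell^\pm$ with no limiting argument; the martingale part is handled by an Azuma-type inequality (Liu--Watbled). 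You instead apply Proposition~\ref{prop.bq.3.1} to the hard truncation $\varphi_L$ and control the remainder $g_L$ by a hands-on Chernoff estimate, then check $\ell_{\varphi_L}^\pm\to\ell_\varphi^\pm$ via uniform integrability. The paper's route is a bit slicker — the identification $\ell_\xi^\pm=\ell^\pm$ is immediate and needs no $L\to\infty$ step — while yours is more elementary in that it does not invoke a martingale concentration inequality; the trade-off is the extra approximation argument for the limiting averages. One point you rightly flag is that both arguments effectively need a two-sided exponential bound (your Chernoff step, and the paper's claim that $\xi$ is bounded below as well as above), whereas the definition of ``uniform exponential moment'' as written is one-sided; this is harmless in all the paper's applications since there $|\sigma|\le N(\cdot)$, but the caveat is worth keeping. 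Your conditional Chernoff bound uses $s\ge t$ in the inequality $e^{tg_L}\le 1+e^{-sL}e^{(t+s)|\varphi|}$, which your choice $t=s=\alpha_2/2$ satisfies, so there is no gap there.
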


\begin{proof}
We will write the sum $\sum_{k=1}^n \sigma(X(x_k),X(x_{k-1})\ldots X(x_0) c)$ as a sum of two quantities for which we have exponential concentration bounds -- one of them thanks to Proposition \ref{prop.bq.3.1} and the other thanks to Azuma type concentration bounds for sums of martingale differences (see e.g.~ \cite[Theorem 1.1]{liu-watbled}).

To do this, let $\xi:Y \to \R$ be defined for $y=(x,c)$ as $\xi(y)=\int \sigma(X(z),X(x)c) \ dP_x(z)$. Note that $\xi$ is continuous (since $P$ is Markov--Feller and $\sigma$ is continuous) and bounded (thanks to the uniform exponential moment hypothesis). Furthermore, having fixed $c \in C$, let $\phi_n$ be the sequence of functions defined on $E^\N$ by
$$
\phi_n((x_i))=\sigma(X(x_{n}),X(x_{n-1})\ldots X(x_0)c)-\int \sigma(X(z),X(x_{n-1})\ldots X(x_0)c)) \ dP_{x_{n-1}}(z).
$$

We then have 
\begin{equation}\label{eq.sum.xi.phi}
\sum_{k=1}^n \sigma(X(x_k),X(x_{k-1})\ldots X(x_0) c)=\sum_{k=1}^n\phi_k((x_i)) + \sum_{k=0}^{n-1} \xi(y_k),
\end{equation}
where we recall that $y_k=(x_k,X(x_{k-1})\ldots X(x_0)c)$ and $y_0=(x_0,c)$.

One now readily checks that for every $(x,c) \in Y$, under $\P_x$, $\phi_k$ is a martingale difference sequence with respect to the canonical filtration $\mathcal{F}_n$ on $E^\N$. Indeed, for $P_{x_0}$-a.e.~ $(x_i)$, we have that $\E_x(\phi_n | \mathcal{F}_{n-1})((x_i))$ is equal to
\begin{equation}
\begin{aligned}
\E_{x_{n-1}}[\sigma(X(z_{n}),X(x_{n-1})\ldots X(x_0)c)]-\int \sigma(X(z),X(x_{n-1})\ldots X(x_0)c) \ dP_{x_{n-1}}(z)=0
\end{aligned}
\end{equation}

Thanks to the uniform exponential moment assumption, we can apply \cite[Theorem 1.1]{liu-watbled} and deduce that for every $\epsilon>0$, there exists $C_1>0$ and $\alpha_1>0$ such that for every $(x,c) \in Y$ and $n \in N$, we have
\begin{equation}\label{eq.exp.est.phi}
    \P_x\left\{(x_i) : \frac{1}{n} \sum_{k=1}^n \phi_k((x_i)) \geq \epsilon\right\} \leq C_1 e^{-\alpha_1n}.
\end{equation}

On the other hand, by Proposition \ref{prop.bq.3.1} applied to the function $\xi$ and thanks to the relation \eqref{eq.Q.by.P}, we obtain that for every $\epsilon>0$, there exists $C_0>0$ and $\alpha_0>0$ such that for every $(x,c) \in Y$ and $n \in \N$, we have
\begin{equation}\label{eq.exp.est.xi}
\P_x\left\{(x_i): \frac{1}{n}\sum_{k=0}^{n-1} \xi(x_k,X(x_{k-1})\ldots X(x_0)c) \in [\ell^-_\xi -\epsilon,\ell^+_\xi+\epsilon]\right\} \geq 1-C_0 e^{-n\alpha_0},
\end{equation}
where $\ell_\xi^+=\sup_{\eta}\int \int \sigma(X(z),X(x)c) \ dP_{x}(z)d\eta(x,c)=\sup_\eta \int \sigma (X(x),c) \ d\eta(x,c)$ since $\xi=Q\sigma$ and $\eta$ is $Q$-stationary. In view of \eqref{eq.sum.xi.phi}, the result now follows by \eqref{eq.exp.est.phi} and \eqref{eq.exp.est.xi}.
\end{proof}


We are now ready to prove Theorem \ref{thm.ld.matrix}.

\begin{proof}[Proof of Theorem \ref{thm.ld.matrix}]
We start by proving the first inequality. We will basically show that we can apply Proposition \ref{prop.bq.3.2} and that in that result, we have $\ell^-=\ell^+=\lambda_1$.

Let $C$ be the $(d-1)$-dimensional real projective space $\mathbf{P}(\R^d)$ endowed with the usual action of $\GL_d(\R)$. We take $\Gamma=\GL_d(\R)$ and $X$ as the map $E \to \GL_d(\R)$ in the data of the Markovian product $(M_n)$. Let $Q$ be the Markov--Feller transition kernel on $Y:=E \times C$ constructed as in \eqref{eq.covering.op} covering the kernel $P$ on $E$. Moreover, let $\sigma:\Gamma \times \mathbf{P}(\R^d) \to \R$ be the continuous cocycle given by $\sigma(g,[v])=\log \frac{\|gv\|}{\|v\|}$ where $v$ is any non-zero element in $\R^d$ and $[v]$ denotes its projection to $\mathbf{P}(\R^d)$. Thanks to condition $(A_1)$, the kernel $P$ is uniformly positively recurrent and thanks to condition $(A_2)$, $\sigma$ has a uniform exponential moment. Therefore, Proposition \ref{prop.bq.3.2} yields that for every $\epsilon>0$, there exist $C>0$ and $\alpha>0$ such that for every $x \in E$, non-zero $v \in \R^d$ and $n \in \N$, we have
\begin{equation}\label{eq.apply.3.2.matrix}
\mathbb{P}_x\left( \frac{1}{n}\log \frac{\|M_n v\|}{\|v\|}  \notin [\ell^- -\epsilon,\ell^+ +\epsilon]\right) \leq C e^{-\alpha n}.
\end{equation}
We will now see that there exists a unique $Q$-stationary probability measure on $Y$ and deduce that $\ell^-=\ell^+$. To this end, let $\eta$ be a $Q$-stationary probability measure on $E \times C$. Since $Q$ covers the kernel $P$, the projection of $\eta$ to $E$ is a $P$-stationary probability measure, which is therefore equal to $\pi$ (because $\pi$ is the unique $P$-stationary probability measure on $E$). Hence we can write $\eta=\int \delta_x \otimes \nu_x \ d\pi(x)$, where $\nu_x$ is a probability measure on $C$. By \cite[Lemma 3.4]{bq.3}, $\nu_x$ is the stationary probability measure for the Markov operator $Q_x$ induced by $Q$ on $\{x\} \times C$. But since $x$ is a single state in the state space $E$, $Q_x$ is the Markov operator induced by a probability measure $\mu_x$ on $\Gamma$ (the renewal measure, see \cite[\S 3.1]{prohaska-sert}), i.e.~ $Q_x((x,c),A\times B)=\delta_{x}(A)\cdot(\mu_x \ast \delta_{c}(B))$ for $A \subseteq E$ and $B \subseteq C$. By construction, the semigroup generated by the renewal measure $\mu_x$ is precisely the semigroup $T_x(x)$ (\cite[page 15]{prohaska-sert}). It follows from \cite[Th\'{e}or\`{e}me 5.3.(ii)]{bougerol.comparaison} that for every $x \in E$, the semigroup $T_x(x)$ in $\GL_d(\R)$ is strongly irreducible and proximal. Therefore, by \cite[\S III Theorem 3.1]{bougerol.book}, $\nu_x$ is the unique stationary probability measure of $\mu_x$ and hence it does not depend on $\eta$. This shows that $Q$ has a unique stationary probability measure $\eta$ on $Y$. In particular $\ell^-=\ell^+=:\ell$ in \eqref{eq.apply.3.2.matrix}. Choosing a basis $v_1,\ldots,v_d$ with unit vectors, applying \eqref{eq.apply.3.2.matrix} with each $v_i$, one gets by Borel--Cantelli that for every $x \in E$, $\P_x$ a.s.  
$$
\sup_{i=1,\ldots,d} \frac{1}{n} \log \|M_n v_i\| \underset{n \to \infty}{\longrightarrow} \ell.
$$
Since the supremum over a basis with unit vectors defines a norm on $\Mat_d(\R)$ comparable to an operator norm, one gets by the subadditive ergodic theorem (see \eqref{eq.kingman.matrix}) that $\ell=\lambda_1$.
\end{proof}

\subsection{Large deviation estimates for Markovian random walks on Gromov-hyperbolic spaces}\label{subsec.ld.isometry}

We introduce some basic definitions from metric geometry to state Theorem \ref{thm.ld.gromov} below. 

Let $(H,d)$ be a Gromov-hyperbolic metric space and $o \in H$ a basepoint. Given $x \in H$, let $h_x \in \Lip_{o}^1(H)$ denote the function defined by $h_x(y)=d(x,y)-d(x,o)$, where $\Lip_{o}^1(H)$ is the set of 1-Lipschitz functions on $H$ vanishing at $o$, endowed with the pointwise topology. By taking the closure in $\Lip_{o}^1(H)$, we get a compactification $\overline{H}^h$ of $H$, called the \textit{horofunction compactification}. The compact $\overline{H}^h$ is metrizable if $H$ is separable. In our case, since we will consider (Markovian) random walks on a countable group acting on $H$, we can and will without loss of generality suppose that $H$ is separable (see \cite[Remark 4]{gruber-sisto-tessera}).
The map $x \mapsto h_x$ is injective on $H$ and we usually identify $H$ with its image in $\overline{H}^h$. 
The \textit{Busemann cocycle} $\sigma: \Isom(H) \times \overline{H}^h \to \mathbb{R}$ is defined by $\sigma(\gamma,h)=h(\gamma^{-1}o)$. Note that for $o \in H \subseteq \overline{H}^h$, $\sigma(\gamma,o)=d(\gamma o,o)=:\kappa(\gamma)$ is the displacement functional. Recall finally that an element $\gamma \in \Isom(H)$ is called loxodromic if it has precisely two fixed points on the Gromov boundary $\partial H$ of $H$.

We say that a Markovian random walk $(M_n)$ on $\Isom(H)$ is \textit{non-elementary} if there does not exist a finite number of maps $V_i$ (say, $i=1,\ldots,t$) from $E$ to the Gromov boundary $\partial H$ such that for $x \in E$, denoting $W(x)=\cup_{i=1}^t V_i(x)$, we have, for every $x_0 \in E$ and $n \in \N$, $P_{x_0}$-a.s. $M_n W(x_0)=W(x_n)$. Moreover, we say that the Markovian random walk $(M_n)$ satisfies \textit{Condition $(A_2')$} if there exist positive constants $a$ and $B$ such that $\mathbb{E}_x[e^{a\kappa(M_1)}] \leq B$ for every $x \in E$. 

\begin{theorem}[Markovian random walks on Gromov-hyperbolic spaces]\label{thm.ld.gromov}
Let $H$ be a geodesic Gromov-hyperbolic space and $(M_n)$ be a non-elementary Markovian random walk on $\Isom(H)$ satisfying $(A_1)$ and $(A_2')$. Then, there exists a constant $\ell_\Lambda \geq 0$ such that for every $\epsilon>0$, there exists $\alpha>0$ and $C>0$ such that for every $x \in E$, $\xi \in \overline{H}^h$ and $n \in \N$, we have
$$
\P_x(|\sigma(M_n,\xi)-n\ell_\Lambda| \geq n\epsilon)\leq Ce^{-\alpha n}.
$$
\end{theorem}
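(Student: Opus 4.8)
The plan is to adapt the proof of Theorem~\ref{thm.ld.matrix} essentially verbatim, replacing the projective space $\mathbf{P}(\R^d)$ by the horofunction compactification $\overline{H}^h$ and the norm cocycle by the Busemann cocycle $\sigma$. We may assume $H$ separable, so $C:=\overline{H}^h$ is compact metrizable; let $\Gamma$ be the countable discrete subgroup of $\Isom(H)$ generated by $X(E)$ (it is locally compact second countable and acts continuously on $C$), and let $X:E\to\Gamma$ be the map from the data of $(M_n)$. Form the Markov--Feller kernel $Q$ on $Y=E\times C$ as in \eqref{eq.covering.op}; it covers $P$. Condition $(A_1)$ forces $P$ to be uniformly positive recurrent, and since every $c\in C$ is $1$-Lipschitz with $c(o)=0$ we have $\sigma(\gamma,c)=c(\gamma^{-1}o)\leq d(\gamma^{-1}o,o)=\kappa(\gamma)$, so $\sup_{c\in C}e^{a\sigma(X(z_1),c)}\leq e^{a\kappa(M_1)}$ and $(A_2')$ endows $\sigma$ with a uniform exponential moment. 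Proposition~\ref{prop.bq.3.2} therefore applies and, after the same reindexing as in the proof of Theorem~\ref{thm.ld.matrix} (use the cocycle identity to write $\sum_{k=1}^n\sigma(X(x_k),X(x_{k-1})\cdots X(x_0)c)=\sigma(M_n,c)$), it yields, for each $\epsilon>0$, constants $C,\alpha>0$ such that $\P_x\big(\tfrac1n\sigma(M_n,\xi)\notin[\ell^--\epsilon,\ell^++\epsilon]\big)\leq Ce^{-\alpha n}$ for all $x\in E$, $\xi\in C$ and $n\in\N$, where $\ell^+$ (resp.\ $\ell^-$) is the supremum (resp.\ infimum) of $\int\sigma(X(x),c)\,d\eta(x,c)$ over $Q$-invariant probability measures $\eta$ on $Y$.

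It remains to show $\ell^-=\ell^+$ and to identify the common value. For the former I would prove, as in the matrix case, that $Q$ has a unique stationary probability measure on $Y$: a $Q$-stationary $\eta$ projects to the unique $P$-stationary measure $\pi$, hence disintegrates as $\eta=\int\delta_x\otimes\nu_x\,d\pi(x)$, and by \cite[Lemma~3.4]{bq.3} each $\nu_x$ is stationary for the Markov operator on $\{x\}\times C$ induced by the renewal measure $\mu_x$ on $\Gamma$, whose generated semigroup is $T_x(x)$. The isometric counterpart of \cite[Th\'{e}or\`{e}me~5.3(ii)]{bougerol.comparaison}---which one deduces from the hypothesis that $(M_n)$ is non-elementary---shows that the group generated by $T_x(x)$ acts non-elementarily on $H$, and then uniqueness of the $\mu_x$-stationary measure on $\overline{H}^h$ follows from the theory of boundary actions of non-elementary groups (Maher--Tiozzo \cite{maher-tiozzo}, Benoist--Quint \cite{bq.clt.hyperbolic}), all the mass escaping to the boundary. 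Thus $\nu_x$, and with it $\eta$, is unique, so $\ell^-=\ell^+=:\ell$.

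To identify $\ell$, apply the displayed estimate to the basepoint horofunction $\xi=h_o\in\overline{H}^h$, where $h_o(y)=d(o,y)$: since $\sigma(\gamma,h_o)=h_o(\gamma^{-1}o)=d(\gamma o,o)=\kappa(\gamma)\geq0$, Borel--Cantelli gives $\tfrac1n\kappa(M_n)\to\ell$ $\P_x$-almost surely, whence the constant $\ell_\Lambda:=\ell$ is $\geq0$ (and, by Kingman's subadditive ergodic theorem applied to the sequence $\kappa(M_n)=d(M_no,o)$, which is subadditive over the Markov shift, it coincides with the drift $\lim_n\tfrac1n d(M_no,o)$). Feeding $\ell^-=\ell^+=\ell_\Lambda$ back into the displayed estimate, after a harmless adjustment of $\epsilon$ to pass from a closed to an open interval, gives exactly the statement of the theorem.

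The step I expect to be the main obstacle is the identity $\ell^-=\ell^+$, and within it two points: first, establishing the isometric analogue of Bougerol's irreducibility/proximality statement, namely that each semigroup $T_x(x)$ generates a non-elementary subgroup of $\Isom(H)$---this is precisely where the non-elementarity hypothesis on $(M_n)$ is used in an essential way---and second, the uniqueness of the stationary measure on the horofunction compactification in the possibly non-proper, merely separable, setting, for which one cannot rely on classical proper-space arguments and must instead appeal to \cite{maher-tiozzo}. The reindexing and the exponential-moment bookkeeping in the remaining steps are routine and proceed exactly as in the proof of Theorem~\ref{thm.ld.matrix}.
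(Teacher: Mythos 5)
Your setup mirrors the paper exactly through the application of Proposition~\ref{prop.bq.3.2}: taking $C=\overline{H}^h$, checking that $(A_1)$ gives uniform positive recurrence and that $(A_2')$ gives the uniform exponential moment for the Busemann cocycle (via $\sigma(\gamma,c)\le\kappa(\gamma)$), and obtaining the estimate $\P_x(\tfrac1n\sigma(M_n,\xi)\notin[\ell^--\epsilon,\ell^++\epsilon])\le Ce^{-\alpha n}$. That part is correct and is the paper's argument.

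The gap is exactly where you anticipated, in the step $\ell^-=\ell^+$, and it is not patched by the citations you give. You propose to transplant the matrix-case argument and prove uniqueness of the $Q$-stationary measure on $Y=E\times\overline{H}^h$ by showing each disintegration $\nu_x$ is the unique $\mu_x$-stationary measure on the horofunction compactification. But Maher--Tiozzo \cite{maher-tiozzo} prove uniqueness of the stationary measure on the \emph{Gromov boundary} $\partial H$, not on $\overline{H}^h$; the horofunction boundary can map many-to-one onto $\partial H$ (and in the non-proper, merely separable setting its structure is considerably more complicated), so uniqueness on $\partial H$ does not a priori pull back to uniqueness on $\overline{H}^h$. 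Benoist--Quint \cite{bq.clt.hyperbolic} do not supply it either. Indeed, the paper's proof explicitly flags this obstruction: it states that, unlike the matrix case, it is not known whether $Q$ has a unique stationary measure on $Y$, and therefore abandons the uniqueness route entirely.

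What the paper does instead is show that \emph{every} $Q$-stationary probability measure $\eta$ satisfies $\int\sigma\,d\eta=\ell_\Lambda$, where $\ell_\Lambda$ is the drift from Kingman's theorem, without invoking uniqueness. Concretely: the Chacon--Ornstein theorem gives $\tfrac1n\E_{x_0}[\sigma(X(z_{n-1})\cdots X(x_0),c)]\to\int\sigma\,d\eta$ for $\eta$-typical $(x_0,c)$; inducing on return times to a state $x$ produces a renewal measure $\mu_x$ with finite exponential moment whose supporting semigroup $T_x(x)$ is shown to be non-elementary (via \cite[Proposition 3.1]{CCMT} and \cite[Theorem 6.2.3, Proposition 6.2.14]{das-simmons-urbanski}); and then \cite[Lemma 3.9]{aoun-mathieu-sert} converts the Busemann-cocycle average back into the displacement $\kappa$, forcing $\int\sigma\,d\eta=\ell_\Lambda$. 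The case $\ell_\Lambda=0$ is also disposed of separately at the outset of this argument, a point your write-up passes over. So your proof is correct up to the point of identifying $\ell^\pm$, but the uniqueness claim on $\overline{H}^h$ is an unsubstantiated leap that the paper deliberately routes around.
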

The constant $\ell_\Lambda$ is called the \textit{drift of the Markovian random walk} $(M_n)$. We note that specializing to $\xi=o \in H$, the above statement boils down to large deviation estimates for the displacement function $\kappa(\cdot)$.

This result generalizes the assertion concerning the uniqueness of zero in the recent work \cite{BMSS} in the iid setting (let us note that even more recently, Gou\"{e}zel \cite{gouezel.21} managed to get rid of the exponential moment assumption in the same setting). In the iid case, the dependence of $\alpha$ on $\epsilon$ has been specified and quantitative estimates have been recently obtained when $H$ is proper (see \cite{aoun-sert.gromov,aoun-mathieu-sert}). Finally, see also the recent work of Goldsborough--Sisto \cite{gold-sisto} for another perspective on Markovian random products of isometries.

\begin{proof}
We aim to apply Proposition \ref{prop.bq.3.2}. To this end, let $C=\overline{H}^h$ and $\Gamma$ be the countable group generated by the image of the map $X:E \to \Isom(H)$ in the data of the Markovian random walk $(M_n)$. Recall that the group of isometries $\Isom(H)$ acts on $\overline{H}^h$ by homeomorphisms given, for $\gamma \in \Isom(H)$,  $h \in \overline{H}^h$ and $y \in M$, by $(\gamma\cdot h)(y)=h(\gamma^{-1}y)-h(\gamma^{-1}o)$ and the Busemann cocycle is a continuous cocycle over this action. Let $Q$ be the Markov--Feller transition kernel on $Y:=E \times C$ constructed as in \eqref{eq.covering.op} covering the kernel $P$ on $E$. Thanks to condition $(A_1)$, the kernel $P$ is uniformly positively recurrent and thanks to condition $(A_2)$, $\sigma$ has a uniform exponential moment. Therefore, Proposition \ref{prop.bq.3.2} implies that for every $\epsilon>0$, there exist $C>0$ and $\alpha>0$ such that for every $x \in E$,  $\xi \in \overline{H}^h$ and $n \in \N$, we have
\begin{equation*}
\mathbb{P}_x\left( \frac{1}{n}\sigma(M_n,\xi)  \notin [\ell^- -\epsilon,\ell^+ +\epsilon]\right) \leq C e^{-\alpha n}.
\end{equation*}
We now let $\ell_\Lambda$ be the constant given by the subadditive ergodic theorem as the $\P_\pi$ almost sure limit of $\frac{1}{n} \kappa(M_n)$ as $n \to \infty$. It remains to show that $\ell^+=\ell^-=\ell_\Lambda$. If $\ell_\Lambda=0$ this equality is easy to see, so we suppose that $\ell_\Lambda>0$. Here, a different argument is needed compared to the corresponding part in the proof of Theorem \ref{thm.ld.matrix}, since unlike therein, in the current setting we do not know whether there is only one $Q$-stationary probability measure on $Y$. Let $\eta$ be a $Q$-stationary probability measure on $Y$. By the Chacon--Ornstein ergodic theorem,
\[
\eta\left\{(x,c) \in Y : \frac{1}{n} \sum_{k=0}^{n-1} Q^k \sigma(x,c) \underset{n \to \infty}{\longrightarrow} \int \sigma \ d \eta \right\}=1.
\]
By specializing to such $(x_0,c)$, using the fact that for every $y=(x',c')$, $\mathbb{Q}_y$ is the pushforward of $\mathbb{P}_{x'}$ by the map \eqref{eq.Q.by.P}, we get that
\begin{equation}\label{eq.chacon.nu}
\lim_{n \to \infty}\frac{1}{n}\E_{x_0}[\sigma(X(z_{n-1})\ldots X(z_1) X(x_0),c)]=\int \sigma d\eta.    
\end{equation}
For $n \geq 1$, let $\tau_x(n)$ denote the random variable, defined on $E^\N$, which is given by $n^{th}$-return time to $x$. Thanks to condition $A_1$, $\tau_x(1)$ (equivalently $\tau_x(n)$ for every $n \in \N$) has a finite exponential moment. For $x \in E$, let $\mu_x$ be the (renewal) probability measure defined as $\mu_x(g)=\mathbb{P}_x(X(z_{\tau_x(1)-1})\ldots X(z_1)X(x)=g)$. It is easy to see that $\mu_x$ has a finite exponential moment, i.e.~ $\int e^{\beta\kappa(g)} d\mu_x(g)<\infty$ for some $\beta>0$. The support of $\mu_x$ is the subsemigroup $X(x)^{-1} T_x(x) X(x)$ of $\Gamma$ defined in the same way as in \S \ref{subsec.simplicity}. Since the Markovian product $(M_n)$ is non-elementary and has positive drift, the semigroup $T_x(x)$ is clearly unbounded. Moreover, it follows from the same argument as in the proof of \cite[Th\'{e}or\`{e}me 5.3.(ii)]{bougerol.comparaison} that $T_x(x)$ does not stabilize a finite collection of points in the Gromov boundary $\partial H$. Now, \cite[Proposition 3.1]{CCMT} implies that the group generated by $T_x(x)$ contains two independent loxodromics and then \cite[Theorem 6.2.3 and Proposition 6.2.14]{das-simmons-urbanski} imply that the semigroup $T_x(x)$ is non-elementary (i.e.~ contains two independent loxodromics).

For the rest, on the one hand, it is not hard deduce from \eqref{eq.chacon.nu} that 
\begin{equation}\label{eq.chacon.nu.return}
    \frac{1}{n}\E_{\mu_{x_0}}[\sigma(g_n \ldots g_1 ,c)]\to E[\tau_{x_0}(1)] \int \sigma \ d\eta
\end{equation}
and since $\mu_{x_0}$ is non-elementary and has a finite exponential moment, it follows from \cite[Lemma 3.9]{aoun-mathieu-sert} and \eqref{eq.chacon.nu.return} that 
\begin{equation}\label{eq.chacon.to.kappa}
    \frac{1}{n}E_{\mu_{x_0}}[\kappa(g_n \ldots g_1)] \to E[\tau_{x_0}(1)]\int \sigma \ d\eta.
\end{equation}
On the other hand, the left-hand-side of \eqref{eq.chacon.nu.return} converges to $E[\tau_{x_0}(1)] \ell_\Lambda$. This shows that $\int \sigma d\eta=\ell_\Lambda$. Since $\eta$ is an arbitrary $Q$-stationary probability measure, this shows that $\ell^+=\ell^-=\ell_\Lambda$, completing the proof.
\end{proof}

\begin{remark}\label{rk.positive.gromov}
We remark that in Theorem \ref{thm.ld.gromov}, we cannot exclude the possibility that $\ell_\Lambda=0$. However, a handy characterization of when $\ell_\Lambda>0$ follows from the previous proof. Indeed, let $x \in E$ and $T_x(x)$ be the semigroup above. As in the proof above, since the Markovian product $(M_n)$ is non-elementary, the semigroup $T_x(x)$ does not stabilize a finite collection of points in the Gromov-boundary $\partial H$. Moreover, as shown above if $T_x(x)$ is unbounded, $T_x(x)$ contains two independent loxodromics. The linear escape result in \cite{maher-tiozzo} implies that the drift of $\mu_x$ is positive and hence from the proof above, we get $\ell_\Lambda>0$.  On the other hand, it is easy to see that $\ell_\Lambda=0$ if for some (equivalently all) $x \in E$, the semigroup $T_x(x)$ is bounded.
\end{remark}

\subsection{Central limit theorem with Berry--Esseen type estimates}\label{subsec.berry.esseen.markov}

Specializing to $t=1$ in \eqref{eq.defn.sn}, 1.~ of Theorem \ref{thm.bougerol.wiener.lil} says that the central limit theorem holds: for every $x \in E$ and $a \in \R$, we have
\begin{equation}\label{eq.gives.clt}
\P_x \left(\log \|M_n\|-n\lambda_1 \leq a\sqrt{n} \right)   \underset{n \to \infty}{\longrightarrow}  \frac{1}{\sigma \sqrt{2\pi}} \int_{-\infty}^a e^{-\frac{s^2}{2\sigma^2}} \ ds.
\end{equation}

In the following result, we give the Berry--Esseen type bound for the convergence rate in \eqref{eq.gives.clt}. Our main interest in the Berry--Esseen bound is that it will be used to obtain a \textit{quantitative} counting central limit theorem on spheres of the Cayley graph of a Gromov-hyperbolic group. Unlike Theorem \ref{thm.bougerol.wiener.lil}, it is not simple to deduce the Berry--Esseen type bound for $\log \|M_n\|$ from that of $\log \|M_n v\|$ --- the latter was proven by Bougerol \cite{bougerol.thm.limite}. Indeed, even in the iid case, although the Berry--Esseen bound for $\log \|M_n v\|$ has been known since the work of Le Page \cite{lepage}, the bounds for the matrix norm were only recently studied \cite{cuny.grad, cuny.jan, XGL}. Below, we give a version of these results for the Markovian case adapting the approach of Xiao--Grama--Liu \cite{XGL} and using our large deviation estimates (replacing the large deviation ingredient of \cite{XGL} from \cite{bq.book} in the iid case).

\begin{theorem}[Berry--Esseen bound in CLT]\label{thm.bougerol.berryess}
Let $(M_n)$ be a strongly irreducible and $1$-contracting Markovian product satisfying $(A_1)$ and $(A_2)$. Let $\|\cdot\|$ be a fixed norm on $\R^d$. Then, there exists a constant $D>0$ such that for every $x \in E$, $a \in \R$, and $n \in \N$, we have
$$
\left| \P_x \left(\log \|M_n\|-n\lambda_1 \leq a\sqrt{n} \right)  - \frac{1}{\sigma \sqrt{2\pi}} \int_{-\infty}^a e^{-\frac{s^2}{2\sigma^2}} \ ds \right| \leq \frac{D \log n}{\sqrt{n}}.
$$
\end{theorem}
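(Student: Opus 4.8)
The plan is to deduce the Berry--Esseen bound for the matrix norm $\log\|M_n\|$ from Bougerol's Berry--Esseen bound for the \emph{norm cocycle} $\log\|M_n v\|$ (fixed unit vector), following the scheme of Xiao--Grama--Liu \cite{XGL}, with the large deviation estimates of Theorem \ref{thm.ld.matrix} taking the role that the iid large deviation estimates of \cite{bq.book} play in \cite{XGL}. The mechanism is simple: for a well chosen fixed direction, $\log\|M_n\|$ and the cocycle differ by only $O(\log n)$ outside an event of very small probability, and a discrepancy of that size perturbs the limiting Gaussian by only $O(\log n/\sqrt n)$.

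\textbf{The comparison estimate.} It is convenient to work with the transpose: $\|M_n\|=\|M_n^{T}\|$. Fix a unit vector $w\in\R^d$. Since $\|M_n^{T}\|\geq\|M_n^{T}w\|$ we get for free that $\P_x(\log\|M_n\|-n\lambda_1\leq t)\leq\P_x(\log\|M_n^{T}w\|-n\lambda_1\leq t)$. For the reverse inequality I would prove that there exist $c_0>0$ and, for every $\beta>0$, a constant $C_\beta$ (none depending on $x$ or $n$) with $\P_x\big(\log\|M_n\|-\log\|M_n^{T}w\|>c_0\log n\big)\leq C_\beta n^{-\beta}$. By the singular value decomposition, writing $u_1(M_n)$ for the top left singular direction of $M_n$ (equivalently the top \emph{right} singular direction of $M_n^{T}$) and $s_1\geq s_2$ for the two largest singular values,
\[
\|M_n^{T}w\|\geq |\cos\angle(w,u_1(M_n))|\cdot\|M_n^{T}\|,\qquad\text{hence}\qquad \log\|M_n\|-\log\|M_n^{T}w\|\leq-\log|\cos\angle(w,u_1(M_n))| .
\]
Now $u_1(M_n)$ is the dominant eigendirection of $M_nM_n^{T}=X(z_n)\,(M_{n-1}M_{n-1}^{T})\,X(z_n)^{T}$; by Theorem \ref{thm.bougerol.simplicity} our hypotheses ($1$-contracting, strongly irreducible) give the Lyapunov gap $\lambda_1>\lambda_2$, and Theorem \ref{thm.ld.matrix} applied to $M_n$ and to $\wedge^2 M_n$ shows $s_2(M_n)/s_1(M_n)\leq e^{-(\lambda_1-\lambda_2-\epsilon)n}$ off an event of probability $\leq Ce^{-\alpha n}$. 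A standard perturbation argument then makes $u_1(M_n)$ converge exponentially fast, uniformly in the starting state, to a random direction $Z\in\mathbf{P}(\R^d)$, with $\P_x(d(u_1(M_n),Z)>\delta)\leq Ce^{-\alpha n}$ whenever $\delta\geq e^{-cn}$. The law of $Z$ is a stationary-type measure for the projective action which, by the Markovian analogue of Guivarc'h's regularity estimate (available since the renewal semigroups $T_x(x)$ are strongly irreducible and proximal, cf.\ the proof of Theorem \ref{thm.ld.matrix} and \cite[Th\'eor\`eme 5.3.(ii)]{bougerol.comparaison}), satisfies $\sup_{x}\sup_{w'}\P_x(|\cos\angle(w',Z)|<\delta)\leq C\delta^{\gamma}$ for some $\gamma>0$. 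Choosing $\delta=n^{-c'}$ with $c'$ large gives $|\cos\angle(w,u_1(M_n))|\geq n^{-c'}$ off an event of probability $\leq Cn^{-c'\gamma}+Ce^{-cn}$, which is the claimed comparison with $c_0=c'$ and $\beta=c'\gamma$ as large as we wish.

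\textbf{Assembling the bound.} Bougerol's Berry--Esseen bound \cite{bougerol.thm.limite} for the norm cocycle, applied to the time-reversed and transposed Markovian product (again $1$-contracting, strongly irreducible, and satisfying $(A_1)$, $(A_2)$), yields $D'>0$ with $\sup_t\big|\P_x(\log\|M_n^{T}w\|-n\lambda_1\leq t)-G(t/\sqrt n)\big|\leq D'/\sqrt n$ uniformly in $x\in E$, where $G(s)=\frac{1}{\sigma\sqrt{2\pi}}\int_{-\infty}^{s}e^{-u^2/2\sigma^2}\,du$; the variance here is positive by \cite[Proposition 4.9]{bougerol.thm.limite} and must equal the $\sigma^2$ of Theorem \ref{thm.bougerol.wiener.lil}, since the comparison step forces the CLTs of $\log\|M_n\|$ and $\log\|M_n^{T}w\|$ to coincide (Slutsky). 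Combining the two inclusions from the comparison step with this cocycle bound and the elementary estimate $|G(s)-G(s')|\leq |s-s'|/(\sigma\sqrt{2\pi})$ with $|s-s'|=c_0\log n/\sqrt n$, we obtain uniformly in $t\in\R$ and $x\in E$
\[
\big|\P_x(\log\|M_n\|-n\lambda_1\leq t)-G(t/\sqrt n)\big|\leq \frac{D'}{\sqrt n}+\frac{c_0\log n}{\sigma\sqrt{2\pi\, n}}+C_1 n^{-1}\leq\frac{D\log n}{\sqrt n}
\]
for a suitable $D>0$ (taking $\beta=1$ above), which is exactly the assertion upon substituting $t=a\sqrt n$.

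\textbf{Main obstacle.} The crux, and the only place the hypotheses and Theorem \ref{thm.ld.matrix} are genuinely used, is the comparison estimate: one must transfer to the Markov-dependent setting both the exponential forward alignment of the dominant singular direction (needing the gap from Theorem \ref{thm.bougerol.simplicity} together with large deviations for $M_n$ and $\wedge^2 M_n$) and Guivarc'h-type H\"older regularity of the limiting direction, \emph{with constants uniform in the starting state}; one must also be mildly careful with the passage to the time-reversed/transposed chain so that Bougerol's cocycle estimate may be invoked uniformly in $x$ (this is automatic when $E$ is finite, which is the only case needed for the applications). Everything else is bookkeeping.
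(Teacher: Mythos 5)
Your strategy differs from the paper's in a way that introduces a genuine error in the key comparison step, and even after the natural fix it would rest on a regularity estimate that the paper carefully avoids needing.

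The central problem is the claim that $u_1(M_n)$ converges. For $M_n=X(z_n)\cdots X(z_1)$ you correctly write $M_nM_n^T=X(z_n)\,(M_{n-1}M_{n-1}^T)\,X(z_n)^T$; but when the singular gap is large this forces $u_1(M_n)\approx [\,X(z_n)\,u_1(M_{n-1})\,]$ in $\mathbf P(\R^d)$, i.e.\ the top left singular direction is (up to an exponentially small perturbation) a Markov chain driven by the $X(z_i)$, not a convergent sequence. Under strong irreducibility no direction is fixed, so $u_1(M_n)$ does not settle down; it is $v_1(M_n)=u_1(M_n^T)$ (the top right singular direction) that stabilizes, since appending $X(z_{n+1})^T$ on the right leaves the image of the rank-one approximation of $M_n^T$ unchanged. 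Because your comparison $\|M_n^Tw\|\geq|\cos\angle(w,u_1(M_n))|\,\|M_n^T\|$ involves precisely $u_1(M_n)$, the convergence you invoke is not available for the quantity you need to control.

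Suppose you repaired this by dropping the transpose and using $\|M_nw\|\geq|\cos\angle(w,v_1(M_n))|\,\|M_n\|$ together with the genuine convergence $v_1(M_n)\to Z$. You would then need a Markovian Guivarc'h-type H\"older regularity $\sup_x\sup_{w'}\P_x(|\cos\angle(w',Z)|<\delta)\leq C\delta^\gamma$, \emph{uniformly in the starting state}. You assert this is ``available'' by pointing to strong irreducibility and proximality of the renewal semigroups $T_x(x)$, but transferring regularity from the renewal (iid) picture to the conditional law of $Z$ under $\P_x$ with the required uniformity is not a formal consequence of that fact, is not proved in the paper, and is a substantive technical ingredient in its own right. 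The paper (following the actual XGL scheme, which you say you are following but in fact are not) replaces this with Lemma \ref{lemma.cartan.iwasawa}, a Markovian version of Benoist--Quint's Lemma 17.8: it compares $\log\|M_n\|$ to the ``restarted'' cocycle $\log(\|M_nv\|/\|M_\ell v\|)+\log\|M_\ell\|$ with error $e^{-\epsilon\ell}$ off an event of probability $e^{-c\ell}$, and its proof needs only the large-deviation bounds for $M_n$ and $\wedge^2M_n$ (Theorem \ref{thm.ld.matrix} and Lemma \ref{lemma.lambda.gap.ld}), not any measure regularity. One then conditions on $\mathcal F_\ell$, uses the cocycle identity $\log\|M_nv\|-\log\|M_\ell v\|=\log\|X_n\cdots X_{\ell+1}\tilde v\|$, applies Bougerol's Berry--Esseen for the restarted vector cocycle uniformly in the starting vector and starting state, and chooses $\ell\asymp\log n$. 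A secondary issue in your writeup is that invoking Bougerol's cocycle Berry--Esseen for the \emph{time-reversed, transposed} chain requires checking $(A_1)$, $(A_2)$, and the uniformity in $x$ for the reversed chain, which you acknowledge only in passing; this complication disappears if one avoids the transpose as the paper does.
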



Regarding the central limit theorem, we signal that in view of the more recent progress of Benoist--Quint \cite{bq.clt,bq.clt.hyperbolic} (see also \cite{cuny.grad} for Berry--Esseen estimates) optimizing the moment hypothesis in the central limit theorem for the iid case (respectively, improving the Berry--Esseen estimates), it is probable that \eqref{eq.gives.clt} and some Berry--Esseen type estimates hold under a polynomial moment hypothesis (we do not pursue these directions).

To prove the Berry--Esseen estimate in Theorem \ref{thm.bougerol.berryess}, we will need some further results on large deviation estimates which are given in the next two lemmas.

\begin{lemma}\label{lemma.lambda.gap.ld}
Under the assumptions of Theorem \ref{thm.bougerol.berryess}, for every $\epsilon>0$, we have
$$
\limsup_{n \to\infty} \frac{1}{n} \log \P_x\left(\frac{1}{n} \log \frac{\|M_n\|^2}{\|\wedge^2 M_n\|} \leq \lambda_1-\lambda_2 - \varepsilon \right)<0,
$$
uniformly in $x \in E$.
\end{lemma}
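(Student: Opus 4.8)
We plan to rewrite
$$\log\frac{\|M_n\|^2}{\|\wedge^2 M_n\|}=2\log\|M_n\|-\log\|\wedge^2 M_n\|$$
and to control the two pieces separately, using that $\tfrac1n\log\|M_n\|\to\lambda_1$ and $\tfrac1n\log\|\wedge^2 M_n\|\to\lambda_1+\lambda_2$ by Furstenberg--Kesten (see \eqref{eq.kingman.matrix}). Indeed, if $2\log\|M_n\|-\log\|\wedge^2 M_n\|\le n(\lambda_1-\lambda_2-\epsilon)$ and $\log\|M_n\|> n\lambda_1-n\epsilon/4$, then necessarily $\log\|\wedge^2 M_n\|> n(\lambda_1+\lambda_2)+n\epsilon/2$; hence the event in the statement is contained in $\{\log\|M_n\|\le n\lambda_1-n\epsilon/4\}\cup\{\log\|\wedge^2 M_n\|\ge n(\lambda_1+\lambda_2)+n\epsilon/2\}$, and it suffices to bound each of these two probabilities by $Ce^{-\alpha n}$ uniformly in $x\in E$.

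For the first event this is immediate from Theorem \ref{thm.ld.matrix} (applicable since $(M_n)$ is strongly irreducible and $1$-contracting and satisfies $(A_1)$, $(A_2)$): $\P_x(\log\|M_n\|\le n\lambda_1-n\epsilon/4)\le\P_x(|\log\|M_n\|-n\lambda_1|\ge n\epsilon/4)\le Ce^{-\alpha n}$, uniformly in $x$.

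The second event is the main point. The Markovian product $(\wedge^2 M_n)$ has top Lyapunov exponent $\lambda_1+\lambda_2$, but it need not be $1$-contracting (e.g.\ when $\lambda_2=\lambda_3$), so one cannot feed it directly into Theorem \ref{thm.ld.matrix}; moreover the naive submultiplicative bound $\log\|\wedge^2 M_n\|\le\sum_{k=1}^n\log\|\wedge^2 X_k\|$ only yields the rate $\E_\pi[\log\|\wedge^2 X_1\|]\ge\lambda_1+\lambda_2$, which is in general strictly larger. To recover the sharp rate we chunk. Fix a block length $L$ and set $Z_j^{(L)}=\log\|\wedge^2(X_{jL}\cdots X_{(j-1)L+1})\|$, so that by submultiplicativity $\log\|\wedge^2 M_n\|\le\sum_{j=1}^{\lceil n/L\rceil}Z_j^{(L)}$ (the last block possibly incomplete). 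Since $L\mapsto\E_\pi[\log\|\wedge^2 M_L\|]$ is subadditive, Fekete's lemma together with the subadditive ergodic theorem give $\tfrac1L\E_\pi[Z_1^{(L)}]\to\lambda_1+\lambda_2$, so we may fix $L$ with $\tfrac1L\E_\pi[Z_1^{(L)}]<\lambda_1+\lambda_2+\epsilon/8$. The $Z_j^{(L)}$ form a functional of the $L$-step Markov chain $(z_{(j-1)L+1},\dots,z_{jL})_{j\ge1}$, which is uniformly geometrically ergodic by $(A_1)$; and $|Z_j^{(L)}|\le 2\sum_{k}N(X_k)$ over the block, which by $(A_2)$ and an iterated conditioning gives the $Z_j^{(L)}$ a uniform exponential moment ($\E_x[e^{(a/2)|Z_j^{(L)}|}]\le B^L$ for all $x$). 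An exponential concentration estimate for ergodic averages of such an exponentially integrable functional of a uniformly geometrically ergodic chain—obtained from Proposition \ref{prop.bq.3.1} after truncating $Z_j^{(L)}$ at a level $T$ and controlling the overflow $\sum_j(Z_j^{(L)}-T)_+$ by a Chernoff bound, exactly along the lines of the proof of Proposition \ref{prop.bq.3.2}—then yields, uniformly in $x$, $\P_x\big(\sum_{j=1}^{\lceil n/L\rceil}Z_j^{(L)}> n(\lambda_1+\lambda_2+\epsilon/4)\big)\le Ce^{-\alpha n}$. Hence $\P_x(\log\|\wedge^2 M_n\|\ge n(\lambda_1+\lambda_2)+n\epsilon/2)\le Ce^{-\alpha n}$, and combining with the first estimate proves the lemma with constants independent of $x$.

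The main obstacle is exactly this second probability: extracting the sharp exponential rate $\lambda_1+\lambda_2$ for $\log\|\wedge^2 M_n\|$ rather than the (generically larger) $\E_\pi[\log\|\wedge^2 X_1\|]$, which forces us to exploit sub-additivity via chunking since the exterior square escapes the hypotheses of Theorem \ref{thm.ld.matrix}. The remaining ingredient—exponential concentration of ergodic averages of an exponentially integrable functional of a uniformly geometrically ergodic chain—is routine and can either be cited or deduced from Propositions \ref{prop.bq.3.1}--\ref{prop.bq.3.2} as indicated.
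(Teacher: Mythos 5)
Your proof is correct in outline and takes a genuinely different route from the paper for the key step. The reduction of the event to a union of a lower deviation of $\log\|M_n\|$ (handled by Theorem~\ref{thm.ld.matrix} applied to $(M_n)$) and an upper deviation of $\log\|\wedge^2 M_n\|$ matches the paper's strategy, as does your correct observation that $(\wedge^2 M_n)$ need not be $1$-contracting, so Theorem~\ref{thm.ld.matrix} cannot be applied to it directly. Where you diverge is in proving the upper-deviation bound for $\log\|\wedge^2 M_n\|$. The paper applies Proposition~\ref{prop.bq.3.2} to the norm cocycle $\sigma(g,[\omega])=\log(\|\wedge^2 g\cdot\omega\|/\|\omega\|)$ over $E\times\mathbf{P}(\wedge^2\R^d)$, which immediately gives exponential concentration of $\tfrac1n\log\|\wedge^2 M_n\omega\|$ in $[\ell^--\epsilon,\ell^++\epsilon]$; the remaining work is then purely deterministic: one shows $\ell^+\le\lambda_1+\lambda_2$ by taking a near-maximizing ergodic $Q$-stationary measure $\eta$, using Chacon--Ornstein, the bound $\sigma(g,c)\le\log\|\wedge^2 g\|$, and Kingman plus uniform integrability. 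Your route never passes to the projective action on $\wedge^2\R^d$: you chunk $M_n$ into $L$-blocks, bound $\log\|\wedge^2 M_n\|\le\sum_j Z_j^{(L)}$ by submultiplicativity, pick $L$ by Fekete/Kingman so that $\tfrac1L\E_\pi[Z_1^{(L)}]$ is already within $\epsilon/8$ of $\lambda_1+\lambda_2$, and then invoke exponential concentration of ergodic averages of the exponentially integrable functional $Z^{(L)}$ of the (still uniformly geometrically ergodic) $L$-block chain. Both arguments are sound. The paper's route is leaner: Proposition~\ref{prop.bq.3.2}, already proven, absorbs all the probabilistic work and the identification of $\ell^+$ costs nothing new. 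Yours is more elementary in spirit (no exterior-square projective cocycle) but pays for it by re-establishing concentration for an unbounded block functional; note that your truncation-plus-Chernoff sketch is essentially re-deriving what Proposition~\ref{prop.bq.3.2} already packages via the martingale-difference decomposition, so you could equally shortcut by applying that proposition's scheme directly to the bounded truncation of $Z^{(L)}$ over the trivial fiber.
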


\begin{proof}
Since the top Lyapunov exponent of the Markovian product $(\wedge^2 M_n)$ is $\lambda_1+\lambda_2$ and $(\wedge^2 M_n)$ satisfies $(A_1)$ and $(A_2)$, by Theorem \ref{thm.ld.matrix}, it suffices to show that for every $\epsilon>0$, we have the following uniformly in $x \in E$:
\begin{equation}\label{eq.to.prove.wedge.ld}
\limsup_{n \to\infty} \frac{1}{n} \log \P_x\left(\frac{1}{n} \log \|\wedge^2 M_n\| \geq \lambda_1+\lambda_2 +\varepsilon \right)<0.
\end{equation}

To prove this, we will apply Proposition \ref{prop.bq.3.2}. Let $C=\mathbb{P}(\wedge^2 \R^d)$ and $G=\GL_d(\R)$ and $X$ as the map $E \to \GL_d(\R)$ in the data of the Markovian product $(\wedge^2 M_n)$, let $Q$ be the Markov--Feller transition kernel on $Y:=E \times C$ constructed as in \eqref{eq.covering.op} covering the kernel $P$ on $E$. Moreover, let $\sigma:G \times \mathbf{P}(\wedge^2\R^d) \to \R$ be the continuous cocycle given by $\sigma(g,[v\wedge w])=\log \frac{\|\wedge^2 g(v \wedge w)\|}{\|v \wedge w\|}$ where $v\wedge w$ is a line in $\wedge^2 \R^d$ and $[v \wedge w]$ denotes its projection to $\mathbf{P}(\wedge^2 \R^d)$. Thanks to condition $(A_1)$, the kernel $P$ is uniformly positively recurrent and thanks to condition $(A_2)$, $\sigma$ has a uniform exponential moment. Therefore, we can apply Proposition \ref{prop.bq.3.2} and deduce that for every $\epsilon>0$, there exists $C>0$ and $\alpha>0$ such that for every $x \in E$, non-zero $v \wedge w \in \wedge^2 \R^d$ and $n \in \N$, we have
\begin{equation*}
\mathbb{P}_x\left( \frac{1}{n}\log \frac{\|\wedge^2 M_n (v \wedge w)\|}{\|v \wedge w\|}  \notin [\ell^- -\epsilon,\ell^+ +\epsilon]\right) \leq C e^{-\alpha n}.
\end{equation*}
By choosing a basis of $\wedge^2 \R^d$ as in the proof of Theorem \ref{thm.ld.matrix}, we only need to show that $\ell^+ \leq \lambda_1+\lambda_2$. Let $\delta>0$ be given and $\eta$ be a $Q$-stationary and ergodic probability measure on $Y$ with $\int \sigma \ d\eta \geq \ell^+ -\delta$. By the Chacon--Ornstein ergodic theorem, we have 
$$
\eta\left\{(x,c) \in Y : \frac{1}{n} \sum_{k=0}^{n-1} Q^k \sigma(x,c) \underset{n \to \infty}{\longrightarrow} \int \sigma d \eta \right\}=1.
$$
By specializing to such $(x_0,c)$, using the fact that for every $y=(x',c')$, $\mathbb{Q}_y$ is the pushforward of $\mathbb{P}_x$ by the map  \eqref{eq.Q.by.P}, we get that 
\begin{align*}
    \ell^+-\delta \leq \int \sigma d\eta &= \lim_{n \to \infty}\frac{1}{n}\E_{x_0}[\sigma(X(x_{n-1})\ldots X(x_0),c)]\\
    &\leq \lim_{n \to \infty} \frac{1}{n} \E_{x_0}[\log \|\wedge^2 X(x_{n-1}) \ldots X(x_0)\| ].
\end{align*}
It remains to observe that the last term  is bounded above by $\lambda_1+\lambda_2$. In fact we claim that it is equal to $\lambda_1+\lambda_2$. Indeed, by subadditive ergodic theorem, $\P_\pi$-a.s.~ $\frac{1}{n}\log \|\wedge^2 X(x_{n-1})\ldots X(x_0))\| \to \lambda_1+\lambda_2$. Since $\P_{x_0}$ is absolutely continuous with respect to $\P_\pi$, this convergence also holds true $\P_{x_0}$-a.s. Since the sequence $\frac{1}{n}\log \|\wedge^2 X(x_{n-1}) \ldots X(x_0)\|$ is uniformly integrable (thanks to condition $(A_2)$), the result follows.
\end{proof}

The next lemma is the Markovian version of \cite[Lemma 17.8]{bq.book} which was used to deduce a local limit theorem for the norms $\log \|M_n\|$ from a local limit theorem for vector norms $\log\|M_n v\|$ in the iid setting. Thanks to our above large deviation estimates, Benoist--Quint's proof applies in our setting as we indicate below.
\begin{lemma}\label{lemma.cartan.iwasawa}
Under the assumptions of Theorem \ref{thm.bougerol.berryess}, for every $\epsilon>0$, there exists $\ell_0 \in \N$, and $c>0$ such that for every $n \geq \ell \geq \ell_0$ and non-zero $v \in R^d$ and $x \in E$, we have
$$
\P_x \left( \left|\log \|M_n\| -\log \frac{\|M_n v\|}{\|M_\ell v\|} -\log \|M_\ell\| \right| > e^{-\epsilon \ell} \right)<e^{-c\ell}.
$$
\end{lemma}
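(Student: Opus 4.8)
The plan is to split the product at time $\ell$, writing $M_n = M_{\ell,n}\,M_\ell$ with $M_{\ell,n}:=X(z_n)\cdots X(z_{\ell+1})$, and to route both $\log\|M_n\|$ and $\log\|M_n v\|-\log\|M_\ell v\|$ through the top singular direction of $M_\ell$, where the two nearly cancel. Fix a singular value decomposition $M_\ell=U\Sigma W^{T}$ with $\Sigma=\diag(e^{\mu_1},\dots,e^{\mu_d})$, $\mu_1\geq\cdots\geq\mu_d$, so that $\log\|M_\ell\|=\mu_1$, the top left singular direction of $M_\ell$ is $u_\ell:=Ue_1$, its top right singular direction is $w_\ell:=We_1$, and set
\[
g_\ell:=\mu_1-\mu_2=\log\frac{\|M_\ell\|^2}{\|\wedge^2M_\ell\|},\qquad c_1:=\Big\langle \tfrac{v}{\|v\|},\,w_\ell\Big\rangle,\qquad R:=\frac{\|M_{\ell,n}\|}{\|M_{\ell,n}u_\ell\|}\;(\geq1).
\]
An elementary deterministic computation with this decomposition gives, whenever $g_\ell>0$, first $M_\ell v/\|M_\ell v\|=\pm u_\ell+O(e^{-g_\ell}/|c_1|)$, whence (using $\|M_{\ell,n}(u_\ell+\vec e)\|=\|M_{\ell,n}u_\ell\|\,(1+O(R\,|\vec e|))$ for the perturbation $\vec e$)
\[
\log\frac{\|M_n v\|}{\|M_\ell v\|}=\log\|M_{\ell,n}u_\ell\|+O\!\Big(\tfrac{R\,e^{-g_\ell}}{|c_1|}\Big);
\]
and second, expanding $M_{\ell,n}U\Sigma$ over its columns, $\|M_n\|=\|M_{\ell,n}U\Sigma\|=e^{\mu_1}\|M_{\ell,n}u_\ell\|\,\big(1+O(e^{-g_\ell}R)\big)$, i.e.
\[
\log\|M_n\|=\log\|M_\ell\|+\log\|M_{\ell,n}u_\ell\|+O\!\big(e^{-g_\ell}R\big).
\]
Subtracting, the term $\log\|M_{\ell,n}u_\ell\|$ cancels and we obtain
\[
\Big|\log\|M_n\|-\log\tfrac{\|M_n v\|}{\|M_\ell v\|}-\log\|M_\ell\|\Big|\;\leq\;C_d\,\frac{R\,e^{-g_\ell}}{|c_1|}
\]
with $C_d$ depending only on $d$, valid as long as the right-hand side stays below a fixed small constant; for $n=\ell$ the left-hand side vanishes identically, so assume $n>\ell$.

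It then suffices to control $g_\ell$, $|c_1|$, $R$ on an event of probability $\geq1-e^{-c\ell}$, uniformly in $n>\ell$, $x\in E$ and unit $v$. By Lemma \ref{lemma.lambda.gap.ld} applied at time $\ell$, there are $c_0>0$ and $\ell_1$ with $\mathbb{P}_x\big(g_\ell\leq(\lambda_1-\lambda_2-\epsilon_0)\ell\big)\leq e^{-c_0\ell}$ for $\ell\geq\ell_1$, uniformly in $x$. For $c_1$, Theorem \ref{thm.ld.matrix} applied to $\log\|M_\ell\|$ and to $\log\|M_\ell(v/\|v\|)\|$ gives $\log\|M_\ell\|-\log\|M_\ell(v/\|v\|)\|\leq2\epsilon'\ell$ off an exponentially small event, and together with the elementary bound $c_1^2\geq\|M_\ell v\|^2/(\|v\|^2\|M_\ell\|^2)-e^{-2g_\ell}$ and the gap estimate (with $\epsilon_0,\epsilon'$ small) this forces $|c_1|\geq e^{-3\epsilon'\ell}$ on the intersection. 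For $R$, I would use $R\leq1/|\langle u_\ell,\,w_{\ell,n}\rangle|$, where $w_{\ell,n}$ denotes the top right singular direction of $M_{\ell,n}$, together with the following structural observation: conditionally on $z_\ell$, the vector $u_\ell$ (a function of $z_1,\dots,z_\ell$) is independent of $M_{\ell,n}$ (a function of $z_{\ell+1},\dots,z_n$); conditioning on $z_\ell$ and the future block therefore freezes $w_{\ell,n}$ and leaves $u_\ell$ distributed according to the conditional law $\mu_{z_\ell,\ell}$ of the top left singular direction of $M_\ell$ given $z_\ell$ — crucially, a law not depending on $n$. Granting that the family $\{\mu_{z,\ell}\}_{z\in E,\,\ell\geq\ell_0}$ is uniformly Hölder regular, i.e. $\mu_{z,\ell}\big(\{w:|\langle w,w'\rangle|\leq t\}\big)\leq C t^{\gamma}$ for all unit $w'$ and suitable $C,\gamma>0$, it follows that $\mathbb{P}_x\big(|\langle u_\ell,w_{\ell,n}\rangle|<e^{-\delta\ell}\big)\leq C e^{-\gamma\delta\ell}$, hence $R\leq e^{\delta\ell}$ off an exponentially small event, uniformly in $n>\ell$. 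Intersecting the three events, the bound of the first paragraph becomes $C_d\,e^{-(\lambda_1-\lambda_2-\epsilon_0-\delta-3\epsilon')\ell}$, which is $\leq e^{-\epsilon\ell}$ once $\epsilon_0,\delta,\epsilon'$ are chosen small enough relative to $\epsilon$ and the spectral gap $\lambda_1-\lambda_2$ and $\ell_0$ is taken large enough to absorb $C_d$; this proves the lemma.

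The one genuinely substantial input is the uniform Hölder regularity of the family $\{\mu_{z,\ell}\}$ — the Markovian analogue of the regularity of the Furstenberg measure. Under strong irreducibility and proximality it should be obtained either directly from Bougerol's work, or by the renewal reduction to the i.i.d.\ case carried out in the proof of Theorem \ref{thm.ld.matrix} (each renewal semigroup being strongly irreducible and proximal) combined with the i.i.d.\ regularity estimates, e.g.\ \cite[Ch.~14]{bq.book}, and uniform geometric ergodicity $(A_1)$. This is also where the main difficulty lies: it is the only point in the argument requiring a transversality estimate beyond the scalar large deviation bounds, and the fact that it must hold \emph{uniformly in $n$} — for $n-\ell=O(\ell)$ one could instead just invoke Theorem \ref{thm.ld.matrix} directly — is precisely what makes it necessary to combine the conditional independence of past and future given $z_\ell$ with the regularity of the Markovian Furstenberg-type measures. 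Everything else is the routine singular-value bookkeeping indicated above, following Benoist--Quint's treatment of the i.i.d.\ case.
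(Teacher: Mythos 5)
Your SVD bookkeeping and the estimates for $g_\ell$ and $c_1$ are correct, but the step controlling $R = \|M_{\ell,n}\|/\|M_{\ell,n}u_\ell\|$ is where the argument has a real gap, and one your sketched patches do not close. You route $R$ through a uniform H\"older regularity for the conditional law $\mu_{z,\ell}$ of the top \emph{left} singular direction $u_\ell$ of $M_\ell$. Unwinding this via the Cartan decomposition, a lower bound on $|\langle u_\ell, w'\rangle|$ amounts to a lower bound on $\|M_\ell^T\psi\|=\|X(z_1)^T\cdots X(z_\ell)^T\psi\|$, which is a product over the \emph{time-reversed} chain with \emph{transposed} maps. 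In the i.i.d.\ case this is handled silently by exchangeability (pass to the transposed measure); in the Markovian setting it is not covered by Theorem \ref{thm.ld.matrix}, and one would have to separately verify strong irreducibility and proximality of the reversed-and-transposed Markovian product and rerun the whole large deviation machinery for it. Your ``renewal to i.i.d.'' sketch hits the very same time-reversal issue. So the ``one substantial input'' you flag is genuinely missing, and is subtler than a citation to the i.i.d.\ regularity plus $(A_1)$.

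It is also avoidable. The target quantity equals $\log\frac{\|M_n\|}{\|M_n v\|}-\log\frac{\|M_\ell\|}{\|M_\ell v\|}$, and for each $m$ one has $|\langle v/\|v\|,\,w_{M_m}\rangle|\leq \|M_m v\|/(\|v\|\|M_m\|)\leq |\langle v/\|v\|,\,w_{M_m}\rangle|+e^{-g_{M_m}}$ with $w_{M_m}$ the top \emph{right} singular direction of $M_m$. The claim therefore reduces to showing $w_{M_n}$ is close to $w_{M_\ell}$ relative to $|c_1|$, and $w_{M_m}$ \emph{stabilises}: applying your SVD perturbation to $M_{m+1}=X(z_{m+1})M_m$ gives the one-step bound $\mathrm{dist}(w_{M_{m+1}},w_{M_m})\leq C_d\, e^{-g_{M_m}}\,e^{2N(X(z_{m+1}))}$ whenever $g_{M_m}>2N(X(z_{m+1}))+C_d'$. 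Telescoping over $m=\ell,\dots,n-1$ and union-bounding over $m\geq\ell$ the gap estimate of Lemma \ref{lemma.lambda.gap.ld} at time $m$ together with the exponential-moment bound $(A_2)$ on $N(X(z_{m+1}))$ yields, on an event of probability $\geq 1-Ce^{-c\ell}$ uniform over $n\geq\ell$, the bound $\mathrm{dist}(w_{M_n},w_{M_\ell})\leq C'e^{-(\lambda_1-\lambda_2-\epsilon_0-2\epsilon_1)\ell}$. Combined with your $|c_1|$ estimate and the gap bounds at times $\ell$ and $n$, this proves the lemma using only Theorem \ref{thm.ld.matrix}, Lemma \ref{lemma.lambda.gap.ld} and condition $(A_2)$ --- exactly the inputs the paper invokes through \cite[Lemma 17.8]{bq.book}. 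In short, it is the right singular direction, not the left, that should be tracked; its geometric stabilisation is an elementary consequence of the scalar large deviations already at hand, and no conditioning on the future, no conditional laws $\mu_{z,\ell}$, and no regularity postulate are needed.
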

\begin{proof}
We can apply the proof of the result \cite[Lemma 17.8 and (17.9)]{bq.book} which makes use of various large deviation estimates all of which are established in our more general setting. Namely, the ingredient \cite[Proposition 14.3]{bq.book} is similarly obtained in our setting using Theorem \ref{thm.ld.matrix} and Lemma \ref{lemma.lambda.gap.ld} which we proved for this purpose. Once equipped with this ingredient, Benoist--Quint only use the iid version of Lemma \ref{lemma.lambda.gap.ld} (see \cite[(17.10)]{bq.book}) and the linear algebraic lemma \cite[Lemma 14.2]{bq.book} and hence this part of the proof goes through in our setting as well. We omit the details in order not to burden the exposition with many more linear algebraic notions that will only be used in a repetitive proof.
\end{proof}
Equipped with the previous two lemmas, we can give the proof of Theorem \ref{thm.bougerol.berryess} adapting the approach of \cite{XGL}.
\begin{proof}
Let $F$ denote the cumulative distribution function of the Gaussian $\mathcal{N}(0,1)$, and for $n \geq 1$, $x \in E$ and $t \in \R$, set
$$
I_{x,n}(t)= \mathbb{P}_x\left(\frac{\log \|M_n\|-n\lambda_1}{\sigma \sqrt{n}}\leq t\right).
$$
Since for any $v \in \R^d$ of unit norm, we have $\|M_nv\| \leq \|M_n\|$, for any $v \in \R^d$ with $\|v\|=1$, by \cite[Th\'{e}or\`{e}me 4.1]{bougerol.thm.limite}, we have
$$
I_{x,n}(t) \leq \mathbb{P}_x\left(\frac{\log \|M_nv\|-n\lambda_1}{\sigma \sqrt{n}}\leq t\right) \leq F(t) + \frac{C}{\sqrt{n}}.
$$

The non-trivial bound is therefore the lower bound for $I_{x,n}(t)$ which we now turn to. By Lemma \ref{lemma.cartan.iwasawa}, for any $\epsilon>0$, there exist $\ell_0 \in \N$ and $c>0$ such that for every $n \geq \ell \geq \ell_0$, we have
\begin{equation}\label{eq.berry1}
\begin{aligned}
I_{x,n}(t) & \geq \mathbb{P}_x\left(\frac{\log \|M_n\|-n\lambda_1}{\sigma \sqrt{n}}\leq t \; \; \;  \text{and} \; \; \; \left|\log \|M_n\| -\log \frac{\|M_n v\|}{\|M_\ell v\|} -\log \|M_\ell\| \right| \leq e^{-\epsilon \ell}\right)\\
& \geq \mathbb{P}_x\left( \frac{\log \|M_n v\|-\log \|M_\ell v\|+\log \|M_\ell \| -n \lambda_1+e^{-\epsilon \ell}}{\sigma \sqrt{n}}\leq t\right) - e^{c \ell}.
\end{aligned}
\end{equation}

By Theorem \ref{thm.ld.matrix}, for every $\lambda'>\lambda_1$, there exists $c'>0$ such that for every $\ell \geq 1$, for every $x \in \E$, we have
$$
\mathbb{P}_x(\log \|M_\ell\| \geq \ell \lambda') \leq e^{-c' \ell}.
$$
Therefore, it follows from \eqref{eq.berry1} that for every $x\in E$ and $n \geq \ell \geq \ell_0$, we have
\begin{equation}\label{eq.berry2}
I_{x,n}(t) \geq \mathbb{P}_x\left( \frac{\log \|M_n v\|-\log \|M_\ell v\|+\ell \lambda' -n \lambda_1+e^{-\epsilon \ell}}{\sigma \sqrt{n}}\leq t\right) -e^{c' \ell} - e^{c \ell}.
\end{equation}
Notice now that we can rewrite $\log \|M_n v\| - \log \|M_\ell v\|$ as $\log \|X_n \ldots X_{\ell+1}\widetilde{v}\|$, where $\widetilde{v}=\frac{M_\ell v}{\|M_\ell v\|}$. To exploit this cocycle property, for $\ell \in \N$, let $\mathcal{F}_\ell$ denote the $\sigma$-algebra generated by the first $\ell$ steps $z_1,\ldots,z_\ell$ of the Markov chain on $E$. Conditioning on the first $\ell$-steps, we have
\begin{equation}\label{eq.berry3}
\begin{aligned}
       & \mathbb{P}_x\left( \frac{\log \|M_n v\|-\log \|M_\ell v\|+\ell \lambda' -n \lambda_1+e^{-\epsilon \ell}}{\sigma \sqrt{n}}\leq t\right)\\ & =\mathbb{E}_x\left( \mathbb{P}_x\left(\frac{\log \|M_n v\|-\log \|M_\ell v\|+\ell \lambda' -n \lambda_1+e^{-\epsilon \ell}}{\sigma \sqrt{n}}\leq t | \mathcal{F}_\ell\right)\right)\\
       & \geq \mathbb{E}_x\left(\inf_{v \in \R^d, \|v\|=1} \mathbb{P}_{z_\ell}\left( \frac{\log \|M_{n-\ell}v\|+\ell \lambda' -n \lambda_1+e^{-\epsilon \ell}}{\sigma \sqrt{n}}\leq t\right)\right)\\
       &=\mathbb{E}_x\left(\inf_{v \in \R^d, \|v\|=1} \mathbb{P}_{z_\ell}\left( \frac{\log \|M_{n-\ell}v\|-(n-\ell) \lambda_1 }{\sigma \sqrt{n-\ell}} \leq T_n \right)\right),
\end{aligned}
\end{equation}
where $T_n$ is the random variable  
$$
T_n=\frac{\sqrt{n}-\sqrt{n-\ell}}{\sigma \sqrt{n(n-\ell)}} \log \|M_{n-\ell}v\|-\frac{\ell \lambda'-n\lambda_1+e^{-\epsilon \ell}}{\sigma \sqrt{n}}+t-\frac{\lambda_1 \sqrt{n-\ell}}{\sigma}.
$$
By Theorem \ref{thm.ld.matrix}, up to possibly reducing $c'>0$, we have that for every $n \geq \ell \geq 1$, for every $v \in \R^d$ with $\|v\|=1$ and $y \in E$,
$$
\mathbb{P}_{y}(\log \|M_{n-\ell}v\|>\lambda'(n-\ell))\leq e^{-c' (n-\ell)}
$$
so that by \eqref{eq.berry3}, we have
\begin{equation}\label{eq.berry4}
    \begin{aligned}
&\mathbb{P}_x\left( \frac{\log \|M_n v\|-\log \|M_\ell v\|+\ell \lambda' -n \lambda_1+e^{-\epsilon \ell}}{\sigma \sqrt{n}}\leq t\right)\\ & \geq \mathbb{E}_x\left(\inf_{v \in \R^d, \|v\|=1} \mathbb{P}_{z_\ell}\left( \frac{\log \|M_{n-\ell}v\|-(n-\ell) \lambda_1 }{\sigma \sqrt{n-\ell}} \leq t_n\right)\right)-e^{c'(n-\ell)},
\end{aligned}
\end{equation}
where $t_n$ is the constant
$$
t_n=\frac{\sqrt{n}-\sqrt{n-\ell}}{\sigma \sqrt{n(n-\ell)}} \lambda'(n-\ell)-\frac{\ell \lambda'-n\lambda_1+e^{-\epsilon \ell}}{\sigma \sqrt{n}}+t-\frac{\lambda_1 \sqrt{n-\ell}}{\sigma}.
$$
Now applying once more \cite[Th\'{e}or\`{e}me 4.1]{bougerol.thm.limite} to \eqref{eq.berry4} and combining it with \eqref{eq.berry2}, we get that for every $n \geq \ell \geq \ell_0$ and $x \in E$
$$
I_{x,n}(t) \geq  F(t_n) - \frac{C}{\sqrt{n-\ell}}-e^{-c'(n-\ell)}-2e^{-c''\ell}
$$
with $0<c'':=\min \{c,c'\}$. Now using the expression of $t_n$ above, one gets that for any $r>0$ fixed, letting $\ell=\lfloor r \log n \rfloor$, we have $|t-t_n| \leq \frac{D_r \log n}{\sqrt{n}}$ for some $D_r \in (0,\infty)$ and every $n \in \N$ and $t \in \R$. Using this and the fact that $F$ is the cumulative distribution function of the standard Gaussian $\mathcal{N}(0,1)$, one deduces by elementary calculus that choosing $\ell=\lfloor \frac{1}{c''}\log n\rfloor$, we have that there exists $D \in (0,\infty)$ such that for every $n \in \N$, $x \in E$ and $t \in \R$, we have
$$
I_{x,n}(t) \geq F(t)-\frac{D \log n}{\sqrt{n}},
$$
as required.
\end{proof}

\subsection{Finite state versions without condition $(A_1)$}\label{subsec.finite.markov}
In our applications, we will need to deal with Markovian products associated to Markov chains on finite state spaces which are irreducible but not necessarily aperiodic. Such chains never satisfy the uniform recurrence condition $(A_1)$. However, it is not hard to deduce versions of above limit theorems for such finite state chains by considering the Markovian products along periodic times $(pn)_{n \in \N}$, where $p \in \N$ denotes the period of the Markov chain. The goal of this part is to briefly record these versions of the above limit theorems for later use.

Let $E$ be a finite state space and $P$ an irreducible Markovian transition kernel on $E$. We denote by $p \in \N$ the period of $P$ and, for $i=0,\ldots,p-1$, by $E_i$ the periodic components of $E$. Let a map $X:E \to \Gamma$ to a group $\Gamma$ be given and $(M_n)$ be the associated Markovian product (recall that $M_n=X(z_n) \ldots X(z_1)$, where $(z_n)_{n \geq 0}$ denotes the Markov chain on $E$). The Markovian product $(M_n)$ does not necessarily satisfy condition $(A_1)$; we will associate some auxiliary Markovian products that will satisfy it. To do this, for $i=0,\ldots,p-1$, let $\widehat{E}^i$ be the set of length $p$-paths based at $E_i$, i.e.~
$$
\widehat{E}^i:=\{(x_1,\ldots,x_p) : P(x_j,x_{j+1})>0 \; \text{for} \; j=1,\ldots, p-1, \; \text{and}\; x_1 \in E_{i+1}\},
$$
where $j$'s are considered modulo $p$. We introduce a Markovian transition kernel $\widehat{P}^i$ on $\widehat{E}^i$ by setting, for $(x_1,\ldots,x_p)$ and $(y_1,\ldots,y_p)$ in $\widehat{E}^i$, 
$$
\widehat{P}^i((x_1,\ldots,x_p),(y_1,\ldots,y_p))=P(x_p,y_1) \prod_{j=1}^{p-1} P(y_j,y_{j+1}).
$$
It is easily checked that $\widehat{P}^i$ defines an irreducible and aperiodic Markovian kernel. 
Now, consider the map 
\begin{equation*}
\begin{aligned}
\widehat{X}:\widehat{E}^i &\to \Gamma,\\
\widehat{X}(x_1,\ldots,x_p)& \mapsto X(x_p) \ldots X(x_1).
\end{aligned}
\end{equation*}
We construct a Markovian product $(\widehat{M}^i_n)$ for each $i=0,\ldots,p-1$ in the usual way. Since $\widehat{P}^i$ is aperiodic and $\widehat{E}^i$ is finite, the Markovian product $(\widehat{M}^i_n)$ automatically satisfies conditions $(A_1)$ and $(A_2)$ (and $(A_2')$ in the setting of Theorem \ref{thm.ld.gromov}).
Moreover, for every $i=0,\ldots,p-1$ and $(x_1,\ldots,x_p) \in \widehat{E}^i$, under $\widehat{P}^i_{(x_1,\ldots,x_p)}$, the distribution of $(\widehat{M}_n^i)$ is the same as the distribution of $(M_{pn})$ unde allow to use the products $(\widehat{M}^i_n)$ to control the product $(M_n)$

We therefore aim to apply the above limit theorems to the Markovian products $(\widehat{M}^i_n)$ and deduce the corresponding limit theorems for the products $(M_{pn})_{n \in \N}$, and then use the fact that the operator norm $\log \|\cdot\|$ and the displacement $\kappa(\cdot)$ (in the setting of Theorem \ref{thm.ld.gromov}) is subadditive to deduce the same limit theorems for the Markovian product $(M_n)$ along all times $n \in \N$. To this end, we also need to relate $1$-contracting and strong irreducibility assumptions on $(M_n)$ and $(\widehat{M}_n^i)$.

\begin{lemma}\label{lemma.mn.to.hatmn}
The Markovian product $(M_n)$ is 1-contracting/strong irreducible/non-elementary if any only if the Markovian product $(\widehat{M}_n^i)$ is, respectively, 1-contracting/ strong irreducible/non-elementary for some (equivalently all) $i=0,\ldots,p-1$.
\end{lemma}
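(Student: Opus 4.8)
The plan is to unwind the definitions of the three properties ($1$-contracting, strong irreducibility, non-elementary) for $(M_n)$ and for $(\widehat M^i_n)$, and to observe that in each case the relevant data --- a proximal sequence in some $T_x$, or a finite family of maps $V_j : E \to \Gr_r(\R^d)$ (respectively to $\partial H$) invariant under the products --- can be shuttled back and forth between the two setups with only bounded-length corrections. The key elementary observation is the relation between the semigroups: for $x \in E_i$ and a length-$p$ path $\hat x = (x_1,\dots,x_p) \in \widehat E^i$ with $x_1 = x$, one has $T_{\hat x}(\hat x) \subseteq T_x$ inside $\GL_d(\R)$, and conversely every element of $T_x$ arising after a multiple of $p$ steps lies in $\widehat X(\hat x_k)\cdots \widehat X(\hat x_1)$ for an appropriate path in $\widehat E^i$; moreover $T_x = \bigcup_{0 \le j < p} X(y_j)\cdots X(y_1) \cdot T_{x'}(x')$ where the prefixes have length $< p$, so $T_x$ and the semigroups $T_{\hat x}(\hat x)$ differ only by boundedly many (hence compactly many) left and right translates. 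Since being $r$-proximal, strongly irreducible, or non-elementary are all insensitive to multiplication by a fixed finite set of invertible transformations, this already gives the equivalences at the level of semigroups.

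First I would treat $1$-contracting. By definition $(M_n)$ is $1$-contracting iff $T_x$ is proximal for some $x \in E$, and $(\widehat M^i_n)$ is $1$-contracting iff $T_{\hat x}(\hat x)$ --- equivalently $T_{\hat x}$ --- is proximal for some path $\hat x \in \widehat E^i$. If some $T_{\hat x}$ is proximal then it contains a proximal sequence, which is then a proximal sequence inside $T_x$ for $x = x_1$, so $(M_n)$ is $1$-contracting. Conversely, if $T_x$ is proximal, pick a proximal sequence $(g_n) \subseteq T_x$; writing $g_n = h_n g_n'$ with $h_n$ a product of at most $p-1$ of the $X(\cdot)$'s (drawn from a finite set, hence after passing to a subsequence equal to a fixed $h$) and $g_n'$ lying in $T_{x'}(x')$ for a fixed $x' \in E$, proximality of $(g_n)$ forces proximality of $(g_n')$ since $h$ is a fixed invertible matrix; passing to the associated $\widehat P^{i'}$-kernel on length-$p$ paths based at $x'$ gives a proximal sequence in $T_{\hat x}(\hat x)$, so $(\widehat M^{i'}_n)$ is $1$-contracting, and by irreducibility of $\widehat P^i$ (equivalently, by the fact that the maximal components are permuted cyclically) this holds for all $i$.

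Next, strong irreducibility (and, verbatim with $\partial H$ in place of $\Gr_r(\R^d)$, non-elementarity). Here I would argue by contraposition in both directions, transporting invariant families of subspaces. Suppose $(\widehat M^i_n)$ is \emph{not} strongly irreducible: there are maps $V_1,\dots,V_t : \widehat E^i \to \Gr_r(\R^d)$ whose union $W(\cdot)$ satisfies $\widehat M^i_n W(\hat x_0) = W(\hat x_n)$ almost surely for every starting path. Reindexing a length-$p$ path by its terminal vertex and its intermediate steps, I would build from the $V_j$ a finite family of maps on $E$ (one gets finitely many maps because a vertex $y \in E$ is the endpoint of only finitely many length-$p$ paths, so one takes the union over all of them of the images of the $V_j$), and check that their union is invariant under $(M_n)$ along all times $n$ --- using subadditivity of the word structure / the path decomposition to reduce arbitrary times to multiples of $p$ followed by a bounded tail, and enlarging the family by the boundedly many one-step images to absorb the tail. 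Conversely, if $(M_n)$ is not strongly irreducible with invariant family $W$ on $E$, then restricting $W$ to terminal vertices of length-$p$ paths gives an invariant family for each $(\widehat M^i_n)$, since $\widehat M^i_n$ along $\widehat P^i$ is literally $M_{pn}$ in distribution. The same bookkeeping, replacing $\Gr_r(\R^d)$ by $\partial H$ and products of isometries for products of matrices, handles non-elementarity.

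The main obstacle I anticipate is purely organizational rather than conceptual: making precise the ``bounded tail'' correction when one converts an invariant family along times that are multiples of $p$ into an invariant family along all times $n \in \N$ (and back), i.e.\ verifying that enlarging the finite collection of maps $V_j$ by their images under the finitely many generators $X(\cdot)$ genuinely restores exact invariance rather than invariance up to a bounded error. For the subspace (linear) case this is exact because the action on $\Gr_r(\R^d)$ is by honest bijections, so no ``bounded error'' truly arises --- one only needs the union of finitely many maps, which stays finite. For the $\partial H$ case the same is true since isometries act on $\partial H$ by homeomorphisms. So the apparent difficulty dissolves once one is careful to phrase everything in terms of \emph{unions of finitely many maps} (which is exactly what the definitions of strongly irreducible and non-elementary allow), and the proof reduces to the finite combinatorics of length-$p$ paths together with the semigroup identity $T_x = \bigcup_{|h| < p} h \cdot T_{x'}(x')$ noted above.
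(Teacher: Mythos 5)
Your proposal follows essentially the same route as the paper: for $1$-contracting, pass a proximal sequence from $T_x$ to the $\widehat E^i$-chain by discarding a bounded-length tail and passing to a subsequence (the fixed, invertible tail preserving proximality); for strong irreducibility and non-elementarity, transport the invariant finite family of maps between $E$ and $\widehat E^i$ — in one direction by restricting to endpoints of length-$p$ paths, in the other by propagating with the generators $X(\cdot)$ — using that the $\GL_d(\R)$ (resp.\ $\Isom(H)$) action is by bijections to make the invariance exact, exactly as the paper does. Be aware, though, that your semigroup bookkeeping systematically confuses the first and last vertices of a state $\hat x=(x_1,\dots,x_p)\in\widehat E^i$: the underlying $E$-chain actually sits at $x_p$, so $T_{\hat x}(\hat x)$ consists of loops at $x_p$ (and is not contained in $T_{x_1}$ unless $x_1=x_p$); the defining constraint of $\widehat E^i$ forces $x_1\in E_{i+1}$, not $E_i$; and after discarding the last $r<p$ factors of $g_n\in T_x$ the result lies in $T_{\hat y}$ for some $\hat y$ ending at $x$, not in a loop semigroup $T_{x'}(x')$ — the paper sidesteps this by passing to $T_{\hat y}$ directly, which is all that the definition of $1$-contracting needs. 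These are local indexing slips rather than conceptual gaps, but they should be cleaned up for the sketch to become a proof.
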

The proof is elementary, we briefly indicate the argument.
\begin{proof}
Suppose $(M_n)$ is $1$-contracting, then there exists $x \in E$ and a sequence $g_n \in T_x$ such that $g_n/\|g_n\|$ converges to a rank one linear transformation. Writing $g_n$ as a product of elements $X(y)$ for $y \in E$ and discarding the last elements to make the length divisible by the period $p$, we find a finite set $F$ and for each $n \in \N$ an element $h_n \in F$ such that, if necessary passing to a subsequence, $h_n^{-1}g_n$ belongs to $T_{(x,x_1,\ldots,x_{p-1})}$ for some $x_1,\ldots,x_{p-1} \in E$. Up to further passing to a subsequence, $h_n$ stabilizes and $h_n^{-1}g_n/\|h_n^{-1}g_n\|$ converges to a rank one transformation. The converse implication (and the statement that some $i$ is equivalent to all $i$) is clear.

Suppose now that for some $i=0,\ldots,p-1$,  $(\widehat{M}_n^i)$ is not strongly irreducible. Then for every $(x_1,\ldots,x_p) \in \widehat{E}_i$, there exists a union of a  finite collection of proper subspaces $W(x_1,\ldots,x_p)$ such that $\widehat{P}^i_{(x_1,\ldots,x_p)_0}$ a.s.~ $\widehat{M}_n^i W((x_1,\ldots,x_p)_0)=W((x_1,\ldots,x_p)_n)$. Using this, first, one verifies that $W(x_1,\ldots,x_p)$ only depends on $x_p$. We set $W(x_p):=W(x_1,\ldots,x_p)$ for any $(x_1,\ldots,x_p) \in \widehat{E}^i$ which is hence well-defined. Second, one checks that for any $x_k \in \widehat{E}^{i+k}$ ($i+k$ considered modulo $p$), the union of subspaces  $W(x_k):=X(x_{k})\ldots X(x_1)W(x_p)$ where $P(x_p,x_1)>0$ and $P(x_i,x_{i+1})>0$ for every $i=1,\ldots,k-1$, is well-defined (i.e.~ does not depend on the path $(x_1,\ldots,x_{p-1}$). Finally, one verifies that for every $x_0 \in E$, $P_{x_0}$ a.s.~ $M_nW(x_0)=W(x_n)$, i.e.~ $(M_n)$ is not strongly irreducible. The other implications are clear and the statement about non-elementariness is proven in the same way as strong irreducibility.
\end{proof}

Combining the constructions above and the previous lemma, one readily deduces the following from Theorems \ref{thm.bougerol.wiener.lil} and \ref{thm.ld.matrix}. 
\begin{theorem}\label{thm.irred.finite.markov}
Let $E$ be a finite set, $P$ an irreducible Markovian transition kernel on $E$, $X:E \to \GL_d(\R)$ a map, and $(M_n)$ the associated Markovian product on $\Gamma$. Suppose that $(M_n)$ is strongly irreducible and $1$-contracting. Then there exist positive constants $\Lambda$ and $\sigma$ such that for every $x \in E$\\[2pt]
\indent 
1. the sequence of $C([0,1])$-valued random variables defined by
\begin{equation}\label{eq.defn.sn2}
S_{n}(t)=\frac{1}{(n\sigma^2)^{1/2}} \left( \log \|M_{\lfloor tn \rfloor}\|-nt\Lambda + (nt-\lfloor nt \rfloor) (\log \|M_{\lfloor tn \rfloor +1}\| - \log\|M_{\lfloor tn \rfloor}\|) \right)
\end{equation}
converges to the Wiener measure $\mathcal{W}$ as $n \to \infty$;\\[2pt]
\indent 2. for $\P_x$-a.e.~ $\omega$, the set of limit points of the sequence $\left(\frac{(S_{n}(t))(\omega)}{2\log \log n}\right)_{n \in \N}$ is the compact set given in 2.~ of Theorem \ref{thm.bougerol.wiener.lil}; and,\\[2pt]
\indent 3. for every $\epsilon>0$, 
$$
\mathbb{P}_x( |\log \|M_n\|-n\Lambda| \geq n\epsilon) \leq C e^{-\alpha n}.$$
\end{theorem}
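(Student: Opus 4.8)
The strategy is to reduce the statement to the auxiliary aperiodic Markovian products $(\widehat M^i_n)$ of \S\ref{subsec.finite.markov}, apply the limit theorems of \S\ref{subsec.Donsker.LIL.markov} and \S\ref{subsec.ld.markov.matrix} to each of them, transfer the conclusions to $(M_{pn})_{n}$ via the distributional identification $\widehat M^i_n \overset{d}{=} M_{pn}$, and finally fill in all times $n$ using subadditivity (and the Lipschitz bound) of $\log\|\cdot\|$ together with the finiteness of $E$.

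First I would verify the hypotheses for the auxiliary chains. By Lemma \ref{lemma.mn.to.hatmn}, since $(M_n)$ is strongly irreducible and $1$-contracting, so is each $(\widehat M^i_n)$; and since $\widehat P^i$ is irreducible and aperiodic on the finite set $\widehat E^i$, conditions $(A_1)$ and $(A_2)$ hold automatically. Hence Theorem \ref{thm.bougerol.wiener.lil} and Theorem \ref{thm.ld.matrix} apply to each $(\widehat M^i_n)$ and yield constants $\Lambda_i$ and $\sigma_i^2>0$, the functional CLT (convergence to $\mathcal W$), the functional LIL (Strassen limit set), and exponential large deviations for $\tfrac1n\log\|\widehat M^i_n\|$ around $\Lambda_i$, all valid for every starting state of $\widehat P^i$.

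Next I would identify the constants. Applying Kingman's subadditive ergodic theorem to $(M_n)$ over the irreducible — hence $\mathbb{P}_\pi$-ergodic — chain $(z_n)$ gives a constant $\Lambda$ with $\tfrac1n\log\|M_n\|\to\Lambda$ a.s.; since the $\widehat P^i$-chain started at a length-$p$ path produces exactly the sequence of group elements of $(M_{pn})_n$ for the $P$-chain conditioned to traverse that path, one has $\widehat M^i_n\overset{d}{=}M_{pn}$ up to a finitely supported change of initial distribution, and comparing Kingman limits forces $\Lambda_i=p\Lambda$ for every $i$. The genuinely delicate point is that the variances attached to $(M_{pn})$ from the various periodic classes coincide; I would get this by observing that $M_{pn+1}$ differs from $M_{pn}$ by the bounded factor $X(z_1)$ and, after an index shift, is also governed by $\widehat M^{i+1}_n$, so reading off the CLT for $(M_{pn+1})$ in two ways gives $\sigma_i^2=\sigma_{i+1}^2$. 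One then sets $\sigma^2$ to be this common value divided by $p$, which is positive since each $\sigma_i^2$ is (positivity of the variance in Theorem \ref{thm.bougerol.wiener.lil} coming from Bougerol's \cite[Proposition 4.9]{bougerol.thm.limite}).

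Finally I would pass to all times. With $C_0:=\max_{z\in E}N(X(z))<\infty$, subadditivity of $\log\|\cdot\|$ gives $|\log\|M_n\|-\log\|M_m\||\le (n-m)C_0$ for $n\ge m$, so $|\log\|M_n\|-\log\|M_{p\lfloor n/p\rfloor}\||\le pC_0$, a fixed constant. For part 3 this constant, together with $|n\Lambda-p\lfloor n/p\rfloor\Lambda|\le p\Lambda$, is negligible against $n\epsilon$ for large $n$, so the exponential bound along times $pn$ transfers to all $n$ with adjusted constants. For parts 1 and 2 the same constant, divided by $\sqrt{n\sigma^2}$ respectively $\sqrt{n\sigma^2}\,2\log\log n$, tends to $0$ uniformly in $t\in[0,1]$, and the piecewise-linear interpolation term in \eqref{eq.defn.sn2} is itself bounded by $C_0$; hence $S_n$ is, uniformly on $[0,1]$, a vanishing perturbation of the $C([0,1])$-valued process built from $(M_{pn})$, so it has the same distributional limit and the same almost sure set of limit points. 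I expect the matching of the variances across the periodic classes to be the only real obstacle; the remaining steps are either direct citations of the Markovian limit theorems or routine perturbation arguments.
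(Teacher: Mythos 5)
Your proposal is correct and follows essentially the same route as the paper: reduce to the aperiodic auxiliary chains $(\widehat M^i_n)$ via Lemma \ref{lemma.mn.to.hatmn}, invoke Theorems \ref{thm.bougerol.wiener.lil} and \ref{thm.ld.matrix}, match the constants across periodic classes by comparing the CLTs for $M_{pn+1}$ read from either side of the first transition, and then upgrade from times $pn$ to all $n$ using the bound $\bigl|\log\|gh\|-\log\|g\|\bigr|\le N(h)$ and finiteness of the state space. The only cosmetic difference is that you pin down $\Lambda$ directly via Kingman over $\P_\pi$ and conclude $\Lambda_i=p\Lambda$, whereas the paper sets $\Lambda=\Lambda_0/p$; both are valid.
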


\begin{remark}\label{rk.markov.hyperbolic.case}
Similarly, using Theorem \ref{thm.ld.gromov} one obtains the following statement for a non-elementary Markovian product on $\Isom(H)$ associated to a finite irreducible Markov chain: there exists $\ell_\Lambda>0$ such that for every $\epsilon>0$ and $x \in E$, we have
$$
\limsup_{n \to \infty} \frac{1}{n} \log \mathbb{P}_x(|d(M_n \cdot o,o)-n\ell_\Lambda| \geq n\epsilon)<0.$$
The proof is very similar to the proof of the previous result and it is omitted.
\end{remark}

\begin{proof}[Proof of Theorem \ref{thm.irred.finite.markov}] 
1. Thanks to Lemma \ref{lemma.mn.to.hatmn}, we can apply Theorem \ref{thm.bougerol.wiener.lil} to each $(\widehat{M}_n^i)$ and get that for every $i=0,\ldots,p-1$, there exist constants $\sigma_{i}>0$ and $\Lambda_{i} \in \R$ such that for every $x=(x_1,\ldots,x_p) \in \widehat{E}_i$, we have that under $\P_x$,
\begin{equation}\label{eq.brown.hat1}
\frac{1}{(n\sigma_{i}^2)^{1/2}} \left( \log \|\widehat{M}^{i}_{\lfloor tn \rfloor}\|-nt\Lambda_i + (nt-\lfloor nt \rfloor) (\log \|\widehat{M}^{i}_{\lfloor tn \rfloor +1}\| - \log\|\widehat{M}^{i}_{\lfloor tn \rfloor}\|) \right) \underset{n \to \infty}{\overset{\mathcal{L}}{\longrightarrow}} \mathcal{W}.
\end{equation}
Recall that the distribution (denoted $\mathcal{L}_{\P_{(x_1,\ldots,x_p)}}(\widehat{M}^{i}_n)$) of $\widehat{M}^{i}_n$ under $\P_{(x_1,\ldots,x_p)}$ is equal to that of $M_{np}$ under $\P_{x_p}$ for each $n \in \N$ (i.e.~ $\mathcal{L}_{\P_{x_p}}(M_{np})$). 
Therefore, specializing to $t=1$ in the previous displayed equation, this implies that for $x \in E_i$, under $\P_x$, the sequence $\frac{1}{(n\sigma_i)^2}(\log \|M_{np}\|-n\Lambda_i)$ converges in distribution to the Gaussian $\mathcal{N}(0,1)$ as $n \to \infty$. Thanks to the inequality 
\begin{equation}\label{eq.simple.norm.ineq}
|\log \|gh\|-\log\|g\|| \leq \max\{\log \|h\|,\log \|h^{-1}\|\}    
\end{equation}
we get that under $\P_x$, $\frac{1}{(n\sigma_i)^2}(\log \|gM_{np}\|-n\Lambda_i) \underset{n \to \infty}{\overset{\mathcal{L}}{\longrightarrow}} \mathcal{N}(0,1)$ for any fixed $g \in \GL_d(\R)$. Now using the fact that $\mathcal{L}_{\P_x}(M_{np+1})=\sum_{y \in E_{i+1}} P(x,y)\mathcal{L}_{\P_y}(X(y)M_{np})$, we easily deduce that $\Lambda_i=\Lambda_0$ and $\sigma_i=\sigma_0$ for every $i=0,\ldots,p-1$. Therefore, \eqref{eq.brown.hat1} together with the equality $\mathcal{L}_{\P_{(x_1,\ldots,x_p)}}(\widehat{M}^{i}_n)=\mathcal{L}_{\P_{x_p}}(M_{np})$ implies that for every $x \in E$, under $\P_x$
$$
\frac{1}{(n\sigma_{0}^2)^{1/2}} \left( \log \|M_{p\lfloor tn \rfloor}\|-nt\Lambda_0 + (nt-\lfloor nt \rfloor) (\log \|M_{p\lfloor tn \rfloor +p}\| - \log\|M_{p\lfloor tn \rfloor}\|) \right) \underset{n \to \infty}{\overset{\mathcal{L}}{\longrightarrow}} \mathcal{W}
$$
Once more using \eqref{eq.simple.norm.ineq} together with the fact that the state space is finite, one gets that for every $x \in E$, under $P_x$, $\|\widehat{S}_n-S_{np}\|_\infty \to 0$ in probability (as $n \to \infty$), where $S_{n}$ is defined in \eqref{eq.defn.sn2} with $\sigma:=\sigma_0/\sqrt{p}$ and $\Lambda:=\Lambda_0/p$. This implies (see e.g.~ \cite[Problem 4.16]{karatzas-shreve}) that for every $x \in E$, under $\P_x$
\begin{equation}\label{eq.brown.hat2}
S_{np} \underset{n \to \infty}{\overset{\mathcal{L}}{\longrightarrow}} \mathcal{W}.
\end{equation}
Once more using the inequality \eqref{eq.simple.norm.ineq} and the fact that $\sigma>0$, we observe that for every $k \in \N$, and $x \in E$, under $\P_x$, $\|S_{np}-S_{np+k}\| \to 0$ in probability and hence \eqref{eq.brown.hat2} implies that $S_{n} \underset{n \to \infty}{\overset{\mathcal{L}}{\longrightarrow}} \mathcal{W}$ as required.\\
The proofs of 2.~ and 3.~ are proven using the same ideas and are omitted to avoid repetition.
\end{proof}

\subsection{Markov measures for Gromov-hyperbolic groups}
We summarize here a construction of a Markov chain on the strongly Markov structure of a Gromov-hyperbolic group and its connection with the Patterson--Sullivan measure (both due to Calegari--Fujiwara \cite{calegari-fujiwara} in this setting). We also include some further related observations from Cantrell \cite{cantrell.georays}; other more specific ones will be included/proven in later sections where they are needed. 

\subsubsection{Patterson--Sullivan measures seen in the strongly Markov structure}\label{subsub.ps.vs.parry}

We keep the notation from \S \ref{sec.hyp}: let $\Gamma$ be a non-elementary Gromov-hyperbolic group endowed with a generating set $S$. Fix a strongly Markov structure $\mathcal{G}$. Let $\lambda>1$ be the exponential growth rate of $\Gamma$ with respect to $S$. Denote by $\nu$ the Patterson--Sullivan probability measure (see \cite[Definition 4.14]{calegari-fujiwara}) obtained as the limit of the sequence of probabilities $\nu_n$ on $\Gamma \cup \partial \Gamma$, where
\begin{equation}\label{eq.def.nun.hat}
\nu_n:=\frac{ \sum_{|g|_S \leq n} \lambda^{-|g|_S} \delta_g}{\sum_{|g|_S \leq n} \lambda^{-|g|_S}}.    
\end{equation}

 Let $Y=[\ast]$ be the (cylinder) set of sequences $(x_n)$ in $\Sigma_{A}$ that starts with the symbol $\ast$, i.e.~ $x_0=\ast$. Let $Y_n \subseteq Y \cap \Sigma_A^0$ be the subset of $Y$ consisting of sequences $(x_m)$ such that $x_m=0$ for every $m \geq n+1$. In view of Definition \ref{def.strong.markov}, the set $Y_n$ is in bijection with the ball of radius $n$ and hence the measures $\nu_n$ on $\Gamma$ defined in \eqref{eq.def.nun.hat} can be considered as measures on $Y_n$ --- we denote them by $\widehat{\nu}_n$. We note that this definition varies slightly from the one given in Section 4 of \cite{calegari-fujiwara}. Specifically our $\widehat{\nu}_n$ measures are normalised to be probability measures unlike in \cite{calegari-fujiwara}. Passing to the limit $\nu$ on $\Gamma \cup \partial \Gamma$, one gets a limiting measure $\widehat{\nu}=\lim_{n \to \infty} \widehat{\nu}_n$ supported on $Y$ and giving zero measure to each $Y_n$. The fact that the limit exists follows from a direct calculation: the $\widehat{\nu}_n$ measure of each cylinder set (which are open and closed sets the collection of which generates the algebra on $Y_\infty$) converges to a finite limit.  The fact that $\widehat{\nu}$ assigns zero measure to each $Y_n$ corresponds to the fact that $\nu$ on $\Gamma \cup \partial \Gamma$ is supported on the compact $\partial \Gamma$ (i.e.~ gives zero mass to $\Gamma$). Alternatively, denoting $Y_\infty:=Y \setminus (\cup_{n \geq 0} Y_n)$ it is easy to see that there is a Borel map $\Psi:  Y_\infty \to \partial \Gamma$ which takes an infinite path not ending with $0$'s to the equivalence class of the corresponding (infinite) geodesic ray in $\partial \Gamma$ and which pushes $\widehat{\nu}$ forward to $\nu$ (see \cite[\S 3.5]{calegari.notes} for a similar description and more details). Simple topological observations show that $\Psi$ is continuous,  surjective and finite-to-one (\cite[Lemma 3.5.1]{calegari.notes}). It follows that $\nu = \lim_{n\to\infty} \Psi_\ast \widehat{\nu}_n$ and so $\nu$ is obtained as the weak limit of the sequence $\nu_n$ defined in (\ref{eq.def.nun.hat}).

\subsubsection{Parry measure of the strongly Markov structure}\label{subsub.parry}
It is well-known since the work of Shannon \cite{shannon} and Parry \cite{parry} that given an irreducible subshift of finite type $\Sigma_B$ (i.e.~ a subshift associated to an irreducible matrix $B$ consisting of zero's and one's, see \S \ref{subsub.shifts}) there is a unique $\sigma$-invariant probability measure $\mu$ on $\Sigma_B$ for which the corresponding measure theoretic entropy $h_\mu(\sigma)$ is maximal among all $\sigma$-invariant (Borel) probability measures, i.e.~ $h_\mu(\sigma) = \sup_m h_m(\sigma)$ where the supremum is over all $\sigma$-invariant probability measures on $\Sigma_B$. Moreover this measure is a Markov measure in the sense that denoting by $E$ the alphabet of $\Sigma_B$, the matrix $B$ gives rise to a transition kernel $P$ and a probability $\mu_\bullet$ on $E$ such that $\mu_\bullet$ is $P$-stationary (see \S \ref{subsub.markov.chains}) and $\mu=\P_{\mu_\bullet}$. It is an ergodic probability measure (with respect to the shift transformation $\sigma$). We call this measure $\mu=\P_{\mu_\bullet}$ the measure of maximal entropy (also called the Parry measure of $\Sigma_B$). 

Even though the shift space $\Sigma_A$ associated to the strongly Markov structure  $\mathcal{G}$ of a Gromov-hyperbolic group $\Gamma$ (endowed with a generating set $S$) is not irreducible, one can run a Parry-like construction \cite[\S 4.2]{calegari-fujiwara} to obtain a shift-invariant Markovian probability measure $\mu$ on $\Sigma_A$ with the properties discussed below. We do not include the simple construction to avoid repetition. 

A key property of the Parry-like measure $\mu$ is that by \cite[Lemma 4.19]{calegari-fujiwara} (more precisely by \cite[Proposition 4.6]{cantrell.georays}), it is closely related to the measure $\widehat{\nu}$ on $Y \subseteq \Sigma_A$ constructed above using the Patterson-Sullivan measure $\nu$: we have
\begin{equation}\label{eq.parry.to.ps}
    \lim_{n \to \infty} \frac{1}{n} \sum_{k=1}^n \sigma^k_\ast \widehat{\nu}=\mu,
\end{equation}
where the convergence holds (in fact with a speed estimate) in total variation distance.

On the other hand, by \cite[Proposition 4.2]{cantrell.georays} the Parry-like measure $\mu$ is nothing but a linear combination of the Parry measures of maximal irreducible components of $\Sigma_A$: for each maximal component $(B_j)_{j=1,\ldots,m}$, there exists $\alpha_j>0$ such that $\sum_{j=1}^m \alpha_j=1$ and  
\begin{equation}\label{eq.parry.lin.comb}
    \mu=\sum_{j=1}^m \alpha_j \mu_{j},
\end{equation}
where $\mu_{j}$ is the Parry measure of the maximal component $B_j$ of $\mathcal{G}$.

As pointed out in \cite[\S 4.3]{calegari-fujiwara}, one can be more precise about the relation between $\widehat{\nu}$ and $\mu$ (than the mere relation \eqref{eq.parry.to.ps}). We record the following statement from \cite{cantrell.georays} which is an instance of this more precise relation and which will be useful later on. 

\begin{lemma}\cite[Lemma 4.5]{cantrell.georays} \label{lem:alpha}
For each $v \in V$ with $\mu[v]>0$ and $k \in \mathbb{Z}_{\ge 0}$ there exists $\alpha_v^k \ge0$ such that
$$\sigma_\ast^k\widehat{\nu}|_{[v]} = \alpha_v^k \mu|_{[v]}.$$
There exists a length $k$ path from $\ast$ to $v$ if and only if $\alpha_v^k >0$. \qed
\end{lemma}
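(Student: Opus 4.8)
The plan is to evaluate the two measures $\sigma_\ast^k\widehat\nu|_{[v]}$ and $\mu|_{[v]}$ on cylinder sets and to check that, for fixed $v$ and $k$, the ratio of their values on an arbitrary cylinder is one and the same constant. Since $\mu[v]>0$ and $\mu$ is the convex combination \eqref{eq.parry.lin.comb} of the Parry measures of the maximal components, $v$ lies in exactly one maximal component $\mathcal{C}$, with associated irreducible zero--one matrix $B$ of spectral radius $\lambda$ and left/right Perron eigenvectors $l,r$; moreover $\mu|_{[v]}=\alpha\,\mu_{\mathcal{C}}|_{[v]}$ for the corresponding weight $\alpha>0$, and the Parry measure of $\mathcal{C}$ satisfies the telescoping identity $\mu_{\mathcal{C}}\big([v,y_1,\dots,y_j]\big)=l_v\,r_{y_j}\big/\big(\lambda^{j}\sum_u l_u r_u\big)$ on cylinders that stay inside $\mathcal{C}$, and vanishes on cylinders that leave $\mathcal{C}$.

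For $\sigma_\ast^k\widehat\nu$ I would return to the approximation $\widehat\nu=\lim_n\widehat\nu_n$, where $\widehat\nu_n$ is the $\lambda^{-|g|_S}$--weighted counting measure on the ball of radius $n$, transported to $\Sigma_A^0$ by $i$ and normalised; cylinder measures converge in this limit, and $\sigma^{-k}[v,y_1,\dots,y_j]$ is a finite union of basic cylinders, so it suffices to compute $\widehat\nu_n\big(\sigma^{-k}[v,y_1,\dots,y_j]\big)$. A ball element whose code $i(g)$ visits $v$ at time $k$ and $y_1,\dots,y_j$ at times $k+1,\dots,k+j$ is the concatenation of a length-$k$ admissible path $\ast\rightsquigarrow v$ (there are, say, $N_k(v)$ of these), the prescribed segment $v\to y_1\to\cdots\to y_j$, and an arbitrary admissible tail of length $\ell\ge0$ issued from $y_j$ (there are $\big((A'')^{\ell}\mathbf{1}\big)_{y_j}$ of these, producing a word of length $k+j+\ell$). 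Summing the weights $\lambda^{-(k+j+\ell)}$ and dividing by $Z_n:=\sum_{m\le n}\lambda^{-m}\#S_m$ gives
\[
\widehat\nu_n\big(\sigma^{-k}[v,y_1,\dots,y_j]\big)=\frac{N_k(v)\,\lambda^{-(k+j)}}{Z_n}\sum_{\ell=0}^{n-k-j}\lambda^{-\ell}\big((A'')^{\ell}\mathbf{1}\big)_{y_j},
\]
so everything reduces to the asymptotics of $\lambda^{-\ell}\big((A'')^{\ell}\mathbf{1}\big)_{y}$, the $\lambda$-renormalised number of admissible paths of length $\ell$ starting at $y$ --- and this is the step I expect to be the main obstacle.

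The required input is that Coornaert's purely exponential growth \eqref{eq.pure.growth}, together with the disjointness of the maximal components of $\mathcal{G}$, forces $A''$ to behave at $\lambda$-order like a single irreducible block: $\lambda^{-\ell}\big((A'')^{\ell}\mathbf{1}\big)_{y}\to\beta_y\ge0$, with $\beta_y>0$ exactly when $y$ can reach a maximal component, and with no polynomial correction --- a polynomial factor would require a path through two distinct maximal components, which disjointness forbids. (A possible period $p>1$ of the maximal blocks is harmless: run the argument along times in a fixed residue class mod $p$, or use the Ces\`aro relation \eqref{eq.parry.to.ps}.) Two consequences follow: if $[v,y_1,\dots,y_j]$ leaves $\mathcal{C}$ then $y_j$ lies strictly downstream of the maximal component $\mathcal{C}$, hence reaches no maximal component, so $\beta_{y_j}=0$ and the cylinder is $\sigma_\ast^k\widehat\nu$-null --- exactly as it is $\mu|_{[v]}$-null; and on cylinders staying in $\mathcal{C}$ the vector $(\beta_y)_{y\in\mathcal{C}}$ is a nonnegative right $\lambda$-eigenvector of $B$, hence $\beta_y=c\,r_y$ there for some $c>0$ by Perron--Frobenius. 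Finally, using $\#S_m=\sum_u A_{\ast u}\big((A'')^{m-1}\mathbf{1}\big)_u\sim\lambda^{m-1}\beta_\ast$ with $\beta_\ast:=\sum_u A_{\ast u}\beta_u>0$, one gets $Z_n\sim\lambda^{-1}\beta_\ast\,n$ and $\sum_{\ell\le n-k-j}\lambda^{-\ell}\big((A'')^{\ell}\mathbf{1}\big)_{y_j}\sim\beta_{y_j}\,n$; letting $n\to\infty$ in the display above yields, on a cylinder staying in $\mathcal{C}$,
\[
\sigma_\ast^k\widehat\nu|_{[v]}\big([v,y_1,\dots,y_j]\big)=\frac{N_k(v)\,c\,\lambda^{\,1-k}}{\beta_\ast}\cdot\frac{r_{y_j}}{\lambda^{j}}.
\]
Comparing with the Parry formula of the first paragraph, the ratio of these two cylinder values equals $N_k(v)\,c\,\lambda^{1-k}\big(\sum_u l_u r_u\big)\big/(\beta_\ast\,\alpha\,l_v)$, which depends only on $v$ and $k$. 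Since the cylinders generate the Borel $\sigma$-algebra of $[v]$, this common value is the required $\alpha_v^k\ge0$, and it vanishes precisely when $N_k(v)=0$, i.e.~when there is no length-$k$ path from $\ast$ to $v$. Beyond the asymptotics of $A''$, the argument is just bookkeeping with path counts and the telescoping form of the Parry measure.
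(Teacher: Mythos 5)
Your argument is correct and is the natural Perron--Frobenius cylinder computation: it is essentially the route taken in \cite[Lemma 4.5]{cantrell.georays}, resting on the same structural inputs (disjointness of the maximal components of $\mathcal{G}$, the telescoping Parry cylinder formula, and Perron--Frobenius uniqueness of the nonnegative right $\lambda$-eigenvector of the irreducible block). The one point to state with care is the possible period $p>1$ of the maximal block: the claimed limit $\lambda^{-\ell}((A'')^\ell\mathbf 1)_y\to\beta_y$ should be read as a Ces\`aro limit (equivalently, a limit along each residue class mod $p$), and the identification $\beta|_{\mathcal C}=c\,r$ as coming from the spectral projection of $B$ onto its $\lambda$-eigenspace; with that reading the partial-sum asymptotics $\sum_{\ell\le N}\lambda^{-\ell}((A'')^\ell\mathbf 1)_{y}\sim\beta_y N$ and $Z_n\sim\lambda^{-1}\beta_\ast n$, and hence your final identity, go through unchanged.
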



\section{Law of large numbers for subadditive functions}\label{sec.lln}
In this section we prove Theorem \ref{thm:lln} which is
a general (weak) law of large numbers for subadditive functions on hyperbolic groups. A key ingredient will be Theorem \ref{thm:rayconvergence} which asserts a the existence of a common growth rate along almost every geodesic with respect to the Patterson--Sullivan measure. It can be seen as a strong law of large numbers for subadditive functions with respect to Patterson--Sullivan measure. Since different constructions of Patterson--Sullivan measures yield measures in the same measure class Theorem \ref{thm:rayconvergence} above does not depend on which construction we choose to work with.

We now deduce Theorem \ref{thm:lln} from Theorem \ref{thm:rayconvergence} which will be proven subsequently.
\begin{proof}[Proof of Theorem \ref{thm:lln}]
Let $\Lambda \in \R$ be the constant given by Theorem \ref{thm:rayconvergence}. For each $\epsilon >0$ define 
\[
A_\epsilon = \left\{ g \in \Gamma: \left| \frac{\varphi(g)}{|g|_S} - \Lambda \right| > \epsilon \right\}.
\]
We need to show that for each $\epsilon > 0$ the density of $A_\epsilon$ on $S_n$ vanishes as $n\to\infty$. Fix a Patterson--Sullivan measure, i.e.~ the one given by the limit of \eqref{eq.def.ps}. Note that for any fixed sufficiently large $R>0$, there exist positive constants $C_1$ and $C_2$ such that for every $n \in \N$, we have
\begin{equation}\label{eq.key.to.lln}
\frac{\#(S_n \cap A_\epsilon)}{\#S_n} \le C_1 \sum_{x \in S_n \cap A_\epsilon} \nu(O(x,R)) \le C_2 \  \nu\left(\bigcup_{x \in S_n \cap A_\epsilon} O(x,R) \right).
\end{equation}
The first inequality follows from \eqref{eq:shadows} and \eqref{eq.pure.growth} whilst the second follows from the fact that, due to hyperbolicity, $O(x,R)$ for $x \in S_n$ covers $\partial \Gamma$ up to uniformly bounded multiplicity. If $\xi$ belongs to $O(x,R)$ for some $n \in \N$ and $x \in S_n \cap A_\epsilon$, then, since the function $\varphi$ is Lipschitz with respect to $d_S$, it follows that there exists a constant $C>0$ (depending on the Lipschitz constant, $R$ and the hyperbolicity constant $\Delta$ but not on $n \in \N$) such that for any geodesic representation $(\xi)_{m=0}^\infty$ with $\xi_0 = o$ of $\xi$, we have $|\varphi(\xi_n)-\varphi(x)| \leq C$. In particular, for such $\xi \in \partial \Gamma$, we have
\[
\left|\frac{\varphi(\xi_n)}{n} - \Lambda\right|  \geq \epsilon - \frac{C}{n}.
\]
Therefore, we deduce by Theorem \ref{thm:rayconvergence} that
\[
\nu\left(\bigcup_{x \in S_n \cap A_\epsilon} O(x,R) \right) \underset{n \to \infty}{\longrightarrow} 0.
\]
Plugging this into \eqref{eq.key.to.lln}, the first conclusion of Theorem \ref{thm:lln} follows.

To deduce the second conclusion, note that for every positive $n \in \N$ and $g \in S_n$, by subadditivity of $\varphi$, we have $|\varphi(g)| \leq Dn$, where $D=\max_{g \in S}|\varphi(g)|$. 
We write
\begin{equation*}
\begin{aligned}
\left|\frac{1}{\#S_n} \sum_{g \in S_n} \frac{\varphi(g)}{n}-\Lambda\right| & \leq \left|\frac{1}{\#S_n} \sum_{g \in A_n} \left(\frac{\varphi(g)}{n} - \Lambda \right)\right| + \left|\frac{1}{\#S_n} \sum_{g \in S_n \setminus A_n} \left(\frac{\varphi(g)}{n}- \Lambda \right)\right|\\&
\leq \frac{\#(S\cap A_\epsilon)}{\#S_n}\epsilon +  \frac{\#(S_n \setminus A_\epsilon) }{\#S_n}(D+
\Lambda).
\end{aligned}
\end{equation*}
Since $\frac{\#(S_n \cap A_\epsilon)}{\#S_n} \to 1$ as $n \to \infty$ and $\epsilon>0$ is arbitrary, the second statement of Theorem \ref{thm:lln} follows.

It remains to show that $\Lambda$ is non-negative. To see this note that since $|g|_S = |g^{-1}|_S$ for every $g \in \Gamma$, we have
\begin{align*}
    2 \ \frac{1}{\#S_n} \sum_{g \in S_n} \frac{\varphi(g)}{n} = \frac{1}{\#S_n} \sum_{g \in S_n} \frac{\varphi(g) + \varphi(g^{-1})}{n}   \ge \frac{1}{\#S_n} \sum_{g \in S_n} \frac{\varphi(\id)}{n} = \frac{\varphi(\id)}{n},
\end{align*}
where $\id \in \Gamma$ is the identity element. The result follows by taking the limit as $n\to\infty$.
\end{proof}

\begin{remark}\label{rk.ray.conv.non.neg}
Notice that we proved that the constant $\Lambda$ appearing in Theorems \ref{thm:lln} and \ref{thm:rayconvergence} is the same. One deduces that the constant $\Lambda$ given by Theorem \ref{thm:rayconvergence} is non-negative.
\end{remark}

We now prove Theorem \ref{thm:rayconvergence}. To do so we follow the argument used Cantrell in \cite{cantrell.georays} employing additionally the subadditive ergodic theorem and properties of subadditive functions. The general tactic is to exploit the ergodicity of the Patterson--Sullivan measure to connect the behaviour of different maximal components for the strongly Markov structure: a key idea due to Calegari--Fujiwara \cite{calegari-fujiwara}.


\begin{proof} [Proof of Theorem \ref{thm:rayconvergence}]
Let $\mathcal{G}$ be a strongly Markov structure associated to the tuple $(\Gamma,S)$ and $\Sigma_A$ the shift space defined over symbols corresponding to the vertices of $\mathcal{G}$. Let $B$ be a maximal component (among $B_1,\ldots,B_m$, see \S \ref{subsub.shifts}) of $\mathcal{G}$ and let $(\Sigma_{B}, \sigma)$ denote the subshift defined over this component. Let $\mu_B$ denote the measure of maximal entropy on this subshift. 

For every $n \in \N$, let $f_n: \Sigma_{B} \to \R$ be the function given by $$f_n(x)=\varphi(\lambda(x_0,x_1) \ldots \lambda(x_{n-1},x_n)).$$ Since $\varphi: \Gamma \to \R$ is subadditive, the sequence $(f_n)_{n \in \N}$ constitutes a subadditive cocycle in the sense that $f_{n+m}(x) \leq f_n(x) + f_m(\sigma^n x)$. Since the Parry measure $\mu_B$ is ergodic, Kingman's subadditive ergodic theorem therefore yields that there exists $\Lambda \in \mathbb{R}$ such that
\begin{equation} \label{eq:aeconv}
\lim_{n\to\infty} \frac{f_n(x)}{n} = \Lambda \ \text{ for $\mu_B$-almost every $x \in \Sigma_B$.}
\end{equation}
Now consider the set
\[
E = \left\{ \xi \in \partial \Gamma: \lim_{n\to\infty} \frac{\varphi(\xi_n)}{n} = \Lambda \right\}.
\]
By Lemma \ref{lem:Lipschitz}, this set is well-defined and $\Gamma$-invariant. The conclusion of our proposition is equivalent to the fact that $\nu(E) = 1$, where $\nu$ denotes the Patterson--Sullivan probability measure \eqref{eq.def.ps}. Since the measure (class of) $\nu$ is ergodic with respect to the $\Gamma$-action \cite{coor}, it suffices to show that $\mu(E) > 0$.

Recall from \S \ref{subsub.ps.vs.parry} that we have a surjective, continuous map $\Psi: Y_\infty \to \partial \Gamma$ which pushes $\widehat{\nu}$ forward to $\nu$. Now fix an integer $k$ and vertex $v \in B$ such that there exists a length $k$ path in $\mathcal{G}$ from $\ast$ to a $v$. By combining \eqref{eq.parry.lin.comb} and Lemma \ref{lem:alpha}, one gets that there exists a constant $\alpha > 0$ such that $\sigma^k_\ast \widehat{\nu}|_{[v]} = \alpha \mu_B|_{[v]}.$ In particular, by (\ref{eq:aeconv}) we have that
\[
\sigma_\ast^k\widehat{\nu}|_{[v]} \left\{ x \in \Sigma_B: \lim_{n\to\infty} \frac{f_n(x)}{n} = \Lambda \right\} > 0 .
\]
Using the basic relation $|\phi(gh)-\phi(h)| \leq |\phi(g)|+|\phi(g^{-1})|$ valid for any subadditive function $\phi:\Gamma \to \R$ and every $g,h \in \Gamma$, one readily sees that the convergence $
f_n(x)/n \to \Lambda$ only depends on the tail of the sequence $x$: if $x$ and $y$ are two sequences such that $x_{n_1+k}=y_{n_2+k}$ for some $n_1,n_2 \in \N$ and all $k \in \N$, then the convergence holds either for both of $x,y$ or neither of them. 

This implies that the set 

\[
E_Y:=\left\{x \in Y: \lim_{n\to\infty} \frac{f_n(x)}{n} = \Lambda \right\} \ \ \text{satisfies} \ \ 
\widehat{\nu} (E_Y) > 0 .
\]
Since $\widehat{\nu}$ pushes forward to $\nu$ under $\Psi$ and $\Psi^{-1}(E)=E_Y$, we have that $\nu(E) = \Psi_\ast \widehat{\nu}(E) =\widehat{\nu}(E_Y) > 0$ and the conclusion follows.
\end{proof}

The following is a consequence of the above proof regarding positivity of the constant $\Lambda$ based on the Markov property (see e.g.~  \cite{countinglox, prohaska-sert} for similar uses of this idea in close contexts).

\begin{proposition}\label{prop.positivity.in.section}
Let $H$ be a group endowed with a semi-norm $|\cdot|$, $\rho:\Gamma \to H$ a morphism and $\varphi(\cdot)=|\rho(\cdot)|$ the associated subadditive function on $\Gamma$. Suppose that
every probability measure with finite exponential moment and with support that generates a finite index subgroup of $\Gamma$, has strictly positive $|\cdot|$-drift.
Then the constant $\Lambda$ given by Theorem \ref{thm:lln} is strictly positive.
\end{proposition}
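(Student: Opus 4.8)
\emph{Strategy.} Recall from the proof of Theorem~\ref{thm:rayconvergence} that $\Lambda$ is produced as the Kingman limit of the subadditive cocycle $f_n(x)=\varphi(\lambda(x_0,x_1)\cdots\lambda(x_{n-1},x_n))$ over the Parry measure $\mu_B=\mathbb{P}_{\mu_\bullet}$ of a maximal component $(\Sigma_B,\sigma)$ of the strongly Markov structure; since $|f_n|\le Dn$, this convergence is also in $L^1$. The plan is to convert the Markov chain underlying $\mu_B$ into a genuine i.i.d.\ random walk on $\Gamma$ via the classical loop (renewal) decomposition, apply the hypothesis to this walk, and transfer the resulting positivity back to $\Lambda$.

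\emph{The renewal walk.} Fix a vertex $v$ of $B$; then $\mu_\bullet(v)>0$. Run the chain from $v$, let $0=\tau_0<\tau_1<\cdots$ be the successive return times to $v$, and for $j\ge1$ let $h_j\in\Gamma$ be the group element read off the path segment between times $\tau_{j-1}$ and $\tau_j$. By the strong Markov property the $h_j$ are i.i.d.; write $\mu_v$ for their common law, a probability measure on $\Gamma$ whose support generates the semigroup $T_v(v)$, hence the subgroup $\Gamma_v:=\langle T_v(v)\rangle$. Because $B$ is finite and irreducible, $\tau_1$ has an exponential tail and $\mathbb{E}_v[\tau_1]=:\bar\tau\in(0,\infty)$; and since $|h_j|_S\le\tau_j-\tau_{j-1}$, the measure $\mu_v$ has a finite exponential moment with respect to $|\cdot|_S$, hence also with respect to $\varphi$ (recall that $\varphi$ is $|\cdot|_S$-Lipschitz by Lemma~\ref{lem:Lipschitz}).

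\emph{The crux.} The main point is that $\Gamma_v$ has finite index in $\Gamma$. Using the length-preserving bijection between paths from $\ast$ and elements of $\Gamma$, fix a path $p$ from $\ast$ to $v$, of length $k$, with associated element $g_p$. The loops at $v$ within $B$ of length $n$ number $\asymp\lambda^n$ (as $B$ is irreducible with spectral radius $\lambda$), and prepending $p$ to each such loop yields a pairwise distinct family of elements of $g_p\Gamma_v$ of word length $n+k$; translating by $g_p^{-1}$ gives $\#(\Gamma_v\cap B_m)\gtrsim\lambda^m$, so by \eqref{eq.pure.growth} the subgroup $\Gamma_v$ occupies a proportion of $B_m$ bounded away from $0$ for all large $m$. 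From here one deduces that $\Gamma_v$ has finite index, using properties of the coding and of hyperbolic groups. I expect this deduction --- that a subgroup of a hyperbolic group with positive lower density cannot have infinite index --- to be the main obstacle in the argument.

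\emph{Conclusion.} Since $\Gamma_v$ has finite index and $\mu_v$ has a finite exponential moment, the hypothesis applies to $\mu_v$: its $|\cdot|$-drift $\ell$ is strictly positive, i.e.\ $\tfrac1k\varphi(h_1\cdots h_k)\to\ell>0$ almost surely (Kingman, applied to the i.i.d.\ sequence $(h_j)$). Under $\mathbb{P}_v$ the element coded by the path up to time $\tau_k$ is exactly $h_1\cdots h_k$, so $f_{\tau_k}=\varphi(h_1\cdots h_k)$; moreover $\tau_k/k\to\bar\tau$ almost surely (strong law of large numbers for i.i.d.\ return times), while $f_n/n\to\Lambda$ holds $\mathbb{P}_v$-almost surely because $\mathbb{P}_v\ll\mathbb{P}_{\mu_\bullet}$. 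Evaluating the last convergence along $n=\tau_k$ and combining on a common full-measure event then gives
\[
\Lambda=\lim_{k\to\infty}\frac{f_{\tau_k}}{\tau_k}=\lim_{k\to\infty}\frac{\varphi(h_1\cdots h_k)/k}{\tau_k/k}=\frac{\ell}{\bar\tau}>0 ,
\]
which is the claim.
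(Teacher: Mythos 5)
Your proposal follows the same overall route as the paper's proof: a renewal decomposition at a vertex of a maximal component (the paper uses a two-vertex/edge return, an inconsequential variant) produces an iid walk $\mu_v$ with exponential moment; the hypothesis is applied to $\mu_v$; and the identity $\Lambda=\ell/\bar\tau$ transfers positivity back to $\Lambda$. These pieces — the exponential tail of $\tau_1$, the Lipschitz comparison between $\varphi$ and $|\cdot|_S$, the absolute continuity $\mathbb{P}_v\ll\mathbb{P}_{\mu_\bullet}$, and the evaluation of $f_n/n\to\Lambda$ along $n=\tau_k$ — are all correct and match the paper.

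However, your proof has a genuine gap, and it is exactly the one you flagged. The step "a subgroup of a hyperbolic group whose intersection with the spheres $S_n$ has positive exponential density must be of finite index" is not a routine consequence of the coding or of basic hyperbolicity; it is a nontrivial theorem. The paper's proof of Proposition \ref{prop.positivity.in.section} handles this by deferring to the argument in the proof of Lemma \ref{lem:(star)}, which in turn invokes a result of Gou\"ezel--Math\'eus--Maucourant, namely \cite[Theorem 4.3]{gmm}. Without knowing (or being able to prove) that input, the sentence "From here one deduces that $\Gamma_v$ has finite index, using properties of the coding and of hyperbolic groups" is an unjustified leap, and the proof does not close. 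Once that theorem is supplied, your argument is complete and essentially identical to the paper's.
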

The hypotheses of this proposition are a little awkward but are satisfied in many cases:\\
\textbullet ${}$ (Furstenberg \cite{furstenberg.non.commuting}) $H=\SL_d(\R)$ and the $\rho(\Gamma)$ is a strongly irreducible and non-relatively compact,\\
\textbullet ${}$ (Guivarc'h \cite{guivarch.positive}) the image of $\Gamma$ is non-amenable and has at most exponential $|\cdot|$-growth (see more precisely Kaimanovich--Kapovich--Schupp \cite[Proposition 2.5]{kaimanovich-kapovich-schupp}),\\
\textbullet ${}$ (Maher--Tiozzo \cite{maher-tiozzo}) the group $\Gamma$ acts non-elementarily on a  geodesic Gromov-hyperbolic space.\\[-8pt]

Since we have already used similar arguments involving the renewal measures in \S \ref{sec.markov} 
we will provide a brief proof of the above result.

\begin{proof}
It follows from \eqref{eq:aeconv} that $\Lambda$ is realized as the linear growth rate of $\varphi$ along almost-sure (with respect to $\mu_B$) Markovian trajectories $(\gamma_1,\gamma_2,\ldots)$ on $\Gamma^\N$. Now fix an edge $(v_0,v_1)$ in the maximal component $B$ and consider the induced (renewal) measure $\nu$ on $\Gamma$ obtained by return times the vertices $v_0$ and $v_1$ consecutively (see e.g.~ \cite[Definition 3.4]{prohaska-sert}). Then, by the Markov property, the induced law on $\Gamma^\N$ of this Markovian random walk along the return times to $(v_0,v_1)$ is the Bernoulli law $\nu^{\N}$ on $\Gamma^N$ (see e.g.~ \cite[Lemma 3.5]{prohaska-sert}). In particular, since the state space is finite, $\nu^{\N}$ is absolutely continuous with respect to $\mu_B$. Denoting by $\tau_0$ the expectation of return times (which has a finite exponential moment since the state space is finite and the chain is irreducible), the $|\cdot|$-drift $\Lambda_\nu$ of $\nu$ satisfies $\Lambda_\nu=\tau_0 \Lambda$. Finally, by the same argument that we will see in Lemma \ref{lem:(star)} (relying on \cite[Theorem 4.3]{gmm}), the support of $\nu$ generates a finite index subgroup of $\Gamma$ and hence $\Lambda_\nu>0$ by hypothesis. The result follows.
\end{proof}

We end this section by justifying 2.~ of Remark \ref{rk.subad.functions.intro}: it clear that if the conclusions of Theorems \ref{thm:lln} and \ref{thm:rayconvergence} hold for a function $\varphi$ and $\varphi': \Gamma \to \mathbb{R}$ is such that $|\varphi-\varphi'|$ is bounded, then they also hold for $\varphi'$. In particular, if $\varphi'$ is almost-subadditive in the sense that $\varphi'(gh) \le \varphi'(g) + \varphi'(h) + C$ for some $C>0$ and for all $g,h \in \Gamma$, then the conclusions of Theorems \ref{thm:lln} and \ref{thm:rayconvergence} hold for $\varphi'$: indeed, in this case the function $\varphi' + C$ is subadditive. Since a quasi-morphism (i.e.~ a function $f:\Gamma \to \R$ satisfying $|f(gh)-f(g)-f(h)| \leq C$ for some $C>0$ and for every $g,h \in \Gamma$) on a group $\Gamma$ is almost-subadditive, Theorems \ref{thm:lln} and \ref{thm:rayconvergence} apply. Notice that for a quasi-morphism, the limit $\Lambda$ is necessarily zero. 


\section{Limit theorems for the strongly Markov structure}\label{sec.5}

In \S \ref{subsub.construction.markov} we associate Markovian products to strongly Markov structure $\mathcal{G}$ of a couple $(\Gamma,S)$ and representation $\rho: \Gamma \to \GL_d(\R)$. We then deduce certain properties (1-contracting and irreducible) of the Markovian product from those of $\rho$. In \S \ref{subsec.lim.thm.markov.structure}, we single out the consequence of Theorem \ref{thm.irred.finite.markov} for these chains. In \S \ref{subsec.means.variances} we prove that the Markovian products associated to different maximal components of $\mathcal{G}$ satisfy limit theorems with the same parameters. Finally in \S \ref{subsec.gromov.markov}, we indicate the analogous results for isometries.

\subsection{Markovian products associated to the Markov structure of a Gromov-hyperbolic group
}\label{subsec.loop}


\subsubsection{Construction of Markovian products}\label{subsub.construction.markov}
We will extensively use the notation and terminology introduced in \S \ref{sec.hyp} and \S \ref{sec.markov}. We fix a Gromov-hyperbolic group $\Gamma$, a generating set $S \subset \Gamma$ and an associated strongly Markov structure $\mathcal{G}$. Denote by $(B_j)_{j=1,\ldots,m}$ the maximal components of $\mathcal{G}$. For each maximal component, let $E_j$ denote the set of edges between two vertices of $V_j$, the set of vertices in $B_j$.
The Parry construction discussed in \S \ref{subsub.parry} gives rise to an irreducible Markovian transition kernel $P_{v,j}$ on the state space $V_j$ and the Parry measure denoted $\mu_j$ is the unique shift invariant probability measure of maximal entropy on the associated trajectory space (subshift). We denote its restriction to $V_j$ by $\mu_{v,j}$. This is a $P_{v,j}$-stationary measure on $V_j$ and in accordance with our notation of trajectory measures we have $\mu_j=\P_{\mu_{v,j}}$\footnote{We stress this point to avoid any confusion. In the sequel, we will often use $\mu_j$ instead of $\P_{\mu_{v,j}}$ to simplify the notation.}. To define a Markovian product using the strongly Markov structure, we pass to the associated edge Markov chain: we consider the transition kernel $P_{e,j}$ on $E_j$ defined by $P_{e,j}((v_1,v_2),(v_3,v_4))=P_{v,j}(v_2,v_3) P_{v,j}(v_3,v_4)$. It is also irreducible and has the unique stationary measure $\mu_{e,j}$ given by $\mu_{e,j}(v_1,v_2)=\mu_{v,j}(v_1)P_{v,j}(v_1,v_2)$.

Having fixed a representation $\rho:\Gamma \to \GL_d(\R)$, we consider the map $X$ defined on the state space $E_j$ by the map $\lambda(\cdot,\cdot)$ in the strongly Markov structure (see Definition \ref{def.strong.markov}) and transpose of the representation $\rho$, i.e.~ $X((v_1,v_2)):={}^t\rho(\lambda(v_1,v_2))$. These define the data of our Markovian product that we will denote by $(M_n^j)$ for every $j=1,\ldots,m$. We denote the corresponding Lyapunov exponents by $\Lambda_1(j) \geq \Lambda_2(j) \geq \ldots \geq \Lambda_d(j)$ and will sometimes write $\Lambda(j)=\Lambda_1(j)$ to simplify notation. 

Finally, we write $(M_n)$ for the stationary Markovian product obtained from the Markov chain $(z_n)$ on the state space $\cup_{j=1}^m E_j$ and with transition kernel $P$ defined in the natural way from the $P_{e,j}$'s. Note that in general we will deal with the case $m>1$ so $(z_n)$ is not an ergodic Markov chain with any starting distribution that is a non-trivial linear combination of $\mu_{e,j}$'s.

\subsubsection{Proximality and strong irreducibility of Markovian products}

In the following lemma, we use the notation and constructions of the previous paragraph and show that  proximal and strongly irreducible representations give rise to proximal and strongly irreducible Markovian products. This relies on key ingredients from the works of Goldsheid--Margulis \cite{goldsheid-margulis} and  Gou\"ezel--Math\'{e}us--Maucourant \cite{gmm}.

\begin{lemma}\label{lem:(star)}
Suppose that $\rho:\Gamma \to \GL_d(\R)$ is a proximal and strongly irreducible representation. Then, for each $j=1,\ldots,m$, the Markovian product $(M^j_n)$ is $1$-contracting and strongly irreducible.
\end{lemma}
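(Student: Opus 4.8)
The plan is to establish both conclusions from a single fact: for each maximal component $B_j$, the semigroup of matrices read along loops of $B_j$ is strongly irreducible and proximal in $\GL_d(\R)$. Then both assertions are read off directly from the definitions recalled in \S\ref{subsec.simplicity}. Fix $j\in\{1,\dots,m\}$ and an edge $e_0\in E_j$. With $X((v_1,v_2))={}^t\rho(\lambda(v_1,v_2))$, every element of the semigroup $T_{e_0}(e_0)$ is of the form ${}^t(\rho(g))$ where $g=\lambda(x_1)\cdots\lambda(x_n)\in\Gamma$ is read along an admissible loop $e_0=x_0\to\cdots\to x_n=e_0$ in the edge graph of $B_j$; let $\Gamma_0\le\Gamma$ denote the subgroup generated by all such loop elements $g$. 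The crucial input — and the only place where the hyperbolic geometry genuinely enters — is that $\Gamma_0$ has \emph{finite index} in $\Gamma$; this is \cite[Theorem 4.3]{gmm} (the same statement used elsewhere for renewal measures in \S\ref{sec.markov} and in \S\ref{sec.lln}), possibly after routine bookkeeping relating loops based at an edge to loops based at a vertex.

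Granting this, I would argue as follows. Since $\rho$ is strongly irreducible and proximal, the identity component of the Zariski closure $\overline{\rho(\Gamma)}$ acts irreducibly and proximally on $\R^d$; the Zariski closure of the finite-index subgroup $\Gamma_0$ has the same identity component, and the Zariski closure of the semigroup $\rho(\{g\text{ a loop element}\})$ is a group wedged between $\rho(\Gamma_0)$ and $\overline{\rho(\Gamma)}$, hence also with that same identity component. By the standard fact that strong irreducibility and proximality of a subsemigroup of $\GL_d(\R)$ depend only on the identity component of its Zariski closure (cf.\ \cite{goldsheid-margulis} and \cite[Ch.~3--4]{bougerol.book}), the semigroup of loop elements of $B_j$ is strongly irreducible and proximal. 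Finally, $A\mapsto{}^tA$ is a variety automorphism of $\GL_d(\R)$ that preserves eigenvalues — hence proximality of matrices — and sends a finite union of subspaces invariant under a set $T$ to a finite union invariant under ${}^tT$ (pass to orthogonal complements); therefore $T_{e_0}(e_0)$, being the set of transposes of the loop elements, is strongly irreducible and proximal too.

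Now both claims follow. As $T_{e_0}(e_0)$ is $1$-proximal and $T_{e_0}(e_0)\subseteq T_{e_0}$, the set $T_{e_0}$ is $1$-proximal, so $(M^j_n)$ is $1$-contracting. For strong irreducibility of $(M^j_n)$, suppose for contradiction there were $r\in\{1,\dots,d-1\}$ and finitely many maps $V_i:E_j\to\Gr_r(\R^d)$ with $W(x):=\bigcup_iV_i(x)$ satisfying $\mathbb{P}_{x_0}$-a.s.\ $M^j_nW(x_0)=W(z_n)$ for all $x_0$ and $n$. Specialize to $x_0=e_0$: for any loop path $e_0=x_0\to\cdots\to x_\ell=e_0$ the pattern $(z_1,\dots,z_\ell)$ occurs with positive probability (all transition probabilities along the path are positive, $E_j$ being a single irreducible class), and on that event $z_\ell=e_0$ and $M^j_\ell$ is the corresponding element $h\in T_{e_0}(e_0)$; intersecting with the probability-one event forces $hW(e_0)=W(e_0)$, and since $h$ was arbitrary, $T_{e_0}(e_0)$ preserves the finite union $W(e_0)$ of proper nonzero subspaces of $\R^d$ — contradicting its strong irreducibility. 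Hence $(M^j_n)$ is strongly irreducible.

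I expect the main obstacle to be invoking \cite[Theorem 4.3]{gmm} in precisely the form needed (loops based at an edge versus a vertex, and ensuring the loop semigroup really generates a finite-index subgroup rather than a thinner one), together with making the Zariski-closure transfer of strong irreducibility and proximality and the effect of transposition rigorous at the level of semigroups; by comparison, the recurrence argument for the finite irreducible chain on $E_j$ is routine.
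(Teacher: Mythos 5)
Your overall strategy is the same as the paper's: reduce to showing that the loop semigroup $T_{e_0}(e_0)$ is proximal and strongly irreducible, pass to the group it generates and apply GMM, then transfer proximality and strong irreducibility across Zariski closures (and transposition). The recurrence argument you give at the end, deriving strong irreducibility of $(M^j_n)$ from that of $T_{e_0}(e_0)$ by realizing each loop element with positive probability, is a sound way to make the reduction explicit; the paper just cites Bougerol \cite[Th\'eor\`eme 5.3(ii)]{bougerol.comparaison} for the same equivalence.

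There is, however, a genuine gap at precisely the point you flag as the crucial input and then wave off as bookkeeping. \cite[Theorem 4.3]{gmm} does not say outright that the loop subgroup has finite index; it says that a subset of $\Gamma$ with strictly positive upper asymptotic density on spheres generates a finite-index subgroup. So you must first establish that the loop elements have positive upper density — and this is where the hyperbolic geometry really does all the work, not in edge-versus-vertex bookkeeping. The needed argument is: by property (iii) of Definition \ref{def.strong.markov}, the bijection between paths and group elements preserves word length, so loops of length $p_j n$ around $e_0$ in the maximal component $B_j$ give $\geq c\lambda^{p_j n}$ distinct elements of $S_{p_j n}\cap\Gamma_0$ (since $B_j$ has spectral radius $\lambda$); combined with Coornaert's purely exponential growth bound $\#S_{p_j n}\leq C\lambda^{p_j n}$ (equation \eqref{eq.pure.growth}), this gives positive density, which is the hypothesis GMM requires. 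Without this step your application of \cite[Theorem 4.3]{gmm} is unsupported, so you should make it explicit. Once that is filled in, the Zariski-closure transfer you use is fine (it amounts to the paper's explicit Goldsheid--Margulis reduction for proximality plus the elementary observation that a finite union of subspaces preserved by a semigroup is preserved by the group it generates), and the rest of your argument closes.
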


Recall that a semigroup $\Lambda<\GL_d(\R)$ is irreducible (resp.~ strongly irreducible) if there does not exist a non-trivial proper $\Lambda$-invariant subspace (resp.~ a finite collection of such subspaces whose union is $\Lambda$-invariant). It is not hard to see (see the proof of \cite[Th\'{e}or\`{e}me 5.3 (ii)]{bougerol.comparaison}) for every $j=1,\ldots,m$, $(M_n^j)$ is $1$-contracting or strongly irreducible, if and only if, there exists $x \in E_j$ such that the semigroup $T_x(x)<\GL_d(\R)$ (see \S \ref{subsec.simplicity}) is, respectively, proximal or strongly irreducible.

\begin{proof}
Fix $j \in \{1,\ldots,m\}$ and $x \in E_j$. It suffices to show that the semigroup $T_x(x)$ is proximal and strongly irreducible.

Let us first show that $T_x(x)$ is strongly irreducible. Let $v_1,v_2 \in V_j$ such that $x=(v_1,v_2)$ and denote by $p_j \in \N$ the period of the kernel $P_{e,j}$ on $E_j$. Then by definition  $T_x(x)={}^t\rho(\Gamma_x)$, where
$$
\Gamma_x:=\{\lambda(v_2,v_3)\ldots \lambda (v_{p_j n},v_1) \lambda(v_1,v_2) : n \in \N \; \text{and} \; (v_i,v_{i+1}) \in E_j \; \forall  i=1,\ldots,p_jn \}.
$$
We recall that in the preceding, $\lambda(\cdot,\cdot)$ denotes the labeling map in the definition of strongly Markov structure $\mathcal{G}$ (Definition \ref{def.strong.markov} (ii)). By the property (iii) in Definition \ref{def.strong.markov}, we have that the subset of $\Gamma_x$ consisting of elements of $\Gamma_x$ of $S$-length $p_j n$ is in bijection with the set $\mathcal{G}_{p_jn}(x)$ paths of vertices of length $p_j n$ in $\mathcal{G}$ that are loops around the vertex $v_1$. Since $v_1$ belongs to the maximal component $B_j$, there exists $c>0$ such that $\# \mathcal{G}_{p_jn}(x) \geq c \lambda^{p_j n}$, where $\lambda>1$ is the growth rate of the group $\Gamma$. We therefore have that $\# (\Gamma_x \cap S_{p_j n}) \geq c \lambda^{p_j n}$
and thanks to the purely exponential growth property \eqref{eq.pure.growth} of Gromov-hyperbolic groups, it follows that the upper asymptotic density of the semigroup $\Gamma_x$ over the spheres $S_{p_j n}$ is strictly positive, i.e.
$$
\limsup_{n \to \infty} \frac{\#(S_{np_j} \cap \Gamma_x)}{\#S_{np_j}}>0.
$$
As a consequence, by a result of Gou\"ezel--Math\'{e}us--Maucourant \cite[Theorem 4.3]{gmm}, we get that the subgroup $\Gamma_x^{\pm}$ generated by $\Gamma_x$ has finite index in $\Gamma$. Since $\rho(\Gamma)<\GL_d(\R)$ is strongly irreducible and $\Gamma_x^{\pm}<\Gamma$ is finite index, $\rho(\Gamma_x^{\pm})$ is also strongly irreducible. Since the transpose semigroup ${}^t \Lambda$ of a strongly irreducible semigroup $\Lambda<\GL_d(\R)$ is also strongly irreducible, it follows that the semigroup ${}^t\rho(\Gamma_x)=T_x(x)$ is strongly irreducible, as required.

It remains to show that ${}^t\rho(\Gamma_x)=T_x(x)<\GL_d(\R)$ is a proximal semigroup. It suffices to show again that the transpose semigroup $\rho(\Gamma_x)$ is proximal. By a result of Goldsheid--Margulis \cite{goldsheid-margulis} (for a version we use, see \cite[Lemma 6.23]{bq.book}), it suffices to show that the Zariski-closure $H:=\overline{\rho(\Gamma_x)}^Z<\GL_d(\R)$ is proximal. Recall that the Zariski-closure of a semi-group is a group and so $H=\overline{\rho(\Gamma_x^{\pm})}^Z$. But since $\Gamma_x^{\pm}$ has finite index in $\Gamma$, denoting by $G=\overline{\rho(\Gamma)}^Z$, we have the equality of connected components $G^o=H^o$. Since $\rho(\Gamma)$ is proximal by hypothesis so is $G^o$ and consequently $H$, completing the proof.
\end{proof}


\subsection{Limit theorems for the maximal components of the strongly Markov structure}\label{subsec.lim.thm.markov.structure}

We now put together the construction of Markovian products $(M_n^j)$ in \S \ref{subsub.construction.markov}, Lemma \ref{lem:(star)} and Theorem \ref{thm.irred.finite.markov} to deduce the following limit laws on the maximal components of the strongly Markov structure. For easier referencing, we state them separately.  
\begin{proposition}[Large deviations on maximal components]\label{prop.ld.maximal}
Under the assumptions of Theorem \ref{thm:ldtboundary}, for each $j=1,\ldots,m$ and for every $x \in E_j$,
\begin{equation*}
\limsup_{n \to \infty} \frac{1}{n}\log \P_x(|\log\|M_n^j\|-n \Lambda(j)|>n \varepsilon)<0,
\end{equation*}
where $\Lambda(j)$ is the top Lyapunov exponent of the Markovian product $(M_n^j)$.
\end{proposition}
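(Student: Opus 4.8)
The plan is to recognize Proposition \ref{prop.ld.maximal} as a direct specialization of the finite-state Markovian large deviation estimate (item 3 of Theorem \ref{thm.irred.finite.markov}) to the Markovian products $(M_n^j)$ built in \S\ref{subsub.construction.markov}. Recall from that construction that for each maximal component $B_j$, the product $(M_n^j)$ is the Markovian matrix product over the \emph{finite} state space $E_j$ (the edge set of $B_j$) equipped with the \emph{irreducible} edge transition kernel $P_{e,j}$, with step map $X((v_1,v_2))={}^t\rho(\lambda(v_1,v_2))$. Thus $(M_n^j)$ is exactly of the form covered by Theorem \ref{thm.irred.finite.markov} (finite state space, irreducible but possibly periodic kernel), and we only need to check that it is $1$-contracting and strongly irreducible.

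This check is precisely Lemma \ref{lem:(star)}: since $\rho$ is proximal and strongly irreducible --- the standing hypothesis carried over from Theorem \ref{thm:ldtboundary} --- the Markovian product $(M_n^j)$ is $1$-contracting and strongly irreducible for each $j=1,\dots,m$. With these hypotheses verified, I would apply item 3 of Theorem \ref{thm.irred.finite.markov} to $(M_n^j)$: for every $\varepsilon>0$ there exist constants $C=C_{j,\varepsilon}>0$ and $\alpha=\alpha_{j,\varepsilon}>0$ such that $\mathbb{P}_x\bigl(|\log\|M_n^j\|-n\Lambda(j)|\geq n\varepsilon\bigr)\leq Ce^{-\alpha n}$ for all $x\in E_j$ and $n\in\N$, where the centering constant produced by that theorem is the top Lyapunov exponent of $(M_n^j)$ (identified via the Furstenberg--Kesten/subadditive ergodic theorem, cf.~\eqref{eq.kingman.matrix}), i.e.~$\Lambda(j)=\Lambda_1(j)$ as defined in \S\ref{subsub.construction.markov}. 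Taking $\tfrac1n\log$ of both sides and passing to the limit yields
\[
\limsup_{n\to\infty}\frac1n\log\mathbb{P}_x\bigl(|\log\|M_n^j\|-n\Lambda(j)|>n\varepsilon\bigr)\leq-\alpha<0,
\]
which is the assertion.

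The main obstacle does not lie in the present argument --- it is entirely front-loaded. The substantive inputs are Lemma \ref{lem:(star)} (which transfers proximality and strong irreducibility of $\rho$ to the renewal semigroups $T_x(x)$, relying on Goldsheid--Margulis and on the purely exponential growth result of Gou\"ezel--Math\'eus--Maucourant to see that $T_x(x)$ generates a finite-index subgroup) and Theorem \ref{thm.irred.finite.markov} (which reduces the periodic finite-state case to the aperiodic one via the length-$p$ path chains of \S\ref{subsec.finite.markov}, subadditivity of $\log\|\cdot\|$, and then applies the Benoist--Quint-style large deviation estimate of Theorem \ref{thm.ld.matrix}). Granting those, Proposition \ref{prop.ld.maximal} is a one-paragraph consequence, which is why it is stated separately merely ``for easier referencing''.
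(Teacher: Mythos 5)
Your proposal is correct and matches the paper's intent exactly: the paper states Proposition \ref{prop.ld.maximal} without proof, explicitly flagging it as a direct combination of the construction in \S\ref{subsub.construction.markov}, Lemma \ref{lem:(star)}, and item 3 of Theorem \ref{thm.irred.finite.markov} stated separately ``for easier referencing.'' You have spelled out that combination precisely, including the correct identification of the centering constant $\Lambda(j)$ with the top Lyapunov exponent $\Lambda_1(j)$ via the subadditive ergodic theorem.
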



\begin{proposition}[Convergence to the Wiener measure on maximal components]\label{prop.wiener.maximal}
Under the assumptions of Theorem \ref{thm:boundarywiener}, for every $j=1,\ldots,m$, $\sigma>0$, $\Lambda \in \R$, $t \in [0,1]$ and $n \in \N$, let $S_{n}^j(t)$ denote $C([0,1])$-valued random variable defined by
\begin{equation}\label{eq.affine.path.interpolation}
S_{n}^j(t)=\frac{1}{(n\sigma^2)^{1/2}} \left( \log \|M^j_{\lfloor tn \rfloor}\|-nt\Lambda + (nt-\lfloor nt \rfloor) (\log \|M^j_{\lfloor tn \rfloor +1}\| - \log\|M^j_{\lfloor tn \rfloor}\|) \right).
\end{equation}
Then, there exists $\sigma=\sigma_j>0$ such that for $\Lambda=\Lambda(j) \in \R$ and for every $x \in E_j$, under $\P_x$, the sequence $(S_{n}^j)_{n \in \N}$ of $C([0,1])$-valued random variables converges in distribution to $\mathcal{W}$.
\end{proposition}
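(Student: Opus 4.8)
The plan is to obtain Proposition~\ref{prop.wiener.maximal} as a direct application of the finite-state invariance principle, Theorem~\ref{thm.irred.finite.markov}, to the Markovian product $(M_n^j)$ attached to the maximal component $B_j$. Recall from \S\ref{subsub.construction.markov} that $P_{e,j}$ is an irreducible Markovian transition kernel on the \emph{finite} state space $E_j$ and that $X((v_1,v_2))={}^t\rho(\lambda(v_1,v_2))$ is a map $E_j\to\GL_d(\R)$, so $(M_n^j)$ is exactly a Markovian product of the type considered in Theorem~\ref{thm.irred.finite.markov}; since $E_j$ is finite, condition $(A_2)$ is automatic and no further integrability assumption is needed.

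Next I would feed in Lemma~\ref{lem:(star)}: since $\rho$ is proximal and strongly irreducible, the product $(M_n^j)$ is $1$-contracting and strongly irreducible, which are precisely the two structural hypotheses of Theorem~\ref{thm.irred.finite.markov}. Applying part~1 of that theorem then yields positive constants $\Lambda'$ and $\sigma_j$ such that, for every $x\in E_j$, the $C([0,1])$-valued random variables defined from $\Lambda'$ and $\sigma_j$ by \eqref{eq.affine.path.interpolation} converge in distribution to $\mathcal{W}$ under $\P_x$. (The strict positivity of $\sigma_j$ traces back, through Theorem~\ref{thm.bougerol.wiener.lil}, to \cite[Proposition~4.9]{bougerol.thm.limite}.)

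It then remains only to identify $\Lambda'$ with the top Lyapunov exponent $\Lambda(j)$ of $(M_n^j)$. Specializing $t=1$ in \eqref{eq.affine.path.interpolation} turns the invariance principle into a central limit theorem for $(\log\|M_n^j\|-n\Lambda')/(\sigma_j\sqrt{n})$; a sequence converging in distribution is tight, so dividing by $\sqrt{n}$ shows $\tfrac1n\log\|M_n^j\|\to\Lambda'$ in $\P_x$-probability for every $x\in E_j$. On the other hand, the subadditive (Furstenberg--Kesten) ergodic theorem as in \eqref{eq.kingman.matrix} gives $\tfrac1n\log\|M_n^j\|\to\Lambda_1(j)=\Lambda(j)$ for $\P_{\mu_{e,j}}$-almost every trajectory; and because $E_j$ is finite with irreducible transition kernel, its stationary measure $\mu_{e,j}$ charges every state, whence $\P_x\ll\P_{\mu_{e,j}}$ for every $x\in E_j$ and this convergence holds $\P_x$-almost surely, hence in $\P_x$-probability. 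Two in-probability limits of one sequence coincide, so $\Lambda'=\Lambda(j)$, and the proposition follows with $\sigma=\sigma_j$.

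I do not expect a genuine obstacle here: all of the analytic substance has been relegated to Theorem~\ref{thm.bougerol.wiener.lil}, Lemma~\ref{lem:(star)} and Theorem~\ref{thm.irred.finite.markov}. The one point that requires a little care is the constant bookkeeping in the final step: Theorem~\ref{thm.irred.finite.markov} is itself proved by passing to the aperiodic block chains $\widehat{P}^i$ of period $p_j$, so the drift it returns is a priori $\Lambda_0/p_j$ with $\Lambda_0$ the Lyapunov exponent of the block product $\widehat{M}^i$, and the absolute-continuity argument above is precisely what pins this down to $\Lambda(j)$.
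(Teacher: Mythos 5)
Your proposal is correct and follows the same route as the paper, which states Propositions \ref{prop.ld.maximal}, \ref{prop.wiener.maximal} and \ref{prop.lil.maximal} as immediate consequences of the construction in \S\ref{subsub.construction.markov}, Lemma \ref{lem:(star)} and Theorem \ref{thm.irred.finite.markov}. You additionally spell out the identification of the drift constant returned by Theorem \ref{thm.irred.finite.markov} with the top Lyapunov exponent $\Lambda(j)$ (via the $t=1$ specialization, Furstenberg--Kesten, and $\P_x\ll\P_{\mu_{e,j}}$), a point the paper leaves implicit but which is indeed worth checking, and your argument for it is sound.
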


Finally, we record the following.

\begin{proposition}[Law of iterated logarithm on maximal components]\label{prop.lil.maximal}
Keep the hypotheses and notation of Proposition \ref{prop.wiener.maximal} and let $\sigma_j>0$ and $\Lambda(j) \in \R$ be the constants given by that result. Then, for every $j=1,\ldots,m$, $x \in E_j$, for $\P_x$-a.e.~ $\omega$, the set of limit points of the sequence $\left(\frac{(S_{n}^j(t))(\omega)}{2\log \log n}\right)_{n \in \N}$ of elements of $C([0,1])$ is equal to the following compact subset of $C([0,1])$:
$$
\left\{f \in C([0,1]): f \; \text{is absolutely continuous}, f(0)=0, \int_{0}^1f'(t)^2dt \leq 1\right\}.
$$
\end{proposition}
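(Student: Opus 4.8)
The plan is to obtain Proposition~\ref{prop.lil.maximal} as the functional law of iterated logarithm counterpart of Proposition~\ref{prop.wiener.maximal}, in exactly the way the latter was deduced from part~1 of Theorem~\ref{thm.irred.finite.markov}: here one invokes part~2 of the same theorem. First I would recall from Lemma~\ref{lem:(star)} that, under the standing hypothesis that $\rho:\Gamma\to\GL_d(\R)$ is strongly irreducible and proximal, each Markovian product $(M_n^j)$ attached to a maximal component $B_j$ of $\mathcal{G}$ is $1$-contracting and strongly irreducible. By construction (\S\ref{subsub.construction.markov}) the underlying Markov chain on the edge set $E_j$ has \emph{finite} state space and irreducible transition kernel $P_{e,j}$, but is in general only irreducible and not aperiodic --- precisely the situation treated in \S\ref{subsec.finite.markov}.

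Granting this, Theorem~\ref{thm.irred.finite.markov}, part~2, applied to $(M_n^j)$ gives immediately that for every $x\in E_j$ and $\P_x$-almost every $\omega$ the set of limit points of $\bigl((S_n^j(t))(\omega)/(2\log\log n)\bigr)_{n\in\N}$ in $C([0,1])$ equals the Strassen compact set, where $S_n^j$ is the interpolated normalized process \eqref{eq.affine.path.interpolation} formed with the pair $(\Lambda,\sigma)=(\Lambda(j),\sigma_j)$. The only point to watch is the bookkeeping of constants: in Theorem~\ref{thm.irred.finite.markov} the constants $\Lambda$ and $\sigma$ are produced once in part~1 (as the pair making the interpolated paths converge to $\mathcal{W}$) and part~2 refers to that same pair; hence the $\sigma_j$, $\Lambda(j)$ appearing in Proposition~\ref{prop.lil.maximal} are verbatim those furnished by Proposition~\ref{prop.wiener.maximal}, and no separate matching is needed.

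The one step that genuinely requires attention --- and which I would write out, since the proof of Theorem~\ref{thm.irred.finite.markov}, part~2, is only sketched --- is the reduction from the irreducible periodic chain on $E_j$ to the auxiliary aperiodic chains $(\widehat M_n^i)$ of \S\ref{subsec.finite.markov}. For these auxiliary chains conditions $(A_1)$ and $(A_2)$ hold (finite state space, aperiodic), so Theorem~\ref{thm.bougerol.wiener.lil}, part~2, applies and yields the functional LIL for the paths built from $(\widehat M_n^i)$; since having a given compact limit set almost surely is a distributional property of the process and $(\widehat M_n^i)$ has, for a suitable starting state, the same law as $(M_{p_j n})_n$, the functional LIL transfers to the subsequence $(S_{p_j n}^j)$. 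To pass from this subsequence to the full sequence $(S_n^j)$ one uses the elementary inequality $|\log\|gh\|-\log\|g\||\le\max\{\log\|h\|,\log\|h^{-1}\|\}$ together with the fact that, $E_j$ being finite, consecutive increments of $\log\|M_n^j\|$ are uniformly bounded; this makes the $C([0,1])$-distance between $S_{p_j n}^j$ and the neighbouring $S_m^j$ deterministically $O(n^{-1/2})$, hence negligible against the iterated-logarithm normalization, so the set of limit points is unchanged. I expect this periodic-to-aperiodic passage --- carried out at the level of the \emph{set of limit points} rather than of distributional convergence --- to be the main (though still essentially routine) obstacle; everything else is a direct appeal to the results of \S\ref{sec.markov} and Lemma~\ref{lem:(star)}.
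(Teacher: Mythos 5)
Your proposal is correct and follows essentially the paper's own route: Propositions~\ref{prop.ld.maximal}--\ref{prop.lil.maximal} are all stated without separate proof bodies precisely because they follow by combining the construction of \S\ref{subsub.construction.markov}, Lemma~\ref{lem:(star)}, and Theorem~\ref{thm.irred.finite.markov}, and your appeal to part~2 of the latter is exactly that combination. Your spelling-out of the periodic-to-aperiodic passage for the functional LIL --- transferring from $(\widehat{M}_n^i)$ to $(M_{p_j n}^j)$ via equality in law and then to the full sequence $(M_n^j)$ via the deterministic $O(n^{-1/2})$ sup-norm bound on $\|S_m^j - S_{p_j n}^j\|_\infty$, negligible against the iterated-logarithm scaling --- correctly fills in the step the paper leaves implicit in the proof of Theorem~\ref{thm.irred.finite.markov}, where parts~2 and~3 are said to follow the same ideas as part~1.
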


\subsection{Comparing means and variances}\label{subsec.means.variances}
To upgrade Propositions \ref{prop.ld.maximal}, \ref{prop.wiener.maximal} and \ref{prop.lil.maximal} to the corresponding limit theorems on the full strongly Markov structure $\mathcal{G}$, we first need to show that the Lyapunov exponents $\Lambda_k(j)$ and variances $\sigma^2_j$ obtained in the previous section all agree (i.e.~ they do not depend on $j=1,\ldots,m$). This result is also needed to prove the positivity of the top Lyapunov exponent. These are the two goals of this paragraph. 

To compare these Lyapunov exponents and variances across maximal components we implement the approach of Calegari--Fujiwara \cite{calegari-fujiwara} (more precisely, its adaptation by Cantrell \cite[Proposition 4.8]{cantrell.TAMS}). The argument crucially relies on the ergodicity of the Patterson--Sullivan measure to compare typical growth rates of appropriately constructed functions along geodesic rays.

\begin{proposition} \label{prop:mandv}
1. There exists constants $\Lambda_1 \geq \Lambda_2 \geq \ldots \geq \Lambda_d$ such that for every $j=1,\ldots,m$ and $i=1,\ldots,d$ we have $\Lambda_i(j)=\Lambda_i$.\\[3pt]
2. There exists a constant $\sigma>0$ such that for every $j = 1, \ldots, m$, we have $\sigma_j^2=\sigma^2$.
\end{proposition}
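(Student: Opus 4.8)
The plan is to follow the Calegari--Fujiwara strategy: exploit the ergodicity of the Patterson--Sullivan measure $\nu$ under $\Gamma$ to link the Parry measures $\mu_j$ of the different maximal components $B_j$ through the boundary coding $\Psi\colon Y_\infty\to\partial\Gamma$, transferring a.e.\ statements for $\nu$ into positive‑measure (hence, by ergodicity, full‑measure) statements for each $\mu_j$.

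\emph{Part 1 (means).} For each $k=1,\dots,d$ the function $\varphi_k(g):=\log\|\wedge^k\rho(g)\|$ is subadditive, being the composition of the operator norm with the homomorphism $\wedge^k\rho$. Theorem~\ref{thm:rayconvergence} then yields a constant $\Lambda^{(k)}\ge 0$ with $\varphi_k(\xi_n)/n\to\Lambda^{(k)}$ for $\nu$-a.e.\ $\xi$ and every representative. On the other hand, on $\Sigma_{B_j}$ the Furstenberg--Kesten theorem \eqref{eq.kingman.matrix} gives $\frac1n\log\|\wedge^k M^j_n\|\to\Lambda_1(j)+\dots+\Lambda_k(j)$ for $\mu_j$-a.e.\ trajectory; and since $\wedge^k({}^t\rho(g))={}^t(\wedge^k\rho(g))$, so that $\|\wedge^k M^j_n\|=\|\wedge^k\rho(g_n)\|=e^{\varphi_k(g_n)}$ for the Euclidean operator norm (and Lyapunov exponents do not depend on the choice of norm), this limit is $\lim_n\varphi_k(g_n)/n$ along the word $g_n$ spelled by the trajectory. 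To glue the two, I would fix a vertex $v\in B_j$ reachable from $\ast$ by a path of some length $k_0$; combining \eqref{eq.parry.lin.comb} with Lemma~\ref{lem:alpha} gives $\sigma_\ast^{k_0}\widehat\nu|_{[v]}=c\,\mu_j|_{[v]}$ with $c>0$. Using that $\Psi$ pushes $\widehat\nu$ to $\nu$, that $\varphi_k$ is Lipschitz (Lemma~\ref{lem:Lipschitz}) so the limiting behaviour is a tail event insensitive to the finite prefix joining $\ast$ to $B_j$, and the $\sigma$-invariance of the corresponding subset of $\Sigma_{B_j}$, one gets $\mu_j\big(\{\varphi_k(g_n)/n\to\Lambda^{(k)}\}\big)>0$, hence $=1$ by ergodicity of $\mu_j$. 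Thus $\Lambda_1(j)+\dots+\Lambda_k(j)=\Lambda^{(k)}$ for every $j$ and $k$, and taking successive differences gives $\Lambda_k(j)=\Lambda^{(k)}-\Lambda^{(k-1)}=:\Lambda_k$, independent of $j$. (This is essentially \cite[Proposition~4.8]{cantrell.TAMS} applied to the exterior powers of $\rho$.)

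\emph{Part 2 (variances).} Here the same mechanism is used with the law of iterated logarithm replacing the law of large numbers. Writing $\varphi=\varphi_1$ and $\Lambda=\Lambda_1$, I would introduce on $\partial\Gamma$ the functional
\[
L(\xi):=\limsup_{n\to\infty}\frac{\varphi(\xi_n)-n\Lambda}{\sqrt{2n\log\log n}},
\]
and check, using that $\varphi$ is Lipschitz and submultiplicative, that $L$ is independent of the chosen representative $\xi_n\to\xi$ and is $\Gamma$-invariant (a translate of a ray changes $\varphi(\xi_n)$ by a bounded amount, which the normalisation absorbs). By ergodicity of $\nu$ under $\Gamma$, $L\equiv\sigma$ $\nu$-a.e.\ for some constant $\sigma\ge 0$. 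Transferring exactly as in Part~1 (via $\Psi$, \eqref{eq.parry.lin.comb} and Lemma~\ref{lem:alpha}, noting that $L$ is a tail functional so the finite prefix is irrelevant and the relevant subset of $\Sigma_{B_j}$ is $\sigma$-invariant), one obtains that for every $j$ and $\mu_j$-a.e.\ trajectory, $\limsup_n\frac{\log\|M^j_n\|-n\Lambda}{\sqrt{2n\log\log n}}=\sigma$. On the other hand, specialising the functional law of iterated logarithm of Proposition~\ref{prop.lil.maximal} to $t=1$ (the values $f(1)$ fill $[-1,1]$ as $f$ ranges over the set $\{f\in C([0,1]):f\text{ absolutely continuous},\,f(0)=0,\,\int_0^1 f'(t)^2\,dt\le 1\}$) shows that this same $\limsup$ equals $\sigma_j$ $\mu_j$-a.e. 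Hence $\sigma_j=\sigma$ for all $j$, and $\sigma>0$ since each $\sigma_j>0$ by Proposition~\ref{prop.wiener.maximal}.

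\emph{Main obstacle.} I expect Part~2 to be the delicate one: the variance is a second‑order quantity, so the comparison must be routed through an almost‑sure statement, and the law of iterated logarithm provides exactly such a statement in the form of an a.e.-constant $\limsup$. The care needed is in verifying that $L$ is genuinely well‑defined on $\partial\Gamma$ and $\Gamma$-invariant, and in the bookkeeping of the measure identities relating $\nu$, its lift $\widehat\nu$ on $\Sigma_A$, and the Parry measures $\mu_j$ under shifts and cylinder restrictions --- in particular that the events in question are tail events, hence shift‑invariant, so that ergodicity of both $\nu$ and $\mu_j$ can be invoked.
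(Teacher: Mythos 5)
Your proposal is correct and the overall strategy — routing the comparison between components through $\Gamma$-invariant events on the boundary and the Parry--Patterson--Sullivan identity of Lemma~\ref{lem:alpha} — is the same as the paper's. The details of Part~2 are genuinely different, and your route is arguably cleaner.

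For Part~1, what you write is essentially the argument the paper has in mind (the paper omits it as ``similar to Theorem~\ref{thm:rayconvergence}'', and your version is confirmed by Lemma~\ref{lemma.ps.lyapunovs}): apply the subadditive theory to the exterior powers $\varphi_k=\log\|\wedge^k\rho(\cdot)\|$, identify $\Lambda^{(k)}$ with $\Lambda_1(j)+\cdots+\Lambda_k(j)$ via Kingman on $(\Sigma_{B_j},\mu_j)$ and the coding $\Psi$, and take differences. The minor reorganisation --- you invoke Theorem~\ref{thm:rayconvergence} as a black box and transfer its $\nu$-full-measure conclusion into each $\mu_j$, rather than re-running the proof of Theorem~\ref{thm:rayconvergence} starting from a single $\mu_j$ --- is harmless because $\alpha_v^k>0$ lets one move in either direction and tail-measurability plus ergodicity of $\mu_j$ upgrades positive to full measure.

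For Part~2 the paper does something else: it defines, for each $j$, a $\Gamma$-invariant set $S_j\subset\partial\Gamma$ via an \emph{empirical} version of the CLT along the ray (a double limit $\lim_n\limsup_m$ of time-averages of indicator functions), pushes the distributional CLT of Proposition~\ref{prop.wiener.maximal} to an almost-sure statement via Birkhoff's ergodic theorem on $(\Sigma_{B_j},\mu_j)$, and then uses $\nu$-ergodicity to conclude $\nu(S_j)=1$ for all $j$, forcing all the Gaussians (hence all $\sigma_j$) to coincide. You instead use the functional LIL (Proposition~\ref{prop.lil.maximal} specialised at $t=1$), which is already an a.e.\ statement identifying $\sigma_j$ as a $\limsup$; you then observe that the corresponding $\limsup$-functional $L$ on $\partial\Gamma$ is well-defined, $\Gamma$-invariant, and hence $\nu$-a.e.\ constant, and transfer back into $\mu_j$ exactly as in Part~1. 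This sidesteps the Birkhoff averaging step entirely, at the cost of leaning on the stronger LIL rather than the CLT. Both establish the same $j$-independence; the paper's CLT-plus-Birkhoff route has the advantage of also setting up the template used later for quantitative boundary CLTs, while your LIL route is conceptually more direct for the single purpose of identifying the variance. (One small notational caveat: the paper's displays write the LIL denominator as $2\log\log n$; the standard Strassen normalisation you use, $\sqrt{2\log\log n}$, is the one that makes the $t=1$ specialisation give $\limsup(\log\|M_n^j\|-n\Lambda)/\sqrt{2n\log\log n}=\sigma_j$, which is what you need.)
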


In the sequel, whenever there is no risk of confusion, we will denote $\Lambda_1$ by $\Lambda$. 

\begin{proof}
1. The proof is similar to that of Theorem \ref{thm:rayconvergence} and hence omitted to avoid repetitive exposition of the same idea.

2. Fix a maximal component $B_j$ and define $S_j \subset \partial \Gamma$ to be Borel measurable subset of $\partial \Gamma$ consisting of boundary elements $\xi$ that have a geodesic representative $(\xi_n)$ such that for each $t \in \mathbb{R}$
\[
\lim_{n\to\infty} \limsup_{m\to\infty} \frac{1}{m} \#\left\{0 \le k\le m: \frac{\log\|\rho(\xi_k^{-1} \xi_{k+n})\| - \Lambda n}{\sqrt{n}} < t \right\}=\frac{1}{\sigma_j\sqrt{2\pi}}\int_{-\infty}^t e^{-\frac{s^2}{2\sigma_j^2}}ds.
\]
The set $S_j$ is well-defined and $\Gamma$-invariant. Given $t \in \mathbb{R}$ and $n \in \mathbb{Z}_{\ge 0}$, we also define the set
\[
F(t,n) = \left\{ x \in \Sigma_{B_j}: \frac{\log\| \rho(\lambda(x_0,x_1) \ldots \lambda(x_{n-1},x_n)) \| - \Lambda n }{\sqrt{n}} < t \right\}.
\]
Here, without loss of generality, we choose the norm $\|\cdot\|$ to be the operator norm induced by the Euclidean norm so that it is invariant under passing to the transpose of a matrix.
Moreover, for $z \in \Sigma_A$ and $m \in \mathbb{Z}_{\ge 0}$, we set
\[
\mu_{(z,m)} = \frac{1}{m} \sum_{k=0}^{m-1} \delta_{\sigma^k z}.
\]
Since the indicator functions $1_{F(t,n)}$ are continuous and $C(\Sigma_A)$ separable, using Birkhoff's ergodic theorem for the shift space $(\Sigma_{B_j},\mu_j)$ where $\mu_j$ is the (ergodic) Parry measure of $\Sigma_{B_j}$, we find a set $\Sigma_{B_j}' \subset \Sigma_{B_j}$ of full $\mu_j$ measure such that for every $z \in \Sigma_{B_j}'$, $t \in \mathbb{R}$ and $n \in \N$, we have 
\begin{equation} \label{eq:indicatorlim}
\lim_{m\to\infty} \int_{\Sigma_{B_j}} 1_{F(t,n)} \ d\mu_{(z,m)} = \mu_j(F(t,n)).
\end{equation}
Notice that, as in the first part of the proof, if $z \in \Sigma_{B_j}$ satisfies the convergence \eqref{eq:indicatorlim}, then any pre-image in $\sigma^{-k}(z)$ (for any $k \ge 1$) also satisfies the same convergence. Hence, thanks to \eqref{eq.parry.lin.comb} and Lemma \ref{lem:alpha}, we can find $k \ge 1$ and a subset $\Sigma_{B_j}^o = \sigma^{-k}(\Sigma_{B_j}') \subset \Sigma_A$ such that $\widehat{\nu}(\Sigma_{B_j}^o) > 0$ and for every $z \in \Sigma_{B_j}^o$, $t \in \mathbb{R}$ and $n \in \N$, we have 
\[
\lim_{m\to\infty} \int 1_{F(t,n)} \ d\mu_{(z,m)} = \mu_j(F(t,n)).
\]
Notice that by construction of the edge-chain in \S \ref{subsub.construction.markov} from the vertex chain, we have
\begin{equation}\label{eq.vertex.to.edge.variance}
\mu_j(F(t,n))=\P_{\mu_{e,j}}\left\{((x_0,x_1), \ldots) \in E_j : \frac{\log\| \rho(\lambda(x_0,x_1) \ldots \lambda(x_{n-1},x_n)) \| - \Lambda n }{\sqrt{n}} < t\right\}.
\end{equation}
Since the operator norm is invariant under transpose, by construction of the Markovian random product $(M_n^j)$, the right-hand-side of \eqref{eq.vertex.to.edge.variance} is equal to $\P_{\mu_{e,j}}\left(\frac{\log \|M_n\|-\Lambda n}{\sqrt{n}}<t\right)$. Using the central limit theorem implied by Proposition \ref{prop.wiener.maximal} (e.g.~ by specializing to $t=1$ in \eqref{eq.affine.path.interpolation} and using the definition of the Wiener measure) then implies that
\[
\lim_{n\to \infty} \lim_{m\to\infty} \int 1_{F(t,n)} \ d\mu_{(z,m)} = \lim_{n\to\infty} \mu_j(F(t,n)) = \frac{1}{\sigma_j\sqrt{2\pi}}\int_{-\infty}^t e^{-\frac{s^2}{2\sigma_j^2}} \ ds
\]
for $t \in \mathbb{R}$. Therefore we deduce that $\Psi(\Sigma_{B_j}^o) \subset S_j$, where $\Psi$ is the function $\Psi:Y_\infty  \to \partial \Gamma$ defined in \S \ref{subsub.ps.vs.parry}. In particular, we have $\nu(S_j) > 0$ and by the ergodicity of $\nu$ this implies that $\nu(S_j) = 1$ and so $\sigma^2_j$ does not depend on $j=1,\ldots,m$, as required.
\end{proof}

A direct consequence of the previous result is the following.

\begin{corollary}\label{corol.limthms.fullautomat}
Propositions \ref{prop.ld.maximal}, \ref{prop.wiener.maximal} and \ref{prop.lil.maximal} hold when $(M_n^j)$ is replaced by $(M_n)$ (see \S \ref{subsub.construction.markov}) and the constants $\Lambda(j)$ and $\sigma_j^2$ are replaced by $\Lambda$ and $\sigma^2$ for each $j=1,\ldots,m$. \qed
\end{corollary}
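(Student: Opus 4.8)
The plan is to observe that the statement for $(M_n)$ reduces, component by component, to the statements already established for the $(M_n^j)$. Since the maximal components of $\mathcal{G}$ are pairwise disjoint --- there is no path in $\mathcal{G}$ from one maximal component to another, as recorded at the end of \S\ref{subsub.shift.space.ofmarkov} --- the transition kernel $P$ on the state space $\bigsqcup_{j=1}^m E_j$ is block diagonal, with diagonal blocks $P_{e,j}$. Consequently, for each $j$ and every $x \in E_j$, under $\P_x$ the Markov chain $(z_n)$ stays in $E_j$ almost surely, so the law of the process $(M_n)_{n \in \N}$ under $\P_x$ coincides with the law of $(M_n^j)_{n \in \N}$ under $\P_x$. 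This is the only nontrivial point, and it is essentially a tautology given the block structure of $P$.

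Given this identification, the three conclusions are immediate. First I would invoke Proposition \ref{prop:mandv} to replace the component-dependent constants: $\Lambda_i(j) = \Lambda_i$ for all $i$ and $\sigma_j^2 = \sigma^2$, uniformly in $j$. Then, for the large deviation estimate, apply Proposition \ref{prop.ld.maximal} to the component $j$ containing $x$, with $\Lambda(j)$ rewritten as $\Lambda$; for convergence to the Wiener measure, apply Proposition \ref{prop.wiener.maximal} with $\Lambda = \Lambda$ and $\sigma = \sigma$ in the definition \eqref{eq.affine.path.interpolation} of $S_n^j$, which with these choices becomes literally the same $C([0,1])$-valued random variable as the $S_n$ built from $(M_n)$; and for the functional law of iterated logarithm, apply Proposition \ref{prop.lil.maximal} verbatim. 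Ranging over $j = 1,\ldots,m$ yields the conclusion for every $x \in \bigsqcup_{j=1}^m E_j$, which is the assertion of the corollary.

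There is no real obstacle here: all the content lies upstream, in Proposition \ref{prop:mandv}, whose proof exploits the ergodicity of the Patterson--Sullivan measure to match means and variances across maximal components, and in Theorem \ref{thm.irred.finite.markov} (via Lemma \ref{lem:(star)}) which furnished the component-wise limit laws in the first place. The one subtlety worth stating explicitly in the write-up is that the normalized interpolation in \eqref{eq.affine.path.interpolation}, once $\Lambda(j)$ and $\sigma_j$ are substituted by the common $\Lambda$ and $\sigma$, is identical to the interpolation associated to $(M_n)$, so that no separate comparison or "asymptotic equivalence of paths" argument is needed --- this is precisely why the corollary can be marked with \qed rather than given a full proof.
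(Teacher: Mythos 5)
Your proposal is correct and is essentially the paper's (implicit) argument: the block-diagonal structure of $P$ makes the law of $(M_n)$ under $\P_x$ for $x \in E_j$ coincide with that of $(M_n^j)$, and Proposition \ref{prop:mandv} unifies the constants across components. The paper indeed treats this as a direct consequence of Proposition \ref{prop:mandv} requiring no separate proof, which matches your observation about the \qed.
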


In Proposition \ref{prop:mandv} we  assumed that our representation $\rho$ is both strongly irreducible and proximal. However, the argument used to prove the first part of this proposition does not require either the strongly irreducible or proximal assumption. We obtain the following which applies to any representation of a Gromov-hyperbolic group into $\GL_d(\R)$.

\begin{lemma}\label{lemma.ps.lyapunovs} Let $\rho: \Gamma \to \GL_d(\R)$ be a representation of a hyperbolic group $\Gamma$ (which is equipped with a generating set). Then the Borel subset $B \subseteq \partial \Gamma$ consisting of elements $\xi$ having a geodesic representative $(\xi_n)$ satisfying 
$$\frac{1}{n}\log\left\|\bigwedge^k \rho(\xi_n)\right\| \underset{n \to \infty}{\longrightarrow} \sum_{i=1}^k \Lambda_i$$
for every $k=1,\ldots,d$, is well-defined, $\Gamma$-invariant and has full $\nu$-mass. \qed
\end{lemma}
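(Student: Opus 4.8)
The plan is to adapt the proof of the second part of Proposition \ref{prop:mandv} (the comparison of variances), replacing the Berry--Esseen/CLT input by the simpler pointwise convergence \eqref{eq.kingman.matrix} of exterior powers. Concretely, I would fix $k \in \{1,\ldots,d\}$ and a maximal component $B_j$ of the strongly Markov structure, and set
\[
E = \left\{ \xi \in \partial \Gamma : \text{for every geodesic representative } (\xi_n), \ \tfrac{1}{n}\log\left\|\textstyle\bigwedge^k \rho(\xi_n)\right\| \to \textstyle\sum_{i=1}^k \Lambda_i(j) \right\}.
\]
By Lemma \ref{lem:Lipschitz} applied to the subadditive function $g \mapsto \log\|\wedge^k\rho(g)\|$ (subadditivity of $\|\wedge^k(\cdot)\|$ and the cocycle property show this set does not depend on the chosen representative, up to a bounded error which washes out after dividing by $n$), the set $E$ is a well-defined Borel $\Gamma$-invariant subset of $\partial\Gamma$. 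Since the Patterson--Sullivan measure class is $\Gamma$-ergodic \cite{coor}, it suffices to show $\nu(E) > 0$.

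For this I would run the argument of Theorem \ref{thm:rayconvergence} essentially verbatim. On the subshift $(\Sigma_{B_j},\sigma)$ equipped with the ergodic Parry measure $\mu_j$, define $f_n(x) = \log\|\wedge^k\rho(\lambda(x_0,x_1)\cdots\lambda(x_{n-1},x_n))\|$; this is a subadditive cocycle, so Kingman's subadditive ergodic theorem gives a constant $c_k(j)$ with $f_n(x)/n \to c_k(j)$ for $\mu_j$-a.e.\ $x$. To identify $c_k(j)$ with $\sum_{i=1}^k\Lambda_i(j)$ I would pass through the edge-chain exactly as in \eqref{eq.vertex.to.edge.variance}: by construction of $(M_n^j)$ in \S\ref{subsub.construction.markov} and invariance of the operator norm under transpose, $f_n$ transported to $E_j$ equals $\log\|\wedge^k M_n^j\|$, whose $\P_{\mu_{e,j}}$-a.s.\ normalized limit is $\sum_{i=1}^k\Lambda_i(j)$ by \eqref{eq.kingman.matrix}. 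Then, as in the proof of Theorem \ref{thm:rayconvergence}, I would pick $k_0 \ge 1$ and a vertex $v \in B_j$ with a length-$k_0$ path from $\ast$ to $v$, use \eqref{eq.parry.lin.comb} together with Lemma \ref{lem:alpha} to get $\sigma_\ast^{k_0}\widehat{\nu}|_{[v]} = \alpha\,\mu_j|_{[v]}$ for some $\alpha>0$, observe (via $|\phi(gh)-\phi(h)| \le |\phi(g)| + |\phi(g^{-1})|$) that the convergence of $f_n(x)/n$ depends only on the tail of $x$, and conclude that $\widehat{\nu}$ gives positive mass to the set $E_Y = \{x \in Y : f_n(x)/n \to \sum_i\Lambda_i(j)\}$; pushing forward by $\Psi$ (which satisfies $\Psi^{-1}(E) = E_Y$) yields $\nu(E) > 0$, hence $\nu(E)=1$.

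This shows simultaneously that the limit $\sum_{i=1}^k\Lambda_i(j)$ is realized $\nu$-a.e.\ along geodesic rays and, comparing the result for different $j$, that $\sum_{i=1}^k\Lambda_i(j)$ is independent of $j$ (this is exactly the content of part 1 of Proposition \ref{prop:mandv}, whose proof is there deferred to ``similar to Theorem \ref{thm:rayconvergence}''). So really one proves both Lemma \ref{lemma.ps.lyapunovs} and Proposition \ref{prop:mandv}.1 at once. The only mild subtlety --- and the step I expect to require the most care --- is the independence of $E$ from the choice of geodesic representative: two representatives of the same $\xi \in \partial\Gamma$ stay within bounded Hausdorff distance, so $|\xi_n^{-1}\xi'_n|_S$ is uniformly bounded, and Lemma \ref{lem:Lipschitz} gives $|\log\|\wedge^k\rho(\xi_n)\| - \log\|\wedge^k\rho(\xi'_n)\||$ uniformly bounded, which vanishes after dividing by $n$; this must be spelled out cleanly since the statement quantifies over all representatives. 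Everything else is a transcription of already-established arguments with $\log\|\rho(\cdot)\|$ replaced by $\log\|\wedge^k\rho(\cdot)\|$.
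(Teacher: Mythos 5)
Your proposal is correct and follows essentially the same route the paper takes: the paper states that the proof of Proposition \ref{prop:mandv}.1 (of which Lemma \ref{lemma.ps.lyapunovs} is the restatement without the proximality/strong-irreducibility hypotheses) is ``similar to Theorem \ref{thm:rayconvergence},'' and your argument is precisely that transcription --- Kingman on the Parry measure of each maximal component, identification of the limit with $\sum_{i=1}^k\Lambda_i(j)$ via the edge chain and transpose-invariance of the norm, Lemma \ref{lem:alpha} plus ergodicity of the Patterson--Sullivan class to pass from a single component to full $\nu$-mass, and the observation that this simultaneously yields independence from $j$. The only cosmetic points: the lemma's set is defined by \emph{existence} of a representative, whereas your $E$ quantifies over \emph{all} representatives, but as you note these coincide by the Lipschitz estimate; and one should intersect over the finitely many $k=1,\ldots,d$, which is immediate.
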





We can now characterise the positivity of $\Lambda$ for strongly irreducible representations as claimed in the introduction.

\begin{proof}[Proof of Proposition \ref{prop.positivity}]
To prove the necessity, note that if the image of $\Gamma$ in $\PGL_d(\R)$ is relatively compact, then we can modify the norm $\|\cdot\|$ so that the map $\Gamma \ni \gamma \to \log\|\rho(\gamma)\| \in \R$ is additive. Now using Lemma \ref{lemma.ps.lyapunovs} and Remark \ref{rk.ray.conv.non.neg}, we realize $\Lambda$ as a counting average (as in Theorem \ref{thm:lln}) with respect to an additive function. The symmetry of $S$ readily implies that this counting average is zero.

Let us now show the remaining implication.
Suppose that the image of $\Gamma$ is not-relatively compact in $\PGL_d(\R)$.  Since $\rho(\Gamma)<\GL_d(\R)$ is (strongly) irreducible, it follows that the semigroup $\rho(\Gamma)$ is $r$-proximal for some $r \in \{1,\ldots,d-1\}$ (this is standard, see e.g.~ \cite[Lemma 3.6]{morris-sert}). It then follows by the same argument in Lemma \ref{lem:(star)} that $(M_n^j)$ is $r$-contracting for each $j=1,\ldots,m$. Therefore, since by Proposition \ref{prop:mandv}, the $\Lambda_i$'s are the Lyapunov exponents of the Markovian product $(M_n^1)$ (which satisfies the assumptions of Theorem \ref{thm.bougerol.simplicity})  there exists $r \in \{1,\ldots,d-1\}$ such that $\Lambda_r>\Lambda_{r+1}$. We now relate these Lyapunov exponents with spherical averages to exploit symmetry of the generating set $S$ to get positivity. To this end, we apply Theorem \ref{thm:rayconvergence} for the subadditive functions $\phi_1(\cdot)= \log \|{}^t\rho(\cdot)\|$ and $\phi_{\det}(\cdot)= \log \det(\rho(\cdot))$ and denote the corresponding averages by $\widetilde{\Lambda}_1$ and $\widetilde{\Lambda}_{\det}$, respectively. In view of Lemma \ref{lemma.ps.lyapunovs}, we have $\widetilde{\Lambda}_1=\Lambda_1$ and $\widetilde{\Lambda}_{\det}=\sum_{i=1}^d\Lambda_i$. Now, on the one hand by Remark \ref{rk.ray.conv.non.neg}, $\Lambda_1$ and $\sum_{i=1}^d \Lambda_i$ are non-negative, and on the other hand, we have $\Lambda_1 \geq \ldots \geq \Lambda_r >\Lambda_{r+1} \geq \ldots \geq \Lambda_d$. It follows that $\Lambda=\Lambda_1>0$. 
\end{proof}

\subsection{The case of isometries}\label{subsec.gromov.markov}

Here we briefly indicate how to associate a Markovian product (and the result corresponding to Proposition \ref{prop.ld.maximal} and Corollary \ref{corol.limthms.fullautomat}) in the analogous situation where, instead of a representation $\Gamma \to \GL_d(\R)$, we are given a non-elementary isometric action of $\Gamma$ on a Gromov-hyperbolic space $(H,d)$.

For each maximal component $B_j$ $j=1,\ldots,m$, the underlying Markov chain $(E_j,P_{e,j})$ described in \S \ref{subsub.construction.markov} remains the same. One only modifies the map $X$. We define $X:E_j \to \Isom(H)$ by $X((v_1,v_2))=\lambda(v_1,v_2)^{-1}$. We similarly denote by $(M_n^j)$ the associated Markovian product on $\Isom(H)$ and $(M_n)$ the Markovian product induced by the Markov chain $(z_n)$ on the state space $\cup_{j=1}^m E_j$ and with transition kernel $P$ defined in the natural way from the $P_{e,j}$'s.

Fix $j=1,\ldots,m$ and $x \in E_j$. One checks exactly as in the same way as Lemma \ref{lem:(star)} that the semigroup $T_x(x)<\Isom(H)$ is non-elementary. This implies that the Markovian product $(M_n^j)$ in $\Isom(H)$ is non-elementary and has positive drift (see Remark \ref{rk.positive.gromov}). Moreover, being defined over an irreducible Markov chain with finite state space, it clearly satisfies Conditions $(A_1)$ and $(A_2')$. In view of Remark \ref{rk.markov.hyperbolic.case}, we deduce that there exists a constant $\ell_{\Lambda_j}>0$ such that for every $\epsilon>0$, there exists $\alpha>0$ and $C>0$ such that for every $x \in E_j$, $\xi \in \overline{H}^h$ and $n \in \N$, we have
\begin{equation}\label{eq.isometry.analogue}
\P_x(|\sigma(M_n^j,\xi)-n\ell_{\Lambda_j}| \geq n\epsilon)\leq Ce^{-\alpha n}.
\end{equation}
Specializing to $\xi=o$, the previous inequality shows by Borel--Cantelli (or directly by the subadditive ergodic theorem) that $\P_x$-a.s.~ $\frac{1}{n} d(M_n^j \cdot o,o) \to \ell_{\Lambda_j}$. Now, the proof of Proposition \ref{prop:mandv} goes through and shows that on one hand the constant $\ell_{\Lambda_j}$ does not depend on the maximal component $B_j$ for $j=1,\ldots,m$ (and hence we denote this constant by $\ell_\Lambda$), and on the other hand, it coincides with the constant $\Lambda$ given by Theorem \ref{thm:rayconvergence} applied with the subadditive function $\phi(\gamma)=d(\gamma \cdot o,o)$ (as Lemma \ref{lemma.ps.lyapunovs}). The former fact together with \eqref{eq.isometry.analogue} gives the following analogue of Corollary \ref{corol.limthms.fullautomat} (the result corresponding to large deviations, i.e.~ Proposition \ref{prop.ld.maximal}):

\begin{proposition}\label{prop.gromov.ld.on.cannon} There exists a constant $\ell_\Lambda>0$ such that for every $\epsilon>0$ and $x \in \cup_{j=1}^m E_j$, we have
$$
\limsup_{n \to \infty} \frac{1}{n} \log \P_x\left(\left|\frac{1}{n}d(M_n \cdot o,o)-\ell_\Lambda\right|>\epsilon\right) <0.
$$ \qed
\end{proposition}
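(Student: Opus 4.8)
The plan is to follow the same strategy as in the linear case (Corollary \ref{corol.limthms.fullautomat}), with the top Lyapunov exponent replaced by the drift of a Markovian random walk on $\Isom(H)$. First I would record, for each maximal component $B_j$, the Markovian product $(M_n^j)$ built from the edge chain $(E_j, P_{e,j})$ of \S\ref{subsub.construction.markov} with the cocycle $X((v_1,v_2)) = \lambda(v_1,v_2)^{-1}$, together with the global product $(M_n)$ associated to the disjoint union $\bigsqcup_{j=1}^m E_j$ and the transition kernel $P$ assembled from the $P_{e,j}$. Since each $(E_j, P_{e,j})$ is a finite irreducible Markov chain, conditions $(A_1)$ and $(A_2')$ are automatic after passing to periodic subchains as in \S\ref{subsec.finite.markov} (the displacement $\kappa$ being subadditive lets one return from periodic times to all $n$), so the only genuine point to check is non-elementarity of $(M_n^j)$.

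Second I would verify, for each $j$ and $x \in E_j$, that the semigroup $T_x(x) < \Isom(H)$ is non-elementary, reusing the argument of Lemma \ref{lem:(star)}: the length-$p_j n$ elements of the loop semigroup $\Gamma_x$ at the relevant vertex of $B_j$ (with $p_j$ the period of $P_{e,j}$) are in bijection with loops of that length in $\mathcal{G}$, and since $B_j$ is maximal their number grows like $\lambda^{p_j n}$; by purely exponential growth \eqref{eq.pure.growth} the upper asymptotic density of $\Gamma_x$ in the spheres $S_{p_j n}$ is positive, so by \cite[Theorem 4.3]{gmm} the subgroup $\Gamma_x^\pm$ has finite index in $\Gamma$, hence acts non-elementarily on $H$; then \cite[Proposition 3.1]{CCMT} together with \cite[Theorem 6.2.3 and Proposition 6.2.14]{das-simmons-urbanski}, exactly as in the proof of Theorem \ref{thm.ld.gromov}, shows $T_x(x)$ contains two independent loxodromics. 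With this in hand, Remark \ref{rk.positive.gromov} gives that $(M_n^j)$ has positive drift, and Remark \ref{rk.markov.hyperbolic.case} (the finite-state form of Theorem \ref{thm.ld.gromov}) yields, for each $j$, a constant $\ell_{\Lambda_j} > 0$ and exponential large deviation bounds \eqref{eq.isometry.analogue} for $\sigma(M_n^j,\xi)$, uniform over $\xi \in \overline{H}^h$ and the starting state in $E_j$; specializing to $\xi = o$ gives $\frac1n d(M_n^j\cdot o, o) \to \ell_{\Lambda_j}$ almost surely.

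Third, I would run the comparison-of-constants argument from the proof of Proposition \ref{prop:mandv}, which itself mirrors Theorem \ref{thm:rayconvergence}: using Birkhoff's theorem on each $(\Sigma_{B_j}, \mu_j)$ and the relations \eqref{eq.parry.lin.comb} and Lemma \ref{lem:alpha} linking the Parry-like measure $\widehat\nu$ to the $\mu_j$, one shows that the Borel subset of $\xi \in \partial\Gamma$ along whose geodesic representative $d(\xi_n\cdot o, o)$ grows at rate $\ell_{\Lambda_j}$ has positive $\nu$-measure; being $\Gamma$-invariant, by ergodicity of $\nu$ it has full measure, which forces $\ell_{\Lambda_j}$ to be independent of $j$ (call it $\ell_\Lambda$) and to coincide with the constant $\Lambda$ of Theorem \ref{thm:rayconvergence} applied to the subadditive function $\phi(\gamma) = d(\gamma\cdot o, o)$ (as in Lemma \ref{lemma.ps.lyapunovs}). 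Finally, the large deviation bound for each $(M_n^j)$ with the common constant $\ell_\Lambda$ transfers to the global product $(M_n)$ over $\bigsqcup_j E_j$ by the reduction of \S\ref{subsec.finite.markov}: a starting state in $E_j$ makes $(M_n)$ behave like $(M_n^j)$, and subadditivity of $\kappa$ together with the argument of Lemma \ref{lemma.mn.to.hatmn} and the proof of Theorem \ref{thm.irred.finite.markov} handles the non-irreducible global chain and the passage from periodic times to all $n$. The step I expect to be the main obstacle is precisely this last bookkeeping — maintaining uniformity over starting states and over $n$ when the global chain is neither irreducible nor aperiodic — but since it is handled verbatim as in \S\ref{subsec.finite.markov}, no genuinely new difficulty arises.
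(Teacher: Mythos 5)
Your proposal is correct and follows essentially the same route as the paper's \S\ref{subsec.gromov.markov}: build the Markovian products $(M_n^j)$ with $X((v_1,v_2))=\lambda(v_1,v_2)^{-1}$, verify non-elementarity of the loop semigroups $T_x(x)$ via the Gou\"ezel--Math\'eus--Maucourant density argument of Lemma \ref{lem:(star)} (upgraded to the isometry setting with CCMT and Das--Simmons--Urba\'nski as in the proof of Theorem \ref{thm.ld.gromov}), invoke Remark \ref{rk.markov.hyperbolic.case} for the large deviation bounds on each component, run the Proposition \ref{prop:mandv} comparison to identify the drifts $\ell_{\Lambda_j}$ across components, and globalize via \S\ref{subsec.finite.markov}. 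Your version is in fact slightly more careful than the paper's terse paragraph about the periodicity reduction (the paper writes that $(A_1)$ is ``clearly satisfied'' before citing Remark \ref{rk.markov.hyperbolic.case}, which is where the aperiodicity issue is actually handled), but there is no genuine difference in the argument.
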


\section{Large deviation theorems} \label{section:ldt}

In \S \ref{subsec.ld.rep} we first prove our counting large devation theorem (Theorem \ref{thm:ldt}) assuming Theorem \ref{thm:ldtboundary}. We subsequently prove our boundary large deviation result (Theorem \ref{thm:ldtboundary}) with respect to the Patterson--Sullivan measure, obtaining a quantitative version of our boundary strong law of large numbers (Theorem \ref{thm:rayconvergence}) in the current setting. 
In \S \ref{subsec.ld.hyp}, we indicate the proofs of the analogous results in the case of isometries of Gromov-hyperbolic spaces.

\subsection{Large deviations for representations}\label{subsec.ld.rep}



We now give the proof of Theorem \ref{thm:ldt} using Theorem \ref{thm:ldtboundary} which will be proven subsequently.

\begin{proof}[Proof of Theorem \ref{thm:ldt}]
Let $\Lambda$ be given by Theorem \ref{thm:ldtboundary} and let $\epsilon>0$ be fixed. Recall from \eqref{eq.key.to.lln} that for any  sufficiently large $R>0$, there exist positive constants $C_1$ and $C_2$ such that for every $n \in \N$,
\begin{equation*}
\frac{\#(S_n \cap A_\epsilon)}{\#S_n} \le C_1 \sum_{x \in S_n \cap A_\epsilon} \nu(O(x,R)) \le C_2 \  \nu\left(\bigcup_{x \in S_n \cap A_\epsilon} O(x,R) \right),
\end{equation*}
where 
\[
A_\epsilon = \left\{ g \in \Gamma: \left| \frac{\log\|\rho(g)\|}{|g|_S} - \Lambda \right| > \epsilon \right\}.
\]
As in the proof of Theorem \ref{thm:lln}, if $\xi \in \bigcup_{x \in S_n \cap A_\epsilon} O(x,R)$, then there is a representative geodesic ray $(\xi_m)$ with $\xi_0 = \id$ and $C>0$ such that for every $n \geq 1$
\begin{equation} \label{eq:geobound}
\left|\frac{\log\|\rho(\xi_n)\|}{n} - \Lambda\right|  \geq \epsilon - \frac{C}{n}.
\end{equation}
Using hyperbolicity, (the fact that geodesic rays with the same end point remain within bounded distance) by enlarging $C$ we can assume that (\ref{eq:geobound}) holds for all geodesic representatives of $\xi \in \bigcup_{x \in S_n \cap A_\epsilon} O(x,R).$
In particular for all sufficiently large $n$,
\begin{align*}
\frac{\#(S_n \cap A_\epsilon)}{\#S_n} &\le C_2 \  \nu\left(\bigcup_{x \in S_n \cap A_\epsilon} O(x,R) \right)\\
&\le C_2 \ \nu\left( \xi \in \partial \Gamma: \text{for all $\xi_m \to \xi$ with $\xi_0=\id$, }\left| \frac{\log\|\rho(\xi_n)\|}{n} - \Lambda \right| > \frac{\epsilon}{2} \right).
\end{align*}
The result now follows from Theorem \ref{thm:ldtboundary}.
\end{proof}

The rest of this section is devoted to the proof of Theorem \ref{thm:ldtboundary} which will make key use of Bougerol's Theorem \ref{thm.ld.matrix} in the form of Proposition \ref{prop.ld.maximal} (and Corollary \ref{corol.limthms.fullautomat}).

\begin{proof}[Proof of Theorem \ref{thm:ldtboundary}]
Let $\Lambda$ be the constant given by Theorem \ref{thm:rayconvergence} applied with the function $\phi(\gamma)=\log \|\rho(\gamma)\|$, where $\|\cdot\|$ is the operator norm induced by the Euclidean norm on $\R^d$ (in particular, it is invariant under the transpose). For any $\epsilon > 0$ and $n \ge 1$, we define the sets
\[
U_n(\epsilon) =  \left\{ \xi \in \partial \Gamma: \text{for all $\xi_m \to \xi$ with $\xi_0=\id$, }\left| \frac{\log\|\rho(\xi_n)\|}{n} - \Lambda \right| > \epsilon \right\}
\]
and
\[
E_n(\epsilon) = \left\{ (x_m)_{m=0}^\infty \in Y_\infty :  \left| \frac{\log\|\rho(\lambda(\ast,x_1) \ldots \lambda(x_{n-1},x_n))\|}{n} - \Lambda \right| > \epsilon \right\}.
\]
Note that $\Psi^{-1}(U_n(\epsilon)) \subseteq E_n(\epsilon)$ and consequently $\nu(U_n(\epsilon)) \le \widehat{\nu}(E_n(\epsilon))$. Therefore to prove Theorem \ref{thm:ldtboundary} it suffices to show that for every $\epsilon >0$, $\widehat{\nu}(E_n(\epsilon)) \to 0$ exponentially quickly as $n\to\infty$, which is what we shall prove in the sequel.\\

To proceed, for every integer $i \geq 1$, we define
\begin{equation}\label{eq.def.ai}
A_i=\left(\sigma^{-i}\left(\bigcup_{j=1}^m \Sigma_{B_j}\right) \backslash \bigcup_{k=0}^{i-1} \sigma^{-k}\left(\bigcup_{j=1}^m \Sigma_{B_j}\right)\right) \cap Y
\end{equation}
where, as before, $B_j$ for $j=1, \ldots , m$ denote the maximal components. Intuitively, each $A_i$ consists of elements in $Y$ that correspond to a path in $\mathcal{G}$ that starts at $\ast$, enters a maximal component exactly  on its $i$th step and then never leaves this component.
For each $n \in \N$, we let  $\widetilde{\nu}_n$ to be the measure on $Y$ given by the restriction of $\widehat{\nu}$ on $\bigcup_{i=1}^n A_i$, i.e.\ for every Borel set $R \subseteq Y$,
\begin{equation}\label{eq.def.nun}
\widetilde{\nu}_n(R) = \widehat{\nu}\left(R \cap \bigcup_{i=1}^n A_i\right).
\end{equation}
We then have the following.
\begin{lemma} [Lemma 4.8 \cite{cantrell.georays}] \label{lem:TV2}
There exists $0<\theta<1$ such that $ \left\| \widetilde{\nu}_n -  \widehat{\nu} \right\|_{TV} = O(\theta^n),$
as $n\to\infty$.
\end{lemma}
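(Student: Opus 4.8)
The plan is to reduce the statement to an exponential bound for the $\widehat\nu$-mass of a single explicit set, and then to estimate that mass by playing the one-sided shadow bound off against the component structure of the Cannon automaton $\mathcal{G}$. First, since $\widetilde\nu_n(R)=\widehat\nu(R\cap\bigcup_{i=1}^nA_i)\le\widehat\nu(R)$ for every Borel $R\subseteq Y$, the measure $\widehat\nu-\widetilde\nu_n$ is non-negative, so $\|\widetilde\nu_n-\widehat\nu\|_{TV}=\widehat\nu(Y)-\widetilde\nu_n(Y)=\widehat\nu\bigl(Y\setminus\bigcup_{i=1}^nA_i\bigr)$. Directly from the definitions, $\bigcup_{i=1}^nA_i=\{x\in Y:\sigma^ix\in\bigcup_j\Sigma_{B_j}\text{ for some }i\in\{1,\dots,n\}\}$, and since a tail of a sequence lying in $\Sigma_{B_j}$ again lies in $\Sigma_{B_j}$, the condition ``$\sigma^ix\in\bigcup_j\Sigma_{B_j}$ for some $i\le n$'' is equivalent to ``$\sigma^nx\in\bigcup_j\Sigma_{B_j}$''. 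Hence the task becomes to show $\widehat\nu\bigl(\{x\in Y:\sigma^nx\notin\bigcup_j\Sigma_{B_j}\}\bigr)=O(\theta^n)$ for some $\theta\in(0,1)$; informally, that the $\widehat\nu$-mass of codings of rays that have not yet entered and remained in a single maximal component by time $n$ decays exponentially.

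Next I would record the combinatorial facts I need, all consequences of the disjointness of the maximal components (\eqref{eq.pure.growth} and its proof) together with their being maximal strongly connected subgraphs of $\mathcal{G}$: (i) any path in $\mathcal{G}$ visits at most one maximal component; (ii) once a path leaves a maximal component it never enters any maximal component again; and (iii) writing $\mu_0$ for the maximum of the spectral radii of the non-maximal irreducible components, one has $\mu_0<\lambda$, and the number of paths of length $\ell$ in $\mathcal{G}$ that start at $\ast$, or at a fixed non-maximal vertex, and remain among non-maximal vertices is bounded by a polynomial in $\ell$ times $\mu_0^{\ell}$ (the polynomial coming from passing through at most $m'$ irreducible components). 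I would also use the one-sided cylinder estimate $\widehat\nu([\ast x_1\cdots x_k])\le C\lambda^{-k}$ for every admissible length-$k$ cylinder; this follows from \eqref{eq:shadows} together with the fact that $\Psi_\ast\widehat\nu=\nu$ and that $\Psi$ maps this cylinder (intersected with $Y_\infty$) into a bounded-radius shadow $O(g,R)$ of the corresponding length-$k$ element $g$, or alternatively from a direct computation with the defining measures $\widehat\nu_m$ as in \S\ref{subsub.ps.vs.parry}.

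The estimate itself I would organise by the vertex $v=x_n$. If $v$ lies in a maximal component $V_j$, then on $\{x_n=v\}$ one has $\sigma^nx\notin\bigcup_j\Sigma_{B_j}$ only if the coding eventually leaves $V_j$; fixing the length-$n$ prefix and decomposing over the exit time $t>n$ and the continuation, each such cylinder has, by (ii), only polynomially-many (in the remaining length $\ell$) admissible continuations staying among non-maximal vertices, hence $\widehat\nu$-mass at most a polynomial in $\ell$ times $\mu_0^{\ell}\lambda^{-(t+\ell)}$, which tends to $0$ as $\ell\to\infty$; thus codings in $\{x_n=v\}$ that eventually leave $V_j$ carry no $\widehat\nu$-mass. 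The same argument, again via (ii), shows that a cylinder $[\ast x_1\cdots x_n]$ whose coding visited a maximal component but ends at a non-maximal $v$ also has $\widehat\nu$-mass $0$. Consequently only non-maximal $v$ reached by a coding that never visited a maximal component contribute, and for these the coding stays among non-maximal vertices throughout, so by (iii) there are at most polynomially-in-$n$ times $\mu_0^n$ such length-$n$ codings, each of $\widehat\nu$-mass $\le C\lambda^{-n}$; summing over these and over the finitely many non-maximal vertices $v$ gives $\widehat\nu(\{x:\sigma^nx\notin\bigcup_j\Sigma_{B_j}\})\le C'\,(\text{polynomial in }n)\,(\mu_0/\lambda)^n$, which is $O(\theta^n)$ for any fixed $\theta\in(\mu_0/\lambda,1)$.

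I expect the genuine difficulty to lie not in the (light) probabilistic input but in the combinatorial bookkeeping of the last two steps: proving (i)--(iii) cleanly from the structure of $\mathcal{G}$, and, above all, justifying that codings with a transient visit to a maximal component carry no $\widehat\nu$-mass, which is exactly the place where the no-return property (ii), the subexponential growth (iii) of the non-maximal part, and the upper shadow bound must be balanced against one another. (Since the statement is Lemma 4.8 of \cite{cantrell.georays}, one may of course simply invoke that reference in lieu of carrying this out.)
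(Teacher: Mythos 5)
The paper cites this as Lemma~4.8 of \cite{cantrell.georays} and does not reproduce the argument, so there is no in-paper proof to compare against. Your self-contained derivation is correct, and it uses exactly the ingredients the cited proof relies on: the reformulation of the total variation as $\widehat\nu\bigl(\{x\in Y:\sigma^n x\notin\bigcup_j\Sigma_{B_j}\}\bigr)$ (using that $\widehat\nu-\widetilde\nu_n\ge 0$ and that the $\Sigma_{B_j}$ are shift-invariant), the one-sided cylinder bound $\widehat\nu([\ast x_1\cdots x_k])\le C\lambda^{-k}$ inherited from the shadow lemma via $\Psi_\ast\widehat\nu=\nu$, the mutual-unreachability and no-return properties of the maximal components of $\mathcal{G}$, and the spectral gap $\mu_0<\lambda$ giving the $\mathrm{poly}(\ell)\mu_0^\ell$ count of paths confined to the non-maximal part. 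The one step that genuinely needs care is, as you flag, showing that cylinders whose coding has already visited and exited a maximal component are $\widehat\nu$-null; your exhaustion of such a cylinder by its length-$(t+\ell)$ non-maximal extensions, whose total mass is at most $\mathrm{poly}(\ell)\mu_0^\ell\lambda^{-(t+\ell)}\to 0$, handles this correctly, and the remaining contribution from prefixes that never touch a maximal component gives the $\mathrm{poly}(n)(\mu_0/\lambda)^n=O(\theta^n)$ bound.
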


It follows from this lemma that for any $\epsilon' >0$ there exist constants $0< \theta = \theta(\epsilon') <1$ and $C_0>0$ such that
\begin{equation}\label{eq.theta'}
  \widehat{\nu}(E_n(\epsilon)) \leq  \widetilde{\nu}_{\epsilon'n}(E_n(\epsilon)) + C_0 \theta^n   \le \sum_{k=1}^{n\epsilon'} \widehat{\nu}(E_n(\epsilon) \cap A_k) + C_0 \theta^n
\end{equation}
for every $\epsilon>0$ and $n \in \N$. Here and throughout the rest of this section we write $\epsilon'n$ instead of $\lfloor \epsilon' n\rfloor$ to simplify notation. We now turn our attention to studying each $\widehat{\nu}(E_n(\epsilon) \cap A_k)$.
\begin{lemma}\label{lem:inequ3}
For every $\epsilon >0$, there exist positive constants $\epsilon'$ and $C_1$ such that for all $n \in \N$, $n \epsilon' \geq k \geq 1$, we have
\[
\widehat{\nu}(E_n(\epsilon) \cap A_k) \le C_1 \ \mu \left\{x \in \cup_j \Sigma_{B_j}: \left| \frac{\log\|\rho(\lambda(x_0,x_1) \ldots \lambda(x_{n-1},x_n))\| }{n} - \Lambda \right| > \frac{\epsilon}{2} \right\}.
\]
\end{lemma}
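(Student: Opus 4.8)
The plan is to bound each term $\widehat{\nu}(E_n(\epsilon)\cap A_k)$ by pushing $\widehat{\nu}$ forward under $\sigma^k$: by the very definition of $A_k$, a point $x\in A_k$ becomes, after $k$ shifts, an infinite admissible path inside one of the maximal components, where $\widehat{\nu}$ is comparable to the Parry measure $\mu$ through Lemma \ref{lem:alpha} and \eqref{eq.parry.lin.comb}. Throughout write $\varphi(g)=\log\|\rho(g)\|$ for the norm fixed in the proof of Theorem \ref{thm:ldtboundary}; since this is an operator norm, $\varphi$ is subadditive, so by Lemma \ref{lem:Lipschitz} (and its proof) $|\varphi(g)|\le |g|_S\max_{s\in S}|\varphi(s)|$ and $|\varphi(gw)-\varphi(g)|\le L|w|_S$, $|\varphi(wg)-\varphi(g)|\le L|w|_S$ for all $g,w\in\Gamma$, where $L:=2\max_{s\in S}|\varphi(s)|$. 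Put $F_n(\delta)=\{y\in\bigcup_{j=1}^m\Sigma_{B_j}:\ |\tfrac1n\varphi(\lambda(y_0,y_1)\cdots\lambda(y_{n-1},y_n))-\Lambda|>\delta\}$; note that $\mu(F_n(\epsilon/2))$ is precisely the quantity on the right-hand side of the lemma.

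First I would carry out the geometric step. Fix $x=(x_m)_{m\ge0}\in E_n(\epsilon)\cap A_k$ with $1\le k\le\epsilon' n$ for an $\epsilon'>0$ to be chosen, and let $y:=\sigma^k x$, which lies in $\Sigma_{B_j}$ for some maximal component $B_j$. Set $h=\lambda(x_k,x_{k+1})\cdots\lambda(x_{n-1},x_n)=\lambda(y_0,y_1)\cdots\lambda(y_{n-k-1},y_{n-k})$. Then $g_n:=\lambda(\ast,x_1)\cdots\lambda(x_{n-1},x_n)$ factors as $(\lambda(\ast,x_1)\cdots\lambda(x_{k-1},x_k))\,h$ with the first factor a product of $k$ generators, while $g_n':=\lambda(y_0,y_1)\cdots\lambda(y_{n-1},y_n)$ factors as $h\,(\lambda(y_{n-k},y_{n-k+1})\cdots\lambda(y_{n-1},y_n))$ with the second factor a product of $k$ generators. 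Hence $|\varphi(g_n)-\varphi(h)|\le Lk$ and $|\varphi(g_n')-\varphi(h)|\le Lk$, so $|\varphi(g_n)-\varphi(g_n')|\le 2Lk\le 2L\epsilon' n$. Since $x\in E_n(\epsilon)$ gives $|\tfrac1n\varphi(g_n)-\Lambda|>\epsilon$, we obtain $|\tfrac1n\varphi(g_n')-\Lambda|>\epsilon-2L\epsilon'$, and choosing $\epsilon':=\min\{1,\epsilon/(4L)\}$ (if $L=0$ then $\varphi\equiv0=\Lambda$ and the lemma is vacuous) yields $y\in F_n(\epsilon/2)$. Therefore $E_n(\epsilon)\cap A_k\subseteq\sigma^{-k}(F_n(\epsilon/2))$, and because $\widehat{\nu}$ is carried by $Y$,
\[
\widehat{\nu}\bigl(E_n(\epsilon)\cap A_k\bigr)\ \le\ \widehat{\nu}\bigl(\sigma^{-k}F_n(\epsilon/2)\bigr)\ =\ \bigl(\sigma^k_\ast\widehat{\nu}\bigr)\bigl(F_n(\epsilon/2)\bigr).
\]

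Next comes the measure comparison. Decompose $F_n(\epsilon/2)$ along the pairwise disjoint cylinders $[v]$ indexed by the vertices $v$ of the maximal components, which cover $\bigcup_{j}\Sigma_{B_j}$. For every such $v$ one has $\mu[v]>0$: by \eqref{eq.parry.lin.comb} it is a positive multiple of the Parry measure of an irreducible subshift of finite type, which charges every cylinder. Lemma \ref{lem:alpha} then gives $\sigma^k_\ast\widehat{\nu}|_{[v]}=\alpha_v^k\,\mu|_{[v]}$, with $\alpha_v^k=(\sigma^k_\ast\widehat{\nu})([v])/\mu([v])\le 1/\mu([v])\le 1/c_0$, where $c_0:=\min\{\mu([w]):w\text{ a vertex of some }B_j\}>0$ (there are finitely many vertices). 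Summing over $v$,
\[
\bigl(\sigma^k_\ast\widehat{\nu}\bigr)\bigl(F_n(\epsilon/2)\bigr)=\sum_v\alpha_v^k\,\mu\bigl(F_n(\epsilon/2)\cap[v]\bigr)\ \le\ c_0^{-1}\,\mu\bigl(F_n(\epsilon/2)\bigr),
\]
so $C_1:=c_0^{-1}$, which depends on neither $k$ nor $n$, works.

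I expect the only delicate part to be the bookkeeping: one must keep straight that the $k$ ``surplus'' edges sit at the front of $g_n$ but at the back of $g_n'$, and check that the difference between normalising by $n$ and by $n-k$ is swallowed once $k\le\epsilon' n$ with $\epsilon'$ small — this is mechanical, using only subadditivity of $\varphi$ and the boundedness of $\rho(S)$ in $\GL_d(\R)$. The one structural point on which the argument hinges is that $F_n(\epsilon/2)$ is supported on the maximal-component cylinders, so that the regularity estimate $\alpha_v^k\le c_0^{-1}$ (uniform in $k$) supplied by Lemma \ref{lem:alpha} and \eqref{eq.parry.lin.comb} is available; restricting to the $A_k$'s, which land exactly in the maximal components, is what makes this possible.
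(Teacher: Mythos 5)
Your proof is correct and follows essentially the same route as the paper's: the same shift-and-compare step (passing from $E_n(\epsilon)\cap A_k$ to $\sigma^{-k}F_n(\epsilon/2)$ via the Lipschitz property of $\log\|\rho(\cdot)\|$, with $\epsilon'$ chosen to absorb the $O(k/n)$ error), followed by the same use of Lemma~\ref{lem:alpha} and \eqref{eq.parry.lin.comb} with the finiteness of $V_{\max}$ to bound $\alpha_v^k$ uniformly by $1/\min_v\mu([v])$. The only cosmetic difference is that you spell out the comparison of $g_n$ and $g_n'$ through their common middle factor $h$ (the paper leaves this implicit), and your Lipschitz constant $L=2\max_{s\in S}|\varphi(s)|$ is off by a harmless factor of $2$ which your choice of $\epsilon'$ compensates for.
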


\begin{proof}
Given $\epsilon>0$, fix $\epsilon'>0$ so that $2 \epsilon ' \max_{s\in S} \{\log\|\rho(s)\|\} < \epsilon/2$.
Then, for each $n\epsilon' \geq k \geq 1$, we have that $E_n(\epsilon) \cap A_k$ is given by

\begin{equation}\label{eq.the.inclusion}
\begin{aligned}
       &\left\{ x \in Y_\infty: \left| \frac{\log\|\rho(\lambda(\ast,x_1) \ldots \lambda(x_{n-1},x_n))\|}{n} - \Lambda\right| > \epsilon \ , \ \sigma^kx \in \cup_j \Sigma_{B_j}, \sigma^{k-1}x \notin \cup_j \Sigma_{B_j}\right\} \\
    &\subseteq \left\{ x \in Y_\infty: \left| \frac{\log\|\rho(\lambda(x_k,x_{k+1}) \ldots \lambda(x_{k+n-1},x_{k+n}))\|}{n} - \Lambda\right| > \frac{\epsilon}{2} \ , \ \sigma^kx \in \cup_j \Sigma_{B_j}\right\}\\
    &= Y_\infty \cap \sigma^{-k} \left\{x \in \cup_j \Sigma_{B_j}: \left| \frac{\log\|\rho(\lambda(x_0,x_1) \ldots \lambda(x_{n-1},x_n))\| }{n} - \Lambda \right| > \frac{\epsilon}{2} \right\}
\end{aligned}
\end{equation}
The inclusion above follows from Lemma \ref{lem:Lipschitz} (due to the submultiplicativity of the operator norm) and the choice of $\epsilon'.$ Letting $V_{\max}$ denote the collection of vertices belonging to a maximal component, it follows that
\begin{align*}
    \widehat{\nu}(E_n(\epsilon) \cap A_k) &\le \sigma_\ast^k\widehat{\nu} \left\{x \in \cup_j \Sigma_{B_j}: \left| \frac{\log\|\rho(\lambda(x_0,x_1) \ldots \lambda(x_{n-1},x_n))\| }{n} - \Lambda \right| > \frac{\epsilon}{2} \right\}\\
    &\hspace{-5mm}= \sum_{v \in V_{\max}} \sigma_\ast^k \widehat{\nu}|_{[v]} \left\{x \in \cup_j \Sigma_{B_j}: \left| \frac{\log\|\rho(\lambda(x_0,x_1) \ldots \lambda(x_{n-1},x_n))\| }{n} - \Lambda \right| > \frac{\epsilon}{2} \right\}\\
    &\hspace{-5mm}= \sum_{v \in V_{\max}} \alpha_v^k \mu|_{[v]} \left\{x \in \cup_j \Sigma_{B_j}: \left| \frac{\log\|\rho(\lambda(x_0,x_1) \ldots \lambda(x_{n-1},x_n))\| }{n} - \Lambda \right| > \frac{\epsilon}{2} \right\}
\end{align*}
where $\alpha_v^k$ are the constants from Lemma \ref{lem:alpha}. We recall now (from the construction of the Parry measure $\mu$) that a vertex  $v$ belongs to $V_{\max}$ if and only if $\mu[v] > 0$. In particular, for $v \in V_{\max}$,
\[
\alpha_v^k = \frac{\widehat{\nu}(\sigma^{-k}[v])}{\mu([v])} \le \max_{v \in V_{\max}} \frac{1}{\mu([v])} < \infty 
\]
and so we deduce that there exists $C_1>0$ such that
\[
 \widehat{\nu}(E_n(\epsilon) \cap A_k) \le C_1 \ \mu \left\{x \in \cup_j \Sigma_{B_j}: \left| \frac{\log\|\rho(\lambda(x_0,x_1) \ldots \lambda(x_{n-1},x_n))\| }{n} - \Lambda \right| > \frac{\epsilon}{2} \right\}
\]
as required. 
\end{proof}

We now complete the proof of Theorem \ref{thm:ldtboundary}. Fix $\epsilon >0$ and let $\epsilon'>0$ be as in Lemma \ref{lem:inequ3}.
By \eqref{eq.theta'}, there exist constants $0<\theta<1$ and $C_0>0$ such that
\[ 
  \widehat{\nu}(E_n(\epsilon)) \le \sum_{k=1}^{n\epsilon'} \widehat{\nu}(E_n(\epsilon) \cap A_k) + C_0 \theta^n
\]
and so by Lemma \ref{lem:inequ3} there is $C_1>0$ such that
\begin{equation}\label{eq.last.before.ld}
\widehat{\nu}(E_n(\epsilon)) \le C_1 n\epsilon' \  \mu\left\{x \in \cup_j \Sigma_{B_j}: \left| \frac{\log\|\rho(\lambda(x_0,x_1) \ldots \lambda(x_{n-1},x_n))\| }{n} - \Lambda \right| > \frac{\epsilon}{2} \right\}  + C_0 \theta^n.
\end{equation}
Recall that by Lemma \ref{lemma.ps.lyapunovs} (and Corollary \ref{corol.limthms.fullautomat}), the constant $\Lambda$ is also the top Lyapunov exponent of the Markovian product $(M_n)$. We now apply Corollary \ref{corol.limthms.fullautomat} (statement corresponding to Proposition \ref{prop.ld.maximal}) which says precisely that the $\mu$-measure of the set in the first term of the right-hand-side of \eqref{eq.last.before.ld} decays exponentially fast in $n$, concluding the proof.
\end{proof}

\begin{remark}
It is also possible to prove Theorem \ref{thm:ldt} using an approximation argument in which one compares the Markov measures on $\mathcal{G}$ to the counting measures on $S_n$. This method, which would avoid proving Theorem \ref{thm:ldtboundary}, is used in Section \ref{section:clt} to prove our counting central limit theorem. We presented the above proof instead as we believe Theorem \ref{thm:ldtboundary} is interesting in its own right.
\end{remark}

\subsection{Large deviations for isometries}\label{subsec.ld.hyp}
This section is devoted to the proof of Theorem \ref{thm.gromov.counting.ld}. 
As in the proof of Theorem \ref{thm:ldt}, we deduce Theorem \ref{thm.gromov.counting.ld} from a boundary large deviation result:  Theorem \ref{thm.gromov.ray.ld}.

The proof of Theorem \ref{thm.gromov.counting.ld} (resp.~ Theorem \ref{thm.gromov.ray.ld}) follows a very a similar line as the proof of Theorem \ref{thm:ldt} (resp.~ Theorem \ref{thm:ldtboundary}). Therefore, for brevity, we will only point out the needed modifications in the proofs. Let us start with the boundary version.

\begin{theorem}\label{thm.gromov.ray.ld}
Let $\Gamma$ be a Gromov-hyperbolic group, $S$ a generating set of $\Gamma$ and $(H,d)$ a geodesic Gromov-hyperbolic space and $o \in H$ a basepoint. Suppose that $\Gamma$ acts on $H$ by isometries and that the action is non-elementary. Let $\nu$ be a Patterson--Sullivan measure on $\partial \Gamma$ for the $S$ word metric. Then there exists a constant $\Lambda>0$ such that for any $\epsilon >0$,
\[
\limsup_{n\to\infty} \frac{1}{n} \log \nu\left( \xi \in \partial \Gamma: \text{for all $\xi_m \to \xi$ with $\xi_0=\id$, }\left| \frac{d(g \cdot o,o)}{n} - \Lambda \right| > \epsilon \right) <0
\]
\end{theorem}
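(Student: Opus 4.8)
The plan is to transport the proof of Theorem \ref{thm:ldtboundary} to the isometric setting, with the displacement function $\kappa(\gamma) := d(\gamma\cdot o, o)$ in place of the log-norm $\log\|\rho(\gamma)\|$ and Proposition \ref{prop.gromov.ld.on.cannon} in place of Corollary \ref{corol.limthms.fullautomat}. First I would take $\Lambda$ to be the constant produced by Theorem \ref{thm:rayconvergence} applied to the subadditive function $\varphi = \kappa$ (subadditivity of $\kappa$ is just the triangle inequality and the fact that $\Gamma$ acts by isometries). By the discussion in \S\ref{subsec.gromov.markov}, this constant coincides with the common drift $\ell_\Lambda$ of the Markovian products $(M_n^j)$ on $\Isom(H)$ attached to the maximal components of the strongly Markov structure $\mathcal{G}$, and it is strictly positive: this follows from Remark \ref{rk.positive.gromov}, since non-elementarity of the action forces each loop semigroup $T_x(x)$ to be non-elementary, hence unbounded, hence of positive drift by the linear escape result of Maher--Tiozzo \cite{maher-tiozzo}.

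Next I would pass from the boundary to the shift space via the surjective Borel map $\Psi: Y_\infty \to \partial\Gamma$ of \S\ref{subsub.ps.vs.parry}, which pushes $\widehat\nu$ forward to $\nu$. Writing
\[
U_n(\epsilon) = \{\xi \in \partial\Gamma : \text{for every } \xi_m \to \xi \text{ with } \xi_0 = \id,\ |\kappa(\xi_n)/n - \Lambda| > \epsilon\}
\]
and
\[
E_n(\epsilon) = \{(x_m) \in Y_\infty : |\kappa(\lambda(\ast,x_1)\cdots\lambda(x_{n-1},x_n))/n - \Lambda| > \epsilon\},
\]
hyperbolicity (geodesic rays with the same endpoint remain within bounded distance) together with the Lipschitz property of $\kappa$ in the word metric (Lemma \ref{lem:Lipschitz}) give $\Psi^{-1}(U_n(\epsilon)) \subseteq E_n(\epsilon)$, so that $\nu(U_n(\epsilon)) \le \widehat\nu(E_n(\epsilon))$; it therefore suffices to prove that $\widehat\nu(E_n(\epsilon))$ decays exponentially in $n$.

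For this last step I would run the decomposition of the proof of Theorem \ref{thm:ldtboundary} essentially verbatim. Using the sets $A_i$ of sequences in $Y$ that first enter a maximal component at step $i$, Lemma \ref{lem:TV2} replaces $\widehat\nu$ by the truncation $\widetilde\nu_{\epsilon'n}$ up to an exponentially small error, reducing matters to bounding $\widehat\nu(E_n(\epsilon)\cap A_k)$ for $1 \le k \le \epsilon' n$. Choosing $\epsilon'$ with $2\epsilon'\max_{s\in S}\kappa(s) < \epsilon/2$, subadditivity and the Lipschitz bound for $\kappa$ give the same inclusion as in \eqref{eq.the.inclusion}, and the uniform bound on the constants $\alpha_v^k$ from Lemma \ref{lem:alpha} yields, exactly as in Lemma \ref{lem:inequ3},
\[
\widehat\nu(E_n(\epsilon)\cap A_k) \le C_1\,\mu\Big\{x \in \textstyle\bigcup_j \Sigma_{B_j} : |\kappa(\lambda(x_0,x_1)\cdots\lambda(x_{n-1},x_n))/n - \Lambda| > \epsilon/2\Big\}.
\]
Summing over $k \le \epsilon' n$ costs only a linear factor, and Proposition \ref{prop.gromov.ld.on.cannon} --- the displacement large deviation estimate for the Markovian product $(M_n)$ built from the maximal components --- asserts precisely that this $\mu$-measure tends to $0$ exponentially fast, which completes the proof. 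Theorem \ref{thm.gromov.counting.ld} is then deduced from Theorem \ref{thm.gromov.ray.ld} by the same shadow-covering argument as in the deduction of Theorem \ref{thm:ldt} from Theorem \ref{thm:ldtboundary}.

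Most of the argument is bookkeeping carried over from \S\ref{section:ldt}; the genuinely new content is packaged in Proposition \ref{prop.gromov.ld.on.cannon}, which rests on Theorem \ref{thm.ld.gromov} and on the verification (as in Lemma \ref{lem:(star)}, via \cite{gmm}, \cite{CCMT} and \cite{das-simmons-urbanski}) that the loop semigroups are non-elementary. The one point to handle with care is that in the isometric construction the map $X$ carries an inversion, $X((v_1,v_2)) = \lambda(v_1,v_2)^{-1}$; since $\kappa(\gamma) = \kappa(\gamma^{-1})$ this does not disturb any of the displacement estimates, but one must check, up to relabeling and a bounded error absorbed by the $\epsilon/2$ slack, that the displacement of the group element $\lambda(\ast,x_1)\cdots\lambda(x_{n-1},x_n)$ coding a geodesic ray is indeed the quantity controlled by Proposition \ref{prop.gromov.ld.on.cannon}.
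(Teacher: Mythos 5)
Your proposal is correct and follows essentially the same route as the paper: carry over the proof of Theorem \ref{thm:ldtboundary} verbatim, with the displacement $\kappa(\gamma)=d(\gamma\cdot o,o)$ replacing $\log\|\rho(\cdot)\|$ and Proposition \ref{prop.gromov.ld.on.cannon} replacing the large deviation part of Corollary \ref{corol.limthms.fullautomat}. Your extra care about the inversion in $X((v_1,v_2))=\lambda(v_1,v_2)^{-1}$ is well placed, though the match is in fact exact (since $M_n=[\lambda(v_0,v_1)\cdots\lambda(v_{n-1},v_n)]^{-1}$ and $\kappa(g^{-1})=\kappa(g)$) so no $\epsilon/2$ slack is actually consumed there.
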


This result implies Theorem \ref{thm.gromov.counting.ld}. The proof of this implication is precisely as in the proof of Theorem \ref{thm:ldt}, one only needs to replace the occurrences of $\log \|\rho(\star)\|$ by $d(\star \cdot o,o)$.

For Theorem \ref{thm.gromov.ray.ld}, similarly, the proof of Theorem \ref{thm:ldtboundary} goes through until the point at the end where we applied Corollary \ref{corol.limthms.fullautomat}. One only has to replace this result by Proposition \ref{prop.gromov.ld.on.cannon}: the analogous Markovian limit law but for the isometric actions (instead of representations) that we now consider. This completes the proof.


\section{Wiener process and the law of the iterated logarithm on the boundary}

The goal of this section is to prove Theorem \ref{thm:boundarywiener}: convergence to the Wiener process and the functional law of iterated logarithm.

\bigskip

Before starting the proof, we recall the notion of tightness that will be used therein. For $t \in [0,1]$, let $E_t: C([0,1]) \to \R$ denote the map that evaluates a function at $t$. We say that a sequence of probability measures $\eta_n$ on $C([0,1])$ is tight if
\begin{enumerate}
    \item $\sup_{n \in \N}E_0{}_\ast \eta_n(\R \setminus [-\lambda,\lambda]) \to 0$ as $\lambda \to +\infty$; and,
    \item $\lim_{\delta \to 0} \sup_{n \in \N} \eta_n (\sup_{|t-s| \leq \delta}|X(t)-X(s)| \geq \epsilon) = 0 $ for every $\epsilon>0$, where $X$ denotes a random variable with distribution $\eta_n$ and $t,s$ range over $[0,1]$. 
\end{enumerate}

In the proof below, the distribution $\eta_n$ will correspond to the pushforward of the Patterson--Sullivan measure $\nu$ by the map $S_n$ defined in \eqref{eq.defn.snxi}.

\begin{proof}[Proof of Theorem \ref{thm:boundarywiener}]
1. To prove the first statement we need to show that the laws of the sequence $(S_n)$ is a tight family and also that finite dimensional distributions of this sequence converge to the finite dimensional distributions of the Wiener measure on $C([0,1])$ (see e.g.~ \cite[Theorem 4.15]{karatzas-shreve}). Without loss of generality, 
we can assume that the operator norm in the definition of $(S_n)$ is induced by the Euclidean norm.

Let us start by showing that the distributions of $(S_n)$ constitute a tight family of measures on $C([0,1])$. Notice that we only need to check the second condition in the definition of tightness above, since by construction $S_n\xi(0)=0$ for $\nu$-a.e. $\xi \in \partial \Gamma$.  Fix $\epsilon>0$. For every $\delta>0$, define 
$$U_n(\epsilon,\delta)=\left\{\xi \in \partial \Gamma : \sup_{|t-s|<\delta} |S_n\xi(t)-S_n\xi(s)|>\epsilon\right\}.$$
For $x \in \Sigma_A$, let $\widehat{S}_n x$ denote the element of $C([0,1])$ defined in the same way as in \eqref{eq.defn.snxi} where for $k \in \N$, $\rho(\xi_k)$ is replaced by $\rho(\lambda(x_0,x_1),\ldots,\lambda(x_{k-1},x_k))$. Let us also similarly define
\begin{equation}\label{eq.en.eps.delta}
E_n(\epsilon,\delta)=\left\{x \in Y_\infty :  \sup_{|t-s|<\delta} |\widehat{S}_n x(t)-\widehat{S}_nx (s)|>\epsilon\right\}.
\end{equation}

Since any two representatives $\xi_m$ and $\xi_m'$ of an element $\xi \in \partial \Gamma$ stay at bounded $S$-distance, it is easy to check that there exists $n_0=n_0(\epsilon) \in \N$ such that for every $n \geq n_0$ and $\delta>0$, we have $\Psi^{-1}(U_n(\epsilon,\delta)) \subseteq E_n(\epsilon/2,\delta)$. Consequently, for every $n \geq n_0$ and $\delta>0$, we have 
$\nu(U_n(\epsilon,\delta)) \leq \widehat{\nu}(E_n(\epsilon/2,\delta))$.    
Therefore, to show that the set of distributions of $S_n$ is tight, it suffices to prove that $\lim_{\delta \to 0}\sup_{n \in \N}\widehat{\nu}(E_n(\epsilon/2,\delta))=0$. 

We use the following strategy to complete the proof of tightness: we show that for large $n \in \N$, the distributions of $\widehat{S}_n$ (under $\widehat{\nu}$) are approximated by that of the Markovian products in Proposition \ref{prop.wiener.maximal} (or more generally Corollary \ref{corol.limthms.fullautomat}) which themselves constitute a tight family (since they converge to the Wiener measure) and for small $n \in \N$ we exploit the fact that jumps of $S_n(\xi)(t)$ are bounded (for $n$ bounded) since they are normalized matrix norms of bounded-length products of elements of the finite set $S$.

It follows from Lemma \ref{lem:TV2} that there exist constants $C_0>0$ and $\theta \in (0,1)$ such that for every $m \in \N$, we have
\begin{equation}\label{eq.theta''}
 \widehat{\nu}(E_n(\epsilon/2,\delta)) \leq  \widetilde{\nu}_{m}(E_n(\epsilon/2,\delta)) + C_0 \theta^m   = \sum_{k=1}^{m} \widehat{\nu}(E_n(\epsilon/2,\delta) \cap A_k) + C_0 \theta^m,
\end{equation}
where the measures $\widetilde{\nu}_m$ and sets $A_k$ are as defined in \eqref{eq.def.ai} and \eqref{eq.def.nun}. We will now require the following observation which is an analogue of Lemma \ref{lem:inequ3}.

\begin{lemma}\label{lemma.inequ4}
There exist constants $c>0$ and $C_1>0$ such that
for every $n \in \N$, $cn^{1/2} \geq k \geq 1$ and $\delta>0$, we have
\[
\widehat{\nu}(E_n(\epsilon/2,\delta) \cap A_k) \le C_1 \ \mu \left\{x \in \cup_j \Sigma_{B_j}:  \sup_{|t-s|<\delta} |\widehat{S}_nx(t)-\widehat{S}_nx (s)|>\epsilon/4 \right\}.
\]
\end{lemma}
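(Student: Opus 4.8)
The plan is to reproduce the proof of Lemma~\ref{lem:inequ3}, the only new point being that the ``negligible prefix'' now has length of order $\sqrt n$ rather than order $n$ --- which is exactly what the range $cn^{1/2}\ge k\ge1$ encodes, reflecting the diffusive normalization $(n\sigma^2)^{-1/2}$ in the definition of $\widehat{S}_n$. Write $\ell_y(0)=0$ and $\ell_y(j)=\log\|\rho(\lambda(y_0,y_1)\cdots\lambda(y_{j-1},y_j))\|$ for $1\le j\le n$, so that $\widehat{S}_n y$ is the piecewise-linear function on $[0,1]$ interpolating the points $\big(j/n,(n\sigma^2)^{-1/2}(\ell_y(j)-j\Lambda)\big)$, $j=0,\dots,n$. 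Let $F_n(\delta)=\{x\in\cup_j\Sigma_{B_j}:\sup_{|t-s|<\delta}|\widehat{S}_n x(t)-\widehat{S}_n x(s)|>\epsilon/4\}$ denote the set on the right-hand side of the claimed inequality. The target is the inclusion
\[
E_n(\epsilon/2,\delta)\cap A_k\ \subseteq\ Y_\infty\cap\sigma^{-k}\big(F_n(\delta)\big),
\]
valid for every $\delta>0$ and every $1\le k\le cn^{1/2}$ once $c$ is fixed small enough. Granting it, the conclusion follows exactly as in Lemma~\ref{lem:inequ3}: since $F_n(\delta)\subseteq\bigcup_{v\in V_{\max}}[v]$ (because $\mu[v]>0$ precisely for $v\in V_{\max}$, and any element of $\cup_j\Sigma_{B_j}$ begins at such a vertex), Lemma~\ref{lem:alpha} gives $\widehat{\nu}(E_n(\epsilon/2,\delta)\cap A_k)\le\sigma_\ast^k\widehat{\nu}(F_n(\delta))=\sum_{v\in V_{\max}}\alpha_v^k\,\mu|_{[v]}(F_n(\delta))\le\big(\max_{v\in V_{\max}}\mu[v]^{-1}\big)\mu(F_n(\delta))$, using $\alpha_v^k=\sigma_\ast^k\widehat{\nu}([v])/\mu([v])\le\mu[v]^{-1}$; this is the assertion with $C_1:=\max_{v\in V_{\max}}\mu[v]^{-1}$.

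To establish the inclusion I would compare $\widehat{S}_n x$ with $\widehat{S}_n(\sigma^k x)$ for $x\in A_k$. Since $\lambda(x_0,x_1)\cdots\lambda(x_{k+i-1},x_{k+i})=g_k\,\lambda(x_k,x_{k+1})\cdots\lambda(x_{k+i-1},x_{k+i})$ with $g_k:=\lambda(x_0,x_1)\cdots\lambda(x_{k-1},x_k)$ a word of $S$-length at most $k$, submultiplicativity of the operator norm (equivalently the inequality $|\log\|ab\|-\log\|b\||\le\max\{\log\|a\|,\log\|a^{-1}\|\}$ recorded around \eqref{eq.simple.norm.ineq}) together with the symmetry of $S$ give $|\ell_x(k+i)-\ell_{\sigma^k x}(i)|\le kM$ for $0\le i\le n-k$ and $|\ell_x(j)|\le kM$ for $0\le j\le k$, where $M:=\max_{s\in S}\max\{\log\|\rho(s)\|,\log\|\rho(s^{-1})\|\}$. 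Hence, comparing breakpoints, the interpolated paths satisfy $|\widehat{S}_n x(t)-\widehat{S}_n(\sigma^k x)(t-k/n)|\le\eta_k$ for all $t\in[k/n,1]$ and $|\widehat{S}_n x(t)|\le\eta_k$ for all $t\in[0,k/n]$, where $\eta_k:=k(M+\Lambda)(n\sigma^2)^{-1/2}$. A short case analysis according to whether $t$ and $s$ lie above or below $k/n$ --- using that $t\mapsto t-k/n$ is an isometry and that the modulus of continuity with window $\delta$ is translation invariant --- then yields
\[
\sup_{|t-s|<\delta}|\widehat{S}_n x(t)-\widehat{S}_n x(s)|\ \le\ \sup_{|u-v|<\delta}|\widehat{S}_n(\sigma^k x)(u)-\widehat{S}_n(\sigma^k x)(v)|+4\eta_k .
\]
Now $k\le cn^{1/2}$ forces $\eta_k\le c(M+\Lambda)/\sigma$, so choosing $c:=\epsilon\sigma/(16(M+\Lambda))$ makes $4\eta_k\le\epsilon/4$; therefore $\sup_{|t-s|<\delta}|\widehat{S}_n x(t)-\widehat{S}_n x(s)|>\epsilon/2$ forces $\sup_{|u-v|<\delta}|\widehat{S}_n(\sigma^k x)(u)-\widehat{S}_n(\sigma^k x)(v)|>\epsilon/4$, and since $x\in A_k$ also gives $\sigma^k x\in\cup_j\Sigma_{B_j}$, the desired inclusion holds.

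The main difficulty --- which is really just careful bookkeeping --- is the path comparison above: one has to keep track at once of the $k/n$ time-shift created by passing from $x$ to $\sigma^k x$ (invisible to the modulus of continuity) and of the norm discrepancy accumulated over the first $k$ letters, which after the normalization $(n\sigma^2)^{-1/2}$ has size $O(k/\sqrt n)$ and is thus negligible precisely in the regime $k=O(\sqrt n)$. This is the exact analogue here of the choice of $\epsilon'$ in Lemma~\ref{lem:inequ3}, with the linear scaling of the admissible prefix length replaced by square-root scaling because of the $\sqrt n$ in the denominator of $\widehat{S}_n$.
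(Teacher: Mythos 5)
Your proof is correct and follows essentially the same route as the paper: factor out the length-$k$ prefix, bound the perturbation of the interpolated path via submultiplicativity of the operator norm, observe that the normalization $(n\sigma^2)^{-1/2}$ makes this $O(k/\sqrt n)=O(c)$ when $k\le cn^{1/2}$, and then pass through $\sigma^{-k}$ and Lemma~\ref{lem:alpha} exactly as in Lemma~\ref{lem:inequ3}. Your bookkeeping of the constant $c$ (incorporating $\sigma$ and $\Lambda$ from the normalization of $\widehat{S}_n$) is in fact more careful than the paper's stated choice, but the argument is the same.
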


\begin{proof}
Fix $c>0$ so that $2 c \max_{s\in S} \{\log\|\rho(s)\|\} < \epsilon/4$.
Then, for each $cn^{1/2} \geq k \geq 1$, the set $E_n(\epsilon/2,\delta) \cap A_k$ satisfies
\begin{align*}
    &  \left\{ x \in Y_\infty: \sup_{|t-s|<\delta} |\widehat{S}_nx(t)-\widehat{S}_nx (s)|>\epsilon/2 \ , \ \sigma^kx \in \cup_j \Sigma_{B_j}, \sigma^{k-1}x \notin \cup_j \Sigma_{B_j}\right\} \\
    &\subseteq \left\{ x \in Y_\infty: \sup_{|t-s|<\delta} |\widehat{S}_nx(t)-\widehat{S}_nx (s)|>\epsilon/4 \ , \ \sigma^kx \in \cup_j \Sigma_{B_j}\right\}\\
    &= Y_\infty \cap \sigma^{-k} \left\{x \in \cup_j \Sigma_{B_j}: \sup_{|t-s|<\delta} |\widehat{S}_nx(t)-\widehat{S}_nx (s)|>\epsilon/4 \right\}.
\end{align*}
The inclusion in the second line above follows from Lemma \ref{lem:Lipschitz} (due to the submultiplicativity of the operator norm) and the choice of $c$. From this point on, the proof follows the same lines as the proof of Lemma \ref{lem:inequ3}. We omit it to avoid repetition.
\end{proof}

From the previous lemma, we deduce the analogue of \eqref{eq.last.before.ld} which reads as follows: for every $n \in \N$, $\delta>0$, $cn^{1/2} \geq m \geq 1$, we have
\begin{equation}\label{eq.last.before.tight}
\widehat{\nu}(E_n(\epsilon/2,\delta)) \le C_1 m \  \mu\left\{x \in \cup_j \Sigma_{B_j}: \sup_{|t-s|<\delta} |\widehat{S}_nx(t)-\widehat{S}_nx (s)|>\epsilon/4 \right\}  + C_0 \theta^m.
\end{equation}

Let $\eta>0$ be arbitrary.  Fix $m \in \N$ large enough so that $C_0 \theta^m<\eta/2$. Now, by Corollary \ref{corol.limthms.fullautomat} (since the operator norm is invariant under the tranpose) the pushforward of $\mu$ by $\widehat{S}_n$ converges to the Wiener measure. These pushforwards are tight and hence we can choose $\delta_1>0$ small enough so that for every $n \geq 1$, the $\mu$-measure on the right-hand-side of \eqref{eq.last.before.tight} is less than $\frac{\eta}{2C_1 m}$ for every $n \geq (m/c)^2$. Now observe from the definition \eqref{eq.en.eps.delta} of $E_n(\epsilon/2,\delta)$ that for every $n \in \N$ such that $\epsilon/2 > \delta n^{1/2} (\frac{3M_0+\Lambda}{\sigma})$, we have $E_n(\epsilon/2,\delta)=\emptyset$, where $M_0=\max_{s \in S}\log \|\rho(s)\|$. Therefore, up to reducing $\delta_1>0$ to $\delta_0>0$ so that any $n \leq  (m/c)^2$ satisfies $\epsilon/2>\delta_0 n^{1/2}(\frac{3M_0+\Lambda}{\sigma})$, we get that for every $\delta \in (0,\delta_0)$, $n \in \N$, we have $\widehat{\nu}(E_n(\epsilon/2,\delta)) \leq \eta$, proving that the laws of $S_n$ constitute a tight family.

We now turn to proving that the finite dimensional distributions of $(S_n)$ converge to those of the Wiener measure. Fix $0=t_0 < t_1< t_2 < \ldots < t_d \leq 1$. Let $F_{n,t_1,\ldots,t_d}(x)$ for $x=(x_1, \ldots,x_d) \in \mathbb{R}^d$ denote the cumulative distribution function
\[
F_{n,t_1,\ldots, t_d}(x) = \nu\left( \xi \in \partial \Gamma: S_n\xi(t_1,\ldots,t_d) \in \prod_{i=1}^{d} (-\infty, x_i] \right),
\]
where 
\[
S_n \xi(t_1, \ldots, t_d) := \left(S_n\xi(t_1), S_n\xi(t_2) - S_n\xi(t_1),  \ldots, S_n\xi(t_d) - S_n\xi(t_{d})\right).
\]
We would like to prove that $F_{n,t_1,\ldots, t_d}(x)$ converges as $n\to\infty$ to the cumulative distribution function $F_{t_1,\ldots,t_d}(x)$ of the multidimensional normal distribution $N(0,\omega)$ with $d\times d$ diagonal covariance matrix $\omega$ with entries $\omega_{ii}=t_i - t_{i-1}$, \cite[\S 1]{billingsley}.
Recall that the Patterson--Sullivan measure $\nu$ on $\partial\Gamma$ is given by $\Psi_\ast \widehat{\nu}$, where $\Psi: Y_\infty \to \partial \Gamma$ is continuous, surjective and finite-to-one. Moreover, using the fact that any two geodesic ray representing $\xi$ stays at bounded distance depending only on the hyperbolicity constant, it follows that there exists a sequence $\eta_n$ converging to zero as $n \to \infty$ such that for every $\xi \in \partial \Gamma$, for any $y \in \Psi^{-1}(\xi)$, we have $\|S_n\xi(t_1,\ldots,t_d)-\widehat{S}_n y(t_1,\ldots,t_d)\|_\infty \leq \eta_n$. As a consequence, 
it suffices to show that for every $x \in \R^{d}$
\[
\widehat{F}_{n,t_1, \ldots, t_d}(x) = \widehat{\nu}\left( y \in Y: \widehat{S}_ny(t_1,\ldots,t_d) \in \prod_{i=1}^{d-1} (-\infty, x_i] \right)
\]
converges to $F_{t_1,\ldots,t_d}(x)$ as $n\to\infty$, where $\widehat{S}_ny(t_1, \ldots, t_d)$ is defined analogously to $S_n \xi(t_1, \ldots, t_d).$

We define $E_{n,t_1, \ldots, t_d}(x)= \left\{ y \in \cup_j \Sigma_{B_j} : \widehat{S}_ny(t_1,\ldots,t_d) \in \prod_{i=1}^{d} (-\infty, x_i] \right\} \subset \Sigma_A$.
Recall from \eqref{eq.parry.to.ps} that the Cesar\'{o} averages of $\widehat{\nu}$ under the shift map converges to the Parry-like measure $\mu$ in the total variation distance. 
It follows from \eqref{eq.parry.to.ps} and Corollary \ref{corol.limthms.fullautomat} (statement corresponding to Proposition \ref{prop.wiener.maximal}) that for every $x \in \R^d$
\begin{align*}
\lim_{n\to\infty} \frac{1}{n^{1/4}} \sum_{k=0}^{n^{1/4}} \sigma_\ast^k \widehat{\nu}(E_{n,t_1, \ldots, t_d}(x)) &= \lim_{n\to\infty} \mu\left( y \in \Sigma_A: S_ny(t_1,\ldots,t_d) \in \prod_{i=1}^{d} (-\infty, x_i] \right) \\
&= F_{t_1,\ldots,t_d}(x).
\end{align*}
Defining
\[
C_{n,t_1, \ldots, t_d}^{\pm}(x) = E_{n,t_1, \ldots, t_d}(x \pm Cn^{-1/4}(1,1, \ldots, 1))
\]
where $C>0$ is some positive constant, we see that
\begin{equation}\label{eq.Cstar}
\lim_{n\to\infty} \frac{1}{n^{1/4}} \sum_{k=0}^{n^{1/4}} \sigma_\ast^k \widehat{\nu}(C_{n,t_1, \ldots, t_d}^{\star}(x)) =F_{t_1,\ldots,t_d}(x),
\end{equation}
for each $x \in \R^d$ and $\star \in \{+,-\}$.
Similarly to \eqref{eq.the.inclusion} if $C>0$ is taken sufficiently large (depending only on $\max_{s\in S}\log\|\rho(s)\|$ and the variance $\sigma^2>0$), by inclusion of the corresponding sets, we have
\begin{equation}\label{eq.inclusion.again}
\sigma_\ast^k\widehat{\nu}(C^-_{n,t_1, \ldots, t_d}(x)) \le \widetilde{\nu}_k(E_{n,t_1, \ldots, t_d}(x)) \le \sigma_\ast^k \widehat{\nu}(C^+_{n,t_1, \ldots, t_d}(x)) 
\end{equation}
for all integers $n \geq 1$ and $n^{1/4} \geq k \geq 1$. We deduce from \eqref{eq.Cstar} and \eqref{eq.inclusion.again} that
\[
\frac{1}{n^{1/4}} \sum_{k=0}^{n^{1/4}} \widetilde{\nu}_k(E_{n,t_1, \ldots, t_d}(x)) = F_{t_1,\ldots,t_d}(x).
\]
Finally, by Lemma \ref{lem:TV2}, this implies that $\widehat{F}_{n,t_1, \ldots, t_d}(x)$  also converges to $F_{t_1,\ldots,t_d}(x)$ as $n\to\infty$. From our above discussion, this concludes the proof of 1.

\bigskip
2. We need to show that the set $U$ of $\xi \in \partial \Gamma$ such that the conclusion of 2.~ holds has full $\nu$-measure. To this end, let $E$ be the set of $y \in Y_\infty$ such that the conclusion holds when $S_n\xi$ is replaced by $\widehat{S}_ny$ and $B$ be the set of $x \in \cup_{j} \Sigma_{B_j}$ such that the same conclusion again holds with $\widehat{S}_nx$. Note that the set $U$ is well-defined since its defining property does not depend on the choice of the representing geodesic ray and all these sets are Borel measurable. Since, given $\xi \in U$, we have that any $\xi'$ with the property $\xi_m=\xi'_{m+k}$ for certain $k \in \Z$ and every $m \in \N$ large enough also belongs to $U$,  the set $U$ is $\Gamma$-invariant. By $\Gamma$-ergodicity of $\nu$, all we need to show is $\nu(U)>0$. As $E \subseteq \Psi^{-1}(U)$ and $\Psi_\ast \widehat{\nu}=\nu$, it suffices to show that $\widehat{\nu}(E)>0$. Let, as before, $V_{\max}$ denote the set of vertices belonging to a maximal component and $v \in V_{\max}$. Let $k \in \N$ be such that there exists a path of length $k$ from $\ast$ to $v$. By Lemma \ref{lem:alpha}, there exists $\alpha_v^k>0$ such that 
\begin{equation}\label{eq.alpha.in.lil}
\sigma_\ast^k\widehat{\nu}|_{[v]} = \alpha_v^k \mu|_{[v]}.
\end{equation}
Now thanks to Corollary \ref{corol.limthms.fullautomat} (statement corresponding to Proposition \ref{prop.lil.maximal}), the set $B$ has full $\mu$ measure. Therefore, by \eqref{eq.alpha.in.lil}, we have $\sigma^k_\ast\widehat{\nu}(B)=\widehat{\nu}(\sigma^{-k}(B))>0$. But since for any $k \in \N$, we have $\sigma^{-k}(B) \cap Y_\infty \subseteq E$, we obtain $\widehat{\nu}(E)>0$, as desired.
\end{proof}

As an immediate consequence, we record the following more classical results, namely the central limit theorem (CLT) and law of iterated logarithm (LIL). The latter one provides a refinement of Theorem \ref{thm:rayconvergence} in the current setting.

\begin{corollary}[Boundary CLT and LIL]\label{corol.boundary.CLT.LIL}
Let $\rho: \Gamma \to \GL_d(\mathbb{R})$ be a strongly irreducible proximal representation of a hyperbolic group $\Gamma$. Equip $\Gamma$ with a finite generating set $S$ and let $\nu$ be the Patterson--Sullivan measure defined in $(\ref{PSdef})$. Let $\Lambda, \sigma^2 >0$ be the mean and variance from Theorem \ref{thm:boundarywiener}. Then,\\[3pt]
1. for each $n \ge 1$ and $ x \in \mathbb{R}$, denoting
\[
 \mathcal{A}_n(x):=\left\{ \xi \in \partial \Gamma: \text{for any representative $\xi_m \to \xi$ with $\xi_0 = o$, } \frac{\log\|\rho(\xi_n)\| - \Lambda n}{\sqrt{n}} \le x \right\}
\]
we have 
\begin{equation}\label{eq.boundary.classical.clt}
\lim_{n \to \infty}\nu(\mathcal{A}_n(x)) = \frac{1}{\sqrt{2\pi} \sigma} \int_{-\infty}^x e^{-t^2/2\sigma^2} \ dt \ \text{; and,}
\end{equation}
2. for $\nu$ almost every $\xi \in \partial \Gamma$ and for any representative $\xi_m \to \xi$,
$$
\liminf_{n\to\infty} \frac{\log\|\rho(\xi_n)\|-n\Lambda}{\sqrt{2\sigma n \log \log n}}=-1 \ \text{ and } \ \limsup_{n\to\infty} \frac{\log\|\rho(\xi_n)\|-n\Lambda}{\sqrt{2\sigma n \log \log n}} = 1.
$$ \qed
\end{corollary}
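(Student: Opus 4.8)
The plan is to deduce both assertions directly from Theorem~\ref{thm:boundarywiener} by evaluating the $C([0,1])$-valued processes at the single time $t=1$. The key preliminary remark is that, at $t=1$, the interpolation term in \eqref{eq.defn.snxi} vanishes, so that $S_n\xi(1)=\min_{\xi_m\to\xi}\tfrac{1}{\sigma\sqrt n}\bigl(\log\|\rho(\xi_n)\|-n\Lambda\bigr)$, and moreover any two geodesic representatives of a given $\xi\in\partial\Gamma$ stay within a distance depending only on the hyperbolicity constant. Since $\gamma\mapsto\log\|\rho(\gamma)\|$ is Lipschitz for the word metric (Lemma~\ref{lem:Lipschitz}), there is a constant $C>0$, independent of $\xi$ and $n$, such that for every representative $\xi_m\to\xi$ with $\xi_0=\id$ one has $\bigl|\,\sigma\sqrt n\,S_n\xi(1)-(\log\|\rho(\xi_n)\|-n\Lambda)\,\bigr|\le C$. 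Thus, up to a uniformly $O(n^{-1/2})$ perturbation, $S_n\xi(1)$ coincides with $\tfrac{1}{\sigma\sqrt n}(\log\|\rho(\xi_n)\|-n\Lambda)$ computed along any representative.

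For part~1, the evaluation map $E_1\colon C([0,1])\to\R$ is continuous and pushes the Wiener measure forward to $\mathcal N(0,1)$; hence part~1 of Theorem~\ref{thm:boundarywiener} and the continuous mapping theorem give that the laws of $S_n\xi(1)$ under $\nu$ converge weakly to $\mathcal N(0,1)$, and since the Gaussian distribution function is continuous, $\nu\{\xi:S_n\xi(1)\le t\}\to\tfrac{1}{\sqrt{2\pi}}\int_{-\infty}^t e^{-s^2/2}\,ds$ for every $t\in\R$. Using the remark above, the event $\mathcal A_n(x)$ is sandwiched, for all $n$, between $\{\xi:S_n\xi(1)\le x/\sigma-C/(\sigma\sqrt n)\}$ and $\{\xi:S_n\xi(1)\le x/\sigma\}$; letting $n\to\infty$ and invoking uniform continuity of the Gaussian distribution function, both bounds converge to $\tfrac{1}{\sqrt{2\pi}}\int_{-\infty}^{x/\sigma}e^{-s^2/2}\,ds=\tfrac{1}{\sqrt{2\pi}\,\sigma}\int_{-\infty}^x e^{-t^2/2\sigma^2}\,dt$, which is \eqref{eq.boundary.classical.clt}.

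For part~2, fix $\xi$ in the full $\nu$-measure set furnished by part~2 of Theorem~\ref{thm:boundarywiener}: the sequence $\bigl(\tfrac{S_n\xi}{\sqrt{2\log\log n}}\bigr)_n$ is relatively compact in $C([0,1])$ with set of cluster points exactly the Strassen ball $K=\{f\in C([0,1]):f\text{ abs.\ continuous},\ f(0)=0,\ \int_0^1 f'(t)^2\,dt\le1\}$. Applying the continuous map $E_1$, and using relative compactness so that cluster sets are preserved, the set of cluster points of $\tfrac{S_n\xi(1)}{\sqrt{2\log\log n}}$ is $E_1(K)=\{f(1):f\in K\}$, which by Cauchy--Schwarz equals $[-1,1]$, the extreme values being attained at $f(t)=\pm t$. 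Therefore $\limsup_n\tfrac{S_n\xi(1)}{\sqrt{2\log\log n}}=1$ and $\liminf_n\tfrac{S_n\xi(1)}{\sqrt{2\log\log n}}=-1$; substituting $S_n\xi(1)=\tfrac{1}{\sigma\sqrt n}(\log\|\rho(\xi_n)\|-n\Lambda)+O(n^{-1/2})$, whose error term becomes negligible after division by $\sqrt{2\log\log n}$, gives the stated values of the $\liminf$ and $\limsup$ along every geodesic representative.

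There is no substantial obstacle here: Theorem~\ref{thm:boundarywiener} carries all the analytic content, and the remaining work is the bookkeeping above, converting between $S_n\xi(1)$ (a minimum over representatives) and the per-representative log-norm. The one point deserving care is the passage from the functional law of iterated logarithm to the one-dimensional one: one must use that the cluster set in Theorem~\ref{thm:boundarywiener} is \emph{exactly} $K$, so that $\limsup=1$ rather than merely $\le1$, the lower bound coming from the extremal function $f(t)=t\in K$, together with relative compactness of $\bigl(\tfrac{S_n\xi}{\sqrt{2\log\log n}}\bigr)_n$, which guarantees that the cluster set commutes with the continuous functional $E_1$.
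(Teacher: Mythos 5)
Your proof is correct and is exactly the deduction the paper has in mind when it marks the corollary with \qed as an immediate consequence of Theorem~\ref{thm:boundarywiener}: specialise to $t=1$ where the interpolation term in \eqref{eq.defn.snxi} drops out, apply the continuous mapping theorem for the evaluation $E_1$ to deduce the one-dimensional CLT from the invariance principle, and push the Strassen ball $K$ forward through $E_1$ to get the scalar LIL from the functional one. Your sandwiching of $\mathcal A_n(x)$ between the two $S_n\xi(1)$-sublevel events, using Lemma~\ref{lem:Lipschitz} and the bounded discrepancy between representatives, is the right way to handle the $\min$ in \eqref{eq.defn.snxi}, and you are right to flag relative compactness of $\bigl(S_n\xi/\sqrt{2\log\log n}\bigr)_n$ as the delicate point in passing to $E_1(K)=[-1,1]$; this is part of Strassen's functional LIL as imported from Bougerol, even though the theorem statement in the paper only records the identification of the cluster set.

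One small point worth recording: your computation gives
\[
\limsup_{n\to\infty}\frac{\log\|\rho(\xi_n)\|-n\Lambda}{\sigma\sqrt{2n\log\log n}}=1,
\]
i.e.\ the denominator carries $\sigma$, or equivalently $\sigma^2$ inside the radical, $\sqrt{2\sigma^2 n\log\log n}$. The corollary as printed writes $\sqrt{2\sigma n\log\log n}$; this, like the factor $2\log\log n$ appearing without a square root in the statements of Theorems~\ref{thm.bougerol.wiener.lil} and~\ref{thm:boundarywiener}, is a typographical slip in the source, and your derivation silently supplies the correct normalisation rather than literally ``the stated values.''
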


\begin{remark}[Speed and uniformity in boundary CLT]
Using Theorem \ref{thm.bougerol.berryess}, it is possible to give a speed estimate 
in  \eqref{eq.boundary.classical.clt} that is uniform over $x \in \R$. However, we will not pursue this direction as this would be a (somewhat lengthy and technical) diversion from the main goals in the article (see \cite[\S 4 and \S 5]{cantrell.georays}).
\end{remark}


\section{Counting central limit theorem and error terms} \label{section:clt}

In this section, after briefly commenting on our approach, we prove Theorem \ref{thm:clt}.

\bigskip

Similar to the schemes we followed in the proofs of weak law of large numbers (\S \ref{sec.lln}) and large deviation results for counting (\S \ref{section:ldt}), one could try to obtain a corresponding counting CLT directly from 1.~ of Corollary \ref{corol.boundary.CLT.LIL}. However there are difficulties in implementing that approach for the CLT. The main issue stems from the fact that when we compare the asymptotic density of sets with the Patterson-Sullivan measure of certain boundary sets (e.g.~ as in the proof of Theorem \ref{thm:ldt}), we do so up to a bounded multiplicative constant. Such a constant is inconsequential when proving large deviation type results, however it destroys the precise limiting behaviour that we need for a CLT to hold. To overcome this issue (and, importantly, to prove a CLT with the Berry--Esseen type error term) we will directly compare the uniform counting measures on $S_n$ with the Markov measures on $\Sigma$. This method will make use of a quantified version of an argument from a recent work of Gehktman--Taylor--Tiozzo \cite{GTTCLT}.

More precisely, using the strongly Markov structure, up to a periodicity issue, we will consider a geodesic factorization of an element $g \in \Gamma$ chosen uniformly from the sphere of length $n$ as $g_0g_1g_2$ with $g_0$ and $g_2$ of size approximately $\log n$. It will then suffice to show a CLT with error term for the middle factor $g_1$. We will show (Lemma \ref{lem:approx2}) that the distribution of this middle factor $g_1$ is approximated (with speed) by the length $\sim(n-2\log n)$ path distribution of a Markov chain. We can then associate a Markovian random matrix product to this chain and bring back (Lemma \ref{lem:periodic.berry}) the relevant result (Theorem \ref{thm.bougerol.berryess}) from the Markovian matrix products to establish a counting CLT with error term (Proposition \ref{prop:cltvertex}). The proof is then completed by resolving the periodicity issue. 

\bigskip

We now start collecting the necessary ingredients for the proof of Theorem \ref{thm:clt}. We will heavily use the constructions from Section \ref{sec.hyp}. Fix a non-elementary Gromov-hyperbolic group $\Gamma$, a generating set $S \subset \Gamma$ and strongly Markov structure $\mathcal{G}$. Let $A$ be the transition matrix as introduced in \S \ref{subsub.shift.space.ofmarkov}.  Let $p$ be an integer that is divisible by the periods of each maximal component of $A$ so that the non-negative matrix $A^p$ has a unique (necessarily real) eigenvalue $\lambda^p$ of maximal modulus. To deduce Theorem \ref{thm:clt} we will first study the convergence of our counting distributions along the subsequence $np$.

For a positive $k \in \N$, we will define a Markov chain on the state space
$$\Omega_{kp}:=\{(w_0,\ldots,w_{kp}) : w_i \in \mathcal{G}, \; A_{w_i,w_{i+1}}=1\}$$
of length-$kp$ paths in the strongly Markov structure $\mathcal{G}$. To define a transition kernel and a stationary measure on $\Omega_{kp}$, we lset
\begin{equation}\label{eq.defn.p.u}
p_i = \lim_{n\to\infty} \frac{e_i^T A^{np} 1}{ \lambda^{np}} \ \ \text{ and } \ \ u_i = \lim_{n\to\infty} \frac{e_\ast^T A^{np} e_i}{\lambda^{np}}
\end{equation}
where $e_i$ and $e_\ast$ correspond to the vectors that have the entry $1$ in the index corresponding to the vertices $v_i$ and $\ast$ respectively, and $0$ elsewhere. 

\begin{remark} \label{remark:exp}
Before proceeding further, we remark that, by our choice of $p$, the limits above defining each $p_i$ and $u_i$ converge exponentially quickly. This is because the matrix $A^p$ exhibits a spectral gap from its leading (real positive) eigenvalue to the rest of the spectrum.
\end{remark}
Now let $\pi_{kp}$ be the measure on $\Omega_{kp}$, defined by $$\pi_{kp}(w_0,\ldots,w_{kp})=\frac{u_{w_{0}} p_{w_{kp}}}{\lambda^{kp} p_\ast}.$$
It is readily checked that $\pi_{kp}$ defines a probability measure. Let $N_{kp}$ be the transition kernel defined by
$$
N_{kp}((w_0,\ldots,w_{kp}),(w'_0,\ldots,w'_{kp}))=\left\{
\begin{array}{ll}
\frac{p_{w'_{kp}}}{\lambda^{kp} p_{w'_0}} & \text{if} \ \; p_{w_0'}>0 \ \text{ and } w_{kp} = w_0'\\
0 & \, \textrm{otherwise.} \\
\end{array}
\right. 
$$
Unfolding the definitions, one also readily checks that $N_{kp}$ is a stochastic matrix and $\pi_{kp}$ is $N_{kp}$-stationary (i.e.~ a left eigenvector with eigenvalue one). Let $\widetilde{\Omega}_{kp} \subseteq \Omega_{kp}^\N$ be the subshift associated to this Markov chain and $\widetilde{\P}_{kp}$ be the associated Markovian measure on $\widetilde{\Omega}_{kp}$. 
Finally, for $k \geq 1$ and $z_1,\ldots,z_k \in \Omega_p$, let $[z_1,\ldots,z_k]$ be the associated cylinder set in $\widetilde{\Omega}_p$ and  $(z_1,\ldots,z_k)$ be the corresponding element of $\Omega_{kp}$. Observe that by an easy calculation using the definitions of $\pi_{kp}$'s and $N_{p}$, we have
\begin{equation}\label{eq.pi.as.iteration}
\pi_{kp}((z_1,\ldots,z_k))=\widetilde{\P}_{p}([z_1,\ldots,z_k]).
\end{equation}

\bigskip

The non-negative matrix $A^p$ is not necessarily irreducible and hence we decompose it into connected components (as we did to obtain $A''$ from $A$ in \S \ref{subsub.shift.space.ofmarkov}). Some of these components will have spectrum with simple eigenvalue $\lambda^p$. We label these finitely many $A^p$ maximal components $C_1, \ldots, C_{m_0}$. 
Note that each of the vertex sets for $C_1, \ldots, C_{m_0}$ are subsets of the vertex sets of the maximal components of $A$. Notice from definitions of the constants $p_i$ and $u_i$'s in \eqref{eq.defn.p.u} and that of the stationary measure $\pi_{kp}$ that $\pi_{kp}(w_0,\ldots,w_{kp})>0$ if any only of $w_0$ and $w_{kp}$ belong to the same maximal component of $A^p$. Moreover, the transition kernel $N_{kp}$ sends a path $(w_0,\ldots,w_{kp})$ in a maximal component $C_i$ to a path in $C_i$. Therefore, the Markov chain defined above is not ergodic if $m_0 \geq 2$. Its ergodic components are simply given by the maximal components $C_i$ for $i=1,\ldots,m_0$: the restriction of the transition kernel $N_{kp}$ to the set $\Omega^i_{kp}$ paths of length $kp$ with initial and end vertex belonging to a single $C_i$ (together with the normalized restriction of $\pi_{kp}$ to $\Omega^i_{kp}$) gives an ergodic Markov chain. Moreover, by the choice of $p$ (a common multiple of the periods of maximal component of $A$), these Markov chains are aperiodic. 

We now proceed precisely as in \S \ref{sec.5} to deduce a CLT with Berry--Esseen bounds along periodic products from $\Sigma_A$. Since the procedure is the same, we only outline the steps:
\begin{enumerate}
    \item As in \S \ref{subsub.construction.markov} We associate a Markovian random matrix product $(M_n^i)$ to the aperiodic finite state Markov chains on $\Omega^i_{p}$.
    \item As in Lemma \ref{lem:(star)}, we check the $1$-contracting and strong irreducibility assumptions for these Markovian products.
    \item By applying Theorem \ref{thm.bougerol.berryess}, we deduce a CLT with mean $\Lambda_i$ and variance $\sigma_i^2>0$ and with Berry--Esseen error term of order $O(\frac{\log n}{\sqrt{n}})$.
    \item We check exactly as in Proposition \ref{prop:mandv} that the means $\Lambda_i$ and variances $\sigma_i^2$ do not depend on $i=1,\ldots,m_0$. Set $\Lambda=\Lambda_i$ and $\sigma^2=\sigma^2_i$.
\end{enumerate}
From these, analogous to Corollary \ref{corol.limthms.fullautomat}, we deduce the following.

\begin{lemma}\label{lem:periodic.berry}
There exists a constant $D>0$ such that for every $n \geq 1$ and $t \in \R$
\[
\left| \widetilde{\mathbb{P}}_p\left( (z_1,\ldots ) \in \widetilde{\Omega}_p : \frac{\log\|\rho(\lambda(z_1)) \ldots \rho(\lambda(z_n))\| - np \Lambda}{\sqrt{np}} \le t\right)- \frac{1}{\sigma \sqrt{2\pi}} \int_{-\infty}^t e^{-\frac{s^2}{2\sigma^2}} ds \right|
\]
is bounded above by $\frac{D \log n}{\sqrt{n}}$ where for $z=(w_0,\ldots,w_p)$ belonging to $\Omega_p$, we write $\lambda(z)=\lambda(w_0,w_1) \ldots \lambda(w_{p-1},w_p)$.
\end{lemma}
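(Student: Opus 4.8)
The plan is to carry out, almost verbatim, the four-step program sketched just above the statement, but now for the maximal components $C_1,\dots,C_{m_0}$ of $A^p$ in place of the maximal components of $A$, and then to assemble the pieces through the ergodic decomposition of the chain on $\Omega_p$. I would fix once and for all the operator norm induced by the Euclidean norm (so that it is invariant under transpose); since any two operator norms differ multiplicatively by a bounded constant, $\log\|\cdot\|$ changes by a bounded additive amount, and the density-type estimate contained in Theorem \ref{thm.bougerol.berryess} absorbs this at the cost of enlarging $D$. For each $i$, write $\Omega^i_p$ for the set of length-$p$ paths in $\mathcal{G}$ with both endpoints in $C_i$ and let $\bar\pi^i_p$ be the normalisation of $\pi_p|_{\Omega^i_p}$; recall that $(\Omega^i_p,N_p|_{\Omega^i_p},\bar\pi^i_p)$ is an aperiodic finite-state ergodic Markov chain, as discussed just before the statement. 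As in \S\ref{subsub.construction.markov} I would attach to it the map $X(w_0,\dots,w_p)={}^t\rho(\lambda(w_0,w_1)\cdots\lambda(w_{p-1},w_p))$ and the associated Markovian matrix product $(M^i_n)$, noting that transpose invariance of the Euclidean operator norm gives $\log\|M^i_n\|=\log\|\rho(\lambda(z_1))\cdots\rho(\lambda(z_n))\|$.

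The first substantive step is to check, exactly as in Lemma \ref{lem:(star)}, that each $(M^i_n)$ is $1$-contracting and strongly irreducible. The mechanism is unchanged: $C_i$ being a maximal component of $A^p$, the restricted matrix $(A^p)|_{C_i}$ has spectral radius $\lambda^p$, so for any vertex $v\in C_i$ the number of length-$k$ loops at $v$ in this chain grows like $\lambda^{pk}$; the corresponding elements of the semigroup $T_x(x)$ then have word length $pk$ and positive upper density in the spheres $S_{pk}$, so by Gou\"ezel--Math\'eus--Maucourant \cite[Theorem 4.3]{gmm} the group generated by $T_x(x)$ has finite index in $\Gamma$, and proximality and strong irreducibility of $\rho$ transfer via Goldsheid--Margulis \cite{goldsheid-margulis} and transpose invariance. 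Theorem \ref{thm.bougerol.berryess} then applies to each $(M^i_n)$ and yields $\Lambda_i\in\R$, $\sigma_i^2>0$ and $D_i>0$ with $\big|\P_x(\frac{\log\|M^i_n\|-n\Lambda_i}{\sigma_i\sqrt n}\le t)-F(t)\big|\le D_i\log n/\sqrt n$ uniformly in $x\in\Omega^i_p$, $t\in\R$, $n\in\N$, where $F$ is the standard normal distribution function.

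Next I would show the parameters are independent of $i$ by rerunning the proof of Proposition \ref{prop:mandv}(2) essentially word for word, with $\sigma^p$ replacing the shift $\sigma$, the subshift of finite type carried by $C_i$ and its (ergodic) Parry measure replacing $(\Sigma_{B_j},\mu_j)$, and \eqref{eq.parry.lin.comb} together with Lemma \ref{lem:alpha} furnishing the comparison between $\widehat\nu$ and the Parry-type measure: for each $i$ this produces a $\Gamma$-invariant Borel set in $\partial\Gamma$ of positive, hence (by ergodicity of $\nu$) full, $\nu$-measure along whose geodesics $\log\|\rho(\cdot)\|$ obeys a CLT with parameters $(\Lambda_i,\sigma_i^2)$; comparing across $i$ one obtains $\Lambda_i=p\Lambda$ and $\sigma_i^2=p\sigma^2$ for common constants $\Lambda>0$, $\sigma^2>0$ which are exactly those in the statement (and $\Lambda$ agrees with the constant of Theorem \ref{thm:rayconvergence} for $\log\|\rho(\cdot)\|$, as in Lemma \ref{lemma.ps.lyapunovs}). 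Finally, since the chain on $\Omega_p$ splits into the ergodic components $\Omega^i_p$ and $\pi_p=\sum_i\pi_p(\Omega^i_p)\bar\pi^i_p$, the Markovian measure is the convex combination $\widetilde{\P}_p=\sum_i\pi_p(\Omega^i_p)\widetilde{\P}^i_p$ with $\widetilde{\P}^i_p$ the Markovian measure started from $\bar\pi^i_p$; integrating the per-state bound of the previous step against $\bar\pi^i_p$, substituting $\Lambda_i=p\Lambda$ and $\sigma_i^2=p\sigma^2$ (i.e.\ rescaling $t$ by $\sigma$ and $n$ by $p$), and taking the convex combination over $i$ with $D:=\max_i D_i$ gives the asserted estimate.

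I expect the parameter-comparison step to be the main obstacle, since it is the only place where the dynamics of the Patterson--Sullivan measure genuinely intervenes and where one must reassemble the Parry-measure / boundary-measure dictionary for the refined coding $A^p$ rather than for $\mathcal{G}$ itself; in contrast, verifying $1$-contracting and strong irreducibility and the final convex-combination argument are faithful copies of earlier parts of the paper plus bookkeeping. A secondary point that needs care is the factor $p$ in the normalisation --- the Lyapunov exponent and variance of $(M^i_n)$ are $p$ times the per-generator constants appearing in the statement (and in Theorem \ref{thm:clt}) --- which must be tracked consistently through the change of variables.
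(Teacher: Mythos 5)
Your proposal is correct and follows the paper's own four-step program sketched immediately before the statement: associate Markovian products $(M^i_n)$ to the aperiodic chains on $\Omega^i_p$, verify $1$-contracting and strong irreducibility exactly as in Lemma \ref{lem:(star)} (via Gou\"ezel--Math\'eus--Maucourant and Goldsheid--Margulis), apply Theorem \ref{thm.bougerol.berryess} to each, transfer the Proposition \ref{prop:mandv} argument to equalize the means and variances across $i$, and assemble over the ergodic components by convex combination. Your explicit bookkeeping $\Lambda_i=p\Lambda$, $\sigma_i^2=p\sigma^2$ is a useful clarification of a point the paper's sketch glosses over (its ``Set $\Lambda=\Lambda_i$'' reads as a per-block rather than per-generator convention) but is needed to match the $np\Lambda$ and $\sqrt{np}$ normalisation in the displayed inequality.
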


\begin{remark}\label{rk.Pp.to.pi}
Keeping the notation of the previous lemma, notice that in view of \eqref{eq.pi.as.iteration}, the first term in the previous lemma is equal to 
$$
\pi_{np}\left\{(z_1,\ldots,z_n):\frac{\log\|\rho(\lambda(z_1)) \ldots \rho(\lambda(z_n))\| - np \Lambda}{\sqrt{np}} \le t \right\}
$$
\end{remark}

Having obtained Lemma \ref{lem:periodic.berry}, to prove Theorem \ref{thm:clt}, we now follow the ideas used in Sections 6-7 of \cite{GTTCLT}. However we need to quantify various rates of convergence to obtain the error term in Theorem \ref{thm:clt}. 

\bigskip

We start by defining a probability measure $\mu_q$ on each sphere $S_q$ that will help us deal with the periodicity issue at the end. Fix an integer $0 \le r \le p-1$. We define a measure on the set $S_r$ (or equivalently the set of paths of length $r$ in $\mathcal{G}$ starting from the vertex $\ast$) in the following way. For $g \in S_r$ we set
\[
\mu_r (g) = \frac{e_i^T A_\infty 1}{e_\ast^T A^r A_\infty 1} = \lim_{n\to\infty} \frac{e_i^T A^{np} 1 }{e_\ast^T A^r A^{np} 1}
\]
where $A_\infty = \lim_{n\to\infty} A^{np}/\lambda^{np}$ and $i$ is the end vertex of the path in $\mathcal{G}$ starting at $v_\ast$ corresponding to $g$. Here the limit defining $A_\infty$ exists since by choice of $p \in \N$ so that $\lambda^p$ is the unique eigenvalue of maximal modulus of $A^p$. For the same reason, the limit defining $\mu_r(g)$ converges exponentially quickly. One easily checks that $\sum_{|g|=r} \mu_r(g) = 1$. We extend the definition of $\mu_q$ on $S_q$ for $q \geq p$ as follows: given integers $n \geq 1$ and $q=np+r$ with $0 \leq r \leq p-1$, we define $\mu_{np + r}$ on $S_{np+r}$ (equivalently, on the set of paths of length $np+r$ in $\mathcal{G}$ starting from the vertex $\ast$) as follows: given $g \in S_{np+r}$, let $(\ast,w_1,\ldots,w_{np+r})$ be the unique path in $\mathcal{G}$ such that $g=\lambda(\ast, w_1) \ldots \lambda(w_{np+r-1},w_{np+r})$. Set $h=\lambda(\ast, w_1) \ldots \lambda(w_{r-1},w_r)$ and let $\mu_{np+r}(g)=\mu_r(h)\frac{1}{e_{w_r}^T A^{np}1}$. Note that the denominator in the last expressions is the number of length $np$-paths starting at the vertex $w_r$. 
Let $\tau_n$ denote the uniform probability measure on the sphere $S_n$. We have the following

\begin{lemma} \label{lem:TV}
For each $r=0, \ldots, p-1$ we have that $\|\tau_{np+r} - \mu_{np+r}\|_{TV} = O(\theta^n)$ for some $0< \theta < 1$ as $n \to\infty$.
\end{lemma}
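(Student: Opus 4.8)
The plan is to reduce the total‑variation bound to a statement about the law of a short initial segment of a uniformly random geodesic, and then to close that statement using the exponential convergence recorded in Remark \ref{remark:exp}.

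\emph{Reduction to prefixes.} Fix $r\in\{0,\dots,p-1\}$ and $n\ge 1$. Each $g\in S_{np+r}$ has a unique length-$r$ prefix $h=h(g)\in S_r$, namely the initial segment of the path of $g$ in $\mathcal G$; let $v(h)$ denote the terminal vertex of that path and, for a vertex $v$ and $m\ge0$, put $N_m(v):=e_v^T A^m 1$, the number of length-$m$ paths in $\mathcal G$ starting at $v$. The elements of $S_{np+r}$ with prefix $h$ are in bijection with the length-$np$ continuations of $h$ from $v(h)$, so there are exactly $N_{np}(v(h))$ of them, and $\#S_{np+r}=\sum_{h\in S_r}N_{np}(v(h))=e_\ast^T A^{np+r}1$. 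Since $\mu_{np+r}(g)=\mu_r(h)/N_{np}(v(h))$ depends on $g$ only through $h$, the conditional law of $g$ given its prefix $h$ is, under \emph{both} $\tau_{np+r}$ and $\mu_{np+r}$, the uniform measure on those $N_{np}(v(h))$ continuations. Hence the two measures differ only through the law of the prefix, and
\[
\|\tau_{np+r}-\mu_{np+r}\|_{TV}=\frac12\sum_{h\in S_r}\left|\frac{N_{np}(v(h))}{\#S_{np+r}}-\mu_r(h)\right|,
\]
a sum over the finite set $S_r$; it suffices to bound each summand by $C\theta^{\,n}$ with $\theta\in(0,1)$ and $C$ independent of $h$ and $n$.

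\emph{Exponential convergence.} By construction $\mu_r(h)=\lim_{n\to\infty}N_{np}(v(h))/\#S_{np+r}$; indeed, writing $p_v:=\lim_n N_{np}(v)/\lambda^{np}$ as in \eqref{eq.defn.p.u}, a short computation gives $\mu_r(h)=p_{v(h)}\big/\sum_{h'\in S_r}p_{v(h')}$, where the denominator equals $e_\ast^T A^r A_\infty 1$. By the choice of $p$ — divisible by the periods of every maximal component of $\mathcal G$, so that $\lambda^p$ is the only eigenvalue of $A^p$ of maximal modulus and $A^{np}/\lambda^{np}$ converges to $A_\infty$ — the limits defining the $p_v$ are attained geometrically (Remark \ref{remark:exp}): there are $C_0>0$ and $\theta_0\in(0,1)$ with $|N_{np}(v)/\lambda^{np}-p_v|\le C_0\theta_0^{\,n}$ for every vertex $v$. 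Summing over vertices with the fixed finite multiplicities coming from $e_\ast^T A^r$ gives $|\#S_{np+r}/\lambda^{np}-\sum_{h'\in S_r}p_{v(h')}|\le C_1\theta_0^{\,n}$, and moreover $\#S_{np+r}/\lambda^{np}\ge C^{-1}\lambda^{r}>0$ for every $n$ by the purely exponential growth \eqref{eq.pure.growth}.

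\emph{Conclusion.} For each $h\in S_r$ the quantity $N_{np}(v(h))/\#S_{np+r}$ is thus a quotient of a geometrically convergent numerator by a geometrically convergent denominator that is bounded away from $0$ uniformly in $h$ and $n$; the elementary estimate $|a_n/b_n-a/b|\le (b\,|a_n-a|+a\,|b_n-b|)/(b\,b_n)$ then yields $\bigl|N_{np}(v(h))/\#S_{np+r}-\mu_r(h)\bigr|\le C\theta_0^{\,n}$ with $C$ independent of $h$. (The case $p_{v(h)}=0$, i.e.\ $v(h)$ reaches no maximal component, is harmless: then $\mu_r(h)=0$ while $N_{np}(v(h))$ grows at most like a polynomial times $(\lambda')^{np}$ for some $\lambda'<\lambda$, so that summand is again geometrically small; enlarging $\theta_0$ absorbs it.) Summing the finitely many summands gives $\|\tau_{np+r}-\mu_{np+r}\|_{TV}=O(\theta^{\,n})$ with $\theta=\theta_0$. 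The only point requiring care is the geometric rate in the second step, i.e.\ the exponential approach of $A^{np}/\lambda^{np}$ to $A_\infty$ (equivalently, the semisimplicity of $\lambda^p$ as an eigenvalue of $A^p$); this is exactly the content of Remark \ref{remark:exp} and rests on the choice of $p$ together with the disjointness of the maximal components of $\mathcal G$.
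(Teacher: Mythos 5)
Your proof is correct and takes essentially the same approach as the paper: condition on the length-$r$ prefix $h$ and use the geometric convergence of $N_{np}(v)/\lambda^{np}$ to $p_v$ guaranteed by Remark \ref{remark:exp}. The one small difference is in how the reduction is organized: the paper works with an arbitrary test set $R$ and shows $\mu_{np+r}(R)=\tau_{np+r}(R^+_{np+r})+O(\theta^n)$, whereas you observe directly that $\tau_{np+r}$ and $\mu_{np+r}$ have the same (uniform) conditional law given the prefix, so the TV distance collapses exactly to $\tfrac12\sum_{h\in S_r}\bigl|N_{np}(v(h))/\#S_{np+r}-\mu_r(h)\bigr|$, a finite sum that one bounds termwise. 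This is a cleaner packaging of the identical underlying estimate, and is fully rigorous as written, including the treatment of the prefixes with $\mu_r(h)=0$.
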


\begin{proof}
Take a set $R \subset \Gamma$, let $R_{np+r} = R \cap S_{np+r}$ and write $R^+_{np+r} = R_{np+r} \cap \bigcup_{\substack{ g \in S_r^+} }[g]$ where $[g]$ denotes all group elements that have corresponding path in $\mathcal{G}$ that start with $g$ and $S_r^+ = S_r \cap \{ g \in S_r: \mu_r(g) >0\}$. From the definition of $\mu_{np+r}$ we see that if $\mu_{np+r}(R) = 0$ then $R^+_{np+r} = \emptyset$ and $\tau_{np+r}(R)$ decays to $0$ exponentially quickly, independently of $R$.
Otherwise, $\mu_{np+r}(R) \neq 0$ and denoting by $v_g$ the last vertex in $\mathcal{G}$ of the path from $\ast$ corresponding to $g$, we have
\begin{align*}
    \mu_{np+r}(R) &= \sum_{g \in S_r^+}\frac{ \mu_r(g) \#(R_{np+r} \cap [g])}{e_{v_g}^T A^{np} 1 }\\
    &= \frac{1}{e_\ast^T A^{np+r}1 }\left(\sum_{g \in S_r^+}\frac{ \mu_r(g) \#(R_{np+r} \cap [g])}{e_{v_g}^T A^{np} 1/e_\ast^T A^{np+r}1 }\right) \\
    &= \left( \frac{1}{e_\ast^T A^{np+r} 1} \ \sum_{g \in S_r^+}  \#(R_{np+r} \cap [g])\right) + O(\theta^n)\\
    &=\tau_{np+r}(R^+_{np+r}) + O(\theta^n)
\end{align*}
for some $0<\theta<1$ independent of $R$. In the penultimate line we used the fact that $e_{v_g}^T A^{np} 1/e_\ast^T A^{np +r} 1$ converges to $\mu_r(g)$ exponentially quickly as $n \to \infty$. To conclude the proof we note that, from the construction of $\mu_r$,  $|\tau_{np+r}(R) - \tau_{np+r}(R^+_{np+r})|$ converges to $0$ exponentially quickly and that this rate of convergence is independent of $R$.
\end{proof}

We now, following \cite{GTTCLT}, define probability measures that will determine the law of the middle factor $g_1$ of a $n$-long product $g$ written in geodesic factorization $g_0g_1g_2$ where $g_0$ and $g_2$ are of logarithmic length. Consequently we show that these measures can be approximated by the path distribution of a Markov chain. Let $c>0$ be a positive constant. For a path $\gamma$ in $\mathcal{G}$ of length $np - 2p\lfloor c \log n \rfloor$ 
starting at $v_i$ ending at $v_j$, we set
\[
 \widetilde{\tau}_{np}^c(\gamma) =   \frac{e_\ast^T A^{p \lfloor c \log n \rfloor} e_i \ e_j^T A^{p \lfloor c \log n \rfloor}1}{e_\ast^T A^{np} 1}.
 \]
Intuitively $ \widetilde{\tau}_{np}^c$ assigns a path $\gamma$ probability $s$ if the proportion of length $np$ paths starting at $\ast$ that have $\gamma$ as a sub-path from the $p\lfloor c \log n\rfloor$ to the $np - p\lfloor c \log n\rfloor$ vertex is $s$. 

\begin{lemma} \label{lem:approx2}
For every fixed $c > 0$ sufficiently large, we have
\[
\| \pi_{pn - 2p\lfloor c\log n \rfloor} - \widetilde{\tau}_{np}^c \|_{TV} = O\left(\frac{1}{\sqrt{n}} \right)
\]
as $n\to\infty$.
\end{lemma}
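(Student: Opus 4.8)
The plan is to compare the two measures path by path, using the spectral gap of $A^p$ recorded in Remark \ref{remark:exp}. Write $L := \lfloor c\log n\rfloor$ and $m := n - 2L$, so that $mp = pn - 2p\lfloor c\log n\rfloor$ and both $\pi_{mp}$ and $\widetilde{\tau}_{np}^c$ are probability measures on the finite set $\Omega_{mp}$ of length-$mp$ paths in $\mathcal{G}$, and note $np = mp + 2pL$. Because $p$ was chosen so that $\lambda^p$ is the unique eigenvalue of $A^p$ of maximal modulus and the maximal components of $\mathcal{G}$ are pairwise non-communicating (so there is no Jordan block at $\lambda^p$), there exist constants $\rho \in (0,1)$ and $C>0$, depending only on $(\Gamma,S,\mathcal{G})$, such that for every vertex $v_i$ and every $\ell \in \N$
\[
\Bigl| \tfrac{e_i^T A^{p\ell} 1}{\lambda^{p\ell}} - p_i \Bigr| \leq C\rho^\ell, \qquad \Bigl| \tfrac{e_\ast^T A^{p\ell} e_i}{\lambda^{p\ell}} - u_i \Bigr| \leq C\rho^\ell, \qquad \Bigl| \tfrac{e_\ast^T A^{p\ell} 1}{\lambda^{p\ell}} - p_\ast \Bigr| \leq C\rho^\ell .
\]
One also has $0 \leq u_i, p_i \leq C$ for all $i$ and $p_\ast \geq C^{-1} > 0$, both consequences of the purely exponential growth estimate \eqref{eq.pure.growth}.

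First I would plug these estimates into the defining formula $\widetilde{\tau}_{np}^c(\gamma) = (e_\ast^T A^{pL} e_i)(e_j^T A^{pL} 1)/(e_\ast^T A^{np}1)$ for a path $\gamma$ from $v_i$ to $v_j$, using $\lambda^{2pL}/\lambda^{np} = \lambda^{-mp}$ and $L \leq n$, to obtain
\[
\widetilde{\tau}_{np}^c(\gamma) = \frac{\bigl(u_i + O(\rho^L)\bigr)\bigl(p_j + O(\rho^L)\bigr)}{\lambda^{mp}\bigl(p_\ast + O(\rho^L)\bigr)} = \frac{u_i p_j}{\lambda^{mp} p_\ast} + O\!\Bigl(\tfrac{\rho^L}{\lambda^{mp}}\Bigr) = \pi_{mp}(\gamma) + O\!\Bigl(\tfrac{\rho^L}{\lambda^{mp}}\Bigr),
\]
with implied constants uniform over $\gamma$ (and over $v_i, v_j$). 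Since $\#\Omega_{mp} = 1^T A^{mp} 1 = O(\lambda^{mp})$ by \eqref{eq.pure.growth}, summing this over $\gamma \in \Omega_{mp}$ gives
\[
\| \pi_{mp} - \widetilde{\tau}_{np}^c \|_{TV} \leq \tfrac12\!\!\sum_{\gamma \in \Omega_{mp}}\!\! \bigl|\pi_{mp}(\gamma) - \widetilde{\tau}_{np}^c(\gamma)\bigr| = O(\rho^L) = O\bigl(\rho^{\lfloor c\log n\rfloor}\bigr) = O\bigl(n^{-c|\log\rho|}\bigr),
\]
and choosing $c$ large enough that $c|\log\rho| \geq \tfrac12$ yields the asserted $O(n^{-1/2})$ bound.

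The computation itself is routine; the two points deserving a word of care are: (i) the exponential rate $\rho$ in Remark \ref{remark:exp} is uniform over the finitely many vertices $v_i, v_j$, which is automatic since $A^p$ is a fixed finite matrix with a genuine spectral gap; and (ii) $\widetilde{\tau}_{np}^c$ may place a small positive mass on paths whose endpoints lie outside the maximal components of $A^p$ — where $\pi_{mp}$ vanishes — but on such a path $u_i = 0$ or $p_j = 0$, so $e_\ast^T A^{pL} e_i = O(\lambda^{pL}\rho^L)$ or $e_j^T A^{pL} 1 = O(\lambda^{pL}\rho^L)$, and the discrepancy is again absorbed into the $O(\rho^L/\lambda^{mp})$ term. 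I do not anticipate a substantive obstacle here; the hypothesis that $c$ be sufficiently large serves precisely to upgrade the rate $n^{-c|\log\rho|}$ to $n^{-1/2}$, and the same mechanism will reappear when this lemma is combined with Lemma \ref{lem:periodic.berry} to control the middle factor $g_1$.
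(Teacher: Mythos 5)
Your proof is correct, and it streamlines the paper's argument. Both you and the paper rely on the same spectral-gap input from Remark \ref{remark:exp}, but you organize the comparison differently. The paper works multiplicatively: it computes the ratio $\pi_{n'}(\gamma)/\widetilde{\tau}^c_{np}(\gamma)$ and shows it is $1+O(n^{-1/2})$, but this only makes sense on the ``good'' paths $\gamma \in L_n$ (those lying entirely in a single $A^p$-maximal component and starting at a vertex with $u_i>0$, so the denominator is bounded away from zero). The paths outside $L_n$ are then handled by a separate bound $\widetilde{\tau}^c_{np}(R\setminus L_n)=O(n^{-1/2})$ obtained from the estimate \eqref{eq.above.discussion}, and the two pieces are summed. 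You instead work additively: you show $\bigl|\pi_{mp}(\gamma)-\widetilde{\tau}^c_{np}(\gamma)\bigr|=O(\rho^L/\lambda^{mp})$ uniformly over \emph{all} $\gamma\in\Omega_{mp}$, with no case distinction, because the additive error only needs $u_i,p_j$ bounded above and $p_\ast$ bounded below — when $u_i$ or $p_j$ vanish, both measures of $\gamma$ are $O(\rho^L/\lambda^{mp})$ and the estimate still holds. Summing against the path count $\#\Omega_{mp}=O(\lambda^{mp})$ then gives the total-variation bound directly. This buys a cleaner, case-free argument, at the modest cost of needing the crude bounds $u_i,p_j\leq C$, $p_\ast\geq C^{-1}$ (which are available, as you note, from \eqref{eq.pure.growth} and the spectral gap). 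The only bookkeeping caveat is that the bound $\#\Omega_{mp}=O(\lambda^{mp})$ is most directly a consequence of the convergence $A^{mp}/\lambda^{mp}\to A_\infty$ (Remark \ref{remark:exp}) rather than of \eqref{eq.pure.growth} alone, though the two are of course closely related; this is an attribution quibble, not a gap.
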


\begin{proof}
By Remark \ref{remark:exp}, there exists $\delta >0$ such that for every vertex $v_i$ in $\mathcal{G}$ we have
\[
p_i = \frac{e_i^T A^{np} 1}{\lambda^{np}} + O\left( \lambda^{-\delta n}\right) \ \text{ and } \ u_i = \frac{e_\ast^T A^{np}e_i}{\lambda^{np}} + O\left( \lambda^{-\delta n}\right)
\]
as $n \to \infty$. It follows that for any $c > (2\delta \log \lambda )^{-1}$, 
we have
\begin{equation}\label{eq.quad.est}
p_i = \frac{e_i^T A^{\lfloor c \log n \rfloor p} 1}{\lambda^{\lfloor c \log n \rfloor p}} + O\left( n^{-1/2}\right) \ \text{ and } \ u_i = \frac{e_\ast^T A^{\lfloor c \log n \rfloor p}e_i}{\lambda^{\lfloor c \log n \rfloor p}} + O\left(n^{-1/2}\right)
\end{equation}
as $n\to\infty$.
Fix such a constant $c>0$. Let $v_i$ and $v_j$ be two vertices in $\mathcal{G}$ that belong to the same maximal component of $A^p$ and such that  $u_i>0$. Let $\gamma$ be a path of length $n' = np - 2p\lfloor c \log n \rfloor$ from $v_i$ to $v_j$. Then,
\begin{align*}
\frac{\pi_{n'}(\gamma)}{\widetilde{\tau}_{np}^c(\gamma)} &= \frac{u_i p_j}{\lambda^{n'} p_\ast}  \frac{e_\ast^T A^{np} 1}{e_\ast^T A^{p \lfloor c \log n\rfloor} e_i \ e_j^T A^{p \lfloor c \log n \rfloor}1}\\
&= \frac{u_i p_j}{p_\ast} \frac{\lambda^{p\lfloor c \log n \rfloor}}{e_\ast^T A^{p \lfloor c \log n \rfloor}e_i} \frac{\lambda^{p\lfloor c \log n \rfloor}}{e_j^T A^{p \lfloor c \log n \rfloor}1} \frac{e_\ast^T A^{np} 1}{\lambda^{np}}.
\end{align*}
Now by the estimates \eqref{eq.quad.est} and Remark \ref{remark:exp} we see that this quotient is equal to
\begin{equation}\label{eq.quad.est2}
\frac{u_i p_j}{p_\ast} \cdot \left(\frac{1}{u_i} +O(n^{-1/2}) \right) \cdot \left(\frac{1}{p_j} +O(n^{-1/2})\right) \cdot \left( p_\ast +O(\theta^n)\right) = 1 +O(n^{-1/2})
\end{equation}
for some $0<\theta<1.$ Here we have used that $u_i>0$ and $p_j>0$ (the former is assumed, the latter follows since $v_j$ is assumed to belong to maximal component of $A^p$). Since there are only finitely many vertices in $\mathcal{G}$ and the left-hand-side of \eqref{eq.quad.est2} only depends on vertices of $\mathcal{G}$, we deduce that
\begin{equation}\label{eq.quad.est3}
\sup_{\gamma} \left| \frac{\pi_{n'}(\gamma)}{\widetilde{\tau}_{np}^c(\gamma)} - 1 \right| = O(n^{-1/2})
\end{equation}
where the supremum is taken over all paths of length $n'$ that lie entirely in a single $A^p$ maximal component. Now note that, given arbitrary two vertices $v_i$ and $v_j$, if $S_{n'}^{ij}$ denotes the set of paths of length $n'$ from $v_i$ to $v_j$ then
\begin{equation}\label{eq.above.discussion}
\begin{aligned}
    \widetilde{\tau}_{np}^c(S_{n'}^{ij}) &= \frac{(e_i^T A^{n'} e_j )(e_\ast^T A^{p \lfloor c \log n \rfloor} e_i) ( e_j^T A^{p \lfloor c \log n\rfloor}1)}{e_\ast^T A^{np} 1}\\
    &\le \frac{(e_i^T A^{n'} 1 )(e_\ast^T A^{p \lfloor c \log n \rfloor} e_i) ( e_j^T A^{p \lfloor c \log n \rfloor}1)}{e_\ast^T A^{np} 1}=p_iu_ip_j/p_\ast + O(n^{-1/2}),
\end{aligned}
\end{equation}
as $n \to \infty$, where the last equality follows (as in \eqref{eq.quad.est2}) by our estimates \eqref{eq.quad.est} and Remark \ref{remark:exp}. The limit $p_iu_ip_j/p_\ast$ in \eqref{eq.above.discussion} is equal to $0$ unless both $v_i,v_j$ belong to the same $A^p$ maximal component and $u_i>0$. Letting $L_n$ denote all paths of length $n'$ that lie entirely in an $A^p$ maximal component and start at any vertex $v_i$ with $u_i >0$. We have, for any set $R$ consisting of length $n'$ paths,
\begin{equation*} 
|\pi_{n'}(R) - \widetilde{\tau}_{np}^c(R)| \le \sum_{\gamma \in R \cap L_n} |\pi_{n'}(\gamma) - \widetilde{\tau}_{np}^c(\gamma)| + \widetilde{\tau}_{np}^c(R \backslash L_n).
\end{equation*}
Here we have used that $\pi_{n'}(R \backslash L_n) = 0$ which we can see holds from the definition of $\pi_{n'}$. To conclude the proof we note that, from \eqref{eq.above.discussion}, $\widetilde{\tau}_{np}^c(R \backslash L_n) =O(n^{-1/2})$ and 
\begin{align*}
\sum_{\gamma \in R \cap L_n} |\pi_{n'}(\gamma) - \widetilde{\tau}_{np}^c(\gamma)| &= \hspace{-2mm} \sum_{\gamma \in R \cap L_n} \left|\frac{\pi_{n'}(\gamma)}{\widetilde{\tau}_{np}^c(\gamma)} \ \widetilde{\tau}_{np}^c(\gamma) - \widetilde{\tau}_{np}^c(\gamma)\right| \\
&\le \sup_{\gamma \in R \cap L_n} \left| \frac{\pi_{n'}(\gamma)}{\widetilde{\tau}_n^\epsilon(\gamma)} - 1 \right| = O(n^{-1/2}),
\end{align*} where we used \eqref{eq.quad.est3} in the last equality and implied error term constants are independent of the $R$. This completes the proof.
\end{proof}

\begin{remark} \label{remark.newbasepoint}
So far this section has been concerned with comparing the measures $\pi_{np}$, $\tau_{np}$ and $\widetilde{\tau}_{np}^c$.  Each of these measures are constructed with the $\ast$ vertex as their `base point', i.e.~ $\pi_{np}$ is constructed using the $e_\ast$ vector and $\tau_{np}$, $\widetilde{\tau}^c_{np}$ can be seen as counting measures on the paths in $\mathcal{G}$ starting at $\ast$ (as indicated in their constructions). If we replace the $\ast$ vertex with any other vertex $v_0$ of large growth, that is a vertex $v_0$ such that $e_{v_0}^T A^n 1 \ge C \lambda^n$ for some $C>0$ and all $n\ge 1$, then we can construct measures analogous to $\pi_{np}$, $\tau_{np}$ and $\widetilde{\tau}^c_{np}$ but with $v_0$ being the new `base point'. To do this, one replaces $e_\ast$ with $e_v$ in the construction of $\pi_{np}$ and alters $\tau_{np}$ and $\widetilde{\tau}^c_{np}$ so that they count with respect to paths starting at $v_0$ instead of $\ast$. This new construction will yield different measures however all of the results that we have seen so far in this section will also hold for these measures. 
\end{remark}

We can now prove a counting CLT with error term for the sequence of spheres $(S_{np})_{n \in \N}$.

\begin{proposition} \label{prop:clt}
There exists $\Lambda, \sigma^2 > 0$ such that
\[
\tau_{np}\left( g \in \Gamma: \frac{\log\|\rho(g)\| - \Lambda|g|}{\sqrt{|g|}} \le t\right) = N(t,\sigma) + O\left( \frac{\log n}{\sqrt{n}}\right)
\]
as $n\to\infty$.
\end{proposition}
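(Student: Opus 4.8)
The plan is to realize a uniformly random $g\in S_{np}$ through a geodesic factorization $g=g_0g_1g_2$, with $|g_0|=|g_2|=p\lfloor c\log n\rfloor$ and the \emph{middle factor} $g_1$ of length $n':=np-2p\lfloor c\log n\rfloor=pn''$ where $n''=n-2\lfloor c\log n\rfloor$, to prove the Berry--Esseen CLT for $\log\|\rho(g_1)\|$, and then to transfer it to $\log\|\rho(g)\|$. First I would set up the reduction. Under the strongly Markov bijection $\tau_{np}$ corresponds to the uniform measure on length-$np$ paths from $\ast$ in $\mathcal G$, and pushing this forward along the map ``extract the sub-path between positions $p\lfloor c\log n\rfloor$ and $np-p\lfloor c\log n\rfloor$'' yields exactly $\widetilde{\tau}^c_{np}$ (this is the intuitive description of $\widetilde{\tau}^c_{np}$). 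Since $S$ is symmetric and $\|\cdot\|$ submultiplicative, writing $M=\max_{s\in S}\log\|\rho(s)\|$ and using $\rho(g_1)=\rho(g_0)^{-1}\rho(g)\rho(g_2)^{-1}$ one gets the deterministic bound $|\log\|\rho(g)\|-\log\|\rho(g_1)\||\le 4Mp\lfloor c\log n\rfloor=O(\log n)$, uniformly over $g\in S_{np}$. Combined with $\sqrt{np}/\sqrt{n'}=1+O(\log n/n)$ and $\Lambda(np-n')=O(\log n)$, for every $t$ with $|t|\le K\sqrt n$ (with $K=\sqrt p(\Lambda+M)$ fixed) the event $\{(\log\|\rho(g)\|-\Lambda np)/\sqrt{np}\le t\}$ is sandwiched between $\{(\log\|\rho(g_1)\|-\Lambda n')/\sqrt{n'}\le t^{\pm}\}$ with $t^{\pm}=t+O(\log n/\sqrt n)$, whence after push-forward
\[
\widetilde{\tau}^c_{np}\!\Bigl(\tfrac{\log\|\rho(g_1)\|-\Lambda n'}{\sqrt{n'}}\le t^-\Bigr)\ \le\ \tau_{np}\!\Bigl(\tfrac{\log\|\rho(g)\|-\Lambda np}{\sqrt{np}}\le t\Bigr)\ \le\ \widetilde{\tau}^c_{np}\!\Bigl(\tfrac{\log\|\rho(g_1)\|-\Lambda n'}{\sqrt{n'}}\le t^+\Bigr).
\]
For $|t|>K\sqrt n$ the crude bound $|\log\|\rho(g)\||\le npM$ already forces $\tau_{np}(\cdots)\in\{0,1\}$, while the Gaussian tail puts $N(t,\sigma)$ within $O(\log n/\sqrt n)$ (in fact $e^{-cn}$) of the same value, so those $t$ need no further argument.

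For the core step I would invoke the two key lemmas. By Lemma~\ref{lem:approx2}, for $c$ large enough $\|\pi_{n'}-\widetilde{\tau}^c_{np}\|_{TV}=O(1/\sqrt n)$, so both outer terms of the sandwich equal $\pi_{n'}\bigl((\log\|\rho(g_1)\|-\Lambda n')/\sqrt{n'}\le t^{\pm}\bigr)+O(1/\sqrt n)$. Choosing the operator norm induced by the Euclidean norm (so $\|{}^tA\|=\|A\|$, making the transpose in the construction of the Markovian products of \S\ref{sec.5}--\S\ref{section:clt} harmless) and noting that a length-$n'=pn''$ path decomposes into $n''$ blocks $z_1,\dots,z_{n''}\in\Omega_p$ with $\rho(\lambda(z_1))\cdots\rho(\lambda(z_{n''}))=\rho(g_1)$, Lemma~\ref{lem:periodic.berry} together with Remark~\ref{rk.Pp.to.pi} applied with $n''$ in place of $n$ gives, uniformly in $s$,
\[
\pi_{n'}\!\Bigl(\tfrac{\log\|\rho(g_1)\|-\Lambda n'}{\sqrt{n'}}\le s\Bigr)=N(s,\sigma)+O\!\Bigl(\tfrac{\log n''}{\sqrt{n''}}\Bigr)=N(s,\sigma)+O\!\Bigl(\tfrac{\log n}{\sqrt n}\Bigr),
\]
since $n''\to\infty$, $\log n''\asymp\log n$, $\sqrt{n''}\asymp\sqrt n$. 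Because $N(\cdot,\sigma)$ is Lipschitz with constant $(\sigma\sqrt{2\pi})^{-1}$ and $t^{\pm}=t+O(\log n/\sqrt n)$, we get $N(t^{\pm},\sigma)=N(t,\sigma)+O(\log n/\sqrt n)$; feeding this back collapses the sandwich to $\tau_{np}\bigl((\log\|\rho(g)\|-\Lambda np)/\sqrt{np}\le t\bigr)=N(t,\sigma)+O(\log n/\sqrt n)$ uniformly in $t$. Since $|g|=np$ on the support of $\tau_{np}$, and replacing $\|\cdot\|$ by another operator norm perturbs $\log\|\rho(g)\|$ by a bounded amount and hence $t$ by $O(1/\sqrt n)$, this is the asserted statement; positivity of $\Lambda$ is Proposition~\ref{prop.positivity} and positivity of $\sigma^2$ is contained in Lemma~\ref{lem:periodic.berry} (ultimately Theorem~\ref{thm.bougerol.berryess}).

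The genuine probabilistic content --- the $O(\log n/\sqrt n)$ Berry--Esseen rate for Markovian matrix products (Theorem~\ref{thm.bougerol.berryess}, packaged in Lemma~\ref{lem:periodic.berry}) and the $O(1/\sqrt n)$ total-variation approximation of the middle-factor law by a Markov path measure (Lemma~\ref{lem:approx2}) --- is already available, so the proof of the proposition is an assembly. The point requiring the most care, and the reason the scheme delivers the sharp $\log n/\sqrt n$ rate rather than merely $o(1)$, is the uniform-in-$t$ bookkeeping: one must check that the three error contributions --- the $O(\log n)$ gap between $\log\|\rho(g)\|$ and $\log\|\rho(g_1)\|$ once divided by $\sqrt{n'}$, the $O(1/\sqrt n)$ from Lemma~\ref{lem:approx2}, and the $O(\log n/\sqrt n)$ Berry--Esseen term --- are \emph{all} $O(\log n/\sqrt n)$, and that each estimate holds uniformly over $t\in\mathbb R$; this is exactly what forces the deterministic truncation at $|t|\asymp\sqrt n$ and the use of the Lipschitz bound for the Gaussian distribution function.
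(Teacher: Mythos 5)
Your proposal is correct and follows essentially the same route as the paper's proof: the geodesic factorization $g=g_0g_1g_2$ with logarithmic-length outer factors, the comparison of $\widetilde{\tau}^c_{np}$ with $\pi_{np-2p\lfloor c\log n\rfloor}$ via Lemma~\ref{lem:approx2}, the Berry--Esseen input from Lemma~\ref{lem:periodic.berry} with Remark~\ref{rk.Pp.to.pi}, and the final appeal to the bounded derivative of the Gaussian CDF all coincide with the paper's argument. Your explicit truncation at $|t|\asymp\sqrt{n}$ is a welcome extra precision (the paper's claimed sandwich $t\mapsto t\pm Cn^{-1/2}\log n$ is, strictly speaking, only uniform for $|t|=O(\sqrt n)$, with the remaining range being trivial as you note), but it does not change the structure of the argument.
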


\begin{proof}
Let $\Omega$ denote the set of finite paths in $\mathcal{G}$ and for $g \in \Omega$ let $\overline{g}$ denote the group element corresponding to $g$ via the labeling map. For $t\in\mathbb{R}$, let $E(t)$ and $\widehat{E}(t)$ be the sets
\[
\left\{ g \in \Gamma: \frac{\log\|\rho(g)\| - \Lambda |g|}{\sqrt{|g|}} \le t \right\} \ \text{ and } \  \left\{ g \in \Omega: \frac{\log\|\rho(\overline{g})\| - \Lambda |\overline{g}|}{\sqrt{|\overline{g}|}} \le t \right\}
\]
respectively and let $c>0$ be a constant given by Lemma \ref{lem:approx2}. For each $n \in \N$, we factorise each path (or element) $g$ of length $np$ as a concatenation (resp.~ product) $g_0g_1g_2$ where $g_0$, $g_1$ and $g_2$ are the sub-paths (resp.~ factors) of $g$ of length $p\lfloor c \log n\rfloor$, $np-2p\lfloor c \log n \rfloor$ and $p\lfloor c \log n\rfloor$ respectively.  Writing $\tau_{np}(E(t)) = \tau_{np}(g = g_0g_1g_2 \in E(t))$ and using submultiplicativity of the matrix norm $\|\cdot\|$ we deduce that there exists $C>0$ such that $\tau_{np}(E(t))$ is bounded above and below by
\[
\tau_{np}(g=g_0g_1g_2: g_1 \in E(t + Cn^{-1/2} \log n))  \text{ and }  \tau_{np}(g=g_0g_1g_2: g_1 \in E(t - Cn^{-1/2} \log n))
\]
respectively. Now note that by the definition of $\widetilde{\tau}_{np}^c$ and by Lemma \ref{lem:approx2}
\begin{equation*}
\begin{aligned}
\tau_{np}\{g=g_0g_1g_2: g_1 \in E(t \pm Cn^{-1/2} \log n) \}&= \widetilde{\tau}_{np}^c(\widehat{E}(t \pm Cn^{-1/2}\log n))\\  &\hspace{-1cm}= \pi_{pn-2p\lfloor c\log n \rfloor}(\widehat{E}(t \pm Cn^{-1/2}\log n)) +O( n^{-1/2}).
\end{aligned}
\end{equation*}

On the other hand, by Lemma \ref{lem:periodic.berry} and Remark \ref{rk.Pp.to.pi}, we get that
\begin{align*}
\pi_{pn-2p\lfloor c\log n \rfloor}(\widehat{E}(t \pm Cn^{-1/2}\log n)) &=  N(t\pm Cn^{-1/2}\log n,\sigma) + O\left( \frac{\log n}{\sqrt{n}}\right)\\
&=N(t,\sigma) + O\left( \frac{\log n}{\sqrt{n}}\right)
\end{align*}
as $n\to\infty$ uniformly in $t \in \mathbb{R}$. The last line follows from the fact that the normal distribution has uniformly bounded derivative. The proof is completed by combining the last two displayed equations.
\end{proof}

Using the same ideas we can also prove the following. Given a vertex $v$ in $\mathcal{G}$ recall that we say that $v$ is of large growth if the number of length $n$ paths in $\mathcal{G}$ starting at $v$ grows at least like $C\lambda^{n}$ for some $C>0$, i.e. $e_v^T A^n 1 \ge C \lambda^n$.

\begin{proposition} \label{prop:cltvertex}
Suppose $v$ is a vertex of large growth. Suppose $\tau_{np}^{v}$ is the uniform counting measure on the paths in $\mathcal{G}$ of length $np$ starting at $v$. Let $\Lambda, \sigma^2 >0$ be the constants in Proposition \ref{prop:clt}. Then
\[
\tau_{np}^{v}\left( g \in \Omega :  \frac{\log\|\rho(\overline{g})\| - \Lambda|\overline{g}|}{\sqrt{|\overline{g}|}} \le x \right) = N(x,\sigma) + O\left( \frac{\log n}{\sqrt{n}}\right)
\]
as $n\to\infty$ where $\Omega$ represents the set of finite paths in $\mathcal{G}$ and for $g \in \Omega,$ $\overline{g} \in \Gamma$ is the group element corresponding to multiplying the edge labelings in $g$.
\end{proposition}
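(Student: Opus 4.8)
The plan is to observe that Proposition \ref{prop:cltvertex} is a verbatim repetition of the proof of Proposition \ref{prop:clt} with the base vertex $\ast$ replaced throughout by $v$, so I will only indicate the required modifications. First I would set up the base-point-$v$ analogues of the auxiliary measures of this section, exactly as sketched in Remark \ref{remark.newbasepoint}: since $v$ is of large growth, $e_v^T A^n 1 \ge C \lambda^n$, so the relevant normalizations make sense, and one sets $u_i^v = \lim_{n\to\infty} e_v^T A^{np} e_i / \lambda^{np}$ (replacing the $u_i$ of \eqref{eq.defn.p.u}), keeps $p_i$ and the transition kernel $N_{kp}$ unchanged, and thereby obtains $\pi_{kp}^v$, $\mu_r^v$ and $\widetilde{\tau}_{np}^{c,v}$. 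By the spectral gap of $A^p$ (Remark \ref{remark:exp}) all of these limits still converge exponentially fast, which is the only quantitative input used below.

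Next I would check that Lemmas \ref{lem:TV} and \ref{lem:approx2} hold with these base-point-$v$ measures. Their proofs only invoked the exponential convergence of the limits defining $p_i$, $u_i$, $A_\infty$, together with the purely exponential count of length-$n$ paths out of the base vertex; none of them used any property specific to $\ast$ (in particular, not the fact that $\ast$ has no incoming edges), and the path count out of $v$ is purely exponential precisely because $v$ is of large growth. Moreover Lemma \ref{lem:periodic.berry} is entirely base-point independent: the Markovian random matrix products $(M_n^i)$ it rests on are attached to the ergodic components $C_1,\dots,C_{m_0}$ of $A^p$, the common mean $\Lambda>0$ and variance $\sigma^2>0$ are those produced in Proposition \ref{prop:clt}, and the $\pi$-measure of a cylinder supported on a single maximal component does not refer to the choice of base vertex. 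Thus all three ingredients, and Remark \ref{rk.Pp.to.pi}, are available in the base-point-$v$ setting with the same constants.

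Finally I would run the factorization argument of Proposition \ref{prop:clt} unchanged. For each $n$, factorize a length-$np$ path $g$ starting at $v$ as a concatenation $g_0 g_1 g_2$ with $|g_0|=|g_2|=p\lfloor c\log n\rfloor$ and $|g_1|=np-2p\lfloor c\log n\rfloor$; by submultiplicativity of $\|\cdot\|$ and the bound $\lvert \log\|\rho(\overline{g_0})\|\rvert, \lvert\log\|\rho(\overline{g_2})\|\rvert = O(\log n)$ one reduces to a central limit theorem with error $O(n^{-1/2}\log n)$ for the middle factor $\overline{g_1}$. The law of $g_1$ under $\tau_{np}^v$ equals $\widetilde{\tau}_{np}^{c,v}$, which by the base-point-$v$ version of Lemma \ref{lem:approx2} is within $O(n^{-1/2})$ in total variation of $\pi_{pn-2p\lfloor c\log n\rfloor}$; applying Lemma \ref{lem:periodic.berry} (via Remark \ref{rk.Pp.to.pi}) and using that $\mathcal N(0,\sigma^2)$ has uniformly bounded density gives $N(x,\sigma) + O(\log n/\sqrt n)$ uniformly in $x$, and the base-point-$v$ version of Lemma \ref{lem:TV} upgrades this from $\mu_{np}^v$ to $\tau_{np}^v$ at the cost of an additive $O(\theta^n)$. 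I do not expect any genuine obstacle here: the only point requiring care is the bookkeeping verification that Lemmas \ref{lem:TV}, \ref{lem:approx2} and \ref{lem:periodic.berry} never used a property of $\ast$ beyond its growth rate and the spectral gap of $A^p$, both of which transfer to any vertex of large growth; the mathematical content sits entirely in Proposition \ref{prop:clt} and Lemma \ref{lem:periodic.berry}, which are already established.
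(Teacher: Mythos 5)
Your proposal is correct and follows exactly the route the paper takes: reconstruct the base-point-$v$ analogues $\pi_{kp}^v$, $\mu_r^v$, $\widetilde{\tau}_{np}^{c,v}$ as in Remark \ref{remark.newbasepoint}, observe that the proofs of the auxiliary lemmas only used the spectral gap of $A^p$ and the large-growth property of the base vertex, and then rerun the factorization argument of Proposition \ref{prop:clt} verbatim. The one small slip is the closing clause invoking (the $v$-version of) Lemma \ref{lem:TV}: that lemma is only needed in the proof of Theorem \ref{thm:clt} to pass from periodic spheres $S_{np+r}$ to all spheres, whereas the factorization in Proposition \ref{prop:clt} already lands directly on $\tau_{np}$, so no intermediate measure $\mu_{np}^v$ appears and no appeal to Lemma \ref{lem:TV} is required for this proposition. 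This does not create a gap, merely an unnecessary step.
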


\begin{proof}
When $v = \ast$ this proposition is precisely Proposition \ref{prop:clt}. The proof of this more general result follows the same method used to prove Proposition \ref{prop:clt} but we consider the `initial vertex' to be $v$ instead of $\ast$. We define the counting measures $\widetilde{\tau}_{np}^c$ and $\pi_{kp}$ as before, but we replace the vector $e_\ast$ with the vector $e_v$ in their definitions, see Remark \ref{remark.newbasepoint}. We can then prove analogous results, such as Lemma \ref{lem:approx2} for these measures and then carry out the same proof. 
\end{proof}

Finally, we are in a position to prove our central limit theorem.

\begin{proof}[Proof of Theorem \ref{thm:clt}]
For $t \in \mathbb{R}$, let $E(t)$ denote the set defined in Proposition \ref{prop:clt}. Fix $r \in \{0,\ldots,p-1\}$. For each $g_0 \in \Gamma$ with $|g_0|_S = r$ let $t(g_0)$ denote the terminal vertex in the path corresponding to $g_0$ which begins with $\ast$ in $\mathcal{G}$. It follows from the definition of $\mu_r$ that if $t(g_0)$ is not a vertex of large growth then $\mu_r(g_0)=0$. Then, by definition of the measure $\mu$ and Proposition \ref{prop:cltvertex}
\begin{align*}
\mu_{np+r} (E(t)) &= \sum_{|g_0| = r } \mu_r(g_0) \tau_{np}^{t(g_0)}(g_1: g_0g_1 \in E(t))\\
&= \sum_{|g_0| = r } \mu_r(g_0) \left(N(t,\sigma) +O\left( \frac{\log n}{\sqrt{n}}\right)\right)\\
&= N(t,\sigma) +O\left( \frac{\log n}{\sqrt{n}}\right)
\end{align*}
$n\to\infty$ and where the implied constant is independent of $t \in \mathbb{R}$. It then follows from Lemma \ref{lem:TV} that $\tau_{np+r}(E(t)) = N(t,\sigma) +O\left( n^{-1/2}\log n\right)$. Since this holds for each $r=0,\ldots,p-1$, our theorem follows.
\end{proof}



\section{On a question of Kaimanovich--Kapovich--Schupp}\label{sec.last}

Here we briefly discuss some consequences of our results which pertain to the growth indicator functions, and make a connection between these and a result of Lubotzky--Mozes--Raghunathan \cite{LMR1,LMR2}. These consequences provide an affirmative answer to a question of Kaimanovich--Kapovich--Schupp \cite{kaimanovich-kapovich-schupp} that was also raised in our precise setting in Sert's thesis \cite{sert.thesis}.

\subsection{Unique maximum of growth indicator}\label{subsec.unique.max}

We will formulate the consequences using the language of reductive real linear algebraic groups. For definitions of the notions and objects we use, we refer the reader to  \cite{bq.book}. The reader is invited to consider the case $G=\SL_d(\R)$ or $\GL_d(\R)$ in which case we will specify the relevant objects.

Let $G$ be the group of real points of a connected reductive affine algebraic group defined over $\R$ (we will shortly refer to such a group as a real reductive Lie group). Let $\mathfrak{a}^+$ be a Weyl chamber in a Cartan subspace of the Lie algebra of $G$ and $\overrightarrow{\kappa}: G \to \mathfrak{a}^+$ the associated Cartan projection. For the case of $G=\GL_d(\R)$ or $\SL_d(\R)$, one can define for $g \in G$, 
$$
\overrightarrow{\kappa}(g)=(\log \sigma_1(g),\ldots,\log \sigma_d(g)),
$$
where $\sigma_i(g)$'s are the singular values of $g$ in decreasing order and $\mathfrak{a}^+$ to be the  cone in $\R^d$ given by $x_1 \geq \ldots \geq x_d$ in the case of $\GL_d(\R)$ and the intersection of this cone with the subspace $x_1+\ldots+x_d=0$ in the case of $\SL_d(\R)$. Denote by $\mathfrak{a}^{++}$ the interior of $\mathfrak{a}^+$. We clarify that these definitions only differ by a linear change of coordinates from the more standard definitions in \cite{bq.book} and the results discussed below are independent of the choice of coordinates up to affine transformations.

A direct corollary of  Theorem \ref{thm:lln} is the following.
\begin{corollary}\label{corol.cartan.weak.law}
Let $G$ be a real reductive Lie group, $\overrightarrow{\kappa}:G \to \mathfrak{a}^+$ a Cartan projection of $G$. Let $\Gamma$ be a Gromov-hyperbolic group, $\rho:\Gamma \to G$ a representation with Zariski-dense image. For every finite symmetric generating set $S$ of $\Gamma$, there exists $\overrightarrow{\Lambda} \in \mathfrak{a}^{++}$ such that
$$
\frac{1}{n} \sum_{\gamma \in S_n} \frac{1}{\# S_n} \overrightarrow{\kappa}(\rho(\gamma)) \underset{n \to \infty}{\longrightarrow} \overrightarrow{\Lambda}.
$$
\end{corollary}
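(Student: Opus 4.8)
The plan is to reduce the multi-dimensional Cartan projection statement to the one-dimensional law of large numbers (Theorem~\ref{thm:lln}) applied coordinate by coordinate, and then to use positivity (Proposition~\ref{prop.positivity}) together with Zariski-density to conclude that the limit vector lies in the open chamber $\mathfrak{a}^{++}$.

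\begin{proof}[Proof of Corollary~\ref{corol.cartan.weak.law}]
First I would fix a system of simple roots $\alpha_1,\ldots,\alpha_\ell$ for $(\mathfrak{g},\mathfrak{a})$, together with the fundamental weights $\varpi_1,\ldots,\varpi_\ell$, so that a vector $x\in\mathfrak{a}$ lies in $\mathfrak{a}^+$ iff $\alpha_i(x)\ge0$ for all $i$ and in $\mathfrak{a}^{++}$ iff all these inequalities are strict; moreover $\mathfrak{a}$ is spanned by the $\varpi_i$ together with the center, and each $\varpi_i$ is realized (up to a positive scalar) as $\log\|\wedge^{k_i}\rho_0(\cdot)v_{k_i}^+\|$ in a suitable fundamental representation $\rho_0$ of $G$. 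Concretely, for $G=\SL_d(\R)$ one takes $\varpi_k(\overrightarrow\kappa(g))=\log\sigma_1(g)\cdots\sigma_k(g)=\log\|\wedge^k g\|$ for $k=1,\ldots,d-1$. The point is that each linear functional $\varpi_k\circ\overrightarrow\kappa\circ\rho:\Gamma\to\R$ is subadditive: indeed $\|\wedge^k(\rho(gh))\|\le\|\wedge^k\rho(g)\|\cdot\|\wedge^k\rho(h)\|$ by submultiplicativity of the operator norm on $\wedge^k\R^d$ (and analogously in the general reductive case, using that $\wedge^{k}\rho_0$ is again a representation and the highest singular value of a matrix is an operator norm). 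Likewise the functional reading off the center of $\mathfrak{g}$ is (the logarithm of) $|\det\rho(\cdot)|$, which is additive, hence subadditive.

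Next I would apply Theorem~\ref{thm:lln} to each of these finitely many subadditive functions $\varphi_i:=\varpi_i\circ\overrightarrow\kappa\circ\rho$, obtaining constants $\Lambda_i\ge0$ with $\frac1n\sum_{\gamma\in S_n}\frac1{\#S_n}\varphi_i(\rho(\gamma))\to\Lambda_i$; since $\overrightarrow\kappa\circ\rho$ takes values in $\mathfrak{a}^+$ and the $\varpi_i$ (together with the central functional) form a coordinate system on $\mathfrak{a}$, the vector-valued average $\frac1n\sum_{\gamma\in S_n}\frac1{\#S_n}\overrightarrow\kappa(\rho(\gamma))$ converges to the unique $\overrightarrow\Lambda\in\mathfrak{a}$ whose coordinates in this basis are the $\Lambda_i$'s. (One uses here that the Cartan projection is bounded on $S_n$ by a linear function of $n$, by Lemma~\ref{lem:Lipschitz} applied to each $\varphi_i$ and to $-\varphi_i$ composed appropriately, so convergence of each coordinate upgrades to convergence of the vector.) This already gives $\overrightarrow\Lambda\in\mathfrak{a}^+$.

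The remaining point, and the one requiring the hypotheses, is that $\overrightarrow\Lambda$ lies in the \emph{open} chamber $\mathfrak{a}^{++}$, i.e.\ $\alpha_i(\overrightarrow\Lambda)>0$ for every simple root $\alpha_i$. I would argue this via the linear growth rate in the $i$-th fundamental representation: composing $\rho$ with $\wedge^{k_i}\rho_0$ gives a representation $\rho_i:\Gamma\to\GL(\wedge^{k_i}\R^{\dim\rho_0})$ whose image is again Zariski-dense in the corresponding fundamental image, hence strongly irreducible and proximal on its image (this is where Zariski-density of $\rho(\Gamma)$ in the reductive group $G$ is used, together with the standard fact that a Zariski-dense subgroup of a group acting irreducibly and proximally on a fundamental representation inherits strong irreducibility and proximality there; for $\SL_d$ this is the statement that $\wedge^k$ of a Zariski-dense subgroup is proximal and strongly irreducible). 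Since $\rho_i(\Gamma)$ is not relatively compact in the corresponding projective group (again by Zariski-density, as $G$ is not compact on the relevant factor), Proposition~\ref{prop.positivity} yields that the average growth rate of $\log\|\rho_i(\cdot)\|$ on spheres is strictly positive. Combining the growth rates in the representations indexed by $i$ and $i+1$ (or, in the general reductive setting, comparing $\log\|\wedge^{k_i}\rho_0(\cdot)\|$ across adjacent fundamental weights) gives strict positivity of $\alpha_i(\overrightarrow\Lambda)$ for each simple root; equivalently, invoking Proposition~\ref{prop:mandv} and Theorem~\ref{thm.bougerol.simplicity}, the Lyapunov spectrum of the associated Markovian product $(M_n^1)$ attached to $\rho_0$ is simple (Zariski-density forces $r$-contraction and strong irreducibility for every $r$), so all successive gaps $\Lambda_i-\Lambda_{i+1}$ are positive, which is exactly $\alpha_i(\overrightarrow\Lambda)>0$. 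Hence $\overrightarrow\Lambda\in\mathfrak{a}^{++}$.
\end{proof}

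The main obstacle I expect is not the passage from one-dimensional to multi-dimensional convergence---that is essentially bookkeeping with a coordinate system on $\mathfrak{a}$---but rather the \emph{strict} positivity of each simple-root coordinate: one must transfer strong irreducibility and proximality from the Zariski-dense datum $\rho(\Gamma)<G$ to each of the finitely many representations $\wedge^{k_i}\rho_0\circ\rho$ (so that Proposition~\ref{prop.positivity}, or equivalently the simplicity result Theorem~\ref{thm.bougerol.simplicity} via the machinery of \S\ref{sec.5}, applies), and verify that Zariski-density indeed forces non-relative-compactness in each relevant projective group. This is standard representation-theoretic input (Benoist--Quint's book \cite{bq.book} being the reference), but it is the step where the hypotheses of the corollary are genuinely used.
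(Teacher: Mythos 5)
Your proof follows the same route the paper intends (reduce to one-dimensional subadditive averages via the fundamental weights, then argue strict positivity of the simple-root coordinates), and it is essentially correct. The paper's own proof is a one-liner citing \cite[Lemma 8.15 \& Lemma 8.17]{bq.book} plus Theorem~\ref{thm:lln} for the convergence and ``additionally Proposition~\ref{prop.positivity}'' for $\overrightarrow\Lambda\in\mathfrak{a}^{++}$; what you have written is the fleshed-out version of exactly that.

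One sentence in your argument is shaky as phrased: ``Combining the growth rates in the representations indexed by $i$ and $i+1$ $\ldots$ gives strict positivity of $\alpha_i(\overrightarrow\Lambda)$.'' Knowing only that the top Lyapunov exponent $\mu_k=\Lambda_1+\cdots+\Lambda_k$ of each $\wedge^k\rho$ is strictly positive does \emph{not} yield $\Lambda_k>\Lambda_{k+1}$: for instance $(1,1,-2)$ in $\mathfrak{a}^+$ for $\SL_3(\R)$ has $\mu_1,\mu_2>0$ but lies on a wall. So Proposition~\ref{prop.positivity} applied representation by representation is not enough by itself. However, you immediately correct course with the ``equivalently'' clause: Zariski-density gives $r$-contraction and strong irreducibility for every $r$ in a faithful proximal representation $\rho_0$, so Theorem~\ref{thm.bougerol.simplicity} (via Lemma~\ref{lem:(star)} and Proposition~\ref{prop:mandv}) yields simplicity of the full Lyapunov spectrum, hence $\alpha_i(\overrightarrow\Lambda)>0$ for all $i$. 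That is the step that actually carries the argument, and it is the correct one. I would just drop or reword the ``combining $\ldots$'' sentence so that the proof rests explicitly on simplicity of the Lyapunov spectrum rather than on positivity in each exterior power, which the word ``equivalently'' currently misrepresents.
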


\begin{proof}
The convergence is a straightforward consequence of \cite[Lemma 8.15 $\&$ Lemma 8.17]{bq.book} together with Theorem \ref{thm:lln}. The fact that $\overrightarrow{\Lambda} \in \mathfrak{a}^{++}$ is obtained using additionally Proposition \ref{prop.positivity}. The details are standard and omitted. 
\end{proof}

It might be possible to prove the above convergence under the same assumptions when we replace the Cartan projection $\overrightarrow{\kappa}$ with the Jordan projection $\overrightarrow{\lambda}$. However, even in the case of Markovian random matrix products, the law of large numbers for the spectral radius may fail (see \cite{aoun-sert.lln}) and one has to deal with this difficulty. On the other hand, in ongoing work with Cipriano and Dougall \cite{CCDS}, we show that the above convergence holds for both $\overrightarrow{\kappa}$ and $\overrightarrow{\lambda}$ with a speed estimate under the assumption that the representation $\rho$ is Anosov (with respect to an appropriate sense parabolic subgroup). Finally, in the previous result, one may prove the stronger statement that $\overrightarrow{\Lambda}$ belongs to the interior of the joint spectrum $J(S)$ of $S$ (see \cite{breuillard-sert}). We will however content with the above version for brevity.

\bigskip

We now turn to a consequence of our large deviation estimate Theorem \ref{thm:ldt}, its connection to the uniqueness of the maximum of the growth indicator function and the connection between the latter and a question of Kaimanovich--Kapovich--Schupp \cite[Problem 9.3]{kaimanovich-kapovich-schupp}. In the latter, the authors proved (see also an earlier related consideration in \cite{kapovich-schupp-shpilrain}) that if in Theorem \ref{thm:lln}, one considers $\Gamma$ to be a free group with a free generating set $S$ and having fixed an automorphism $\phi:F \to F$, one takes $\varphi:F \to \R$ to be the function $w \mapsto |\phi(w)|_S$, then the convergence in Theorem \ref{thm:lln} is exponential. In \cite[Problem 9.3]{kaimanovich-kapovich-schupp}, the authors ask the question of whether there are other examples where this convergence is exponential for a map on a free group. Theorems \ref{thm:ldt} and \ref{thm.gromov.counting.ld} clearly provide positive answers in a more general (both for underlying groups and generating sets) setting. The existence of this kind of phenomenon was also asked in \cite[Introduction 7.4.3]{sert.thesis} with the language of growth indicator of a finite set, a notion that was introduced therein (see also \cite{sert.ongoing}). We now briefly recall this notion and formulate the consequence of our counting large deviation result.

Let $G$ be a real reductive Lie group, $\mathfrak{a}^+$ a Weyl chamber of $G$, $\Gamma<G$ a finitely generated subgroup and $S$ be a finite generating set $\Gamma$. We define the growth indicator of $S$ as:
\begin{equation*}
\begin{aligned}
\varphi_S: \mathfrak{a}^+ & \to [0,\infty) \cup \{-\infty\} \\
 \alpha & \mapsto \inf_{\alpha \in O} \limsup_{n \to \infty} \frac{1}{n}\log \#\left\{g \in S_n \; | \; \frac{1}{n}\overrightarrow{\kappa}(g) \in O\right\},
\end{aligned}
\end{equation*}
where $O$ ranges over neighborhoods of $\alpha$ in the Weyl chamber $\mathfrak{a}^+$. If $\Gamma$ is Zariski-dense, the closure of the locus of points $x \in \mathfrak{a}^+$ on which $\varphi_S$ takes values in $[0,\infty)$ is contained in the joint spectrum of $S$ (\cite{breuillard-sert}), which is a convex body in $\mathfrak{a}^+$.
On the other hand, denoting by $\lambda_S>1$ the exponential growth rate of the cardinality of $S_n$, the function $\varphi_S$ is bounded above by $\log \lambda_S$. Moreover, it is not hard to see that the value $\log \lambda_S$ is always attained by $\varphi_S$. In this general setting, the locus of maxima, i.e.~ the description of the set $\varphi_S^{-1}(\{\log \lambda_S\})$ remains to be studied. Thanks to our Theorem \ref{thm:ldt}, we can describe it in the setting of Corollary \ref{corol.cartan.weak.law}. Indeed, the conclusion of the latter implies that $\varphi_S(\overrightarrow{\Lambda})=\log \lambda_S$ where $\overrightarrow{\Lambda} \in \mathfrak{a}^{++}$ is given by that corollary and the following consequence of Theorem \ref{thm:ldt} says that $\varphi_S$ attains its maximum only on $\overrightarrow{\Lambda}$  which is precisely the aforementioned positive answer to \cite[Problem 9.3]{kaimanovich-kapovich-schupp}.

\begin{corollary}\label{corol.cartan.unique.max}
Let $G$ be a real reductive Lie group, $\overrightarrow{\kappa}:G \to \mathfrak{a}^+$ a Cartan projection, $\Gamma<G$ a Zariski-dense Gromov-hyperbolic subgroup and $S$ a finite symmetric generating set of $\Gamma$. Let $\varphi_S:\mathfrak{a}^+ \to [0,\infty) \cup \{-\infty\}$ be the growth indicator of $S$. Then, the Weyl chamber element $\overrightarrow{\Lambda} \in \mathfrak{a}^{++}$ given by Corollary \ref{corol.cartan.weak.law} is the unique point where $\varphi_S$ reaches its maximum value $\log \lambda_S $. \qed
\end{corollary}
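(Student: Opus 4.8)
The plan is to derive the statement from a multi-dimensional refinement of Theorem~\ref{thm:ldt}, namely the assertion that for every $\epsilon>0$ there is $c>0$ with
\begin{equation}\label{eq.multid.ld}
\limsup_{n\to\infty}\frac1n\log\left(\frac{1}{\#S_n}\#\Big\{g\in S_n:\ \big\|\tfrac1n\overrightarrow{\kappa}(g)-\overrightarrow{\Lambda}\big\|>\epsilon\Big\}\right)\le-c .
\end{equation}
Granting \eqref{eq.multid.ld}, the corollary follows by soft estimates. For open $O\subseteq\mathfrak{a}^+$ put $N_n(O)=\#\{g\in S_n:\tfrac1n\overrightarrow{\kappa}(g)\in O\}$; since $N_n(O)\le\#S_n\le C\lambda_S^{\,n}$ by \eqref{eq.pure.growth}, one always has $\varphi_S\le\log\lambda_S$. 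If $\overrightarrow{\Lambda}\in O$, pick $\epsilon>0$ so that the $\epsilon$-ball about $\overrightarrow{\Lambda}$ lies in $O$; then \eqref{eq.multid.ld} gives $N_n(O)\ge\#S_n(1-Ce^{-cn})\ge\tfrac12\#S_n$ for large $n$, hence $\limsup_n\tfrac1n\log N_n(O)=\log\lambda_S$ for \emph{every} such $O$ and therefore $\varphi_S(\overrightarrow{\Lambda})=\log\lambda_S$. If $\beta\neq\overrightarrow{\Lambda}$, choose $O\ni\beta$ and $\epsilon>0$ with $O\subseteq\{\|\cdot-\overrightarrow{\Lambda}\|>\epsilon\}$; then \eqref{eq.multid.ld} yields $N_n(O)\le C'(\lambda_Se^{-c})^{\,n}$, so $\varphi_S(\beta)\le\log\lambda_S-c<\log\lambda_S$. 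This establishes both that $\overrightarrow{\Lambda}$ is a maximizer and that it is the unique one.

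It remains to prove \eqref{eq.multid.ld}, and the only real work is reducing to the one-dimensional Theorem~\ref{thm:ldt}; we may assume $G$ has no compact factors, on which $\overrightarrow{\kappa}$ is trivial. Let $r$ be the rank of the derived group of $G$ and, for each fundamental weight $\chi_i$ ($i=1,\dots,r$), let $(\rho_i,V_i)$ be an irreducible proximal rational representation of $G$ whose highest weight is a positive multiple of $\chi_i$; for $G=\SL_d(\R)$ one may take $\rho_i=\wedge^i$ acting on $\wedge^i\R^d$, with $\chi_i(\alpha)=\alpha_1+\cdots+\alpha_i$. Since $\Gamma$ is Zariski-dense in $G$ and $G$ acts irreducibly and proximally on $V_i$, the restriction $\rho_i|_\Gamma\colon\Gamma\to\GL(V_i)$ is again strongly irreducible (a finite $\rho_i(\Gamma)$-invariant union of proper subspaces would be stabilized componentwise by a finite-index --- hence still Zariski-dense --- subgroup, forcing $G$ to stabilize a proper subspace of $V_i$) and proximal (exactly as in the proof of Lemma~\ref{lem:(star)}: by Goldsheid--Margulis \cite{goldsheid-margulis}, it suffices that the Zariski closure of $\rho_i(\Gamma)$, which equals $\rho_i(G)$, be proximal on $V_i$, and this holds by choice of $\rho_i$). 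Moreover, by the Cartan (polar) decomposition, for a suitable Euclidean operator norm on $\Endo(V_i)$ one has the exact identity $\log\|\rho_i(g)\|=\chi_i(\overrightarrow{\kappa}(g))$ for every $g\in G$, and the linear map $\alpha\mapsto(\chi_1(\alpha),\dots,\chi_r(\alpha))$ --- complemented by the characters of any central torus of $G$, which give additive functions on $\Gamma$ with vanishing spherical averages by symmetry of $S$, exactly as in Corollary~\ref{corol.cartan.weak.law} --- is a linear isomorphism onto its image; see \cite[\S 8.15--8.17]{bq.book}.

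With these auxiliary representations fixed, I would apply Theorem~\ref{thm:ldt} to each strongly irreducible proximal $\rho_i|_\Gamma$: for every $\delta>0$ there is $c_i>0$ such that
\[
\limsup_{n\to\infty}\frac1n\log\Bigl(\tfrac{1}{\#S_n}\,\#\bigl\{g\in S_n:\ \big|\tfrac1n\chi_i(\overrightarrow{\kappa}(g))-\ell_i\big|>\delta\bigr\}\Bigr)\le-c_i ,
\]
where $\ell_i$ is the constant furnished by Theorem~\ref{thm:lln} for the subadditive function $\gamma\mapsto\log\|\rho_i(\gamma)\|$; by linearity of $\chi_i$ and the definition of $\overrightarrow{\Lambda}$ in Corollary~\ref{corol.cartan.weak.law}, $\ell_i=\chi_i(\overrightarrow{\Lambda})$, and the analogous bound for the central characters holds trivially since their averages are $0$. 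Intersecting these finitely many exceptional events and using that $\{\chi_1,\dots,\chi_r\}$ together with the central characters span $\mathfrak{a}^*$, a ball $\{\|\tfrac1n\overrightarrow{\kappa}(g)-\overrightarrow{\Lambda}\|>\epsilon\}$ is contained, for $\delta$ small enough, in a finite union of events of the above form; adding the corresponding exponential bounds gives \eqref{eq.multid.ld}. I expect the only real difficulty to be expository: checking strong irreducibility and proximality of the $\rho_i|_\Gamma$ and invoking the structure theory of reductive groups --- but this is exactly the mechanism already used in Lemma~\ref{lem:(star)} and in \cite[Ch.~8]{bq.book}, so nothing genuinely new is required.
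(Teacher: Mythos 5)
Your argument follows the intended line of proof: the paper presents this corollary as a direct consequence of Theorem~\ref{thm:ldt} (applied to the highest-weight representations, so that one-dimensional large deviation bounds for $\log\|\rho_i(\cdot)\|=\chi_i\circ\overrightarrow{\kappa}$ combine into the multi-dimensional estimate~\eqref{eq.multid.ld}), together with Corollary~\ref{corol.cartan.weak.law} for the existence of the maximum. Your soft deduction of the statement from~\eqref{eq.multid.ld} is correct, as is the verification that each $\rho_i|_\Gamma$ is strongly irreducible and proximal.

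The one place where you wave your hands too quickly is the central part of $\mathfrak{a}$. The sentence ``the analogous bound for the central characters holds trivially since their averages are $0$'' is a non-sequitur: knowing that the spherical average of an additive function $\chi_c\circ\overrightarrow{\kappa}\circ\rho:\Gamma\to\R$ is zero (which follows from the symmetry of $S$) gives only the law of large numbers, not exponential decay of the exceptional set. Since $G$ is allowed to be reductive, and a Zariski-dense Gromov-hyperbolic $\Gamma$ can have unbounded image under such a character (e.g.\ a Zariski-dense free subgroup of $\GL_2(\R)$ with $|\det|\neq 1$ on the generators), the central directions cannot simply be dropped. To fill this in, note that $g\mapsto e^{\chi_c(\overrightarrow{\kappa}(g))}$ is a one-dimensional representation for which strong irreducibility and $1$-contraction hold vacuously; the Markovian large deviation machinery (Theorem~\ref{thm.ld.matrix}, then Proposition~\ref{prop.ld.maximal} and the Calegari--Fujiwara comparison of maximal components as in Proposition~\ref{prop:mandv}) applies verbatim, yielding the exponential bound around the mean $0$. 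One should apply the \emph{proof} of Theorem~\ref{thm:ldt} rather than cite its statement, since the statement asserts $\Lambda>0$, which fails in this one-dimensional case; the proof itself does not use positivity. With that repair, the argument is complete.
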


\subsection{A connection to the work of Lubotzky--Mozes--Raghunathan}\label{subsec.LMR}

Here we let $G$ be a connected semisimple real Lie group and $\Gamma<G$ a finitely generated Zariski-dense subgroup, endowed with a finite symmetric generating set $S$. Let $K<G$ be a maximal compact subgroup and $d_G$ a left-$G$-invariant and bi-$K$-invariant Riemannian metric on $G$ induced by the Killing form. If $\Gamma$ is a uniform lattice in $G$, then it is not hard to see that the word-metric $d_S$ is Lipschitz equivalent $d_G$ (see e.g.~ \cite[Proposition 3.2]{LMR2}). The situation is much less clear for non-uniform lattices. Confirming a conjecture of Kazhdan (see \cite{gromov}), Lubotzky--Mozes--Raghunathan \cite{LMR2} have shown that if $G$ has $\R$-rank at least two and $\Gamma$ is an irreducible lattice in $G$, then $d_S$ and $d_G$ are Lipschitz equivalent. In other words, there is a constant $C>1$ such that for every $n \in \N$ and $g \in S_n$, we have
\begin{equation*}
    C^{-1}n \leq d_G(g,\id) \leq Cn.
\end{equation*}

This equivalence breaks down for rank-one simple Lie groups in which case the word-metric $d_S$ of a (non-uniform) lattice can be exponentially distorted in the terminology of \cite[\S 3]{gromov}. This is for example the case for $\SL_2(\Z)<\SL_2(\R)$. When $\Gamma$ is only required to be Zariski-dense, the connection between $d_S$ and $d_G$ is much less clear. In many cases (e.g.~ if $\Gamma$ is not discrete), it does not make sense to ask for Lipschitz equivalence \textit{of every element of $\Gamma$} as there can be elements of $\Gamma$ with arbitrarily large $d_S$-length but small $d_G$-length. 

One way to study the connection between $d_S$ and $d_G$, despite the fact they can not be Lipschitz equivalent, is to ask whether there is an equivalence $d_S \sim d_G$ for \textit{most of the elements of $\Gamma$}. Our counting large deviation Theorem \ref{thm:ldt} (and Corollary \ref{corol.cartan.unique.max}) then have the following consequence which establishes such a statistical relation between $d_S$ and $d_G$ for Gromov-hyperbolic groups.

\begin{corollary}\label{corol.LMR}
Let $\Gamma$ be a Zariski-dense, non-elementary Gromov-hyperbolic subgroup of a real semisimple Lie group $G$. Then, for every finite symmetric generating set $S$ of $\Gamma$ and constant $\epsilon>0$, there exists a subset $T_\epsilon$ of $\Gamma$ with the property that
\begin{equation}\label{eq.exp.generic}
\frac{\# (S_n \setminus (T_\epsilon \cap S_n))}{\# S_n}=O(e^{-\alpha n})
\end{equation}
for some $\alpha>0$, and there exists a constant $\Lambda=\Lambda(S) >0$ such that for every $n \in \N$ and $g \in S_n \cap T_\epsilon$, we have
\begin{equation}\label{eq.last.last}
n(\Lambda-\epsilon) \leq d_G(g,\id) \leq n(\Lambda +\epsilon).
\end{equation}
\end{corollary}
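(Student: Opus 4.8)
The plan is to transfer the statement to the symmetric space of $G$ and there compare the Riemannian distance with the norm of the Cartan projection, and then to feed the one-dimensional large deviation estimate of Theorem \ref{thm:ldt} into a multidimensional concentration statement for the Cartan projection around the Lyapunov vector $\overrightarrow{\Lambda}\in\mathfrak{a}^{++}$ supplied by Corollary \ref{corol.cartan.weak.law}. Throughout I may and do assume $G$ is linear (replacing $G$ by its adjoint group, which changes neither $d_G$ nor $\overrightarrow{\kappa}$, preserves Zariski density of the image of $\Gamma$, and has finite kernel on $\Gamma$ since a non-elementary hyperbolic group has finite center). Set $\Lambda:=\|\overrightarrow{\Lambda}\|$ where $\|\cdot\|$ is the Weyl-invariant Euclidean norm on $\mathfrak{a}$ induced by the Killing form; positivity $\Lambda>0$ is immediate from $\overrightarrow{\Lambda}\in\mathfrak{a}^{++}$.

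\emph{Geometric comparison.} First I would observe that, $d_G$ being left-$G$-invariant and bi-$K$-invariant for a maximal compact $K<G$ and induced by the Killing form, the projection $G\to X:=G/K$ is a Riemannian submersion onto the symmetric space $X$ whose fibres are $K$-cosets of diameter $\leq \operatorname{diam}_{d_G}(K)<\infty$. Writing $g=k_1\exp(\overrightarrow{\kappa}(g))k_2$ in a $KAK$-decomposition and using left-invariance of $d_G$ together with the triangle inequality, one obtains a constant $C_0>0$ depending only on $G$ and the chosen normalisation with
\[
\bigl| d_G(g,\id)-\|\overrightarrow{\kappa}(g)\| \bigr|\leq C_0\qquad\text{for all }g\in G.
\]
\emph{Multidimensional large deviations for $\overrightarrow{\kappa}$.} Next I would establish that for every $\epsilon>0$,
\[
\limsup_{n\to\infty}\frac1n\log\left(\frac{1}{\#S_n}\#\Bigl\{g\in S_n:\ \bigl\|\tfrac1n\overrightarrow{\kappa}(g)-\overrightarrow{\Lambda}\bigr\|>\epsilon\Bigr\}\right)<0.
\]
As indicated right after Theorem \ref{thm:ldt} (and as is the content behind Corollary \ref{corol.cartan.unique.max}), this follows by applying Theorem \ref{thm:ldt} to a finite family $\rho_1,\dots,\rho_r$ of representations of $\Gamma$ obtained by restricting suitable fundamental representations of $G$: since $\Gamma$ is Zariski-dense in the connected semisimple group $G$, these can be chosen strongly irreducible and proximal over $\R$ (standard, cf.\ Benoist--Quint \cite{bq.book}), and there are linear functionals $\chi_1,\dots,\chi_r$ forming a basis of $\mathfrak{a}^\ast$ with $\log\|\rho_i(g)\|=\langle\chi_i,\overrightarrow{\kappa}(g)\rangle+O(1)$. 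Theorem \ref{thm:ldt} gives, for each $i$, exponential decay of the proportion of $g\in S_n$ with $\bigl|\tfrac1n\log\|\rho_i(g)\|-\Lambda_i\bigr|>\epsilon$, where $\Lambda_i$ is the average from Theorem \ref{thm:lln}; by Corollary \ref{corol.cartan.weak.law} and the identity $\log\|\rho_i\|=\langle\chi_i,\overrightarrow{\kappa}\rangle+O(1)$ we have $\Lambda_i=\langle\chi_i,\overrightarrow{\Lambda}\rangle$. A union bound over $i$ followed by inverting the linear isomorphism $H\mapsto(\langle\chi_i,H\rangle)_i$ of $\mathfrak{a}$ yields the displayed estimate, with rate $\alpha=\min_i\alpha_i>0$.

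\emph{Conclusion.} Finally I would assemble the two inputs. Fix $n_0$ with $C_0\leq\epsilon n_0/2$ and set
\[
T_\epsilon:=\Bigl\{g\in\Gamma:\ |g|_S\geq n_0\ \text{ and }\ \bigl\|\tfrac{1}{|g|_S}\overrightarrow{\kappa}(g)-\overrightarrow{\Lambda}\bigr\|\leq\tfrac{\epsilon}{2}\Bigr\}.
\]
For $g\in S_n\cap T_\epsilon$ one has $\bigl|\,\|\overrightarrow{\kappa}(g)\|-n\Lambda\,\bigr|\leq\|\overrightarrow{\kappa}(g)-n\overrightarrow{\Lambda}\|\leq n\epsilon/2$, hence $|d_G(g,\id)-n\Lambda|\leq C_0+n\epsilon/2\leq n\epsilon$, which is \eqref{eq.last.last}. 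For \eqref{eq.exp.generic}, when $n\geq n_0$ the set $S_n\setminus(T_\epsilon\cap S_n)$ is precisely the exceptional set of the large deviation estimate above (with $\epsilon/2$ in place of $\epsilon$), and for the finitely many $n<n_0$ the trivial bound $1$ is absorbed into the implied constant. The main obstacle — the only point requiring more than bookkeeping — is the selection of the family $\{\rho_i\}$, namely producing representations of $\Gamma$ (from $G$) that are simultaneously strongly irreducible and proximal over $\R$ and whose log-norms linearly recover all of $\mathfrak{a}^\ast$ through $\overrightarrow{\kappa}$; this rests on Benoist's theory of proximal subgroups for Zariski-dense subgroups of semisimple groups and is the substantive ingredient underlying the multidimensional concentration (and hence already underlying Corollary \ref{corol.cartan.unique.max}), the rest being the geometric comparison $d_G\approx\|\overrightarrow{\kappa}\|$ and elementary estimates.
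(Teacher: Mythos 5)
Your proof is correct and follows essentially the same route as the paper: reduce $d_G$ to $\|\overrightarrow{\kappa}(\cdot)\|$ via the symmetric space $G/K$, and then invoke the multidimensional large deviation concentration of $\tfrac1n\overrightarrow{\kappa}$ around $\overrightarrow{\Lambda}\in\mathfrak{a}^{++}$. The paper packages that second step by citing Corollary \ref{corol.cartan.unique.max} (uniqueness of the maximum of the growth indicator), whereas you unfold it directly from Theorem \ref{thm:ldt} applied to a basis-spanning family of strongly irreducible proximal representations of $\Gamma$ — which is precisely the mechanism the paper alludes to just after Theorem \ref{thm:ldt} and behind Corollary \ref{corol.cartan.unique.max}, so the substance is the same.
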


A subset of $\Gamma$ satisfying \eqref{eq.exp.generic} can be called $S$-\textit{exponentially generic} in $\Gamma$ in the terminology of \cite{kaimanovich-kapovich-schupp}. 

\begin{proof}
It suffices to work with the symmetric space $G/K$ and the $G$-invariant metric $d_{G/K}$ induced by the Killing form. Let $\mathfrak{a}^+$ be a Weyl chamber in a Cartan subspace $\mathfrak{a}$ of the Lie algebra $\mathfrak{g}$ of $G$ such that we have the Cartan decomposition $K\exp(\mathfrak{a}^+)K$. Denoting by $\|\cdot\|$ the norm induced by the Killing form on $\mathfrak{a}$, by \cite[\S 6.7.4]{bq.book}, for any $g \in G$, we have $\|\overrightarrow{\kappa}(g)\|=d_{G/K}(g \cdot o,o)$. The result now follows from Corollary \ref{corol.cartan.unique.max}.
\end{proof}

\begin{remark}
By replacing the use of Theorem \ref{thm:ldt} (in the form of Corollary \ref{corol.cartan.unique.max}) by Theorem \ref{thm:lln}, one can obtain a version of Corollary \ref{corol.LMR} valid for any left-$G$-invariant Riemannian metric $d$ on $G$ but $T_\epsilon$ being only $S$-\textit{generic} for $\Gamma$ instead of $S$-exponentially generic. Here, by $S$-generic for $\Gamma$, we understand a subset satisfying \eqref{eq.exp.generic} with $O(e^{-\alpha n})$ replaced by $o(1)$).
\end{remark}

\end{document}